\title	 {\normalfont\headfam\itshape\LARGE
          Knots, sutures and excision}
\author {P. B. Kronheimer and T. S. Mrowka%
            
            \footnote{%
             The work of the first author
            was supported by the National Science Foundation through
            NSF grant number DMS-0405271. The work of
            the second author was supported by NSF grants DMS-0206485,
            DMS-0244663 and DMS-0805841.} } 
\address {Harvard University, Cambridge MA 02138 \\
          Massachusetts Institute of Technology, Cambridge MA 02139}
\begin{document}

\pagestyle{numbered}	

\maketitle

\begin{abstract}
    We develop monopole and instanton Floer homology groups for balanced
    sutured manifolds, in the spirit of \cite{Juhasz-1}. Applications
    include a new proof of Property P for knots.
\end{abstract}

\tableofcontents

\section{Introduction}

Floer homology for sutured manifolds is an invariant $\SFH(M,\gamma)$
of ``balanced
sutured 3-manifolds'' $(M,\gamma)$, introduced by Juh\'asz in
\cite{Juhasz-1,Juhasz-2}. It incorporates the knot Floer homology of
Ozsv\'ath-Szab\'o and Rasmussen
\cite{Ozsvath-Szabo-knotfloer,Rasmussen-thesis} as a special case, and
it provides a framework in which to adapt the arguments of Ghiggini
and Ni \cite{Ghiggini,Ni-A,Ni-B} to reprove, for example, that knot Floer
homology detects fibered knots.

The construction that forms the basis of Juh\'asz's invariant is an
adaptation of Ozsv\'ath and Szab\'o's Heegaard Floer homology for
$3$-manifolds. 
The purpose of the present paper is to show how something very
similar can be done using either monopole Floer
homology \cite{KM-book} or instanton Floer homology
\cite{Donaldson-book} in place of the Heegaard version.
We will define an invariant of balanced sutured manifolds by gluing
them up, with some extra pieces, to form a closed manifold and then
applying ordinary Floer homology, of either variety, to this closed manifold.
Many of the theorems and constructions of Ghiggini, Ni and Juh\'asz can
be repeated in this context. In particular, our construction leads to
candidates for  ``monopole knot homology'' and ``instanton knot
homology'':  the monopole and instanton counterparts
of the Heegaard knot homology  groups. Adapting the arguments of
\cite{Ghiggini} and \cite{Ni-A}, we shall also prove that fibered knots
can be characterized using  either of these invariants.

The definition of instanton knot homology which
arises in this way, motivated by Juh\'asz's sutured manifold
framework, is not new. It turns out
to be exactly the same as an earlier instanton homology for knots, defined by
Floer twenty years ago~\cite{Floer-Durham-paper}. We conjecture that,
over a field of characteristic zero, the knot homology groups of
Ozsv\'ath-Szabo and Rasmussen are isomorphic to Floer's instanton knot
homology.

Monopole Floer homology for balanced sutured
manifolds is defined in section~\ref{sec:Monopole-Floer-homology},
and the definition is
adapted to the instanton case in section~\ref{sec:instantons}. The
same definition could be applied with Heegaard Floer homology: it
is not clear to the authors whether the resulting invariant of sutured
manifolds would be the same as Juh\'asz's invariant, but we would
conjecture that this is the case. It seems, at least, that
the construction  recaptures Heegaard knot homology
\cite{Rasmussen-2}. Some things are missing however.
Our construction leads to knot homology
groups which lack (a priori) the $\Z$ grading as well as the
additional structures that are present in the theory developed in
\cite{Ozsvath-Szabo-knotfloer} and \cite{Rasmussen-thesis}.

In the setting of instanton homology, we obtain new non-vanishing
theorems.  Among other applications, the non-vanishing theorems
lead to a new proof of
Property P for knots. In contrast to the proof in \cite{KM-Witten-P},
the argument presented here is independent of the work of Feehan and
Leness \cite{Feehan-Leness} concerning Witten's conjecture, and does
not require any tools from contact or symplectic topology. As a
related matter, we show that instanton homology captures the Thurston
norm on an irreducible $3$-manifold, answering a question raised in
\cite{KM-asymptotics}.

\subparagraph{Acknowledgments.} The authors would like to thank Andr\'as
Juh\'asz and Jake Rasmussen for helpful comments and corrections to
an earlier version of this paper.

 \section{Background on monopole Floer homology}

\subsection{Monopole Floer homology recalled}
\label{subsec:monopole-recap}
 
We follow the notation of \cite{KM-book} for monopole Floer homology.
Thus, to a closed, connected,  oriented $3$-manifold $Y$ equipped with a \spinc{}
structure $\s$, we associate three varieties of Floer homology
groups with integer coefficients,
\[
            \Hto_{\bullet}(Y,\s) , \quad \Hfrom_{\bullet}(Y,\s),\quad
            \Hred_{\bullet}(Y,\s).
\]
The notation using $\bullet$ in place of the more familiar $*$ was
introduced in \cite{KM-book} to denote that, in general, there is a
completion involved in the definition. In all that follows, the
distinction between $\Hfrom_{\bullet}$ and $\Hfrom_{*}$ does not
arise, but we preserve the former notation as a visual clue. Unless
$c_{1}(\s)$ is torsion, these groups are not $\Z$-graded, but they
always have a canonical $\Z/2$ grading.

The three varieties are related by a long exact sequence
\[
            \dots\to
            \Hred_{\bullet}(Y,\s)\stackrel{i}{\to}\Hto_{\bullet}(Y,\s)
            \stackrel{j}{\to}\Hfrom_{\bullet}(Y,\s)
            \stackrel{p}{\to}\Hred_{\bullet}(Y,\s)\to\cdots.
\]
If $c_{1}(\s)$ is not torsion, then $\Hred_{\bullet}(Y,\s)$ is zero
and $\Hto_{\bullet}(Y,\s)$ and $\Hfrom_{\bullet}(Y,\s)$ are
canonically isomorphic, via $j$. In this case, we simply write
$\HM_{\bullet}(Y,\s)$ for either $\Hto_{\bullet}(Y,\s)$ or
$\Hfrom_{\bullet}(Y,\s)$. All these groups can be non-zero only for
finitely many \spinc{} structures on a given $Y$: we write
\[
                \Hto_{\bullet}(Y) = \bigoplus_{\s}
                \Hto_{\bullet}(Y,\s)
\]
for the total Floer homology, taking the sum over all isomorphism
classes of \spinc{} structure, with similar notation for the $\Hfrom$
and $\Hred$ cases.

\subsection{Local coefficients}

We can also define a version of
Floer homology with a local system of coefficients. The following
definition is adapted from \cite[section 22.6]{KM-book}.
Let $\cR$ denote any commutative ring with $1$ supplied with an
``exponential map'', a group homomorphism
\begin{equation}\label{eq:exponential}
                \exp: \R \to \cR^{\times}.
\end{equation}
We will use polynomial notation for the exponential map, writing
\[
            t = \exp(1)
\]
and so writing $\exp(n)$ as $t^{n}$.
Let $\bonf(Y,\s)$ denote the Seiberg-Witten configuration space for a
\spinc{} structure $\s$ on $Y$; that is, $\bonf(Y,\s)$ is the space of
gauge equivalences classes $[A,\Phi]$ consisting of a \spinc{}
connection $A$ and a section $\Phi$ of the spin bundle. Given a
smooth $1$-cycle $\eta$ in $Y$ with real coefficients, we can
associate to each path $z: [0,1] \to \bonf(Y,\s)$ a real number $r(z)$
by
\[
                        r(z) = \frac{i}{2\pi} \int_{[0,1]\times
                        \eta}\tr F_{A_{z}},
\]
where $A_{z}$ is the $4$-dimensional connection on $[0,1]\times
Y$ arising from the path $z$. Now define a  local system
$\Gamma_{\eta}$ on $\bonf(Y,\s)$ by declaring its fiber at every point
to be $\cR$ and declaring the map $\cR\to\cR$ corresponding to a path $z$
to be multiplication by $t^{r(z)}$. Following \cite[section
22]{KM-book}, we obtain Floer homology groups with coefficients in
$\Gamma_{\eta}$;
they will be $\cR$-modules
 denoted
\[
            \Hto_{\bullet}(Y;\Gamma_{\eta}) ,
            \quad \Hfrom_{\bullet}(Y;\Gamma_{\eta}),\quad
            \Hred_{\bullet}(Y;\Gamma_{\eta}).
\]
These still admit a  direct sum decomposition by isomorphism classes
of \spinc{} structures. The following is essentially
Proposition~32.3.1 of \cite{KM-book}:

\begin{proposition}\label{prop:twist-vanish}
If there is an integer cohomology class that
evaluates as $1$ on $[\eta]$, and if $t-t^{-1}$ is invertible in
$\cR$, then
 $\Hred_{\bullet}(Y;\Gamma_{\eta})$ is zero; thus we again
have an isomorphism $j$ between  $\Hto_{\bullet}(Y;\Gamma_{\eta})$
and $\Hfrom_{\bullet}(Y;\Gamma_{\eta})$.
\end{proposition}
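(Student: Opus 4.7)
The plan is to mirror the proof of \cite[Proposition~32.3.1]{KM-book}: reduce to \spinc{} structures with torsion $c_{1}$, identify the hypothesis on $[\eta]$ with a nontriviality statement for the restriction of $\Gamma_{\eta}$ to the reducible stratum, and then use this nontriviality together with the invertibility of $t-t^{-1}$ to annihilate the reducible contribution.

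For $\s$ with $c_{1}(\s)$ not torsion, the (perturbed) blown-up configuration space contains no reducible critical orbits, so $\Hred_{\bullet}(Y,\s;\Gamma_{\eta})$ vanishes for elementary reasons independently of the coefficient system; one may therefore restrict attention to $\s$ with $c_{1}(\s)$ torsion. For such an $\s$ the reducibles sweep out the Picard torus $T=H^{1}(Y;\R)/H^{1}(Y;\Z)$ of flat \spinc{} connections, and after the blow-up each reducible is replaced by an $S^{1}$-family of normalized-spinor directions. The restriction of $\Gamma_{\eta}$ to $T$ is the rank-one local system whose holonomy around a loop representing $\alpha\in H_{1}(T;\Z)=H^{1}(Y;\Z)$ is multiplication by $t^{\langle\alpha,[\eta]\rangle}$, so the hypothesis that some integer cohomology class on $Y$ pairs as $1$ with $[\eta]$ is precisely the statement that this local system has a loop of holonomy~$t$.

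The remainder of the argument follows \cite[\S 32]{KM-book}. One analyses the reducible contribution to $\Hred_{\bullet}(Y,\s;\Gamma_{\eta})$ via the Morse--Bott, $S^{1}$-equivariant model over the reducible stratum; the nontriviality of the holonomy together with the $S^{1}$-equivariant structure on the blow-up direction produces the element $t-t^{-1}$ as the obstruction to acyclicity of the reducible part of the chain complex, and inverting $t-t^{-1}$ in $\cR$ kills that contribution. The main obstacle is this last algebraic identification: tracking through the equivariant blow-up formalism of \cite[\S\S 22, 32]{KM-book} to confirm that the factor one must invert is really $t-t^{-1}$ (rather than $t-1$ or some other variant) is the technical heart of the argument, and it is there that the chain-level construction of \cite{KM-book} must be quoted most carefully.
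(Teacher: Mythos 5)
The paper gives no proof here; it simply cites \cite[Proposition~32.3.1]{KM-book}, so there is no internal argument to compare your proposal against. The overall shape of your sketch is right: dismiss the non-torsion $\s$, reduce to the Picard torus of reducibles when $c_{1}(\s)$ is torsion, and argue that the restriction of $\Gamma_{\eta}$ there kills $\Hred_{\bullet}$.

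The one step that goes wrong is the holonomy computation, and fixing it is precisely what explains the appearance of $t-t^{-1}$. A gauge transformation $u:Y\to S^{1}$ with class $\alpha\in H^{1}(Y;\Z)$ acts on the spinor bundle $S$ by $u$, hence on $\det S = \Lambda^{2}S$ by $u^{2}$; equivalently, $\tr F_{A_{z}}$ in the definition of $r(z)$ is a trace over the rank-two spinor bundle and carries a factor of $2$. Working this through gives $r(z) = 2\langle\alpha,[\eta]\rangle$ for the loop determined by $u$, so the holonomy of $\Gamma_{\eta}$ around that loop in the Picard torus is $t^{2\langle\alpha,[\eta]\rangle}=t^{2}$ under your hypothesis, not $t$. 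The unit that must be inverted is then $1-t^{2}=-t(t-t^{-1})$, and since $t\in\cR^{\times}$ already, requiring $1-t^{2}$ to be invertible is exactly the hypothesis $t-t^{-1}\in\cR^{\times}$. So $t-t^{-1}$ is not a delicate output of the $S^{1}$-equivariant blow-up formalism, as you suggest; it is an elementary factor of two visible in the definition of $r(z)$ itself. You are correct that the remaining chain-level details --- how the reducible locus feeds into $\Hred_{\bullet}$ after perturbation --- must still be quoted from \cite[\S\S 22, 32]{KM-book}, but identifying the correct unit does not require that machinery.
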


In the situation of the proposition, 
we once more drop the decorations and simply write
\[
                    \HM_{\bullet}(Y;\Gamma_{\eta}) =
                    \bigoplus_{\s}
                \HM_{\bullet}(Y,\s;\Gamma_{\eta})
\]
for this $\cR$-module.

\subsection{Cobordisms}

    Cobordisms between $3$-manifolds give rise to maps between their
Floer homology groups. More precisely, if $W$ is a compact, oriented
cobordism from $Y_{1}$ to $Y_{2}$, equipped with a
homology-orientation in the sense of \cite{KM-book}, then $W$ gives
rise to a map
\[
            \Hto(W) : \Hto_{\bullet}(Y_{1}) \to \Hto_{\bullet}(Y_{2})
\]
with similar maps on $\Hfrom_{\bullet}$ and $\Hred_{\bullet}$. If
$\eta_{1}$ and $\eta_{2}$ are $1$-cycles in $Y_{1}$ and $Y_{2}$
respectively, then to obtain a  maps between the Floer groups with
local coefficients, we need an additional piece of data: a $2$-chain
$\nu$ in $W$ with $\partial\nu = \eta_{2}-\eta_{1}$. In this case, we
obtain a map which we denote by
\[
          \Hto(W;\Gamma_{\nu}) :
          \Hto_{\bullet}(Y_{1};\Gamma_{\eta_{1}}) \to
          \Hto_{\bullet}(Y_{2};\eta_{2}).
\]

The map $\Hto(W)$ and its relatives are defined by taking a sum over
all \spinc{} structures on $W$. In the case of $\Hto(W;\Gamma_{\nu})$,
the \spinc{} contributions are weighted according to the pairing of
the curvature of the connection with the cycle $\nu$. There is a
corresponding invariant for a closed $4$-manifold $X$ with $b_{+}\ge
2$ containing a closed $2$-cycle $\nu$. In \cite{KM-book}, this
invariant of $(X,\nu)$ is denoted by $\m(X,\nu)$ (or $\m(X,[\nu])$,
because only the homology class of $\nu$ matters);
it is an element of $\cR$ defined by
    \begin{equation}\label{eq:SW-generating}
                        \m(X,[\nu]) = \sum_{\s} \m(X,\s)
                        t^{\langle c_{1}(\s),[\nu]\rangle},
    \end{equation}
where  $\m(X,\s)$ denotes the ordinary Seiberg-Witten
invariant for a \spinc{} structure $\s$.

\subsection{Adjunction inequalities and non-vanishing theorems}
\label{subsec:adjunction}

Monopole Floer homology detects the Thurston norm of a $3$-manifold
$Y$. We recall from \cite{KM-book} what lies behind this slogan. Let
$F\subset Y$ be a closed, oriented, connected surface in our closed,
oriented $3$-manifold $Y$. We shall suppose $F$ is not sphere.  Then
we have a vanishing theorem \cite[Corollary 40.1.2]{KM-book},
which states that
\[
                            \HM_{\bullet}(Y, \s) = 0
\]
for all \spinc{} structures $\s$ satisfying
\[
                    \langle c_{1}(\s) , [F] \rangle > 2\genus(F) -
                    2.
\]
(Note that this condition implies that $c_{1}(\s)$ is not torsion.)
This vanishing theorem is usually referred to as the ``adjunction
inequality''. Accompanying this result is a rather deeper
\emph{non}-vanishing
theorem, which we state (for the sake of simplicity) in the case that
the genus of $F$ is at least $2$. In this case, the non-vanishing
theorem asserts that if $F$ is genus-minimizing in its homology
class, then there exists a \spinc{} structure $\s_{c}$ with
\[
                            \HM_{\bullet}(Y, \s_{c}) \ne 0
\]
and
\[
                    \langle c_{1}(\s_{c}) , [F] \rangle = 2\genus(F) -
                    2.
\]
Slightly more specifically, Gabai's theorem from \cite{Gabai} tells us
that $Y$ admits a taut foliation having $F$ as a compact leaf. A
foliation in turn determines a \spinc{} structure on $Y$. The
non-vanishing result holds for any \spinc{} structure $\s_{c}$ arising
in this way. This result appears as Corollary~41.4.2 in
\cite{KM-book}. The techniques of this paper provide an alternative
proof, which we will explain in the context of instanton homology in
section~\ref{subsec:I-non-vanish} below.

It is convenient to introduce the following shorthand.
We denote the set of isomorphism classes of \spinc{} structures on a
closed oriented manifold $Y$ by $\Sp(Y)$. If $F\subset Y$ is a closed,
connected
oriented surface of genus $g\ge 2$, then we write $\Sp(Y|F)$ for the set of
isomorphisms classes of \spinc{} structures $\s$ on $Y$ satisfying the
constraint
\begin{equation}\label{eq:spinc-max}
            \langle c_{1}(\s), [F] \rangle = 2\genus(F)-2,
\end{equation}
and we write
\[
                \HM_{\bullet}(Y|F) \subset \HM_{\bullet}(Y)
\]
for the subgroup
\begin{equation}\label{eq:bar-F-notation}
            \HM_{\bullet}(Y|F) = \bigoplus_{\s \in \Sp(Y|F)}
            \HM_{\bullet}(Y,\s).
\end{equation}
Note again that all the \spinc{} structures in $\Sp(Y|F)$ have
non-torsion first Chern class.
When a local system $\Gamma_{\eta}$ is given, we
define $\HM_{\bullet}(Y|F;\Gamma_{\eta})$ similarly.
If $F$ is a surface with more than one component, we define
\[
            \Sp(Y|F) = \bigcap_{F_{i}\subset F} \Sp(Y|F_{i})
\]
where the $F_{i}$ are the components, and we define
$\HM_{\bullet}(Y|F)$ accordingly.

As a special case, we have

\begin{lemma}\label{lemma:product-rank-1}
    Let $F$ be a closed, connected, oriented surface of genus at least
    $2$, and let $Y=F\times S^{1}$. Regard $F$ as a surface
    $F\times\{p\}$ in $Y$.
    Then we have
    \[
                \HM_{\bullet}(Y|F) = \Z.
    \]
     Indeed, if $F$ is given a
    metric of constant negative curvature and $Y$ is given the product
    metric, then the complex that computes $\HM_{\bullet}(Y|F)$
    has a single generator, corresponding to a single, non-degenerate
    solution of the Seiberg-Witten equations. 
\end{lemma}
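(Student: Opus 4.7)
The plan is to compute the three-dimensional Seiberg-Witten moduli space on $Y = F\times S^{1}$ with the product metric by exploiting the K\"ahler structure on the cylinder $\R\times Y$. First I would enumerate $\Sp(Y|F)$. By K\"unneth, $H^{2}(Y;\Z) \cong H^{2}(F;\Z) \oplus \bigl(H^{1}(F;\Z) \otimes H^{1}(S^{1};\Z)\bigr)$; since $Y$ is spin with torsion-free cohomology, \spinc{} structures are classified by their first Chern class. The maximality condition $\langle c_{1}(\s),[F]\rangle = 2g-2$ fixes the $H^{2}(F)$-component of $c_{1}(\s)$, so $\Sp(Y|F)$ is an $H^{1}(F;\Z)$-torsor; write $\s_{\alpha}$ for these \spinc{} structures and $\s_{0}$ for the distinguished one pulled back from the holomorphic tangent structure on $F$. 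Every $\s \in \Sp(Y|F)$ has non-torsion $c_{1}$, so no reducible solutions occur.

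Next I would invoke the K\"ahler analysis of the Seiberg-Witten equations. With the hyperbolic metric on $F$ and the identification of $\R\times S^{1}$ with $\mathbb{C}^{*}$, the cylinder $\R\times Y \cong F\times \mathbb{C}^{*}$ is a K\"ahler surface with canonical bundle pulled back from $K_{F}$, and translation-invariant finite-energy solutions of the four-dimensional Seiberg-Witten equations on this cylinder correspond exactly to solutions of the three-dimensional equations on $Y$. For $\s_{0}$, the standard K\"ahler computation (decomposing $W^{+}$ into its holomorphic summands and reducing the equations to the vortex equations on $F$) identifies such solutions with effective divisors on $F$ of degree zero; the empty divisor is the only possibility, yielding a unique gauge-equivalence class of solutions, in which the positive spinor is a parallel holomorphic section of one summand of $W^{+}$ and the connection is the Chern connection of the corresponding line bundle. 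For $\alpha \ne 0$, the \spinc{} structure $\s_{\alpha}$ differs from $\s_{0}$ by tensoring with a non-trivial flat $S^{1}$-bundle, and the analogous K\"ahler solution would require a non-zero holomorphic section of a non-trivial flat line bundle over $F$, which does not exist; hence $\s_{\alpha}$ carries no solutions, and the total Floer complex has a single generator.

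Finally, to establish non-degeneracy I would Fourier-decompose the linearized Seiberg-Witten operator along the $S^{1}$ factor: non-zero modes are invertible by a spectral-gap argument driven by the $\partial_{t}^{2}$ term, while the zero-mode component reduces to an elliptic operator on $F$ whose strict positivity follows from the Weitzenb\"ock identity, the constant negative scalar curvature of the hyperbolic $F$ combining with the algebraic curvature term from the quadratic part of the Seiberg-Witten equations to give a uniformly positive self-adjoint operator. The main obstacle is this last step: one has to track the signs in the Weitzenb\"ock identity carefully and verify that the scalar curvature of $F$ and the spinor-curvature term combine constructively rather than destructively, thereby confirming that the lone solution is transverse and that the complex computing $\HM_{\bullet}(Y|F)$ is indeed generated by a single non-degenerate critical point, yielding $\HM_{\bullet}(Y|F) = \Z$.
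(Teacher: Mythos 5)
The paper disposes of this lemma in two sentences, citing it as a standard computation: the product \spinc{} structure is the one corresponding to the $2$-plane field tangent to the $F$ fibers, and its unique solution has covariantly constant spinor. Your attempt to flesh out the argument via the K\"ahler/vortex reduction is a legitimate route and captures the right picture for $\s_{0}$, but it has a genuine gap in the exclusion of the other classes $\s_{\alpha}\in\Sp(Y|F)$.

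The issue is your claim that ``$\s_{\alpha}$ differs from $\s_{0}$ by tensoring with a non-trivial flat $S^{1}$-bundle.'' This is false: if $\s_{\alpha}=\s_{0}\otimes L_{\alpha'}$ with $c_{1}(L_{\alpha'})\in H^{1}(F)\otimes H^{1}(S^{1})$ non-zero, then $L_{\alpha'}$ has non-torsion first Chern class and is therefore not flat. Its \emph{restriction} to each fiber $F\times\{s\}$ is topologically trivial (and may be equipped with a flat structure encoding the monodromy around $S^{1}$), and it is this restricted object that enters the vortex picture. Conflating the global $L_{\alpha'}$ with a flat bundle on $F$ skips the step that makes the argument work: one must first show that any solution of the $3$-dimensional equations on $Y=F\times S^{1}$ (not just the translation-invariant ones on the cylinder $\R\times Y$) is circle-invariant modulo gauge, so that restriction to a fiber produces a vortex datum and the monodromy of $\s_{\alpha}$ becomes an honest flat twist on $F$ acting on that datum. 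Without this rigidity step the effective-divisor classification does not directly apply to $\s_{\alpha}$; the alternative of merely invoking the adjunction inequality for tori $\gamma\times S^{1}$ only yields vanishing of $\HM_{\bullet}(Y,\s_{\alpha})$, not the sharper claim of the lemma that the \emph{chain complex} has no generators at all. A related but lesser slip occurs at the end: non-degeneracy of the generator means invertibility of the extended Hessian of the Chern--Simons--Dirac functional, which is a self-adjoint operator of indefinite sign; your Fourier-mode analysis should aim at showing the zero-mode operator on $F$ has trivial kernel, not that it is ``uniformly positive,'' since a positive Hessian would be wildly false for a Floer-theoretic critical point.
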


\begin{proof}
    This is standard. The \spinc{} that contributes is the product
    \spinc{} structure, which corresponds to the $2$-plane field
    tangent to the fibers of the map $Y\to S^{1}$. The unique
    gauge-equivalence class of solutions to the equations is a pair
    $[A,\Phi]$ with $\Phi$ covariantly constant.    
\end{proof}

\begin{corollary}\label{cor:product-rank-1-twist}
    Let $Y$ be the product $F\times S^{1}$, as in the previous lemma.
    Then for any local coefficient system $\Gamma_{\eta}$, we have
    \[
                \HM_{\bullet}(Y|F;\Gamma_{\eta} ) = \cR,
    \]
    where $\cR$ is the coefficient ring. \qed
\end{corollary}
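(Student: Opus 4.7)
The plan is to argue that introducing a local system $\Gamma_{\eta}$ does not change the generators of the chain complex, only the coefficient ring at each generator; so the content of Lemma~\ref{lemma:product-rank-1} transfers directly.

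Concretely, I would first re-use the metric of the previous lemma: equip $F$ with a hyperbolic metric and $Y = F\times S^{1}$ with the product metric. For each \spinc{} structure $\s \in \Sp(Y|F)$, the Seiberg-Witten moduli space on $Y$ depends only on the metric and on $\s$, not on the choice of local system. By Lemma~\ref{lemma:product-rank-1}, this moduli space is empty for every $\s \in \Sp(Y|F)$ except the product \spinc{} structure $\s_{0}$, for which it consists of a single non-degenerate gauge-equivalence class $[A_{0},\Phi_{0}]$. Since every $\s \in \Sp(Y|F)$ has non-torsion $c_{1}(\s)$, the reducible solutions do not enter, and $\HM_{\bullet}$ is computed by the ``from'' (or equivalently the ``to'') complex generated by the irreducible critical points.

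Next, following \cite[section~22]{KM-book}, the chain complex with coefficients in $\Gamma_{\eta}$ has as its underlying module $\bigoplus_{\alpha}(\Gamma_{\eta})_{\alpha}$, one copy of the fiber $\cR$ for each irreducible critical point $\alpha$. For $\s_{0}$ we therefore get a free $\cR$-module of rank one, and for every other $\s \in \Sp(Y|F)$ the complex is zero. The differential counts moduli spaces of flow lines weighted by the parallel transport of $\Gamma_{\eta}$; with only a single generator for $\s_{0}$, there are no pairs of distinct critical points between which a flow line could be counted, so the differential is forced to vanish. Summing over $\s \in \Sp(Y|F)$ yields $\HM_{\bullet}(Y|F;\Gamma_{\eta}) = \cR$.

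There is really no serious obstacle: the whole argument is a direct upgrade of the previous lemma. The only point that warrants a sentence of justification is that the local coefficient system plays no role in the differential here because the complex has a single generator, so no care is needed about parallel transport along flow lines. A trivialization of the fiber of $\Gamma_{\eta}$ at the one critical point identifies the resulting $\cR$-module with $\cR$ itself.
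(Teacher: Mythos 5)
Your proof is correct and follows exactly the implicit reasoning behind the paper's $\qed$: Lemma~\ref{lemma:product-rank-1} produces a chain complex with a single irreducible generator, so replacing $\Z$ by the fiber $\cR$ of $\Gamma_{\eta}$ at that one generator gives a rank-one free $\cR$-module with zero differential. The only point worth keeping explicit is the one you already made — with a single critical point there are no trajectories between distinct critical points, so the parallel transport in $\Gamma_{\eta}$ never enters the differential.
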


\subsection{Disconnected 3-manifolds, part I}
\label{subsec:disconnected-1}

So far, following \cite{KM-book}, we have discussed connected
$3$-manifolds and connected $4$-dimensional cobordisms between them.
Because of the special role played by reducible connections, one must
be careful when generalizing; but there are simple situations where
the discussion can be carried over without difficulty to the case of
$3$-manifolds with several components. The analysis of the
Seiberg-Witten equations on a manifold with cylindrical ends is
carried out in \cite{KM-book} for an arbitrary number of ends, and our
task here is just to package the resulting information.

Let $W$ be cobordism from $Y_{\inn}$ to $Y_{\out}$, and suppose that each
of these has components
\[
\begin{aligned}
Y_{\inn} &= Y_{\inn,1} \cup \dots \cup Y_{\inn,r} \\
Y_{\out} &= Y_{\out,1} \cup \dots \cup Y_{\out,s}.
\end{aligned}
\]
Although we label them this way, no ordering of the components need be
chosen at this point.
We may allow either $r$ or $s$ (or both) to be zero, and we do not
require $W$ to be connected. If $W$ has any closed components, we
insist that each such component has $b_{+}\ge 2$.

As a simple way to avoid reducible connections, let us give a closed,
oriented surface $F_{\inn}\subset Y_{\inn}$ and $F_{\out}\subset Y_{\out}$. We
will suppose that each component of $Y_{\inn}$ contains a component of
$F_{\inn}$ and that all components of $F_{\inn}$ have genus $2$ or more.
Thus we have non-empty surfaces
\[
\begin{aligned}
F_{\inn,i} &= F_{\inn} \cap Y_{\inn,i}\\ & \subset Y_{\inn,i}.
\end{aligned}
\]
We
make a similar hypothesis for $F_{\out}$. We can regard the union
$F_{\inn}\cup F_{\out}$ as a subset of $W$, and we suppose that we are given
a surface $F_{W}\subset W$ which contains $F_{\inn}\cup F_{\out}$ in
addition perhaps to other components. The notation we previously used
for \spinc{} structures with constraints on $c_{1}$ can be extended
to this case: we write $\Sp(W|F_{W})$ for the set of \spinc{}
structures $\s$ on $W$ such that \eqref{eq:spinc-max} holds for every
component of $F_{W}$.

We can define $\HM_{\bullet}(Y_{\inn}|F_{\inn})$ by taking a  product
over the components of $Y_{\inn}$. That is, we should define the
configuration space
$\bonf(Y_{\inn})$ as the product of the $\bonf(Y_{\inn,i})$, and
we should construct $\HM_{\bullet}(Y_{\inn}|F_{\inn})$ as the Floer
homology of the Chern-Simons-Dirac functional on the components of
this product space which belong to the appropriate \spinc{}
structures. The only slight twist here is in understanding the
orientations of moduli spaces that are needed to fix the signs.

We therefore digress to consider orientations. For a cobordism such as
$W$ above, perhaps with several components, we define a $2$-element
set $\Lambda(W)$ of homology orientations of $W$ as follows. Attach
cylindrical ends to the incoming and outgoing ends to get a complete
manifold $W^{+}$, and let $t$ be
function which agrees with the cylindrical coordinate on the ends. The
function $t$ tends to $+\infty$ on the outgoing ends and $-\infty$ on
the incoming ends. Consider the linearized anti-self-duality operator
$\delta =d^{*}\oplus d^{+}$ acting on the weighted Sobolev spaces
\[
            \delta : L^{2}_{1,\epsilon} ( i \Lambda^{1} ) \to
            L^{2}_{\epsilon} (i\Lambda^{0} \oplus i\Lambda^{+}),                         
\]
where $L^{2}_{k,\epsilon}=e^{-\epsilon t}L^{2}_{k}$. Fix a \spinc{}
structure on $W$ and let $D^{+}_{A_{0}}$ be the Dirac operator,  for a
\spinc{} connection $A_{0}$ that is constant on the ends. We consider
$D_{A}$ acting on weighted Sobolev spaces of the same sort, and we
write
\[
     P  = \delta + D^{+}_{A_{0}}   .
\]
These are the linearized Seiberg-Witten equations on $W$, with Coulomb
gauge fixing, at a configuration where the spinor is zero.
There exists
$\epsilon_{0}>0$ such that the  operator $P$ is Fredholm for all
$\epsilon$ in the interval $(0,\epsilon_{0})$. We define $\Lambda(W)$
to be the set of orientations of the determinant line of $P$, for any
$\epsilon$ in this range. Using weighted Sobolev spaces here is
equivalent to using ordinary Sobolev spaces and replacing $P$ by a
zeroth-order perturbation which on the ends has the form
\[
                    P - \epsilon \Theta
\]
where $\Theta$ is obtained from applying the symbol of $P$ to the
vector field $\partial/ \partial t$ along the cylinder. The Dirac
operator is irrelevant at this point because it is complex and its
real determinant is therefore canonically oriented; so we could use
the operator $\delta$ instead.

Now suppose that $\alpha_{1}$ and
$\alpha_{2}$ are gauge-equivalence classes corresponding to
non-degenerate critical points of the
Chern-Simons-Dirac functional on $Y_{1}$ and $Y_{2}$ respectively. Let
$\gamma = (A,\Phi)$ be any configuration on $W^{+}$ that is asymptotic
to these gauge-equivalence classes on the ends. Let $P_{\gamma}$ be the
corresponding operator (acting on the Sobolev spaces without weights).
Define $\Lambda(W; \alpha_{1},\alpha_{2})$ to be the set of
orientations of the determinant of $P_{A}$. This is independent of the
choice of $\gamma$, in a canonical manner. 
If $\Lambda_{1}$ and $\Lambda_{2}$ are two $2$-element sets, we use
the notation $\Lambda_{1}\Lambda_{2}$ to denote the $2$-element set
formed by the obvious ``multiplication'' (the set of bijections from
$\Lambda_{1}$ to $\Lambda_{2}$). With this in mind, we define
\[
            \Lambda(\alpha_{1},\alpha_{2}) =
            \Lambda(W)\Lambda(W;\alpha_{1},\alpha_{2}).
\]
An excision argument makes this independent of $W$. Given now a
$3$-manifold $Y$ (with several components) and a non-degenerate
critical point $\alpha$, we choose cobordism $X$ from the empty set to
$Y$ and we define
\[
            \Lambda(\alpha) = \Lambda(\varnothing,\alpha).
\]
We then have
\[
                \Lambda(\alpha_{1},\alpha_{2}) =
                \Lambda(\alpha_{1})\Lambda(\alpha_{2}).
\]

What this last equality means in practice is this. If we are
given a cobordism
$W$ with a choice of homology orientation in $\Lambda(W)$ and a moduli
space $M=M(W;\alpha_{1},\alpha_{2})$, then a choice of orientation of $M$ is
the same as a choice of bijection from $\Lambda(\alpha_{1})$ to
$\Lambda(\alpha_{2})$. In the case that $Y_{1}=Y_{2}$, the cylindrical
cobordism has a canonical homology-orientation because the operator
$P$ is invertible; so in this $\Lambda(\alpha_{1})\Lambda(\alpha_{2})$
orients the moduli spaces. The appropriate definition of
$\HM_{\bullet}(Y_{1},\s)$ for \spinc{} structures $\s$ that are
non-torsion on each component is therefore to take the complex to be
\[
        C_{\bullet}(Y_{1},\s) =   \bigoplus_{\alpha_{1}} \Z\Lambda(\alpha_{1})
\]
and to define the differential using the corresponding orientation of
the moduli spaces.  In this way, we construct
$\HM_{\bullet}(Y_{1}|F_{1})$ and $\HM_{\bullet}(Y_{2}|F_{2})$. If we
supply $W$ with a homology orientation in the above sense, then
$W$ defines a map
\begin{equation}\label{eq:W-map}
                \HM(W|F_{W}) : \HM_{\bullet}(Y_{\inn}|F_{\inn}) \to
                \HM_{\bullet}(Y_{\out}|F_{\out}).
\end{equation}
The notation $\HM(W|F_{W})$ is meant to imply that we use only the
\spinc{} structures from $\Sp(W|F_{W})$. 

The complex $ C_{\bullet}(Y_{1},\s)$ just defined can be considered as
a tensor product over the connected components of $Y_{1}$:
\begin{equation}\label{eq:tensor-complex}
           C_{\bullet}(Y_{1},\s) = \bigotimes_{i}
            C_{\bullet}(Y_{1,i},\s_{i}) ,
\end{equation}
but there are some choices involved. Let us pick an ordering of the
components. For each $i$, let $X_{i}$ be a cobordism from the empty
set to $Y_{1,i}$. Using the standard convention for the orientation of
a direct sum, we can then identify
\[
                    \Lambda(X) =
                    \Lambda(X_{1})\Lambda(X_{2})\cdots\Lambda(X_{r}),
\]
and similarly with $\Lambda(X,\alpha_{1})$. In this way, we can
specify an isomorphism
\[
                \Z\Lambda(\alpha_{1}) \to
                \Z\Lambda(\alpha_{1,1})\otimes \dots \otimes
                \Z\Lambda(\alpha_{1,r}).
\]
This allows us to identify the complexes on the left and right in
\eqref{eq:tensor-complex} as groups. Ordering issues mean
that there will be the expected alternating signs appearing when we compare the
differentials on the left and right.
As usual with products in homology, what results from this is a split short exact
sequence
\begin{equation}\label{eq:split-short-exact}
    0\to    \bigotimes_{i} \HM_{\bullet}(Y_{\inn,i}|F_{\inn,i}) \to
        \HM_{\bullet}(Y_{\inn}|F_{\inn}) \to T               \to 0
\end{equation}
where $T$ is a torsion group. If $W$ is closed and has more than one component,
the invariant is a product of the contributions from each component.

There is another sign issue to discuss. Consider the case of a
$3$-manifold $Y$ with  non-torsion \spinc{} structure $\s$. Let $Z$ be
the $4$-manifold $S^{1}\times Y$. We can pull back the \spinc{}
structure to $Z$, and we still call it $\s$. For clarity, suppose
that
$b_{+}(Z)$ is bigger than $1$, so that $\m(Z,\s)$ is defined. To fix
the sign of $\m(Z,\s)$, we need a homology orientation of $Z$; but  a
product such as $Z$ has a preferred  homology orientation. To define
it, we must specify an orientation for the determinant of $P$ on $Z$.
The operator $P-\epsilon\Theta$ is invertible for small $\epsilon$,
and we use this to to orient the determinant.
Now let $\alpha$ be  non-degenerate critical point for the (possibly
perturbed) Chern-Simons-Dirac functional on $(Y,\s)$. This pulls back
to an isolated, non-degenerate solution on $Z$ to the $4$-dimensional
Seiberg-Witten equations, say $\hat{\alpha}$. This solution
contributes either $+1$ or $-1$ to the invariant $\m(Z,\s)$. We have
the following lemma.

\begin{lemma}
    The solution $\hat{\alpha}$ contributes $+1$ or $-1$ to the
    invariant $\m(Z,\s)$ according as the critical point $\alpha$ has
    odd or even grading in $C_{\bullet}(Y,\s)$, for the canonical
    $Z/2$ grading.
\end{lemma}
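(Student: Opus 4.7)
The plan is to express the sign of the contribution of $\hat\alpha$ to $\m(Z,\s)$ as a mod-$2$ spectral flow, and then to recognize this spectral flow as the canonical $\Z/2$ grading of $\alpha$. Because $\alpha$ is non-degenerate, $\hat\alpha$ is an isolated solution on $Z=S^{1}\times Y$, and its contribution to $\m(Z,\s)$ is the sign of the natural orientation of $\det P_{\hat\alpha}$ relative to the preferred homology orientation of $Z$. In a translation-invariant gauge the linearized Seiberg-Witten operator with Coulomb gauge-fixing at $\hat\alpha$ takes the standard cylindrical form
\[
            P_{\hat\alpha} = \frac{d}{ds} + L_{\alpha},
\]
where $L_{\alpha}$ is the extended Hessian of the Chern-Simons-Dirac functional at $\alpha$ on $Y$. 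Non-degeneracy of $\alpha$ forces $L_{\alpha}$ to be invertible, hence $P_{\hat\alpha}$ is invertible on $Z$, giving the canonical orientation of $\det P_{\hat\alpha}$.

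Next I would connect $P_{\hat\alpha}$ to the reference operator $P-\epsilon\Theta$ that orients $\det P$ and hence defines the homology orientation of $Z$. The Dirac summand is complex linear and thus canonically oriented, so only the orientation of $\det(\delta-\epsilon\Theta)$ matters. Fix an auxiliary reference configuration $\alpha_{0}\in\bonf(Y,\s)$ and perform the homotopy in two stages: first, from $\delta-\epsilon\Theta$ to $d/ds + L_{\alpha_{0}}$ through Fredholm operators on $Z$; and second, from $L_{\alpha_{0}}$ to $L_{\alpha}$ along a smooth path $\alpha_{s}$ in $\bonf(Y,\s)$. Along the second stage, $d/ds + L_{\alpha_{s}}$ fails to be invertible exactly when a real eigenvalue of $L_{\alpha_{s}}$ crosses zero (the $k=0$ Fourier mode along $S^{1}$), and each such crossing flips the orientation of the determinant line. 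Hence the second homotopy contributes the sign $(-1)^{\mathrm{sf}(L_{\alpha_{s}};\alpha_{0},\alpha)}$, where $\mathrm{sf}$ denotes the mod-$2$ spectral flow; the first homotopy is a normalization that anchors the canonical $\Z/2$ grading to the homology orientation. By the definition of the canonical $\Z/2$ grading in \cite{KM-book}, this mod-$2$ spectral flow is exactly $\mathrm{gr}(\alpha)-\mathrm{gr}(\alpha_{0})$, and combining the two stages yields the asserted relation between the sign of $\hat\alpha$ and the parity of $\mathrm{gr}(\alpha)$.

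The main obstacle is to pin down the overall sign convention. Several independent conventions come into play simultaneously --- the direct-sum ordering in $P=\delta+D_{A_{0}}^{+}$ that orients $\Lambda(Z)$, the sign of the weighting $-\epsilon\Theta$, the sign convention relating the anti-self-duality equations and the gradient flow of the Chern-Simons-Dirac functional (so that the displayed formula $P_{\hat\alpha}=d/ds+L_{\alpha}$ holds with the asserted sign), and the reference configuration used to absolutize the $\Z/2$ grading. None of these are conceptually subtle, but assembling them to verify that ``odd grading gives $+1$'' rather than the opposite is the one piece of the argument that cannot be sidestepped by general nonsense, and would have to be checked against a single explicit model in which both sides are computable by hand.
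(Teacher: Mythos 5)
Your proposal follows essentially the same route as the paper: express the contribution of $\hat\alpha$ as a comparison of orientations of $\det P$ along a path from the reference operator $P-\epsilon\Theta$ to $P_{\gamma}$, write the operators along the path in cylindrical form $d/ds+L_s$, and match orientation reversals to the mod-$2$ spectral flow of $L_s$. Two remarks are worth making. First, the intermediate reference configuration $\alpha_0$ and the two-stage decomposition are unnecessary: in \cite{KM-book} the canonical $\Z/2$ grading of $\alpha$ is \emph{defined} as the parity of the spectral flow of $L_s$ along the single path from $P-\epsilon\Theta$ to $P_\gamma$, so what you call ``stage 1'' is already absorbed into the definition, and the anchoring you worry about is not a separate issue. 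Second, the one claim you assert but do not prove --- that each zero crossing of an eigenvalue of $L_s$ flips the orientation of $\det(d/ds+L_s)$, equivalently that the spectral-flow parity homomorphism on loops of self-adjoint operators coincides with the orientation-comparison homomorphism --- is exactly the crux, and the paper treats it not by tracking individual crossings but by observing that both are non-trivial homomorphisms $H_1(S)\to\Z/2$ on a space $S$ of self-adjoint Fredholm operators homotopy equivalent to $U(\infty)/O(\infty)$ (following Atiyah-Singer), hence equal; this is cleaner and sidesteps the case analysis around nonsimple crossings. Your explicit concern about the overall normalization (odd $\leftrightarrow +1$ versus the opposite) is a fair point, but it is fixed once the conventions in \cite{KM-book} are taken as given, and the paper does not dwell on it.
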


\begin{proof}
    We have two operators differing by zeroth-order terms
    \[
                \begin{aligned}
                    P_{0} &= P-\epsilon\Theta \\
                    P_{1} &= P_{\gamma}.
                \end{aligned}
    \]
    Let $P_{s}$ be a homotopy between them. We have a determinant line
    for this family of operators over the interval $[0,1]$, and the
    invertibility of $P_{0}$ and $P_{1}$ at the two ends gives the
    determinant line a canonical orientation at the two ends. The sign
    with which $\hat{\alpha}$ contributes is, by definition, $+1$ or
    $-1$ according as these two orientations at $s=0,1$ are homotopic.

    On the other hand, we can write $P_{s}$ as
    \[
                \frac{d}{dt} + L_{s}
    \]
    on $S^{1}\times Y$, where $L_{s}$ is a self-adjoint elliptic
    operator perturbed by a bounded term, and the canonical mod 2
    grading of $\alpha$ is determined, by definition, by the parity of
    the spectral flow of the family of operators $L_{s}$ from $s=0$ to
    $s=1$.

    So we must see that the parity of the spectral flow of the
    operators $L_{s}$ determines whether the invertible operators
    $P_{0}$ and $P_{1}$ provide the same orientation. This is a
    general fact about families of self-adjoint Fredholm operators.
    What we have here are two non-trivial homomorphisms
    \[
                H_{1}(S) \to \Z/2,
    \]
    where $S$ is a suitable space of self-adjoint operators. One can
    argue as in \cite{KM-book}, following \cite{Atiyah-Singer-skew},
    that one may take $S$ to have the
    homotopy of $U(\infty)/O(\infty)$,
    at which point it is clear that these two
    are the same.
\end{proof}

A consequence of the lemma is that the invariant $\m(Z,\s)$ is the
equal to the Euler characteristic of $\HM_{\bullet}(Y,\s)$, computed
using the canonical mod 2 grading.
From the lemma and excision, we obtain similar results in other
situations of the following sort. Consider again a cobordism $W$ from
$Y_{1}$ to $Y_{2}$ with surfaces $F_{W}$, $F_{1}$ and $F_{2}$ as
before.
Suppose that one of the incoming boundary components is the same
as one of the outgoing ones: say
\[
            Y_{\inn,r} = Y_{\out,s}.
\]
We may form a new $W^{*}$ from $W$ by identifying these boundary
components, so $W^{*}$ has $r-1$ incoming and $s-1$ outgoing boundary
components. The manifolds $Y_{\inn,r}$ and $Y_{\out,s}$ may belong either to
the same or to different components of $W$, but we treat these cases
together. The surface $F_{W}$ gives rise to a homeomorphic surface
$F_{W^{*}}$ in $W^{*}$. (We push $F_{\inn,r}$ and $F_{\out,s}$ away from the
boundary of $W$ before gluing to $Y_{\inn,r}$ to $Y_{\out,r}$, to keep these
surfaces disjoint, if necessary.) If is possible that this process has
created a $W^{*}$ which has one more closed component than $W$. This new
closed component of $W^{*}$ will have $b_{+}$ at least $1$; but we shall
suppose  that, if there is such a component, it has
$b_{+}$ at least $2$. (The case of $b_{+}=1$ will be discussed in a
slightly different context in the next subsection.)

Under this hypothesis on $b_{+}$ for the closed components, we now have a new map
\begin{equation}\label{eq:W-primes-map}
                \HM(W^{*}|F_{W^{*}}) : \HM_{\bullet}(Y^{*}_{\inn}|F^{*}_{\inn}) \to
                \HM_{\bullet}(Y^{*}_{\out}|F^{*}_{\out}),
\end{equation}
where $Y^{*}_{\inn}$ is $Y_{\inn}\setminus Y_{\inn,r}$ and  $Y^{*}_{\out}$ is defined
similarly. The analysis from \cite{KM-book}  provides a
``gluing theorem'' which tells us that the map $\HM(W^{*}|F_{W^{*}})$ is obtained
from $\HM(W|F_{W})$ by a contraction. More precisely, at the chain
level, $(W,F_{W})$ defines a chain map
\[
                    \bigotimes_{i} C_{\bullet}(Y_{\inn,i}|F_{\inn,i})
                    \to \bigotimes_{j} C_{\bullet}(Y_{\out,j}|F_{\out,j}).
\]
This map can be contracted by taking an alternating trace over
\[
C_{\bullet}(Y_{\inn,r}|F_{\inn,r})=C_{\bullet}(Y_{\out,s}|F_{\out,s}),\] and the
result of this contraction is a chain map which is chain-homotopic
to the chain map defined by $(W^{*},F_{W^{*}})$. 

The cobordism $W$ from $Y_{\inn}$ to $Y_{\out}$ can also be regarded as a
cobordism $\tilde W$ from $\tilde Y_{\inn}$ to $\tilde Y_{\out}$, where
\[
            \tilde Y_{\inn} = Y_{\inn} \cup (-Y_{\out,s})
\]
and
\[
                \tilde Y_{\out} = Y_{\out}\setminus Y_{\out,s}.
\]
(That is, we regard the last outgoing component as an incoming component
with the opposite orientation.) The relation between the maps defined
by $W$ and $\tilde W$ can be put in the same context  as the above
gluing theorem. We first add an extra component $Z$ to $W$, where $Z$
is the cylinder $[0,1]\times Y_{\out,s}$, regarded as a cobordism from
$Y_{\out,s}\cup(-Y_{\out,s})$ to the empty set. The map defined by $W\cup Z$
is a tensor product, at the chain level, and the cobordism $\tilde W$
can be obtained by gluing an outgoing component of $W$ to an incoming
component of $Z$. All that is left is to understand the map defined by
$Z$. Discounting torsion, this last map is the Poincar\'e duality
pairing
\[
                    \HM_{\bullet}(-Y_{\out,s} | F_{\out,s})
                    \otimes \HM_{\bullet}(Y_{\out,s} | F_{\out,s}) \to \Z.
\]
As in \cite{KM-book}, this pairing depends on a homology orientation
of $Y_{\out,s}$, which reappears as the need to choose a homology
orientation for the extra component $Z$.

Let us pursue a simple application of this formalism. Let $W$ be 
again a cobordism from $Y_{\inn}$ to $Y_{\out}$ and let $F_{\inn}$ and
$F_{\out}$ be surfaces in these boundary $3$-manifolds as above.
Suppose that $W$ contains in its interior a product $3$-manifold
\[
                    Z = G \times S^{1}
\]
where $G$ is connected of genus at least $2$. Regard $G = G\times
\{p\}$ also as a
submanifold of $W$. Form a new cobordism $W^{\dag}$ from $Y_{\inn}$ to
$Y_{\out}$ by the following process. Cut $W$ open along $Z$ to obtain
a manifold $W'$ with two extra boundary components $G\times S^{1}$,
then attach a copy of $G\times D^{2}$ to each of these boundary
components to obtain $W^{\dag}$. Set
\[
                \begin{aligned}
                    F_{W} &= (F_{\inn}\cup F_{\out} \cup G) \subset W
                    \\
                     F_{W^\dag} &= (F_{\inn}\cup F_{\out} \cup G)
                     \subset W^{\dag}.
                \end{aligned}
\]
Then we have

\begin{proposition}\label{prop:excision-prototype}
    The maps $\HM(W|F_{W})$ and $\HM(W^{\dag}|F_{W^{\dag}})$ are equal,
    up to sign, as maps
    \[
                    \HM(Y_{\inn}| F_{\inn}) \to
                    \HM(Y_{\out}|F_{\out}).
    \]
\end{proposition}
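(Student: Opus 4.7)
The plan is to cut $W$ open along $Z=G\times S^{1}$ and realize both $W$ and $W^{\dag}$ as two different closures of the resulting cobordism $W'$, then compare the two closure operations via the rank-one computation of Lemma~\ref{lemma:product-rank-1}. Since $Z$ sits in the interior of $W$ with a product collar, the complement $W':=W\setminus\nu(Z)$ is a cobordism from $Y_{\inn}\cup(G\times S^{1})$ to $Y_{\out}\cup(G\times S^{1})$, carrying the surface $F_{W'}=F_{\inn}\cup F_{\out}\cup G\cup G$ with two parallel copies of $G$ sitting next to the cut. The original $W$ is recovered from $W'$ by identifying its extra incoming $G\times S^{1}$ boundary with its extra outgoing copy, while $W^{\dag}$ is recovered by independently attaching $G\times D^{2}$ to each of these two boundary components.

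Next I would apply the two gluing/composition results developed in the last few pages. The alternating-trace version of the gluing theorem gives, at the chain level,
\[
\HM(W|F_{W}) \;=\; \mathrm{tr}_{G\times S^{1}}\bigl(\HM(W'|F_{W'})\bigr),
\]
with the trace taken over the $C_{\bullet}(G\times S^{1}|G)$ tensor factor appearing in both source and target. Separately, functoriality of cobordism maps together with the tensor decomposition \eqref{eq:tensor-complex} yields
\[
\HM(W^{\dag}|F_{W^{\dag}}) \;=\; (\mathrm{id}\otimes c_{-})\circ\HM(W'|F_{W'})\circ(\mathrm{id}\otimes c_{+}),
\]
where $c_{+}\in\HM_{\bullet}(G\times S^{1}|G)$ is the class defined by $G\times D^{2}$ viewed as a cobordism from $\varnothing$ and $c_{-}$ is the $\Z$-linear functional on $\HM_{\bullet}(G\times S^{1}|G)$ defined by its reverse.

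By Lemma~\ref{lemma:product-rank-1} the module $\HM_{\bullet}(G\times S^{1}|G)$ is free of rank one, generated by the covariantly-constant solution $x$ on the product. Writing $\HM(W'|F_{W'})=\sum_{i}\phi_{i}\otimes T_{i}$ where each $T_{i}$ is multiplication by an integer $t_{i}\in\Z$, the trace formula gives $\HM(W|F_{W})=\sum_{i}t_{i}\phi_{i}$, while the cap formula gives $\HM(W^{\dag}|F_{W^{\dag}})=\langle c_{-},x\rangle\cdot c_{+}\cdot\sum_{i}t_{i}\phi_{i}$, where now $c_{+}$ is identified with its coordinate along $x$. The proposition therefore reduces to the claim that
\[
\langle c_{-},x\rangle\cdot c_{+} \;=\; \pm 1.
\]

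To verify this, I would compute the $G\times D^{2}$ cap directly, giving $G$ a hyperbolic metric and $D^{2}$ a flat metric as in the proof of Lemma~\ref{lemma:product-rank-1}. In the distinguished \spinc{} structure whose $c_{1}$ pairs to $2\genus(G)-2$ with $G$, the covariantly-constant spinor on $G\times S^{1}$ extends uniquely across the capping disc as a finite-energy solution asymptotic to $x$; the associated moduli space is a single non-degenerate point, so $c_{+}=\pm x$, and the identical analysis on the reversed cap gives $\langle c_{-},x\rangle=\pm 1$. The main obstacle is precisely this cap calculation: one must check both that the covariantly-constant solution extends uniquely (and that no other \spinc{} structure with $c_{1}$ meeting $G$ in the extremal way supports a contribution) and that the extended solution is non-degenerate on the manifold with cylindrical end. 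This is the same sort of explicit Seiberg-Witten analysis that underpins Lemma~\ref{lemma:product-rank-1} itself.
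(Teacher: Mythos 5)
Your proof is correct and follows essentially the same route as the paper: cut $W$ along $Z$, realize $W$ and $W^{\dag}$ as the self-glued and doubly capped-off completions of the cobordism $W'$, invoke the rank-one computation of Lemma~\ref{lemma:product-rank-1}, and reduce to the normalization $\pm 1$ of the $G\times D^{2}$ cap. The paper phrases the last reduction a bit more compactly, by treating both $[0,1]\times Z$ and $(D^{2}\amalg D^{2})\times G$ as cobordisms from $Z$ to $Z$ inducing the same map on $\HM_{\bullet}(Z|G)\cong\Z$, and it also offers an indirect route for the cap normalization you leave as ``the main obstacle'': the cap class is forced to be $\pm 1$ because there exist closed $4$-manifolds containing $(Z,G)$ whose corresponding Seiberg--Witten invariant equals $1$.
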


\begin{proof}
    Consider the manifold $W'$ obtained from $W$ by cutting open along
    $Z$. This is a cobordism from $Y_{\inn}\cup Z$ to $Y_{\out} \cup
    Z$. The manifold $W$ or $W^{\dag}$ can be obtained from $W'$ by
    gluing with  $[0,1]\times Z$ or with $(D^{2}\amalg D^{2}) \times
    G$ respectively. We can regard $[0,1]\times Z$ and $(D^{2}\amalg
    D^{2})\times G$ as two different cobordisms from $Z$ to $Z$, and
    they both induce maps
    \[
                        \HM_{\bullet}(Z|G) \to \HM_{\bullet}(Z|G).
    \]
    The result follows from the glueing formalism as long as we know
    that these two maps on $\HM_{\bullet}(Z|G)$ are the same.
    Lemma~\ref{lemma:product-rank-1} tells us that
    $\HM_{\bullet}(Z|G)$ is simply $\Z$. The product $[0,1]\times Z$
    of course induces the identity map on this copy of $Z$. So it only
    remains to show that the invariant of manifold $D^{2}\times G$ in
    $\HM_{\bullet}(Z|G)$ is $\pm 1$. This can be seen directly by
    examining the solutions of the Seiberg-Witten equations; or one
    can see indirectly that this must be so, on the grounds that there
    exist closed $4$-manifolds containing $(Z|G)$ for which an
    appropriate
    Seiberg-Witten invariant is $1$. 
\end{proof}

\subsection{Disconnected 3-manifolds, part II}
\label{subsec:disconnected-2}

In the previous subsection we discussed gluing results in a context
where the boundary components of the cobordisms carried \spinc{}
structures that had non-torsion first Chern classes. The non-torsion
condition ensures that reducible solutions on the $3$-manifolds play
no role. A situation that is algebraically similar is when the
 boundary components $Y$ carry $1$-cycles $\eta$ and we
use local coefficients for which the vanishing theorem
Proposition~\ref{prop:twist-vanish}
applies. We can think of $\Hred_{\bullet}(Y;\Gamma_{\eta})$ as
measuring the contribution of the reducible solutions; so in a
situation where this group is zero, as in the Proposition, we can
expect simple gluing results.
This expectation is confirmed in the case of connected
$3$-manifolds by the results of \cite[section 32]{KM-book}. We will
deal here with the simplest situation, in which the boundary
components are $3$-tori and local coefficients are used.

Let $W$ be a compact oriented $4$-manifold with boundary, and
suppose the oriented boundary consists of a collection of $3$-tori,
\[
            \partial W = T_{1} \cup \dots \cup T_{r}.
\]
We do not need to suppose that $W$ is connected, but we do require
that every closed component of $W$ has $b_{+}$ at least $2$.
Let $\nu\subset W$ be a $2$-chain with
\[
            \partial \nu = \eta_{1} + \dots + \eta_{r}.
\]
We suppose that each $\eta_{i}$ is a $1$-cycle in $T_{i}$ satisfying
the hypotheses of Proposition~\ref{prop:twist-vanish} and that our
coefficient ring  $\cR$ has $t-t^{-1}$ invertible. We may take it that
each $\eta_{i}$ is a standard circle. For each $i$, the map
\[
    j:\Hto_{\bullet}(T_{i};\Gamma_{\eta_{i}}) \to
    \Hfrom_{\bullet}(T_{i};\Gamma_{\eta_{i}})
\]    
is an isomorphism according to the
proposition, so we again just write \[ HM_{\bullet}(T_{i};\Gamma_{i})
\]
for this group, using $j$ to identify the two. According to
\cite[section 37]{KM-book}, this group is a free $\cR$-module of rank
$1$,
\[
            \HM_{\bullet}(T_{i};\Gamma_{\eta_{i}}) \cong \cR.
\]
(The proof in \cite{KM-book} was done in the case that $\cR=\R$, but
only the invertibility of $t-t^{-1}$ is needed.) After choosing a
basis element in $\HM_{\bullet}(T_{i};\Gamma_{\eta_{i}})$, we should
expect $W$ to have an invariant living in
\[
\bigotimes_{i}
\HM_{\bullet}(T_{i};\Gamma_{\eta_{i}}) 
                    = \cR.
\]
However, there is a short-cut to defining an $\cR$-valued invariant of
$W$, used in \cite{Fintushel-Stern-knot} and \cite[section
38]{KM-book}. We now describe this short-cut. In the remainder of this
subsection, we will leave aside the question of choosing
homology-orientations to fix the  sign of the invariants that
arise. So a $4$-manifold or a cobordism will have an invariant that is
ambiguous in its overall sign.

Let $E(1)$ be a rational elliptic surface and let $\widehat{E(1)}$ be
the complement of the neighborhood of a regular fiber, so that
$\partial\widehat{E(1)}=T^{3}$. Let $\nu_{1}$ be a $2$-cycle in $E(1)$
arising from a section meeting the neighborhood of the fiber
transversely in a disk, and let $\hat\nu_{1}$ be the corresponding
$2$-chain in $\widehat{E(1)}$. Let $\bar{W}$ be the closed
$4$-manifold obtained by attaching $r$ copies of $\widehat{E(1)}$ to
$W$, making the attachments in such a way that the $1$-cycles in the
boundary tori match up: thus the manifold
\[
            \bar W = W \cup_{T_{1}} \widehat{E(1)} \dots
            \cup_{T_{r}}\widehat{E(1)}
\]
contains a $2$-cycle
\[
             \bar\nu = \nu \cup_{\eta_{1}} \hat\nu_{1} \dots
             \cup_{\eta_{r}} \hat\nu_{1}.
\]
We can now compute a Seiberg-Witten invariant of the closed pair
$(\bar{W},\bar\nu)$, and the result depends only on $(W,\nu)$, not on
the choice of gluing. Thus we may make a definition:

\begin{definition}\label{def:W-def-close}
Let $W$ have boundary a collection of $3$-tori, as above, let $\nu$ be
a $2$-chain in $W$, and let $(\bar W, \bar \nu)$ be the closed
manifold obtained by attaching copies of $\widehat{E(1)}$. Suppose
that every component of $\bar W$ has $b_{+}\ge 2$. Then we
write
\[
   \m(W,\nu)\in \cR                 
\]
for the invariant
$
            \m(\bar{W},\bar{\nu})$
of the closed manifold, as defined at \eqref{eq:SW-generating}.             
    \CloseDef
\end{definition}

There is a formal device that can be used to extend this definition to
include the case that $\bar{W}$ has closed components with $b_{+}=1$.
 Let $E(n)$ denote the 
elliptic surface without multiple fibers and having Euler
number $n$, and let $\widehat{E(n)}$ be the complement of a
fiber. There is a $2$-chain $\nu_{n}$ just as in the case $n=1$.
Instead of attaching $\widehat{E(1)}$ to each $T_{i}$ to form $\bar{W}$, we can similarly
attach $\widehat{E(n_{i})}$ to $T_{i}$, for any $n_{i}\ge 1$. We still
refer to the resulting closed manifold as $\bar{W}$. It contains a
$2$-cycle $\bar{\nu}$ as before. By choosing $n_{i}$ larger than $1$
when needed, we can ensure that all components of $\bar{W}$ have
$b_{+}$ least $2$. We then define $\m(W,\nu)$ by
\begin{equation}\label{eq:n-i-correction}
            \m(W,\nu) = (t-t^{-1})^{-\sum (n_i -1)}
            \m(\bar{W},\bar{\nu}).
\end{equation}
By the results of \cite[section 38]{KM-book}, the quantity on the
right is independent of the choice of the $n_{i}$.

Suppose next that $W$ contains in its interior another $3$-torus $T$
which intersects $\nu$ transversely in a single circle $\eta$
representing a primitive element of $H_{1}(T)$. We can then cut $W$
open along $T$ to obtain $W'$, a manifold whose boundary consists of
$(r+2)$ tori. We can denote the two new boundary components by $T_{r+1}$
and $T_{r+2}$. By cutting $\nu$ also, we obtain a $2$-chain $\nu'$ in
$W'$ whose boundary has two new circles $\eta_{r+1}$ and $\eta_{r+2}$
in the new  boundary components. We have the following glueing
theorem. (The hypothesis that $t-t^{-1}$ is invertible in $\cR$ remains in
place.)

\begin{proposition}\label{prop:cut-W-1}
    In the above situation, the invariants of $(W,\nu)$ and
    $(W',\nu')$ are equal: thus
    \[
                    \m(W,\nu) = \m(W',\nu')
    \]
    in the ring $\cR$.
\end{proposition}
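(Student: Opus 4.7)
The plan is to mimic the proof of Proposition~\ref{prop:excision-prototype}, using the $3$-torus $T$ equipped with the local system $\Gamma_{\eta}$ in place of the product $F\times S^{1}$ with $\Z$ coefficients. The rank-one ingredient that was supplied in the earlier setting by Lemma~\ref{lemma:product-rank-1} is here replaced by the fact that $\HM_{\bullet}(T^{3};\Gamma_{\eta})\cong\cR$ is free of rank one, with no contribution from reducibles, as follows from Proposition~\ref{prop:twist-vanish} together with the computation in \cite[section 37]{KM-book}.

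First pass to closed manifolds via Definition~\ref{def:W-def-close}. Choose the integers $n_{i}\ge 1$ for the $r$ original boundary tori (common to $W$ and $W'$) large enough that every closed component of $\bar W$ has $b_{+}\ge 2$, and for the two new tori $T_{r+1},T_{r+2}$ of $W'$ simply take $n_{r+1}=n_{r+2}=1$. Writing $\bar W^{0}$ for the manifold obtained from $\bar W$ by cutting along $T$ while retaining the $r$ original $\widehat{E(n_{i})}$ caps, one has
\[
    \bar W  = \bar W^{0} \cup_{T_{r+1}\sqcup T_{r+2}} (T^{3}\times[0,1]),
    \qquad
    \bar W' = \bar W^{0} \cup_{T_{r+1}\sqcup T_{r+2}} (\widehat{E(1)}\sqcup\widehat{E(1)}),
\]
and the $2$-chains match up compatibly. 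Because the correction factor $(t-t^{-1})^{-\sum(n_{i}-1)}$ in~\eqref{eq:n-i-correction} is identical on the two sides, it suffices to prove $\m(\bar W,\bar\nu)=\m(\bar W',\bar\nu')$.

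Now invoke the gluing formalism along $T^{3}$ for Seiberg-Witten invariants with local coefficients \cite[section 32]{KM-book}. Each invariant is obtained by applying the relative invariant
\[
    \Psi(\bar W^{0},\bar\nu^{0}) \in \HM_{\bullet}(T^{3};\Gamma_{\eta})\otimes\HM_{\bullet}(T^{3};\Gamma_{\eta}) \cong \cR
\]
to the $\cR$-bilinear functional determined by the capping $4$-manifold: the cylinder $T^{3}\times[0,1]$ yields the Poincar\'e duality pairing, while $\widehat{E(1)}\sqcup\widehat{E(1)}$ yields the product of two copies of the $\widehat{E(1)}$-closing functional $\HM_{\bullet}(T^{3};\Gamma_{\eta})\to\cR$.

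The main obstacle is to identify these two functionals. This amounts to checking that the relative invariant of a single $\widehat{E(1)}$ represents the generator of $\HM_{\bullet}(T^{3};\Gamma_{\eta})\cong\cR$ that is self-dual under the Poincar\'e pairing; equivalently, that gluing two copies of $\widehat{E(1)}$ along a common $T^{3}$ (producing the fiber sum $E(2)$) contributes $\pm 1$ to the appropriate Seiberg-Witten count. This is precisely the computation underlying the independence statement for Definition~\ref{def:W-def-close}, carried out on the $E(n)$ family in \cite[sections 37--38]{KM-book}. Granted this, the two capping functionals coincide and the two Seiberg-Witten invariants agree, completing the proof.
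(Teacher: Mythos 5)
Your reduction to the closed case is fine: fixing $n_{r+1}=n_{r+2}=1$ and observing that the $(t-t^{-1})$-correction factors cancel is a clean way to reduce to showing $\m(\bar W,\bar\nu)=\m(\bar W',\bar\nu')$. The problem comes at the next step. You assert that $\m(\bar W,\bar\nu)$ is obtained by applying the relative invariant $\Psi(\bar W^{0},\bar\nu^{0})\in\HM_{\bullet}(T^{3};\Gamma_{\eta})^{\otimes 2}$ to the bilinear functional coming from the cylinder $T^{3}\times[0,1]$ -- i.e.\ to the Poincar\'e duality pairing. But regluing the two ends of the cylinder onto $T_{r+1}\sqcup T_{r+2}$, which both lie on the boundary of the \emph{same} piece $\bar W^{0}$, is a non-separating (trace-type) gluing. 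The passage from ``relative invariant lives in $\HM\otimes\HM$'' to ``closed invariant is the pairing contraction'' is precisely the non-separating gluing theorem with local coefficients. That statement is not supplied by \cite[section 32]{KM-book}, which the surrounding text describes as confirming the gluing expectation only along connected separating hypersurfaces; it is exactly the content of the lemma that the paper proves after the proposition. Your argument therefore invokes, under the label ``gluing formalism,'' the thing that needs to be established, and the genuine analytic content -- stretching along $T$, showing that the limit trajectories factor through the composite $j\circ\Hft$, and identifying the weighted count with the trace -- does not appear in your proposal.

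Two smaller but related issues. First, before stretching along $T$ one must ensure $b_{+}(X')\ge 1$ so that no reducible solutions appear on the cut-open piece in the limit; the paper arranges this explicitly by a preliminary fiber sum with $E(n)$ along a nearby $2$-torus, and your outline never addresses it. Second, identifying the $\widehat{E(1)}$-capping functional with the Poincar\'e pairing via the $E(2)$ computation is fine once the trace formula is in hand, but it is an algebraic coda rather than the ``main obstacle''; without the trace formula there is nothing to compare it against.
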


\begin{proof}
    There are two cases, according as $T$ is separating or not. The
    separating case is treated in \cite[section 38]{KM-book}. We deal
    here with the non-separating case.
    The definitions mean that both sides are to be interpreted as
    invariants of suitable closed manifolds. Restating it in such
    terms, and throwing out the components that do not contain $T$,
    we arrive at the following. Let $X$ be a closed, connected
    $4$-manifold with $b_{+}\ge 2$, and let $T\subset X$ be a
    non-separating
    $3$-torus. Let $\nu$ be a $2$-cycle in $X$ meeting $T$
    transversely in a standard circle $\eta$ with multiplicity $1$.
    Let $X'$ be cobordism from $T$ to $T$ obtained by
    cutting $X$ open, and let $\nu'$ be the resulting $2$-chain in
    $X'$. Because of what we already know about the separating case,
    the proposition is equivalent to the following lemma, which we
    shall prove.
\end{proof}

\begin{lemma}
    In the above situation, the map induced by the cobordism,
    \[
                        \Hfrom_{\bullet}(X';\Gamma_{\nu'}) :
                        \Hfrom_{\bullet}(T;\Gamma_{\eta}) \to
                        \Hfrom_{\bullet}(T;\Gamma_{\eta})
    \]
    is given by multiplication by the element
    $\m(X, \nu)\in \cR$.
\end{lemma}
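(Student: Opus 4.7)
The plan is to note that $\Hfrom_\bullet(T;\Gamma_\eta)\cong\cR$ is free of rank one, so the cobordism map $\Hfrom(X';\Gamma_{\nu'})$ is multiplication by a uniquely determined element $c\in\cR$; the task is then to identify $c$ with $\m(X,\nu)$. I would do this by closing $X'$ up to recover $X$ and applying a trace formula based on the Poincar\'e pairing on $\Hfrom_\bullet(T;\Gamma_\eta)$.

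For the trace step, close $X'$ by identifying its two $T$-boundaries, so that the result is the closed pair $(X,\nu)$. Factor this closure as $X=X'\cup Z$, where $Z=[0,1]\times T$ is regarded as a cobordism from $T\cup(-T)$ to the empty set, equipped with the product $2$-chain $[0,1]\times\eta$ that glues $\nu'$ into $\nu$. The discussion preceding Proposition~\ref{prop:excision-prototype} identifies the cobordism map induced by such a $Z$ with the Poincar\'e duality pairing. Since $\Hred_\bullet(T;\Gamma_\eta)=0$ by Proposition~\ref{prop:twist-vanish} (the hypothesis that $t-t^{-1}$ is invertible being in force), the same identification applies in the local-coefficient setting, via the gluing machinery of \cite[section 32]{KM-book}. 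This produces the identity
\[
    \m(X,\nu) = c\cdot\alpha,
\]
where $\alpha\in\cR$ is the value of the pairing $\Hfrom_\bullet(-T;\Gamma_\eta)\otimes\Hfrom_\bullet(T;\Gamma_\eta)\to\cR$ on a chosen pair of generators.

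To show $\alpha$ is a unit, indeed equal to $\pm 1$, apply the same identity to the test case $X_0=S^1\times T\cong T^4$ cut along the non-separating torus $\{p\}\times T$. The cut-open manifold $X_0'$ is the cylinder $[0,1]\times T$, whose cobordism map is the identity, so its scalar is $c_0=1$. On the other hand, $T^4$ carries a unique basic \spinc{} structure (with torsion $c_1$) of Seiberg-Witten invariant $\pm 1$, and so $\m(T^4,\nu_0)=\pm 1$ for the closed $2$-cycle $\nu_0$ obtained by closing $\nu'_0$ across the cut. The identity $\m(X_0,\nu_0)=c_0\cdot\alpha$ then forces $\alpha=\pm 1$. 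Combining with the trace formula yields $c=\pm\m(X,\nu)$, with the remaining sign absorbed into the usual homology-orientation conventions.

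The main obstacle is this middle step: verifying that the Poincar\'e-pairing interpretation of the cylindrical cobordism $Z$ carries over faithfully to the local-coefficient, non-separating setting, with the twist $\Gamma_\nu$ crossing the cutting torus $T$ in the single essential circle $\eta$, and that all signs conspire correctly. Once that gluing identity is in hand the rest is a straightforward normalisation.
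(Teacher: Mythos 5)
Your proposal aims to deduce the lemma formally, by re-gluing $X'$ to itself via a cylinder $Z=[0,1]\times T$ and invoking the ``trace = closed invariant'' identity, then pinning down the overall constant $\alpha$ on the test case $T^{4}$. The normalisation step is sound (the adjunction inequality forces only the torsion \spinc{} structure to contribute on $T^{4}$, and its invariant is $\pm 1$), and the observation that rank one forces the map to be a scalar is of course correct. But the middle step you flag as ``the main obstacle'' really is a gap, and it is precisely the content of the lemma.

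The discussion preceding Proposition~\ref{prop:excision-prototype} establishes the ``contraction by alternating trace'' gluing formalism only in the non-torsion setting $\HM_{\bullet}(Y|F)$, where reducible critical points are absent on the boundary $3$-manifolds. For a $3$-torus with $\Gamma_{\eta}$, all critical points of the perturbed Chern-Simons-Dirac functional are reducible, and the algebraic vanishing $\Hred_{\bullet}(T;\Gamma_{\eta})=0$ does not by itself license the TQFT-style gluing: reducibles are still present at the chain/analytic level, boundary-obstructed trajectories occur, and one must account for the operator $j:\Hto_{\bullet}\to\Hfrom_{\bullet}$. Moreover, the factorisation $X = X'\cup Z$ along $T\sqcup T$ cannot be reduced to the \emph{separating} $3$-torus case of \cite[section~38]{KM-book} without already knowing how to glue along a single non-separating copy of $T$ — which is exactly what is being proved. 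So the appeal to ``gluing machinery of \cite[section~32]{KM-book}'' is circular at the crucial point.

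What the paper actually does is supply the missing analysis directly: after a fiber sum with $E(n)$ to force $b_{+}(X')\ge 1$, it perturbs so that only reducibles remain on $T$, stretches the neck, and shows that in the limit the zero-dimensional moduli spaces on $X$ split as (i) irreducible trajectories on $X'$ from boundary-unstable to boundary-stable critical points, which count the map $\Hft(X';\Gamma_{\nu'}):\Hfrom_{\bullet}\to\Hto_{\bullet}$, and (ii) unbroken boundary-obstructed trajectories on the inserted cylinder, which count $j:\Hto_{\bullet}\to\Hfrom_{\bullet}$. This identifies $\m(X,\nu)$ as the Kronecker contraction of $j\circ\Hft(X')$, i.e.\ the trace of $\Hfrom_{\bullet}(X';\Gamma_{\nu'})$, from which the lemma follows by rank one. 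Your scheme would produce the same answer \emph{if} you supplied a proof of the trace identity in this setting; as written, the identity is asserted rather than proved, so there is a real gap where the paper does its genuine work.
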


\begin{proof}
    It is convenient to arrange first that $X'$ has $b_{+}$ at least
    $1$. We can do this by choosing a standard $2$-torus $F$ near $T$
    intersecting $\nu$ transversely and forming a fiber sum at $F$
    with an elliptic surface $E(n)$. From what we know about
    separating $3$-tori, we can conclude that this modification
    multiplies
    both $ \Hfrom_{\bullet}(X';\Gamma_{\nu'})$ and $\m(X,\nu)$  by
    $(t-t^{-1})^{n-1}$.
    
    We now perturb the Chern-Simons-Dirac functional on $T$, as in
    \cite[section 37]{KM-book}, so that there are only reducible
    critical points, and we stretch $X$ at $T$, inserting a cylinder
    $[-R,R]\times T$ and letting $R$ increase to infinity as usual.
    We consider what happens to the zero-dimensional moduli spaces on
    $X$ in the limit.
    Because $b_{+}(X')$ is at least $1$, we obtain in the limit only
    irreducible solutions on the cylindrical-end manifold obtained
    from $X'$. Furthermore, these irreducible solutions run from
    boundary-unstable critical points at the incoming end to
    boundary-stable critical points at the outgoing end. The weighted
    count of such solutions defines the map
    \[
                \Hft(X';\Gamma_{\nu'}) :
                \Hfrom_{\bullet}(T;\Gamma_{\eta})  \to
                \Hto_{\bullet}(T;\Gamma_{\eta}) 
    \]
    in the notation of \cite[subsection 3.5]{KM-book}.
    We must also obtain in the limit  some (possibly broken) trajectories on
    the cylindrical part, running from  boundary-stable critical
    points to boundary-unstable critical points. For
    dimension-counting reasons, these trajectories must actually be
    unbroken and must be boundary-obstructed. The weighted count of
    such trajectories defines the map 
    \[
                        j : \Hto_{\bullet}(T;\Gamma_{\eta})  \to
                \Hfrom_{\bullet}(T;\Gamma_{\eta}).
    \]
    Thus $\m(X,\nu)$ is equal to the contraction by the Kronecker
    pairing of two chain maps which on homology define the composite
    \[
                        j \circ \Hft_{\bullet}(X; \Gamma_{\nu'})
                        : \Hfrom_{\bullet}(T;\Gamma_{\eta})  \to
                \Hfrom_{\bullet}(T;\Gamma_{\eta}) .
    \]
    It follows that $\m(X,\nu)$ is the trace of this composite map.
    The composite is equal to $\Hfrom_{\bullet}(X';\Gamma_{\nu'})$,
    and the Floer group here is a free $\cR$-module of rank $1$, so
    the result follows.
\end{proof}

There is a straightforward modification of the above results in the
case that $W$ has some additional boundary components which are not
$3$-tori but contain surfaces $F$ of genus $2$ or more, as in the
previous subsection. That is, we suppose that the boundary of $W$ is a
union of $3$-tori $T_{1},\dots,T_{r}$ together with a pair of
$3$-manifolds $-Y_{\inn}$ and $Y_{\out}$, each of which may have several
components. We suppose also that $Y_{\inn}$ and $Y_{\out}$ contain surfaces
$F_{\inn}$ and $F_{\out}$ all of whose components have genus $2$ or more. We
also ask that each component of $Y_{i}$ contains a
component of $F_{i}$. We shall suppose 
that there is a $2$-chain $\nu$ in $W$ whose boundary we write as
\[
            \partial \nu = - \zeta_{1} + \zeta_{2} + \eta_{1} + \dots
            + \eta_{r}.
\]
The $\eta_{i}$ are to be standard circles, one in each torus $T_{i}$
as before. The $1$-cycles $\zeta_{1}$ and $\zeta_{2}$ will be in
$Y_{1}$ and $Y_{2}$, but we can allow these to be arbitrary (zero for
example). We take $F_{W}$ to be any closed surface
in $W$ consisting of $F_{\inn}\cup F_{\out}$ together perhaps with
additional components. We again suppose that any closed component
of $W$ has $b_{+}\ge 2$. Then $W$ should give rise to a map
\begin{equation}\label{eq:W-with-Y}
                \HM_{\bullet}(W | F_{W };\Gamma_{\nu})
                 : \HM_{\bullet}(Y_{\inn}|F_{\inn}; \Gamma_{\zeta_{1}})
                 \to \HM_{\bullet}(Y_{\out}|F_{\out};\Gamma_{\zeta_{2}}).
\end{equation}
To define this map, we can again attach $(\widehat{E(1)},
\hat\nu_{1})$ to each of the $3$-tori, to obtain $(\bar{W}, \bar\nu)$
a cobordism from $Y_{\inn}$ to $Y_{\out}$ containing a $2$-chain $\bar\nu$
and a surface $F_{W}$. The boundary of $\bar\nu$ is just
$-\zeta_{1}+\zeta_{2}$.
As in Definition~\ref{def:W-def-close}, we take
$\HM_{\bullet}(W:F_{W};\Gamma_{\nu})$ to be \emph{defined} by the map
given by the cobordism $\bar{W}$. In the event that $\bar{W}$ has any
closed components with $b_{+}=1$, we modify the construction by using
elliptic surfaces $E(n_{i})$ as in \eqref{eq:n-i-correction}.
Proposition~\ref{prop:cut-W-1} then has the following variant.

\begin{proposition}\label{prop:cut-W-2}
    Let $W$ be as above, and let $T\subset W$ be a $3$-torus meeting
    $\nu$ transversely in a standard circle with multiplicity $1$. Let
    $W'$ and $\nu'$ be obtained from $W$ and $\nu$ by cutting along
    $T$. Suppose that $F_{W}$ is disjoint from $T$, so that it becomes
    also a surface $F_{W'}$ in $W'$. Assume as always that $t-t^{-1}$
    is invertible in $\cR$. Then the maps
\begin{equation}\label{eq:W-with-Y-2}
\begin{aligned}
\HM_{\bullet}(W | F_{W };\Gamma_{\nu})
                 &: \HM_{\bullet}(Y_{\inn}|F_{\inn}; \Gamma_{\zeta_{1}})
                 \to \HM_{\bullet}(Y_{\out}|F_{\out};\Gamma_{\zeta_{2}}) \\
\HM_{\bullet}(W' | F_{W' };\Gamma_{\nu'})
                 &: \HM_{\bullet}(Y_{\inn}|F_{\inn}; \Gamma_{\zeta_{1}})
                 \to \HM_{\bullet}(Y_{\out}|F_{\out};\Gamma_{\zeta_{2}})                 
                 \end{aligned}
\end{equation}
are equal up to sign.\qed
\end{proposition}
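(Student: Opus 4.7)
The plan is to reduce to Proposition~\ref{prop:cut-W-1} by interpreting both maps as induced by closed-up cobordisms, then repeating the neck-stretching argument used to prove the lemma following Proposition~\ref{prop:cut-W-1}. By Definition~\ref{def:W-def-close} (as extended immediately before the statement), $\HM_{\bullet}(W|F_{W};\Gamma_{\nu})$ is the map on Floer homology induced by the cobordism $\bar W$ from $Y_{\inn}$ to $Y_{\out}$ obtained by attaching a copy of $\widehat{E(1)}$ (or $\widehat{E(n_{i})}$ for any closed component needing $b_{+}\ge 2$) along each $3$-torus boundary component of $W$; similarly $\HM_{\bullet}(W'|F_{W'};\Gamma_{\nu'})$ is the map induced by $\bar{W'}$. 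Since $T$ is interior to $W$ and disjoint from both $F_{W}$ and the tori $T_{1},\ldots,T_{r}$, it remains interior to $\bar W$, and $\bar{W'}$ is recovered from $\bar W$ by cutting along $T$ and attaching two further copies of $\widehat{E(1)}$ to the resulting $T^{3}$ ends.

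I would then repeat the neck-stretching argument of the lemma following Proposition~\ref{prop:cut-W-1}, now applied to the interior torus $T\subset \bar W$. Perturb the Chern-Simons-Dirac functional on $T$ so that all critical points are reducible, as permitted by Proposition~\ref{prop:twist-vanish} given the invertibility of $t-t^{-1}$, and stretch the neck at $T$. Because the perturbations and $L^{2}$ analysis near $T$ are local and decouple from the cylindrical ends at $Y_{\inn}$ and $Y_{\out}$, the compactness and gluing results of \cite{KM-book} go through in the same form. In the limit, the zero-dimensional moduli spaces on $\bar W$ decompose into irreducible solutions on the cut cobordism $\check{\bar W}\colon Y_{\inn}\sqcup T \to Y_{\out}\sqcup T$ with boundary-unstable/boundary-stable endpoints on the two $T$ ends, together with boundary-obstructed trajectories on the neck. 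Since $\HM_{\bullet}(T;\Gamma_{\eta})\cong \cR$ is free of rank one, this factorization expresses $\HM(\bar W;\Gamma_{\bar\nu})$ as the partial trace, over the rank-one $\cR$-module $\HM_{\bullet}(T;\Gamma_{\eta})$, of the map $\HM(\check{\bar W};\Gamma)$ induced by the cut cobordism.

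Running the same analysis inside $\bar{W'}$ and invoking the gluing formula for the composite of $\check{\bar W}$ with the two attached $\widehat{E(1)}$ caps yields $\HM(\bar{W'};\Gamma_{\bar\nu'})$ as the evaluation of $\HM(\check{\bar W};\Gamma)$ against the distinguished basis element of $\HM_{\bullet}(T;\Gamma_{\eta})\cong \cR$ produced by $\widehat{E(1)}$, together with its Poincar\'{e} dual on the $-T$ side. That evaluation against this basis element agrees with partial trace on a rank-one free $\cR$-module is precisely the content of Proposition~\ref{prop:cut-W-1} applied to any closed $4$-manifold containing $T$ and the $\widehat{E(1)}$ pair, so the two maps agree up to the sign arising from orientation choices. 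The hardest step is the neck-stretching analysis itself: one must verify that the standard $L^{2}$ estimates, transversality, and gluing from \cite{KM-book} apply to the possibly disconnected $\bar W$ simultaneously with multiple cylindrical ends of different types (some high-genus, some $3$-tori with local coefficients). This is the technical content already in place for Section~\ref{subsec:disconnected-1}, combined with the local-coefficient framework underlying Proposition~\ref{prop:twist-vanish}.
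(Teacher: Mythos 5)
The paper states Proposition~\ref{prop:cut-W-2} with a $\qed$ and no separate proof, treating it as an immediate consequence of Definition~\ref{def:W-def-close}, its extension at \eqref{eq:n-i-correction}, and Proposition~\ref{prop:cut-W-1}. Your argument fills in precisely that chain: $\HM_{\bullet}(W|F_{W};\Gamma_{\nu})$ and $\HM_{\bullet}(W'|F_{W'};\Gamma_{\nu'})$ are by definition the maps induced by the closed-up cobordisms $\bar W$ and $\bar{W'}$, and $\bar{W'}$ differs from $\bar W$ only by cutting along the interior torus $T$ and attaching two $\widehat{E(1)}$ caps; the neck-stretching and rank-one factorization through $\HM_{\bullet}(T;\Gamma_{\eta})\cong\cR$ is exactly the mechanism of Proposition~\ref{prop:cut-W-1} and its lemma. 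So the proposal is correct and is essentially the paper's intended (unwritten) argument, spelled out.
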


A particular application of this setup will be used in the sequel,
a version of Proposition~\ref{prop:excision-prototype}. We formulate
the result as the following corollary:

\begin{corollary}\label{cor:excision-prototype-twist}
    Let $W$ be a cobordism from $Y_{1}$ to $Y_{2}$ containing a
    $2$-chain $\eta$ with boundary $-\zeta_{1} \cup \zeta_{2}$. Let
    $F_{1}$, $F_{2}$ and $F_{W}$ be surfaces as above. Let $T\subset
    W$ be a $3$-torus disjoint from $F_{W}$ and cutting $\nu$ in a
    standard circle $\eta\subset T$. Form $W^{\dag}$ by cutting $W$ along
    $T$ and attaching two copies of $D^{2}\times T^{2}$ in such a way
    that $\partial D^{2}\times\{p\}$ is glued to $\eta$ in both copies. Let
    $\eta^{\dag}$ be the $2$-chain in $W^{\dag}$ obtained by attaching
    $2$-disks $D^{2}\times\{p\}$. Then, as maps from
    $\HM_{\bullet}(Y_{\inn}|F_{\inn}; \Gamma_{\zeta_{1}})$ to
    $\HM_{\bullet}(Y_{\out}|F_{\out}; \Gamma_{\zeta_{2}})$, we have
    \[
                \HM(W|F_{W};\Gamma_{\nu}) = (t-t^{-1})
                \HM(W^{\dag}|F_{W^{\dag}}; \Gamma_{\nu^{\dag}}),
    \]
    to within an overall sign.
\end{corollary}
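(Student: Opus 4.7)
The argument parallels that of Proposition~\ref{prop:excision-prototype}, with the rank-one local-coefficient Floer group $\HM_\bullet(T^3;\Gamma_\eta)\cong\cR$ of Corollary~\ref{cor:product-rank-1-twist} playing the role of $\HM_\bullet(G\times S^1|G)=\Z$.

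I would first apply Proposition~\ref{prop:cut-W-2} to the $3$-torus $T\subset W$, obtaining
\[
\HM(W|F_W;\Gamma_\nu)=\pm\,\HM(W'|F_{W'};\Gamma_{\nu'}),
\]
where $W'$ is obtained from $W$ by cutting along $T$ and carries two new torus boundary components $T_1,T_2$. Both $W$ and $W^\dag$ arise from $W'$ by filling in $T_1\cup T_2$: $W$ is recovered by gluing in the cylinder $[0,1]\times T^3$ connecting $T_1$ to $T_2$, while $W^\dag=W'\cup(D^2\times T^2)_1\cup(D^2\times T^2)_2$ caps off the two tori separately. By the gluing formalism of section~\ref{subsec:disconnected-2}, each filling may be regarded as a cobordism $T^3\to T^3$ (the double cap factoring through the empty manifold), and so induces multiplication by an element of $\cR$ on $\HM_\bullet(T^3;\Gamma_\eta)\cong\cR$. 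Thus $\HM(W|F_W;\Gamma_\nu)$ and $\HM(W^\dag|F_{W^\dag};\Gamma_{\nu^\dag})$ differ only by the ratio of these two multipliers.

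The cylinder induces the identity. For the double cap, the key observation is that gluing the two copies of $D^2\times T^2$ along their common $T^3$ boundary yields the closed manifold $S^2\times T^2$, and the two disks $D^2\times\{p\}$ combine along $\partial D^2\times\{p\}=\eta$ to form the sphere $S^2\times\{p\}$; the multiplier equals the closed Seiberg--Witten invariant $\m(S^2\times T^2,S^2\times\{p\})$. The main computation I foresee, and the step that requires the most care, is to evaluate this invariant. Since $S^2\times T^2$ has $b_+=1$, one must use the extended form of Definition~\ref{def:W-def-close}, via the fiber sum $(S^2\times T^2)\#_f E(n)=E(n)$ at a regular fiber. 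Combining the standard formula $\m(E(n),\nu_n)=(t-t^{-1})^{n-2}$ (for $n\ge 2$) with equation~\eqref{eq:n-i-correction} gives
\[
\m(S^2\times T^2,S^2\times\{p\})=(t-t^{-1})^{-(n-1)}(t-t^{-1})^{n-2}=(t-t^{-1})^{-1}.
\]
The ratio of the two multipliers is then $1/(t-t^{-1})^{-1}=(t-t^{-1})$, yielding the corollary up to an overall sign.
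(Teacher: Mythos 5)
Your overall strategy -- apply Proposition~\ref{prop:cut-W-2} to cut $W$ along $T$, and then compare the two ways of filling the resulting pair of torus boundary components (the cylinder $[0,1]\times T^{3}$ versus the disjoint union of two copies of $D^{2}\times T^{2}$) -- is exactly the strategy the paper uses. Where you part ways with the paper is in how the multiplier of the disconnected filling is evaluated. The paper simply computes the invariant of the disconnected $4$-manifold $D^{2}\times T^{2}\amalg D^{2}\times T^{2}$ directly via Definition~\ref{def:W-def-close} and the correction~\eqref{eq:n-i-correction}, ``as can be deduced from the invariants of the elliptic surfaces.'' You instead first glue the two boundary components of the cobordism together, obtaining the closed manifold $S^{2}\times T^{2}$, and then invoke the identity ``multiplier of the cobordism $=$ closed invariant of the glued-up manifold'' before computing $\m(S^{2}\times T^{2})$ by fiber-summing with $E(n)$.

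That detour is where the argument needs more care. The identity you invoke is (essentially) the Lemma buried inside the proof of Proposition~\ref{prop:cut-W-1}, but that lemma is established for a \emph{non-separating} torus $T$ inside a connected closed $4$-manifold with $b_{+}\geq 2$. In your situation the $3$-torus along which the two caps are glued is \emph{separating} in $S^{2}\times T^{2}$ (it bounds the two copies of $D^{2}\times T^{2}$), and $b_{+}(S^{2}\times T^{2})=1$. Both hypotheses fail, and the genus-two analogue shows the identity cannot hold verbatim for a separating hypersurface: the multiplier of the double cap $(D^{2}\amalg D^{2})\times G$ is $\pm 1$ by Proposition~\ref{prop:excision-prototype}, yet $\m(S^{2}\times G\,|\,G)=0$ since $S^{2}\times G$ is a ruled surface. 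Relatedly, what your fiber-sum calculation actually pins down is the corrected invariant of the manifold \emph{with boundary} $D^{2}\times T^{2}$ (attaching $\widehat{E(n)}$ gives $E(n)$), not an invariant of the closed manifold $S^{2}\times T^{2}$, and these are not interchangeable without an argument. So while you land on the same element $(t-t^{-1})^{-1}$ of $\cR$ that the paper asserts, the step that equates your multiplier with $\m(S^{2}\times T^{2})$ and then identifies $\m(S^{2}\times T^{2})$ with the $E(n)$-capped quantity is not justified by the results quoted, and should be argued directly in terms of the relative invariant of $D^{2}\times T^{2}$ and the gluing formalism, as the paper's proof does.
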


\begin{proof}
    Using Proposition~\ref{prop:cut-W-2}, this can be proved with the
    same strategy that we applied to
    Proposition~\ref{prop:excision-prototype}.  That is, we consider
    two different cobordisms from $T$ to $T$: first, the product
    cobordism, and second the (disconnected) cobordism formed from two
    copies of $D^{2}\times T^{2}$. In each case, there is an obvious
    $2$-chain whose boundary is the difference of the two copies of
    $\eta$. Each of these cobordisms has an invariant which lives in
    $\cR$, according to Definition~\ref{def:W-def-close}, or more
    accurately its correction at \eqref{eq:n-i-correction}.
    In this sense, the product cobordism has invariant $1\in \cR$.
    The invariant of the other cobordism is
    $(t-t^{-1})^{-1}$, as can be deduced from the invariants of the
    elliptic surfaces.
\end{proof}

\section{Floer's excision theorem}

\subsection{The setup}

We shall need to understand
how monopole Floer homology behaves under certain cutting and gluing
operations on the underlying $3$-manifold. A formula of the type
that we need was first proved by Floer in the context of instanton
homology. Floer's ``excision formula'', as he called it, applied only
to cutting along tori; but in the monopole homology context one can
equally well cut along surfaces of higher genus, as long as one
restricts to \spinc{} structures that are of top degree on the surface
where the cut is made. We give the proof in the monopole Floer
homology context in this section: it is almost identical to Floer's
argument, as presented in \cite{Braam-Donaldson}. Similar formulae have
been proved in Heegaard Floer theory, by Ghiggini, Ni and Juh\'asz
\cite{Ghiggini, Ni-A, Ni-B, Juhasz-1, Juhasz-2}.

The setup is the following. Let $Y$ be a closed, oriented
$3$-manifold, of either one or two components. In the case of two
components, we call the components $Y_{1}$ and $Y_{2}$.  Let
$\Sigma_{1}$ and $\Sigma_{2}$ be closed oriented surfaces in $Y$, both
of them connected and of equal genus.  If $Y$ has two components, then
we suppose that $\Sigma_{i}$ is a non-separating surface in $Y_{i}$
for $i=1,2$. If $Y$ is connected, then we suppose that $\Sigma_{1}$
and $\Sigma_{2}$ represent independent homology classes. In either
case, we write $\Sigma$ for $\Sigma_{1}\cup \Sigma_{2}$.
Fix an
orientation-preserving diffeomorphism $h : \Sigma_{1} \to \Sigma_{2}$.
From this data, we construct a new manifold $\tilde Y$ as follows. Cut each
$Y$ along $\Sigma$ to obtain a manifold $Y'$ with four
boundary components: with orientations, we can write
\[
            \partial Y'  = \Sigma_{1} \cup (-\Sigma_{1}) \cup
            \Sigma_{2} \cup (-\Sigma_{2}) 
\]
If $Y$ has two components, then so does $Y'$, and we can write
$Y'=Y'_{1}\cup Y'_{2}$.
Now form $\tilde Y$ by gluing the boundary component
$\Sigma_{1}$ to the boundary component $-\Sigma_{2}$  and gluing
$\Sigma_{2}$ to $-\Sigma_{1}$, using the chosen diffeomorphism of $h$
both times. See Figure~\ref{fig:Excision-setup} for a picture in the
case that $Y$ has two components. In either case, $\tilde Y$ is
connected. We write $\tilde \Sigma_{1}$ for
the image of $\Sigma_{1} = -\Sigma_{2}$ in $\tilde Y$ and $\tilde
\Sigma_{2}$ for the image of $\Sigma_{2}=-\Sigma_{1}$. So $\tilde Y$
contains a surface $\tilde\Sigma= \tilde\Sigma_{1}\cup
\tilde\Sigma_{2}$.

\begin{figure}
    \begin{center}
        \includegraphics[scale=0.7]{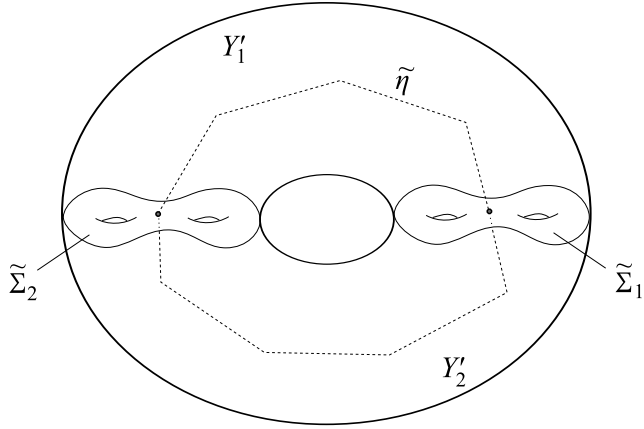}
    \end{center}
    \caption{\label{fig:Excision-setup}
    Forming a manifold $\tilde Y$ from $Y_{1}$ and $Y_{2}$, for the
    excision theorem.}
\end{figure}

If we wish  to use local coefficients in Floer homology, we will
need to augment this excision picture with $1$-cycles $\eta$.
Specifically, we  take a $1$-cycle $\eta$ in $Y$ that intersects each
$\Sigma_{i}$  transversely in a single point
$p_{i}$ ($i=1,2$) with
positive orientation. If $Y$ has two components, then we may write
$\eta=\eta_{1}+\eta_{2}$ for its two parts.  We suppose that the
diffeomorphism $h$ is chosen so that $h(p_{1})=p_{2}$. When this is
done, the $1$-cycle gives to a $1$-cycle
$\tilde \eta$ in
the new manifold $\tilde Y$, as shown, by cutting and gluing.

We begin with a statement of the excision theorem with integer
coefficients, when the genus of $\Sigma$ is two or more.

\begin{theorem}\label{thm:Floer-excision}
If $\tilde Y$ is obtained from $Y$ as above and the genus of
$\Sigma_{1}$ and $\Sigma_{2}$ is at least two, then there is an isomorphism of Floer groups
with integer coefficients,
\[            
            \HM_{\bullet}(Y|\Sigma)
         \to   \HM_{\bullet}(\tilde Y|\tilde \Sigma).
\]
\end{theorem}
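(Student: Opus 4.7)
The plan is to adapt Floer's original pair-of-pants excision argument, as presented in \cite{Braam-Donaldson}, to the monopole setting. I shall construct a cobordism $W$ from $Y$ to $\tilde Y$, a reverse cobordism $W'$ from $\tilde Y$ to $Y$, and verify that the induced maps are mutually inverse via the excision principle of Proposition~\ref{prop:excision-prototype}.

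\textbf{Construction of the cobordism.} Let $U$ denote $Y$ cut open along $\Sigma$; this is a compact $3$-manifold with four boundary components, two copies each of $\Sigma_{1}$ and $\Sigma_{2}$. By definition $\tilde Y$ is obtained from the same $U$ by an $h$-twisted boundary identification. Build $W$ by combining the product $U\times[0,1]$ with ``pair of pants'' pieces $\Sigma_{i}\times\Pi_{i}$, where $\Pi_{i}$ is a $2$-dimensional cobordism interpolating between the straight and the $h$-twisted gluing patterns, attached along $\partial U\times[0,1]$. The resulting $W$ has $\partial W = (-Y)\sqcup\tilde Y$ and contains a distinguished interior product region $Z\cong\Sigma\times S^{1}$ coming from the waist circles of the pair of pants. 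Setting $F_{W} = \Sigma\cup\tilde\Sigma\cup(\Sigma\times\{p\})$ with $\Sigma\times\{p\}\subset Z$, every component of $F_{W}$ has genus at least $2$, so Section~\ref{subsec:disconnected-1} supplies an induced map
\[
    \HM(W|F_{W})\colon \HM_{\bullet}(Y|\Sigma) \to \HM_{\bullet}(\tilde Y|\tilde\Sigma).
\]

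\textbf{Invertibility.} Construct the reverse cobordism $W'$ from $\tilde Y$ to $Y$ by the same recipe. To verify $\HM(W'|F_{W'})\circ\HM(W|F_{W})=\pm\id$, assemble the composite $4$-manifold $W\cup_{\tilde Y}W'$, which is a cobordism from $Y$ to $Y$ inducing the composite map. This composite contains the product region $Z\cong\Sigma\times S^{1}$ in its interior, so Proposition~\ref{prop:excision-prototype} lets me replace $Z$ by two disk caps $\Sigma\times D^{2}$ without changing the induced map (up to sign). A topological analysis of the resulting cobordism $W^{\dag}$ shows that it splits as the trivial cobordism $Y\times[0,1]$ together with closed components of the form $\Sigma\times S^{2}$; the latter contribute $\pm 1$ in the relevant top-$c_{1}$ \spinc{} summand by a direct computation via Lemma~\ref{lemma:product-rank-1}. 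Hence the composite is $\pm\id$, and the symmetric argument gives $\HM(W|F_{W})\circ\HM(W'|F_{W'}) = \pm\id$.

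\textbf{Main obstacle.} The heart of the argument is the topological identification in the last step: verifying that after the $Z$-surgery the composite cobordism really does split off a trivial $Y\times[0,1]$ piece with identifiable closed remainders. This rests essentially on the rank-one computation of Lemma~\ref{lemma:product-rank-1}, which crucially uses the hypothesis $\genus(\Sigma)\geq 2$, coupled with the flexibility of Proposition~\ref{prop:excision-prototype}. The case of connected $Y$ is handled in the same way, with the pair-of-pants construction naturally adapted to the situation where $\Sigma_{1}$ and $\Sigma_{2}$ lie in a single component.
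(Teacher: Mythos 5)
Your overall strategy matches the paper's: build a pair-of-pants cobordism $W$ (and its reverse), compose them, locate a product region $Z\cong\Sigma\times S^1$ in the composite, and apply Proposition~\ref{prop:excision-prototype} to trade $Z$ for disk caps. That much is correct and is exactly what Floer's argument, as adapted in the paper, does.

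The gap is precisely at the step you flagged as the ``main obstacle,'' and your description of what happens there is wrong. In the paper's construction, the region that gets cut is a \emph{single} copy of $\Sigma_{1}\times S^{1}$, living over a single closed curve $k$ in the base $2$-manifold (a twice-punctured genus-one surface when $Y$ is disconnected). Cutting along this one curve kills the genus of the base, and after capping, the composite $X^{*}$ is \emph{exactly} the product cobordism $[0,1]\times\tilde Y$; there are no closed components left over. Your version, in which the capped cobordism $W^{\dag}$ splits off closed pieces $\Sigma\times S^{2}$, is not what the topology gives here, and it is also not something you can safely work with even if it did appear. First, $\Sigma\times S^{2}$ has hyperbolic intersection form, so $b_{+}(\Sigma\times S^{2})=1$, which violates the $b_{+}\ge 2$ hypothesis that the gluing formalism of section~\ref{subsec:disconnected-1} imposes on closed components. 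Second, Lemma~\ref{lemma:product-rank-1} computes the monopole Floer homology of the $3$-manifold $\Sigma\times S^{1}$; it says nothing about the Seiberg--Witten invariant of the closed $4$-manifold $\Sigma\times S^{2}$, so the ``direct computation via Lemma~\ref{lemma:product-rank-1}'' you cite does not establish the claimed $\pm 1$ contribution. You should replace this step by the correct topological identification: after cutting along the one interior copy of $\Sigma_{1}\times S^{1}$ and capping with two copies of $\Sigma_{1}\times D^{2}$, the composite cobordism is the trivial product, so Proposition~\ref{prop:excision-prototype} directly gives $\HM(\bar W)\circ\HM(W)=\pm\mathrm{id}$ with nothing further to compute.
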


\begin{remark}
    In the case that $Y$ has two components, the left-hand side is 
    the homology of a tensor product of complexes. In this case, the
    statement of the theorem implies that there is a split short exact
    sequence
\begin{multline}\label{eq:tensor-product-1}
       \HM_{\bullet}(Y_{1}|\Sigma_{1})
            \otimes
            \HM_{\bullet}(Y_{2}|\Sigma_{2})
         \to   \\
         \HM_{\bullet}(\tilde Y|\tilde \Sigma) \to \mathrm{Tor}\bigl(  \HM_{\bullet}(Y_{1}|\Sigma_{1})
            ,
            \HM_{\bullet}(Y_{2}|\Sigma_{2})\bigr).
\end{multline}
\end{remark}

Floer's version of this theorem has $\Sigma_{1}$  and $\Sigma_{2}$ of
genus $1$, with $Y=Y_{1}\cup Y_{2}$.  It uses
instanton Floer homology associated to an $\SO(3)$ bundle with
non-zero Stiefel-Whitney class on $\Sigma$. To obtain a version in
monopole Floer homology when $\Sigma$ has genus $1$, we need to use
local coefficients. We present a version that is tailored to our
later needs. We recall that $\Gamma_{\eta}$ denotes a system of local
coefficients with fiber $\cR$, a commutative ring as in
section~\ref{subsec:monopole-recap}. We suppose, as just discussed,
that $\eta$ meets $\Sigma_{1}$ and $\Sigma_{2}$ each in a single point
so that we may form $\tilde\eta$ as shown. Under these hypotheses, we
expect there to be an isomorphism
\[
            \HM_{\bullet}(Y;\Gamma_{\eta})
                    \to    \HM_{\bullet}(\tilde Y;\Gamma_{\tilde \eta}) .
\]
We shall not endeavor to prove this variant of Floer's excision
theorem here, because it involves considering reducible solutions on
multiple boundary components. Instead, as in
section~\ref{subsec:disconnected-2}, we introduce some auxiliary
surfaces $F$ and corresponding constraints on the \spinc{} structures,
just to avoid reducibles.

Thus we suppose in addition that $Y$ contains an oriented surface $F$ meeting
$\Sigma=\Sigma_{1}\cup\Sigma_{2}$ transversely, and that the
diffeomorphism $h:\Sigma_{1}\to\Sigma_{2}$ carries the oriented
intersection $\Sigma_{1}\cap F$ to $\Sigma_{2}\cap F$. In this case,
we can form an oriented surface $\tilde F$ in the new $3$-manifold
$\tilde Y$, by cutting $F$ and regluing. We suppose that neither $F$
nor $\tilde F$ contains a $2$-sphere, and that every component of $Y$
contains a component of $F$ whose genus is at least $2$.

\begin{theorem}\label{thm:Floer-excision-genus-1}
Suppose $\tilde Y$ and $\tilde F$ are obtained from $Y$ and $F$ as above, with
$\Sigma_{1}$ and $\Sigma_{2}$ both of genus $1$. Let $\tilde \eta$ be
the
$1$-cycle in $\tilde Y$ formed from the cycle $\eta$ in
$Y$ as shown in Figure~\ref{fig:Excision-setup}.
Assume as usual that $t-t^{-1}$ is invertible in the ring $\cR$.
Then there is an isomorphism:
\[
            \HM_{\bullet}(Y|F;\Gamma_{\eta})
                    \to    \HM_{\bullet}(\tilde Y|\tilde F;\Gamma_{\tilde \eta}) .
\]
\end{theorem}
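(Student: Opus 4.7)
The plan is to adapt Floer's original excision argument (as presented in Braam--Donaldson and used in the proof of Theorem~\ref{thm:Floer-excision}) to the genus-$1$ setting. The key substitutions are that the auxiliary surface $F$ takes over the role played by the higher-genus $\Sigma$ in the untwisted version, ensuring the relevant \spinc{} structures are non-torsion on the genus-$\geq 2$ components so the apparatus of subsection~\ref{subsec:disconnected-1} applies, while the local-coefficient system $\Gamma_\eta$ kills the would-be reducible contributions along the genus-$1$ cut surfaces $\Sigma_i$ via Proposition~\ref{prop:twist-vanish}. The framework of subsection~\ref{subsec:disconnected-2}, and in particular Corollary~\ref{cor:excision-prototype-twist}, then packages both features together and will do most of the heavy lifting.

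The first step is to construct an excision cobordism $W$ from $Y$ to $\tilde Y$ containing ``pair-of-pants $\times\, T^{2}$'' building blocks. Concretely: cut $Y$ open along $\Sigma=\Sigma_1\cup\Sigma_2$ to obtain $Y_{0}$; form $Y_{0}\times[0,1]$; and close off the extra boundary by gluing in pieces of the form $P\times T^{2}$ (with $P$ a pair of pants), arranged so that the identification of boundary tori at $t=0$ recovers $Y$ while the identification at $t=1$ uses the swap diffeomorphism $h$ to produce $\tilde Y$. Extend $F\cup\tilde F$ to a closed surface $F_{W}\subset W$, adding auxiliary genus-$\geq 2$ components inside the pair-of-pants region to arrange that every closed component of the eventually capped-off manifold contains a high-genus component; and extend $\eta,\tilde\eta$ to a $2$-chain $\nu\subset W$ with $\partial\nu=-\eta+\tilde\eta$ crossing each building block transversely in a standard circle. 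This produces the candidate map $\Phi=\HM(W|F_{W};\Gamma_\nu)$ in the sense of \eqref{eq:W-with-Y}, and a symmetric construction yields a reverse cobordism $\bar W$ with map $\Psi$.

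To prove $\Phi$ is an isomorphism it suffices to show that $\Psi\circ\Phi$ and $\Phi\circ\Psi$ are scalar multiples of the identity by units in $\cR$. The composite $\bar W \circ W$ can be analyzed by cutting along the internal $3$-tori where $W$ and $\bar W$ meet along $\tilde Y$, together with additional standard tori within the pair-of-pants regions. Repeated application of Corollary~\ref{cor:excision-prototype-twist} --- each cut along such a $3$-torus permits surgery to $D^{2}\times T^{2}$ caps at the cost of an explicit factor of $(t-t^{-1})^{\pm 1}$ --- progressively simplifies the composite until it is identified, up to an overall unit in $\cR$, with the identity cobordism $Y\times[0,1]$. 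Rescaling $\Psi$ by the inverse of this unit then exhibits it as a two-sided inverse of $\Phi$.

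The main obstacle is the local-model computation underpinning the reduction above: one must show that the pair-of-pants-$\times\,T^{2}$ cobordism, equipped with its standard local-coefficient data, induces an \emph{invertible} $\cR$-linear map on the rank-$1$ Floer groups $\HM_{\bullet}(T^{3};\Gamma_\eta)\cong\cR$. By the elliptic-surface capping of Definition~\ref{def:W-def-close}, together with the $b_{+}=1$ correction \eqref{eq:n-i-correction}, this reduces to computing the Seiberg--Witten invariant of the closed $4$-manifold obtained by attaching three copies of $\widehat{E(n)}$ to $P\times T^{2}$, with the answer visibly a unit in $\cR$ (a suitable power of $t-t^{-1}$). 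This is the twisted counterpart of the non-vanishing input Lemma~\ref{lemma:product-rank-1} used in the proof of Theorem~\ref{thm:Floer-excision}, and the main technical bookkeeping is in tracking signs and the precise power of $(t-t^{-1})$ produced at each stage.
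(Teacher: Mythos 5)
Your overall plan---reuse the excision cobordisms $W$ and $\bar W$ from the proof of Theorem~\ref{thm:Floer-excision}, equip them with $2$-chains whose boundaries are $\eta - \tilde\eta$, form the composite, and apply Corollary~\ref{cor:excision-prototype-twist} to cut along a $3$-torus and compare with the product cobordism---is the paper's strategy, and the tools you cite are the right ones. But the geometric description is muddled in a way that leaves real gaps. You propose cutting along ``internal $3$-tori where $W$ and $\bar W$ meet along $\tilde Y$'' together with ``additional standard tori,'' but the gluing locus between $W$ and $\bar W$ is the whole $3$-manifold $\tilde Y$ (not a torus), and the surfaces $\tilde\Sigma_i$ it contains are $2$-tori, not $3$-tori. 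The correct cut locus is a \emph{single} $3$-torus $K = \Sigma_1 \times k$ sitting in the interior of the composite $X$, where $k$ is exactly the same closed curve in the shaded region $V$ used in the genus-$\geq 2$ proof; it is a $3$-torus now precisely because $\Sigma_1$ has genus $1$, and by construction it meets $\nu_X$ transversely in a standard circle. Cutting along $K$ once and capping with $D^2\times T^2$ already gives the product cobordism, so there is no ``progressive simplification'' through repeated cuts and no indeterminate cumulative power of $(t - t^{-1})$ to track.

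Relatedly, the ``main obstacle'' you flag---a local-model computation showing that pair-of-pants $\times T^2$ acts invertibly on $\HM_{\bullet}(T^3;\Gamma_\eta)$---is not in the way. The only local input needed is the comparison of the cylinder $[0,1]\times T^3$ with $(D^2\times T^2)\amalg(D^2\times T^2)$, and that is exactly the content of Corollary~\ref{cor:excision-prototype-twist}, already proved earlier via elliptic-surface capping. One application of it, at the single torus $K$, gives
\[
\HM(\bar W | F_{\bar W}; \Gamma_{\nu_{\bar W}}) \circ \HM(W | F_W; \Gamma_{\nu_W}) = (t - t^{-1})\cdot\mathrm{id},
\]
and likewise for the other composite; since $t-t^{-1}$ is a unit, both composites are isomorphisms and the theorem follows. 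Your sketch, by contrast, leaves the number of cuts, the precise tori, the resulting power of $(t-t^{-1})$, and the auxiliary pair-of-pants computation all unresolved---these are what would need to be supplied, and once they are, the argument collapses into the paper's one-cut version.
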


\begin{remark}
    Note again that if $Y$ has two components and $\cR$ is a field,
    then the left-hand-side is the tensor product
    \[
 \HM_{\bullet}(Y_{1}|F_{1};\Gamma_{\eta_{1}}) \otimes_{\cR}
 \HM_{\bullet}(Y_{2}|F_{2};\Gamma_{\eta_{2}}).
    \]
\end{remark}

There is also a simpler way in which local coefficients can enter into
the excision theorem, when the cycle $\eta$ does not intersect
$\Sigma$. We state an adaptation of
Theorem~\ref{thm:Floer-excision} of this sort.

\begin{theorem}\label{thm:Floer-excision-extra-eta}
Let $\tilde Y$ be obtained from $Y$ as in
Theorem~\ref{thm:Floer-excision},
with $\Sigma$ of genus at least two. Let $\eta_{0}$ be a  1-cycle in
$Y$, disjoint from $\Sigma$. This becomes a cycle also in $\tilde Y$,
which we   denote by $\tilde\eta_{0}$. Then we have an isomorphism
of $\cR$-modules:
\[            
            \HM_{\bullet}(Y|\Sigma;\Gamma_{\eta_{0}})
         \to   \HM_{\bullet}(\tilde Y|\tilde \Sigma;\Gamma_{\tilde\eta_{0}}).
\]
\end{theorem}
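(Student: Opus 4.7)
The plan is to carry the local coefficient data through the proof of Theorem~\ref{thm:Floer-excision} with only cosmetic changes, exploiting the hypothesis that $\eta_{0}$ is disjoint from $\Sigma$ so that the $1$-cycle never interacts with the surgery region.

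First, I would construct the excision cobordism $W$ from $Y$ to $\tilde Y$ in exactly the same way as for Theorem~\ref{thm:Floer-excision} (following Braam--Donaldson): interpolate between $Y$ and $\tilde Y$ using a pair-of-pants cobordism whose cross-section is a copy of $\Sigma_{1}\cong\Sigma_{2}$, glued on along neighborhoods of $\Sigma$ in $Y\times\{1\}$. The resulting cobordism carries an internal surface $F_{W}$ containing $\Sigma \times\{0\}$, $\tilde \Sigma\times\{1\}$, and a horizontal copy of $\Sigma$ in the pair-of-pants part, with all components of genus $\ge 2$. This restricts the relevant \spinc{} structures to those of top degree on each component.

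Second, since $\eta_{0}$ misses $\Sigma$, we may isotope it into the complement of the surgery region and take $\nu_{0}=\eta_{0}\times[0,1]$ as a $2$-chain in $W$ with $\partial\nu_{0}=\tilde\eta_{0}-\eta_{0}$. This produces a map
\[
 \HM(W\mid F_{W};\Gamma_{\nu_{0}}) : \HM_{\bullet}(Y\mid\Sigma;\Gamma_{\eta_{0}}) \to \HM_{\bullet}(\tilde Y\mid\tilde\Sigma;\Gamma_{\tilde\eta_{0}}),
\]
and the reverse cobordism $W^{\mathrm{op}}$, carrying $\nu_{0}^{\mathrm{op}}$, gives a map in the opposite direction.

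Third, to see that the two compositions are (up to sign) the identity, I would mimic the argument used to deduce Proposition~\ref{prop:excision-prototype}. Glued end-to-end, $W^{\mathrm{op}}\cup W$ contains a product region $Z=\Sigma\times S^{1}$ (where two pairs of pants fit together into an $S^{1}$-family of $\Sigma$'s), and cutting along $Z$ and capping off with two copies of $\Sigma\times D^{2}$ leaves $Y\times[0,1]$, which induces the identity. The $2$-chain $\nu_{0}\cup\nu_{0}^{\mathrm{op}}$ was arranged to be disjoint from $Z$, so the cut-and-cap modification does not disturb the twisting data; the local system restricts trivially to $Z$, and on the capped pieces the $2$-chain extends by the product construction.

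The main point to verify is therefore that Proposition~\ref{prop:excision-prototype} remains valid in this twisted context. Its proof reduces to computing $\HM_{\bullet}(Z\mid\Sigma;\Gamma_{\mathrm{trivial}})$ and comparing the contributions of $[0,1]\times Z$ and $(D^{2}\amalg D^{2})\times\Sigma$. Because the local system is trivial on a neighborhood of $Z$ (the $2$-chain is disjoint from $Z$), Corollary~\ref{cor:product-rank-1-twist} gives $\HM_{\bullet}(Z\mid\Sigma;\Gamma_{\mathrm{trivial}})\cong\cR$, and the two cobordism invariants reduce to their untwisted values of $\pm 1$ on the generator, now scaled by the same factor coming from the trivially-twisted ends. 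This equality of twisted invariants is the only nontrivial point; once it is established, the rest of the argument goes through verbatim, yielding the desired isomorphism.
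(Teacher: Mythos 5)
Your proposal is correct and follows the same route the paper intends for this theorem: construct the excision cobordisms $W$, $\bar{W}$ exactly as in the proof of Theorem~\ref{thm:Floer-excision}, equip them with the product $2$-chain $\nu_{0}=\eta_{0}\times[0,1]$ (well-defined because $\eta_{0}$ misses $\Sigma$ and hence sits inside the product part $[0,1]\times Y'$ of $W$), and observe that since $\nu_{0}$ is disjoint from the cutting hypersurface $Z=\Sigma\times S^{1}$, the prototype comparison Proposition~\ref{prop:excision-prototype} upgrades trivially to local coefficients: the contraction over $\HM_{\bullet}(Z\mid G)\cong\Z$ involves no twisting, so the cut-and-cap modification still produces the same map up to sign. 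The paper does not spell out this proof (it leaves it as the evident modification of Theorem~\ref{thm:Floer-excision}), and your write-up correctly identifies the only point needing comment — that the local system interacts trivially with the excision region precisely because $\eta_{0}\cap\Sigma=\varnothing$.
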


In  Theorem~\ref{thm:Floer-excision-extra-eta}, consider the case
that $Y=Y_{1}\cup Y_{2}$ and $\eta_{0}$ is contained in $Y_{1}$. In
this case, the chain complex that computes the group
$\HM_{\bullet}(Y|\Sigma;\Gamma_{\eta_{0}})$ on the left is
\[            
            C_{\bullet}(Y_{1}|\Sigma_{1};\Gamma_{\eta_{0}})
            \otimes_{\Z}
            C_{\bullet}(Y_{2}|\Sigma_{2}),
\]
(the tensor product of a complex of free $\cR$-modules and a complex
of free abelian groups, both finitely generated). By the K\"unneth
theorem, if $\cR$ has no $\Z$-torsion and
$\HM_{\bullet}(Y_{1}|\Sigma_{1};\Gamma_{\eta_{0}})$ is a free
$\cR$-module, then the theorem provides an isomorphism
\begin{equation}\label{eq:excision-extra-twist}            
            \HM_{\bullet}(Y_{1}|\Sigma_{1};\Gamma_{\eta_{0}})
            \otimes
            \HM_{\bullet}(Y_{2}|\Sigma_{2})
         \to   \HM_{\bullet}(\tilde Y|\tilde\Sigma;\Gamma_{\tilde\eta_{0}}).
\end{equation}    
As a particular application of this result, we have:

\begin{corollary}\label{cor:eta-no-change}
    Let $\Sigma\subset Y$ be a closed, oriented surface whose
    components have genus at least $2$ and let $\eta$ be a $1$-cycle
    in $Y$ whose support lies in $\Sigma$. Suppose that
    $\cR$ has no $\Z$-torsion. Then
    \[
                \HM_{\bullet}(Y|\Sigma;\Gamma_{\eta}) \cong
                \HM_{\bullet}(Y|\Sigma)\otimes \cR.
    \]
\end{corollary}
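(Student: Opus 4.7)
The plan is to deduce the corollary from Floer excision by introducing an auxiliary copy of $\Sigma\times S^{1}$ as a ``reservoir'' into which the cycle $\eta$ is transferred, and then evaluating the auxiliary Floer homology via Corollary~\ref{cor:product-rank-1-twist}. I first describe the argument assuming $\Sigma$ is connected (of genus $g\ge 2$) and non-separating in $Y$, and then indicate the modifications needed for the general multi-component case.

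Set $Y^{*}=\Sigma\times S^{1}$, $\Sigma^{*}=\Sigma\times\{0\}\subset Y^{*}$, and $\eta^{*}=\eta\times\{\tfrac{1}{2}\}\subset Y^{*}$, a $1$-cycle in $Y^{*}$ disjoint from $\Sigma^{*}$. Applying Theorem~\ref{thm:Floer-excision-extra-eta} to the disjoint union $Y\sqcup Y^{*}$ with excision surfaces $\Sigma_{1}=\Sigma$ and $\Sigma_{2}=\Sigma^{*}$ and with auxiliary cycle $\eta_{0}=\eta^{*}$ placed in the $Y^{*}$-factor, the K\"unneth consequence of the excision theorem (the version of \eqref{eq:excision-extra-twist} in which the local-system twist is carried by the second tensor factor rather than the first) yields
\[
\HM_{\bullet}(Y|\Sigma)\otimes_{\Z}\HM_{\bullet}(Y^{*}|\Sigma^{*};\Gamma_{\eta^{*}})\;\cong\;\HM_{\bullet}(\tilde Y|\tilde\Sigma;\Gamma_{\tilde\eta^{*}}).
\]
By Corollary~\ref{cor:product-rank-1-twist} the second tensor factor is $\cR$, which is a free $\cR$-module of rank one; combined with the $\Z$-torsion-free hypothesis on $\cR$, this kills the Tor contribution in the K\"unneth sequence and makes the displayed isomorphism clean.

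Next, I identify the right-hand side with $\HM_{\bullet}(Y|\Sigma;\Gamma_\eta)$. Cutting $Y^{*}=\Sigma\times S^{1}$ along $\Sigma^{*}$ opens it into the cylinder $\Sigma\times[0,1]$, and the excision operation simply reinserts this cylinder in place of a collar of $\Sigma\subset Y$. Hence $\tilde Y\cong Y$; the surface $\tilde\Sigma$ becomes two parallel copies of $\Sigma$ in $Y$, both representing $[\Sigma]\in H_{2}(Y)$, so $\Sp(\tilde Y|\tilde\Sigma)=\Sp(Y|\Sigma)$; and $\tilde\eta^{*}$ is a push-off of $\eta$ inside the inserted cylinder, homologous to the original $\eta\subset\Sigma$ in $\tilde Y\cong Y$, so that $\Gamma_{\tilde\eta^{*}}\cong\Gamma_\eta$. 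Assembling these identifications gives $\HM_{\bullet}(Y|\Sigma)\otimes_\Z\cR\cong\HM_{\bullet}(Y|\Sigma;\Gamma_\eta)$.

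The most delicate point is verifying the symmetric form of the K\"unneth consequence \eqref{eq:excision-extra-twist} and checking that the $\Z$-torsion-free hypothesis on $\cR$ really suffices to eliminate the Tor term; this hypothesis is used precisely here. For the general case in which $\Sigma$ has components $\Sigma^{(1)},\dots,\Sigma^{(k)}$, I would iterate the argument by performing one excision for each component $\Sigma^{(j)}$ against an auxiliary copy of $\Sigma^{(j)}\times S^{1}$ carrying the piece $\eta^{(j)}\times\{\tfrac{1}{2}\}$ of the cycle. A single factor of $\cR$ is introduced by the first excision via the K\"unneth formula over $\Z$; each subsequent excision then tensors an already-$\cR$-module Floer group with the auxiliary rank-one factor $\cR$, but this tensor is now naturally \emph{over $\cR$} rather than over $\Z$, since both factors carry a common system of $\cR$-coefficients, so it acts as the identity. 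Thus the iterated chain of excisions introduces only a single tensor factor of $\cR$, matching the statement of the corollary.
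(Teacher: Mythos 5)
Your proof is essentially the same as the paper's: apply Theorem~\ref{thm:Floer-excision-extra-eta} with $\Sigma\times S^1$ as the auxiliary piece carrying the cycle, identify $\tilde Y$ with $Y$ and $\tilde\eta_0$ with $\eta$, and invoke Corollary~\ref{cor:product-rank-1-twist} together with the K\"unneth sequence; the paper merely labels the two factors in the opposite order so that \eqref{eq:excision-extra-twist} applies verbatim, which spares the need for your remark about the ``symmetric form.'' You are right that the multi-component case requires iterating over components (since Theorem~\ref{thm:Floer-excision-extra-eta} demands connected excision surfaces), a point the paper leaves implicit; your sketch is sound, but for full rigor one should note that after the first excision the Floer complex already has coefficients in $\cR$, so subsequent steps use the K\"unneth theorem over $\cR$ rather than over $\Z$, with the Tor term vanishing because the auxiliary factor $\HM_{\bullet}(\Sigma^{(j)}\times S^1|\Sigma^{(j)};\Gamma)\cong\cR$ is a free $\cR$-module.
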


\begin{proof}
   Apply the isomorphism of \eqref{eq:excision-extra-twist} with
   $(Y_{2},\Sigma_{2})=(Y,\Sigma)$ and
   $(Y_{1},\Sigma_{1})=(\Sigma\times S^{1}, \Sigma\times\{p\})$. Take
   $\eta_{0}$ in $\Sigma\times S^{1}$ to be the cycle corresponding to
   $\eta$. By
   Proposition~\ref{cor:product-rank-1-twist} we have
   \[
           \HM_{\bullet}(Y_{1}|\Sigma_{1};\Gamma_{\eta_{0}}) = \cR.
   \]
   The manifold $\tilde Y$ is another copy of the original $Y$ and
   $\tilde \Sigma$ is two parallel copies of $\Sigma$. The cycle
   $\eta_{0}$ becomes now the original $1$-cycle $\eta$, so
   \[
            \HM_{\bullet}(\tilde Y|\tilde\Sigma;\Gamma_{\tilde
            \eta_{0}}) = \HM_{\bullet}(Y|\Sigma;\Gamma_{\eta}) .
   \]
   Thus \eqref{eq:excision-extra-twist} gives an isomorphism
   \[
            \cR \otimes\HM_{\bullet}(Y|\Sigma) \to
            \HM_{\bullet}(Y|\Sigma;\Gamma_{\eta}).
    \]
\end{proof}

\subsection{Proof of the excision theorems}

The proof of Theorem~\ref{thm:Floer-excision}
is very much the same as Floer's proof of his original excision
theorem, as described in \cite{Braam-Donaldson}. 
The first step (which is common to both
Theorem~\ref{thm:Floer-excision} and
Theorem~\ref{thm:Floer-excision-genus-1}) is to construct a cobordism
$W$ from $\tilde Y$ to $Y$. In the case that $Y$ is 
disjoint union $Y_{1} \cup
Y_{2}$, the cobordism $W$ admits a map $\pi: W\to P$, where $P$
is a $2$-dimensional pair-of-pants cobordism.
This is shown schematically in Figure~\ref{fig:Excision-cobordism-1}.
The $4$-dimensional
cobordism is the union of two pieces. The first piece is the product
 $[0,1]\times Y'$, where $Y'$ as before is obtained from $Y$ by
 cutting open along $\Sigma_{1}$ and $\Sigma_{2}$. (In the
Figure, this appears as the union of two pieces, corresponding to the
decomposition of $Y'$ as $Y'_{1}\cup Y'_{2}$.) The
second piece is the product of the closed surface $\Sigma_{1}$ with a $2$-manifold $U$ with
corners: $U$ corresponds to the gray-shaded area in the figure.
The two pieces are fitted
together as shown, using the diffeomorphism $h$. If $Y$ is connected,
then the picture looks just the same in the neighborhood of the
shaded region, but the product region $[0,1]\times Y'$ is connected;
the cobordism $W$ in this case does not admit a map to the pair of
pants.

\begin{figure}
    \begin{center}
        \includegraphics[scale=0.7]{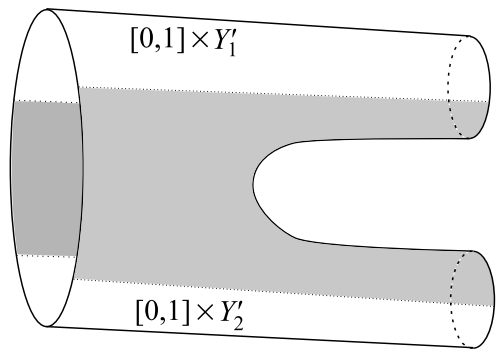}
    \end{center}
    \caption{\label{fig:Excision-cobordism-1}
    A cobordism $W$ from $\tilde Y$ to $Y= Y_{1} \cup Y_{2}$.}
\end{figure}

There is a very similar cobordism $\bar{W}$ which goes the other way:
Theorem~\ref{thm:Floer-excision} arises because the cobordisms $W$ and
$\bar W$ give rise to mutually inverse maps (in the case of genus at
least $2$)
\[
\begin{aligned}
\HM(W) :
            \HM_{\bullet}(\tilde Y|\tilde \Sigma) &\to
            \HM_{\bullet}(Y|\Sigma)
  \\
            \HM(\bar W) :
             \HM_{\bullet}(Y|\Sigma)
               &\to           
             \HM_{\bullet}(\tilde Y|\tilde \Sigma) .
             \end{aligned}
 \]
when the coefficients are a field.

To show that the cobordisms induce mutually inverse maps,
let $X$ be the cobordism from $\tilde Y$ to
$\tilde Y$ formed as the union of $W$ and $\bar W$. We must show that
$X$ gives rise to the identity map on $\HM_{\bullet}(\tilde Y|\tilde \Sigma)$.
This will show that $\HM(\bar{W})\circ \HM(W) =1$,
and there will be a similar argument for the other composite.
Note that $\Sigma$ and $\tilde\Sigma$
are homologous in $X$, so the map induced by $X$ really does factor
through $\HM_{\bullet}(Y|\Sigma)$, not just $\HM_{\bullet}(Y)$.

The manifold $X$ is shown
schematically in Figure~\ref{fig:Excision-cobordism-2} for the case
that $Y$ has two components, $Y_{1}\cup Y_{2}$, in which case it
admits a map $\pi$ to the twice-punctured genus-1
surface, as drawn.
\begin{figure}
    \begin{center}
        \includegraphics[scale=0.7]{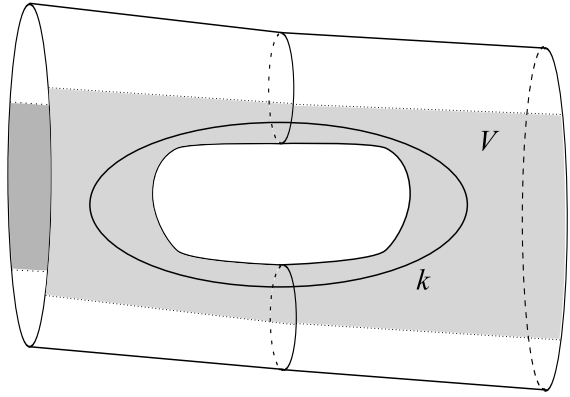}
    \end{center}
    \caption{\label{fig:Excision-cobordism-2}
   The composite cobordism $X$ from $\tilde Y$ to $\tilde Y$.}
\end{figure}
Over the shaded region $V$ it is a product,
\[
\begin{aligned}
\pi^{-1}(V) &= \Sigma_{1 } \times V \\
            &= \Sigma_{2} \times V.
\end{aligned}
\]
If $Y$ is connected, the picture is essentially the same in the
neighborhood of $\pi^{-1}(V)$.
Let $k$ be the closed curve in $V$ that is shown, and let $K$ be the
inverse image
\[
\begin{aligned}
                K &= \pi^{-1}(k) \\
                  &= \Sigma_{1}\times k.
\end{aligned}
\]
(We continue to identify $\Sigma_{1}$ with $\Sigma_{2}$ via $h$ in
what follows.)
Let $X'$ be the manifold-with-boundary formed by cutting along $K$.
Its boundary is two copies of $K$. Let $X^{*}$ be the new cobordism
from $\tilde Y$ to $\tilde Y$ obtained by attaching two copies of
$\Sigma_{1}\times
D^{2}$, with $\partial D^{2}$ being identified with $k$:
\[
                    X^{*} = X' \cup (\Sigma_{1}\times D^{2}) \cup
                    (\Sigma_{1}\times D^{2}).
\]

Floer's proof hinges on  the fact that the manifold $X^{*}$ is  just the
product cobordism from $\tilde Y$ to $\tilde Y$. This means that we
only need show that $X^{*}$ gives rise to the same map as $X$. This
desired equality can be deduced from the formalism of
section~\ref{subsec:disconnected-1}, for it is precisely
Proposition~\ref{prop:excision-prototype}. This concludes the proof
that $\HM(\bar{W})\circ \HM(W)=1$. The picture for the composite of
the two cobordisms in the other order is shown in
Figure~\ref{fig:Excision-cobordism-2b}. The proof that this composite
gives the identity is essentially the same: the relationship between
$Y$ and $\tilde Y$ is a symmetric one, except that we have allowed
only $Y$ to have two components.
Figure~\ref{fig:Excision-cobordism-2b} shows the corresponding
curve $\tilde k$ in this case, along which one must cut, just as we
cut along $k$ in the previous case. This
completes the proof of
Theorem~\ref{thm:Floer-excision}.

\begin{figure}
    \begin{center}
        \includegraphics[scale=0.7]{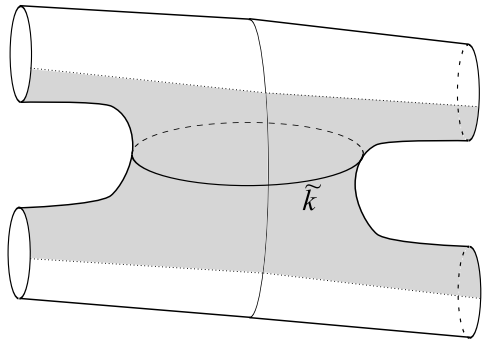}
    \end{center}
    \caption{\label{fig:Excision-cobordism-2b}
   The composite cobordism in the opposite order, from $Y$ to $Y$, in
   the case that $Y$ has two components.}
\end{figure}

The proof of Theorem~\ref{thm:Floer-excision-genus-1} is very similar.
The same cobordisms $W$ and $\bar{W}$ are used. In the cobordism $W$,
there is a $2$-chain $\nu_{W}$ whose boundary is $\eta - \tilde\eta$.
It consists of the product chain $[0,1]\times \eta'$ in part of $W$
obtained from $[0,1]\times Y$; while over the shaded region $U$ in
Figure~\ref{fig:Excision-cobordism-1}, the cycle $\nu_{W}$ is a
section $\{p\}\times U$ of $\Sigma_{1}\times U$. There is a similar
$2$-chain $\nu_{\bar{W}}$ in $\bar{W}$, and these fit together to give
a $2$-chain $\nu_{X}$ in the composite cobordism $X$
(Figure~\ref{fig:Excision-cobordism-2}). The $3$-manifold $K\subset X$
lying over the curve $k$ is now a $3$-torus, and $K$ meets $\nu_{X}$
transversely in a standard circle. The proof now proceeds as before,
but using Corollary~\ref{cor:excision-prototype-twist} in place of
Proposition~\ref{prop:excision-prototype}. We learn that the composite
cobordism $X$ gives a map which is $(t-t^{-1})$ times the map arising
from the trivial product cobordism $X^{\dag}$. That is,
\[
               \HM(\bar W | F_{\bar W}; \Gamma_{\nu_{\bar W}}) \circ
               \HM(W | F_{W}; \Gamma_{\nu_{W}})  = (t-t^{-1}).
\]
The same holds for the composite in the opposite order.
Since $t-t^{-1}$ is a unit in $\cR$, this means that
$\HM(W|F_{W};\Gamma_{\nu_{W}})$ is an isomorphism, as required.

\section{Monopole Floer homology for sutured manifolds}
\label{sec:Monopole-Floer-homology}

In this section, we give the definition of the monopole homology
groups for balanced sutured manifolds, which are the main object of
study in this paper.

\subsection{Closing up sutured manifolds}
\label{subsec:closing}

We recall Juh\'asz's definition of a balanced sutured manifold
\cite{Juhasz-1}, a
restricted version of Gabai's notion of a sutured manifold
\cite{Gabai}:

\begin{definition}
    A \emph{balanced sutured manifold} $(M,\gamma)$ is a compact,
    oriented
    $3$-manifold $M$ with boundary, equipped with the following data:
    \begin{enumerate}
    \item
        a closed, oriented $1$-manifold $s(\gamma)$ in $\partial M$,
        i.e.~a collection of disjoint oriented circles in the
        boundary, called the sutures;
    \item a union $A(\gamma)$ of annuli, which comprise a tubular
    neighborhood of $s(\gamma)$ in $\partial M$; the closure of
    $\partial M\setminus A(\gamma)$ is called $R(\gamma)$.
    \end{enumerate}
    These are required to satisfy the following conditions:
    \begin{enumerate}
        \item $M$ has no closed components;
        \item if the components of $\partial A(\gamma)$ are oriented in
        the same sense as the sutures, then it should be possible to orient
        $R(\gamma)$ so that its oriented boundary coincides with this
        given orientation of $A(\gamma)$;
        \item $R(\gamma)$ has no closed components (which implies that
        the orientation in the previous item is unique); we call it
        the \emph{canonical orientation};
        \item if we define $R_{+}(\gamma)$ (and $R_{-}(\gamma)$ also) as the
        subset of $R(\gamma)$ where the canonical orientation coincides
        with the boundary orientation (or its opposite, respectively),
        then $\chi(R_{+}(\gamma)) = \chi(R_{-}(\gamma))$. \CloseDef
    \end{enumerate}
\end{definition}

It is often helpful to consider sutured manifolds as manifolds with
corners: the corners run along the circles $\partial A(\gamma)$ and
separate the flat annuli from the rest of the boundary.
Note that $M$ need not be connected. A model example is a
\emph{product sutured manifold} \[([-1,1] \times \T, \delta).\] Here $\T$
is an oriented surface with non-empty boundary and no closed
components, and the sutures are
\[
                s(\delta) = \{0\} \times\partial\T
\]
with the boundary orientation. The annuli  $A(\delta)$ are $[-1,1]
\times\partial\T$, and we have
\[
\begin{aligned}
    R_{+}(\delta) &= \{1\}\times\T \\
    R_{-}(\delta) &= \{-1\}\times\T. \\
\end{aligned}
\]

Given a balanced sutured manifold $(M,\gamma)$, we form a closed,
oriented manifold $Y = Y(M,\gamma)$ as follows. The closed manifold is
dependent on some choices, as we shall see. First, we choose an
oriented
connected surface $\T$  whose boundary
components  are in one-to-one correspondence with the components of
$s(\gamma)$. We call $T$ the \emph{auxiliary surface}.
From $\T$ we form the product sutured manifold $([-1,1]\times
T,\delta)$ as
just described. We then glue the annuli $A(\delta)$ to the annuli
$A(\gamma)$: this is done by a map
\[
                A(\delta)\to A(\gamma)
\]
which is orientation-reversing with respect to the  boundary
orientations and which maps $\partial R_{+}(\delta)$ to $\partial
R_{+}(\gamma)$. The result of this step is a 3-manifold with exactly
two boundary components, $\bar R_{+}$ and $\bar R_{-}$, which are
closed orientable surfaces of equal genus:
\[
\begin{aligned}
\bar R_{+} &= R_{+}(\gamma) \cup \{1\}\times T \\
            \bar R_{-} &= R_{-}(\gamma) \cup \{-1\}\times T \\
            \end{aligned}
\]
We require $T$ to be of
sufficiently large genus (genus zero may suffice, and genus
two  always will) so that two conditions hold:
\begin{enumerate}
\renewcommand{\theenumi}{(C\arabic{enumi})}%
    \item \label{item:genus-2} the genus of $\bar R_{\pm}$ is at least two;
    \item \label{item:closed-curve} the surface $T$ contains a simple
    closed curve  $c$ such that $\{1\}\times c$ and $\{-1\}\times c$
    are non-separating curves in $\bar{R}_{+}$ and $\bar{R}_{-}$
    respectively.
\end{enumerate}
Finally, form
$Y(M,\gamma)$ by identifying $\bar R_{+}$ with $\bar R_{-}$ using any
diffeomorphism which reverses the boundary orientations
(i.e.~preserves the canonical orientations),
\[
    h : \bar R_{+} \to \bar R_{-}.
\]

Inside $Y$ is a closed, connected, non-separating surface $\bar R$, obtained from the
identification of $\bar R_{+}$ with $\bar R_{-}$. We can orient $\bar
R$ using the canonical orientation of $R_{+}(\gamma)$. As an oriented
pair, $(Y,\bar R)$ depends only on two things, beyond $(M,\gamma)$
itself: first, the choice of genus for $\T$, and second the choice
of diffeomorphism $h$ used in the last step.

\begin{definition}\label{def:closure}
We call $(Y,\bar{R})$ a
\emph{closure} of the balanced sutured manifold $(M,\gamma)$ if it is
obtained in this way, by attaching to $(M,\gamma)$ a product region
$[-1,1]\times T$ satisfying the above conditions and then attaching
$\bar{R}_{+}$ to $\bar{R}_{-}$ by some $h$. \CloseDef
\end{definition}

\subsection{The definition}

Let $Y=Y(M,\gamma)$ be formed from a sutured manifold $(M,\gamma)$ as
described in the previous subsection. Recall
that $Y$ contains a connected, oriented closed surface $\bar{R}$, by
construction, whose genus is at least two. We make the
following definition:

\begin{definition}\label{def:SHM}
    We define the monopole Floer homology of the sutured manifold
    $(M,\gamma)$ to be the finitely-generated abelian group
    \[
               \SHM(M,\gamma) :=    \HM_{\bullet}(Y|\bar{R}),
    \]
    where $Y=Y(M,\gamma)$ is a closure of $(M,\gamma)$ as described in
    Definition~\ref{def:closure}, and the notation on the right follows
    \eqref{eq:bar-F-notation}. \CloseDef
\end{definition}

    As it stands, this definition appears to depend on the choice of
    genus, $g$, for the auxiliary surface $T$, as well as on the
    choice of gluing diffeomorphism $h$. In
    section~\ref{sec:independence} we shall prove:

\begin{theorem}\label{thm:independence}
    The group $\SHM(M,\gamma)$ defined in \ref{def:SHM}
    depends only on $(M,\gamma)$, not
    on the choice of genus $g$ for the auxiliary surface $T$ or the
    diffeomorphism $h$.
\end{theorem}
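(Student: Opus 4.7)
The plan is to use Floer's excision theorem (Theorem~\ref{thm:Floer-excision}) to compare any two closures of $(M,\gamma)$. By Definition~\ref{def:closure}, a closure depends on two independent choices: the gluing diffeomorphism $h:\bar R_{+} \to \bar R_{-}$ and the auxiliary surface $T$ (in particular its genus). I would handle these in two steps.

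For independence from $h$, I fix $T$ and let $Y_{0} = Y(T, h_{0})$ be any closure with this $T$. I form the disjoint union $Y_{0} \sqcup (\bar R \times S^{1})$ and apply Theorem~\ref{thm:Floer-excision} with $\Sigma_{1} = \bar R \subset Y_{0}$, $\Sigma_{2} = \bar R \times \{p\}$, and an arbitrary excision diffeomorphism $h:\bar R \to \bar R$. Unwrapping the cut-and-reglue, one sees that the mutant is obtained by inserting a cylinder $\bar R \times [-1,1]$ between the two sides $\bar R_{+}$ and $\bar R_{-}$ of the cut in $Y_{0}$, with both end-gluings given by $h$; the effective composite gluing $\bar R_{+} \to \bar R_{-}$ is therefore $h^{2}$. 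Thus, up to diffeomorphism, the mutant is the closure $Y(T, h^{2})$, and crucially this mutant depends only on the excision map $h$, not on the original $h_{0}$. Combining Theorem~\ref{thm:Floer-excision} with the rank-one calculation $\HM_{\bullet}(\bar R \times S^{1}|\bar R) = \Z$ of Lemma~\ref{lemma:product-rank-1} and the tensor decomposition \eqref{eq:tensor-complex} for disconnected manifolds (using the freeness of $\Z$) yields
\[
\HM_{\bullet}(Y(T,h_{0}) | \bar R) \;\cong\; \HM_{\bullet}(Y(T, h^{2}) | \bar R),
\]
whose right-hand side does not involve $h_{0}$. Hence $\HM_{\bullet}(Y(T,h)|\bar R)$ is constant in $h$.

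For independence from the genus of $T$, by iteration it suffices to compare $T_{0}$ of genus $g$ with $T_{1} = T_{0} \# T^{2}$ of genus $g+1$. Using the freedom of step~1, I may arrange the gluings so that $Y_{0}$ and $Y_{1}$ differ only in a localized neighborhood of the added handle: concretely, $Y_{1}$ is obtained from $Y_{0}$ by removing an embedded solid torus $D^{2} \times S^{1}$ sitting inside $T_{0} \times S^{1} \subset Y_{0}$ and gluing $(T^{2}\setminus D^{2}) \times S^{1}$ in its place. I would then realize this local modification by a further excision: cut $Y_{1}$ along a genus-$\geq 2$ surface containing the added handle, glue in appropriate higher-genus product regions, and apply Theorem~\ref{thm:Floer-excision} or Proposition~\ref{prop:excision-prototype} so that the result is identified with $Y_{0}$. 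Combined with a further application of step~1 to absorb the incidental change in gluing, this produces $\HM_{\bullet}(Y_{0}|\bar R_{0}) \cong \HM_{\bullet}(Y_{1}|\bar R_{1})$.

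The main obstacle lies in step~2: pinpointing a local cut-and-reglue that precisely realizes the stabilization of $T$, and matching the $\Sp(Y|\bar R)$ constraints on the two sides of the comparison, where $\bar R_{0}$ and $\bar R_{1}$ have different genera. Step~1, by contrast, is essentially automatic given Lemma~\ref{lemma:product-rank-1} together with the observation that the mutant produced by excision with $\bar R \times S^{1}$ is a closure independent of $h_{0}$.
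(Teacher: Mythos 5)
Step 1 of your argument contains an error that makes it fail. When you apply Theorem~\ref{thm:Floer-excision} to $Y_{0}\sqcup(\bar R\times S^{1})$ with $\Sigma_{1}=\bar R\subset Y_{0}$, $\Sigma_{2}=\bar R\times\{p\}$, and an excision diffeomorphism $h\colon\bar R\to\bar R$, the mutant $\tilde Y$ is diffeomorphic to $Y_{0}$ itself, not to $Y(T,h^{2})$. The point you lose sight of is that after cutting $Y_{0}$ along $\bar R$, the two boundary components of the cut manifold are \emph{not} abstractly interchangeable copies of $\bar R$; their canonical identifications with $\Sigma_{1}$ (coming from the tubular-neighborhood collars) differ precisely by $h_{0}$. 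When you then insert the cylinder (cut from $\bar R\times S^{1}$) with the crossed gluings via $h$, the two occurrences of $h$ enter with opposite senses and cancel, while $h_{0}$ survives: the composite gluing from one side of the cut to the other is again $h_{0}$. More generally, if $Y_{2}$ is the mapping torus of $g$ and the excision diffeomorphism is $h$, the resulting closure uses the gluing $(h^{-1}gh)\,h_{0}$; setting $g=\mathrm{id}$ gives back $h_{0}$. So the excision you propose gives only the tautology $\HM_{\bullet}(Y_{0}|\bar R)\otimes\Z\cong\HM_{\bullet}(Y_{0}|\bar R)$ and proves nothing about $h$-independence.

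The correct move, which is what the paper does, is to take $Y_{2}$ to be the mapping torus of an \emph{arbitrary} diffeomorphism $g$ of $\bar R$ (with the identity as excision diffeomorphism); then the mutant is $Y(T,g\,h_{0})$, and varying $g$ ranges over all closures with this $T$. This requires knowing $\HM_{\bullet}(Y_{2}|\bar R)\cong\Z$ for an arbitrary mapping torus $Y_{2}$, not just for the product. That is Lemma~\ref{lem:fibered}, which itself uses excision (comparing mapping tori of $g$, $g^{-1}$, and the identity) and a short algebraic argument; Lemma~\ref{lemma:product-rank-1} alone does not suffice.

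For step 2, the obstacle you flag is real and is the substantive part of the proof, but the route you sketch (cut along a genus-$\geq 2$ surface containing the added handle and apply Theorem~\ref{thm:Floer-excision} or Proposition~\ref{prop:excision-prototype}) is not the one that works. The surfaces along which the stabilization is localized are tori of the form $S^{1}\times c$ and $S^{1}\times d$ sitting in product regions, so the genus-$\geq 2$ excision theorem does not apply; one needs the genus-$1$, local-coefficient version (Theorem~\ref{thm:Floer-excision-genus-1}, where $t-t^{-1}$ must be invertible). The paper therefore first proves $g$-independence for $\SHM(M,\gamma;\Gamma_{\eta})$ over a suitable $\cR$, relates this to $\SHM(M,\gamma)\otimes\cR$ via Lemma~\ref{lem:straight-tensor}, and finally deduces the $\Z$-coefficient statement by running over a universal coefficient ring. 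That chain of reductions is essential; a direct genus-$\geq 2$ cut will not realize the handle stabilization.
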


There is a version of $\SHM$ with local coefficients that we shall use
at some points along the way. Recall that $T$ is required to contain a
curve $c$ that yields non-separating curves $\{\pm 1\}\times c$ on
$\bar{R}_{\pm}$. Let us choose the diffeomorphism $h$ so that $h$ maps
$\{1\}\times c$ to $\{-1\}\times c$, preserving orientation. Thus the
surface $\bar{R}$ in $Y(M,\gamma)$ now contains a closed curve
$\bar{c}$, the image of $\{\pm 1\}\times c$. Let $c'$ be any dual
curve on $\bar{R}$: a curve $c'$ with $\bar{c}\cdot c'=1$ on
$\bar{R}$.

\begin{definition}\label{def:SHM-Gamma}
    We define the monopole Floer homology of the sutured manifold
    $(M,\gamma)$ \emph{with local coefficients}
    to be the $\cR$-module
    \[
               \SHM(M,\gamma;\Gamma_{\eta}) :=
               \HM_{\bullet}(Y|\bar{R};\Gamma_{\eta}),
    \]
    where the closure $Y=Y(M,\gamma)$ is constructed using a
    diffeomorphism $h$ satisfying the constraint just described, and
    $\eta$ is the $1$-cycle in $Y$ carried by the curve $c'$
    dual to $\bar{c}$ as above. 
    \CloseDef
\end{definition}

We shall see  that this is independent of the choice of $\eta$. When
using local coefficients in this way, we can relax the requirement that
$\bar{R}$ has genus $2$ or more (condition \ref{item:genus-2} above)
and allow closures in which $\bar{R}$ has genus $1$:

\begin{proposition}\label{prop:SHM-Gamma-invariant}
    As long as $t-t^{-1}$ is invertible in $\cR$, 
    the $\cR$-module $\SHM(M,\gamma;\Gamma_{\eta})$ defined in
    \ref{def:SHM-Gamma}
    depends only on $(M,\gamma)$ and $\cR$, not
    on the remaining choices. Furthermore, subject to the same
    condition on $\cR$, one can relax the condition \ref{item:genus-2}
    above and allow $\bar{R}$ to have genus $1$ when using local
    coefficients.
\end{proposition}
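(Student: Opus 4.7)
The plan is to parallel the proof of Theorem~\ref{thm:independence} (the integer-coefficient version), substituting the local-coefficient excision theorems (Theorems~\ref{thm:Floer-excision-genus-1} and~\ref{thm:Floer-excision-extra-eta}) for Theorem~\ref{thm:Floer-excision}, and Corollary~\ref{cor:product-rank-1-twist} for Lemma~\ref{lemma:product-rank-1}. The invertibility of $t-t^{-1}$ in $\cR$ is what makes the local-coefficient excision theorems applicable -- via Proposition~\ref{prop:twist-vanish}, it rules out reducible contributions on the torus boundaries that will appear in the excision cobordism.

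To establish invariance under the choices of $h$ and $T$, I would show that any two closures $(Y_1,\bar R_1;\eta_1)$ and $(Y_2,\bar R_2;\eta_2)$ of $(M,\gamma)$ are related by a sequence of excision moves, each of which preserves $\HM_\bullet$. For a change of the gluing diffeomorphism $h$ alone (keeping $T$ fixed), I would apply the excision theorem to $Y_1\sqcup(\bar R\times S^1)$, cutting along $\bar R_1$ and $\bar R\times\{p\}$. By choosing the excision diffeomorphism between these two surfaces appropriately -- and absorbing it into the trivial cylinder produced by cutting $\bar R\times S^1$ -- any diffeomorphism $\phi\colon\bar R\to\bar R$ can be realized as the net regluing, so the excised manifold is $Y_2$ for the appropriate choice. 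The excision theorem yields
\[
\HM_\bullet(Y_1|\bar R_1;\Gamma_{\eta_1})\otimes_\cR\HM_\bullet(\bar R\times S^1|\bar R;\Gamma_{\eta'})\;\cong\;\HM_\bullet(Y_2|\bar R_2;\Gamma_{\eta_2}),
\]
and the second factor is $\cR$ by Corollary~\ref{cor:product-rank-1-twist}, producing the desired isomorphism. Changes in the auxiliary surface $T$ are handled by an analogous excision with a different standard piece (a stabilization that attaches a handle to $T$), while changes of the dual curve $c'$ can be absorbed by applying Corollary~\ref{cor:eta-no-change} in a product neighborhood of $\bar R$.

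For the genus-$1$ allowance, the same strategy applies with Theorem~\ref{thm:Floer-excision-genus-1} in place of Theorem~\ref{thm:Floer-excision-extra-eta}: a genus-$1$ closure is connected to a genus-$\ge 2$ closure by a single stabilization excision, which shows that the two definitions agree whenever both are available. In either case the auxiliary surfaces $F$ required by the excision theorem (to rule out reducibles on the non-torus pieces) are provided by $\bar R$ itself when the genus is $\ge 2$, or by the stabilized pieces in the genus-$1$ case.

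The main technical obstacle will be matching the local-coefficient data across the excision. The cycle $\eta_1\subset\bar R_1$ lies on the cut surface, which is not directly accommodated by the hypotheses of Theorem~\ref{thm:Floer-excision-genus-1}; one must first isotope $\eta_1$ slightly off $\bar R_1$ in $Y_1$ (which does not change $\HM_\bullet(Y_1|\bar R_1;\Gamma_{\eta_1})$, since $\Gamma_\eta$ depends only on the homology class of $\eta$), and in parallel introduce an auxiliary cycle in $\bar R\times S^1$ meeting $\bar R\times\{p\}$ transversely in a single point, as required by the theorem. Verifying that the cycle $\tilde\eta$ produced in the excised manifold coincides, up to homology, with $\eta_2$ -- combined with the bookkeeping of orientations and of the surface $F_W$ needed to kill reducibles -- is the essential task, but once done the substantive work is entirely contained in the already-established excision theorems.
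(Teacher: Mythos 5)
Your proposal is correct and follows essentially the paper's own route: the choices of $h$, of auxiliary genus, and of $\eta$ are each absorbed by an excision against a standard piece whose Floer homology is $\cR$ (Corollary~\ref{cor:product-rank-1-twist}), and the genus-$1$ allowance is handled by a single torus excision relating the genus-$1$ closure to a genus-$2$ one. The only notable difference is in how the local-coefficient cycle is handled: you propose to carry $\eta$ through the excision directly, isotoping it off the cut surface $\bar{R}_1$, whereas the paper first invokes Lemma~\ref{lem:straight-tensor} (built on Corollary~\ref{cor:eta-no-change}), which identifies $\SHM^{g}(M,\gamma;\Gamma_{\eta})$ with $\SHM^{g}(M,\gamma)\otimes\cR$ outright whenever $\bar{R}$ has genus at least two and $\eta$ lies on $\bar{R}$; thus $h$- and $\eta$-independence are inherited immediately from the already-established integer-coefficient invariance with no cycle bookkeeping, and explicit $\eta$-tracking is reserved only for the genus-stabilization step (Proposition~\ref{prop:independence-local}) and the genus-$1$ extension, where the construction is arranged so that $\eta$ meets the cut tori transversely in a single point, exactly as Theorem~\ref{thm:Floer-excision-genus-1} requires.
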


In the case that
$\bar{R}$ does have genus $2$ or more, we shall also see that we
can take $\eta$ to be any
non-separating curve on $\bar{R}$, rather than a curve dual to
$\bar{c}$.

\subsection{Proof of independence}

\label{sec:independence}

We now  prove Theorem~\ref{thm:independence}:
our definition of the monopole Floer
homology of a balanced
sutured manifold $(M,\gamma)$ is independent of the choices made in
its definition. The proof consists of several applications of Floer's
excision theorem. We begin with an observation about mapping tori:

\begin{lemma}\label{lem:fibered}
    Let $Y\to S^{1}$ be a fibered $3$-manifold whose fiber $R$ is a
    closed surface of genus at least $2$. Then $\HM(Y|R)\cong\Z$.
\end{lemma}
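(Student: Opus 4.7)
The plan is to reduce to the case of trivial monodromy, already handled by Lemma~\ref{lemma:product-rank-1}, via Floer's excision theorem. Write $Y=Y_\phi$ for the mapping torus of some $\phi\colon R\to R$. I would apply Theorem~\ref{thm:Floer-excision} to the disconnected $3$-manifold
\[
Y_\phi\sqcup(R\times S^{1}),
\]
cutting along a fiber $\Sigma_{1}\subset Y_\phi$ and a fiber $\Sigma_{2}=R\times\{p\}\subset R\times S^{1}$. Both surfaces are connected of genus $\ge 2$ and are non-separating in their respective components, so the hypotheses of the excision theorem are satisfied.

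The key observation driving the plan is that cutting any mapping torus along a single fiber produces the product $R\times[0,1]$ as a manifold with boundary, regardless of what the monodromy is; the monodromy only records \emph{how} one would re-glue the two ends to recover the original $3$-manifold. In particular both cut pieces $Y'_{1}$ and $Y'_{2}$ are abstractly the same $R\times[0,1]$. Taking $h\colon\Sigma_{1}\to\Sigma_{2}$ to be the identity under the natural identifications of both fibers with $R$, one then traces a fiber loop in the resulting $\tilde Y$ and finds that the composite monodromy is trivial. Hence $\tilde Y\cong R\times S^{1}$, with $\tilde\Sigma=\tilde\Sigma_{1}\cup\tilde\Sigma_{2}$ a pair of parallel fibers. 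Since the two parallel fibers impose the same constraint on \spinc{} structures, Lemma~\ref{lemma:product-rank-1} yields
\[
\HM_{\bullet}(\tilde Y|\tilde\Sigma)\cong\HM_{\bullet}(R\times S^{1}|R)\cong\Z.
\]

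Feeding this into the split short exact sequence \eqref{eq:tensor-product-1} provided by Theorem~\ref{thm:Floer-excision}, and noting that $\HM_{\bullet}(R\times S^{1}|R)\cong\Z$ is free, the Tor term vanishes and the sequence collapses to
\[
\HM_{\bullet}(Y_\phi|R)\otimes_{\Z}\Z\cong\HM_{\bullet}(\tilde Y|\tilde\Sigma)\cong\Z,
\]
which forces $\HM_{\bullet}(Y_\phi|R)\cong\Z$, as required.

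The only real work in this plan is the identification of $\tilde Y$: one must keep careful track of the boundary orientations on the four ends of $Y'_{1}\sqcup Y'_{2}$ and verify that the two excision gluings (both using $h=\mathrm{id}$) combine to give trivial total monodromy on $\tilde Y$. Once that identification is confirmed, the remainder of the argument is a routine application of the K\"unneth-like split sequence.
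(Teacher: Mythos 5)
Your proposal has a genuine gap in the identification of $\tilde Y$. It is not true that cutting $Y_\phi$ and $R\times S^1$ along fibers and regluing with $h=\mathrm{id}$ produces $R\times S^1$. Although each cut piece is abstractly $R\times[0,1]$, the parametrization of the piece coming from $Y_\phi$ by $R\times[0,1]$ identifies one boundary component with $\Sigma_1$ via the identity and the other via $\phi$ (this is precisely where the monodromy is recorded). Tracing a fiber loop around $\tilde Y$, the total monodromy works out to be $h^{-1}\circ\psi\circ h\circ\phi$, where $\psi$ is the monodromy of the second component; with $\psi=\mathrm{id}$ this collapses to $\phi$ for \emph{any} choice of $h$. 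So your $\tilde Y$ is again $Y_\phi$ (with $\tilde\Sigma$ two parallel fibers), and the excision theorem, applied to $Y_\phi\sqcup(R\times S^1)$, only returns the tautology $\HM_{\bullet}(Y_\phi|R)\otimes\Z\cong\HM_{\bullet}(Y_\phi|R)$, not the computation you want.

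The paper's proof gets around this by taking the second component to be $Y_{\phi^{-1}}$ rather than $R\times S^1$: then the composite monodromy $\phi^{-1}\circ\phi$ really is trivial, and excision gives an injection $\HM(Y_\phi|R)\otimes\HM(Y_{\phi^{-1}}|R)\hookrightarrow\HM(R\times S^1|R)=\Z$ with torsion cokernel. But this introduces the unknown group $\HM(Y_{\phi^{-1}}|R)$, and a second idea is needed: $Y_{\phi^{-1}}$ is orientation-reversing diffeomorphic to $Y_\phi$, so $\HM(Y_{\phi^{-1}}|R)\cong\HM(Y_\phi|R)$ as an abstract abelian group (by the universal-coefficient style argument relating a complex of free $\Z$-modules and its dual). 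This gives $A\otimes A\hookrightarrow\Z$ with torsion cokernel for $A=\HM(Y_\phi|R)$, which forces $A\cong\Z$. If you want to salvage your write-up, replacing $R\times S^1$ by $Y_{\phi^{-1}}$ and adding the orientation-reversal step is exactly the repair needed.
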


\begin{proof}
    In the case of the product fibration, we have already seen this in
    the previous section. If $Y_{h}$ denotes the mapping torus of a
    diffeomorphism $h:R\to R$, then the excision theorem,
    Theorem~\ref{thm:Floer-excision}, in the guise of
    \eqref{eq:tensor-product-1}, gives us an injective map
    \[
                \HM(Y_{h}|R) \otimes \HM(Y_{g}|R) \to
                  \HM(Y_{gh}|R)
    \]
    with cokernel the Tor term. When $g=h^{-1}$, the mapping tori
    $Y_{h}$ and $Y_{g}$ are orientation-reversing diffeomorphic; and
    $\HM(Y_{g}|R)$ is therefore isomorphic to $\HM(Y_{h}|R)$ as an abelian
    group. (This is for the same reason that the homology and cohomology
    of a finitely-generated complex of free $\Z$-modules are
    isomorphic, as abelian groups.) So we obtain an injective map
    \[
                    \HM(Y_{h}|R) \otimes \HM(Y_{h}|R) \to
                  \Z
    \]
    whose cokernel is torsion. This forces
    $\HM(Y_{h}|R)$ to be $\Z$.
\end{proof}

\begin{corollary}
    Let $Y_{1}$ be a closed oriented $3$-manifold containing a
    non-separat\-ing oriented surface $\bar{R}$ of genus two or more.
    Let $\tilde Y$ be obtained from $Y_{1}$ by cutting along $\bar{R}$ and
    re-gluing by an orientation-preserving diffeomorphism $h$. Then
    $\HM(Y_{1}|\bar{R})$ and $\HM(\tilde Y|\bar{R})$ are isomorphic.
\end{corollary}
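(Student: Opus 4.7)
The plan is to apply Floer's excision theorem (Theorem~\ref{thm:Floer-excision}) to the disconnected $3$-manifold $Y = Y_{1} \sqcup Y_{h}$, where $Y_{h}$ is the mapping torus of the gluing diffeomorphism $h\colon \bar R \to \bar R$. I take $\Sigma_{1} = \bar R \subset Y_{1}$ and $\Sigma_{2} \subset Y_{h}$ a fiber; both are connected, non-separating, of genus at least two, so the hypotheses of the excision theorem are satisfied. For the excision diffeomorphism $\Sigma_{1} \to \Sigma_{2}$ I use the tautological identification $\Sigma_{1} \cong \bar R \cong \Sigma_{2}$.

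The first step is a topological identification of the resulting manifold $\tilde Y_{\mathrm{ex}}$. Cutting $Y_{h}$ along $\Sigma_{2}$ produces a cylinder $\bar R \times [0,1]$, with one boundary end naturally identified with $\Sigma_{2}$ and the other identified with $\Sigma_{2}$ via $h^{-1}$, reflecting the monodromy of the mapping torus. Tracing a point from the $+$ side of the cut in $Y_{1}$ through the two excision gluings across this cylinder and back out to the $-$ side reveals that the composite regluing rule is precisely $h$. Thus $\tilde Y_{\mathrm{ex}}$ is diffeomorphic to $\tilde Y$, and $\tilde \Sigma_{\mathrm{ex}} = \tilde \Sigma_{1} \sqcup \tilde \Sigma_{2}$ consists of two parallel (hence homologous) copies of $\bar R$ inside $\tilde Y$; parallel surfaces impose the same \spinc{} constraint, so $\HM_{\bullet}(\tilde Y_{\mathrm{ex}}|\tilde \Sigma_{\mathrm{ex}}) = \HM_{\bullet}(\tilde Y | \bar R)$.

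Next I would invoke the split short exact sequence \eqref{eq:tensor-product-1} furnished by the excision theorem:
\[
0 \to \HM_{\bullet}(Y_{1}|\bar R) \otimes \HM_{\bullet}(Y_{h}|\bar R) \to \HM_{\bullet}(\tilde Y_{\mathrm{ex}}|\tilde \Sigma_{\mathrm{ex}}) \to \mathrm{Tor}\bigl(\HM_{\bullet}(Y_{1}|\bar R),\, \HM_{\bullet}(Y_{h}|\bar R)\bigr) \to 0.
\]
Lemma~\ref{lem:fibered} applies because $\bar R$ has genus at least two, and it gives $\HM_{\bullet}(Y_{h}|\bar R) \cong \Z$. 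The Tor term therefore vanishes and the tensor product reduces to $\HM_{\bullet}(Y_{1}|\bar R)$. Combining with the identification in the first step yields the desired isomorphism $\HM_{\bullet}(Y_{1}|\bar R) \cong \HM_{\bullet}(\tilde Y | \bar R)$.

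The main obstacle is the orientation bookkeeping in the first step: one must verify that the composite regluing rule produced by the two excision gluings and the insertion of $\bar R \times [0,1]$ is exactly $h$, and not a conjugate, inverse, or identity. The choice of $Y_{h}$ as the auxiliary manifold is essential here --- had one instead used the trivial product $\bar R \times S^{1}$, the monodromy contribution would be absent and the composite rule would collapse to the identity, producing only the tautology $\HM_{\bullet}(Y_{1}|\bar R) \cong \HM_{\bullet}(Y_{1}|\bar R)$ regardless of how the excision diffeomorphism is chosen.
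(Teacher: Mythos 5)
Your proof follows the same approach as the paper: apply Theorem~\ref{thm:Floer-excision} to $Y_1$ together with the mapping torus $Y_h$ of $h$, and use Lemma~\ref{lem:fibered} to identify $\HM_{\bullet}(Y_h|\bar{R})\cong\Z$, so that the tensor factor and the $\mathrm{Tor}$ term both collapse. Your careful verification that the excised manifold is $\tilde{Y}$, that the two parallel copies of $\bar{R}$ impose the same \spinc{} constraint, and that the trivial product $\bar{R}\times S^1$ would not serve as the auxiliary manifold, faithfully expands details that the paper's terse proof leaves implicit.
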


\begin{proof}
    Apply the excision theorem, Theorem~\ref{thm:Floer-excision},
    with $Y=Y_{1}\cup Y_{2}$,
    taking $Y_{2}$ to be the mapping torus of $h$ and
    $\Sigma_{1}=\Sigma_{2}=\bar{R}$. Lemma~\ref{lem:fibered}
    tells us that $\HM(Y_{2}|\bar{R})\cong\Z$, so
    $\HM(Y_{1}|\bar{R}) \cong \HM(\tilde Y|\bar{R})$ by the excision
    theorem.
\end{proof}

Consider now the situation of Theorem~\ref{thm:independence}. We have
a closed $3$-manifold $Y=Y(M,\gamma)$ whose construction depends on a
choice of genus $g$ for $T$ and a choice of diffeomorphism $h$. We are
always supposing that $Y$ has been constructed using an auxiliary
surface $T$ subject to the conditions \ref{item:genus-2} and
\ref{item:closed-curve}. The
above corollary tells us that $\HM(Y|\bar{R})$ is independent of the
choice of $h$. So the group $\SHM(M,\gamma)$, as we have defined it,
depends only on the choice of $g$. Let us temporarily write it as
\begin{equation}\label{eq:SHM-g}
                    \SHM^{g}(M,\gamma).
\end{equation}
We can apply the same arguments with local coefficients:
Theorem~\ref{thm:Floer-excision-extra-eta} can be used in place of
Theorem~\ref{thm:Floer-excision} to see that
\begin{equation}\label{eq:SHM-Gamma-g}
                \SHM^{g}(M,\gamma;\Gamma_{\eta})
\end{equation}
(as defined in Definition~\ref{def:SHM-Gamma}) depends at most on the
choice of $g$, not on $h$ (as long as conditions \ref{item:genus-2}
and \ref{item:closed-curve} hold). However, we can also relate
\eqref{eq:SHM-g} to \eqref{eq:SHM-Gamma-g} directly:

\begin{lemma}\label{lem:straight-tensor}
    If the coefficient ring $\cR$ has no $\Z$-torsion, then
    we have \[ \SHM^{g}(M,\gamma;\Gamma_{\eta}) =
    \SHM^{g}(M,\gamma)\otimes \cR. \]
\end{lemma}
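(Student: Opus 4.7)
The plan is to observe that this lemma is an immediate specialization of Corollary~\ref{cor:eta-no-change}. Indeed, the closure $Y = Y(M,\gamma)$ used in defining $\SHM^{g}$ is built, per condition \ref{item:genus-2}, so that the distinguished surface $\bar{R}\subset Y$ is closed, connected, oriented and has genus at least $2$. Moreover, in Definition~\ref{def:SHM-Gamma} the local coefficient system is obtained by taking $\eta$ to be the $1$-cycle carried by the curve $c'\subset\bar{R}$ that is dual to $\bar{c}$ inside $\bar{R}$; in particular $\eta$ is supported in $\bar{R}$. These are exactly the hypotheses required by Corollary~\ref{cor:eta-no-change}, together with the standing assumption that $\cR$ has no $\Z$-torsion.

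Therefore the plan is simply to invoke Corollary~\ref{cor:eta-no-change} with $\Sigma=\bar{R}$, which yields the isomorphism of $\cR$-modules
\[
    \HM_{\bullet}(Y|\bar{R};\Gamma_{\eta}) \;\cong\; \HM_{\bullet}(Y|\bar{R})\otimes_{\Z}\cR.
\]
Unpacking the notation from Definitions~\ref{def:SHM} and \ref{def:SHM-Gamma}, the left-hand side is $\SHM^{g}(M,\gamma;\Gamma_{\eta})$ and the right-hand side is $\SHM^{g}(M,\gamma)\otimes\cR$, which is the desired identity. There is no real obstacle to carrying this out: the only thing to check is that the hypotheses of Corollary~\ref{cor:eta-no-change} are met, and both the genus condition on $\bar{R}$ and the containment of $\eta$ in $\bar{R}$ are built into the construction of the closure and the choice of $\eta$ made in Definition~\ref{def:SHM-Gamma}.
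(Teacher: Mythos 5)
Your proposal is correct and follows essentially the same route as the paper: invoke Corollary~\ref{cor:eta-no-change} with $\Sigma=\bar{R}$, using the fact that the cycle $\eta$ from Definition~\ref{def:SHM-Gamma} lies on $\bar{R}$ and that condition \ref{item:genus-2} guarantees $\genus(\bar{R})\ge 2$. The paper's own proof is a one-liner that does exactly this.
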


\begin{proof}
    In the definition of the local system $\Gamma_{\eta}$, the
    $1$-cycle $\eta$ is parallel to a curve lying on $\bar{R}$.
    The result therefore follows from the definitions and
    Corollary~\ref{cor:eta-no-change}.
\end{proof}

Because we already know that $\SHM^{g}(M,\gamma)$ is independent of
$h$, the above lemma establishes that $
\SHM^{g}(M,\gamma;\Gamma_{\eta})$ is also independent of $h$, and that
it is also independent of the choice of $\eta$.
Next we prove:

\begin{proposition}\label{prop:independence-local}
    If $t-t^{-1}$ is invertible in the coefficient ring $\cR$ and
    $\cR$ has no $\Z$-torsion,
    then the Floer group with local coefficients,
    $\SHM^{g}(M,\gamma;\Gamma_{\eta})$, is independent of $g$.
\end{proposition}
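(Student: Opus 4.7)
The plan is to prove $\SHM^{g}(M,\gamma;\Gamma_{\eta})\cong \SHM^{g'}(M,\gamma;\Gamma_{\eta})$ for any two admissible $g,g'$ with $g'\ge g+1$ by a single application of Theorem~\ref{thm:Floer-excision-genus-1}; iteration in $g$ combined with the already-established independence from $h$ will then yield full independence of $g$. The invertibility hypothesis $t-t^{-1}\in\cR^{\times}$ enters precisely because it is the hypothesis of the genus-$1$ excision theorem.

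To set up, build $Y=Y^{g}(M,\gamma)$ with auxiliary surface $T=T^{g}$ containing the distinguished curve $c$, choosing the gluing $h$ so that $h(\{1\}\times c)=\{-1\}\times c$. Then the annulus $[-1,1]\times c$ closes up to a torus $T_{0}=c\times S^{1}\subset Y$, and this torus is non-separating in $Y$ because $c$ is non-separating in $T$ (condition \ref{item:closed-curve}). The $1$-cycle $\eta$ of Definition~\ref{def:SHM-Gamma} is carried by a curve $c'\subset\bar R$ dual to $\bar c$, and since $T_{0}\cap\bar R=\bar c$ with $c'\cdot\bar c=1$, the cycle $\eta$ meets $T_{0}$ transversely in exactly one point. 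Next introduce a model manifold $Z=F_{Z}\times S^{1}$, with $F_{Z}$ a closed oriented surface of genus $g_{Z}:=g'-g+1\ge 2$; pick a non-separating $\alpha\subset F_{Z}$ and a dual $\alpha'\subset F_{Z}$ with $\alpha\cdot\alpha'=1$, producing a non-separating torus $T_{1}=\alpha\times S^{1}\subset Z$ and a $1$-cycle $\eta_{Z}$ (carried by $\alpha'\times\{\mathrm{pt}\}$) meeting $T_{1}$ once. Corollary~\ref{cor:product-rank-1-twist} gives $\HM_{\bullet}(Z|F_{Z};\Gamma_{\eta_{Z}})\cong\cR$.

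Applying Theorem~\ref{thm:Floer-excision-genus-1} to $Y\sqcup Z$, with $\Sigma_{1}=T_{0}$, $\Sigma_{2}=T_{1}$, auxiliary surface $F=\bar R\cup F_{Z}$, $1$-cycle $\eta+\eta_{Z}$, and a diffeomorphism $\Sigma_{1}\to\Sigma_{2}$ carrying $c\to\alpha$ and matching the $S^{1}$-factors, yields an isomorphism
\[
\HM_{\bullet}\!\bigl(Y\sqcup Z\,\big|\,\bar R\cup F_{Z};\Gamma_{\eta+\eta_{Z}}\bigr)\;\cong\;\HM_{\bullet}\!\bigl(\tilde Y\,\big|\,\tilde F;\Gamma_{\tilde\eta}\bigr).
\]
The left-hand side splits (by the K\"unneth-type formula over $\cR$ available since $\HM_{\bullet}(Z|F_{Z};\Gamma_{\eta_{Z}})$ is free of rank $1$) as $\SHM^{g}(M,\gamma;\Gamma_{\eta})\otimes_{\cR}\cR=\SHM^{g}(M,\gamma;\Gamma_{\eta})$. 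The cut-and-reglue fuses the auxiliary region of $Y$ with that of $Z$: it replaces the ``$c$-collar'' in $T^{g}$ by the ``$\alpha$-collar'' in $F_{Z}$, producing a new auxiliary surface $T':=(T^{g}\setminus c)\cup(F_{Z}\setminus\alpha)$. An Euler-characteristic count gives genus $g+g_{Z}-1=g'$, the sutured manifold $M$ is preserved, and the distinguished curve $c$ becomes a new curve on $T'$ satisfying~\ref{item:closed-curve}; hence $\tilde Y$ is a closure of genus $g'$ and the isomorphism reads $\SHM^{g}(M,\gamma;\Gamma_{\eta})\cong\SHM^{g'}(M,\gamma;\Gamma_{\eta})$.

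The main obstacle is the topological verification that $\tilde Y$ is, on the nose, a legitimate closure $Y^{g'}(M,\gamma)$ in the sense of Definition~\ref{def:closure}, not merely a $3$-manifold with the correct Floer invariant. This requires tracking (i)~that the reglued auxiliary piece is a product $[-1,1]\times T'$ attached correctly to $A(\gamma)$, (ii)~that the induced identification $\bar R'_{+}\to\bar R'_{-}$ is an orientation-preserving diffeomorphism of the canonical orientations, and (iii)~that conditions \ref{item:genus-2} and \ref{item:closed-curve} are satisfied (which is automatic for $g_{Z}\ge 2$ given our choice of $\alpha$). Once these are verified, the previously established independence from $h$ absorbs the specific gluing diffeomorphism produced by the excision, completing the proof.
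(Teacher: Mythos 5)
Your proof is correct and uses essentially the same strategy as the paper: apply Theorem~\ref{thm:Floer-excision-genus-1} to $Y\sqcup(S^{1}\times F_{Z})$, cutting along the tori $c\times S^{1}$ and $\alpha\times S^{1}$, recognize the reglued manifold as a higher-genus closure of $(M,\gamma)$, and invoke Corollary~\ref{cor:product-rank-1-twist} together with the K\"unneth theorem over $\cR$. The only cosmetic difference is that the paper increases the auxiliary genus by one at a time (taking $F_{Z}$ of genus $2$) and implicitly iterates, whereas you jump directly from $g$ to $g'$ in a single application by taking $F_{Z}$ of genus $g'-g+1$.
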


\begin{proof}
Fix $g_{1}$ and let $T$ be a surface of genus $g_{1}$. Let $Y_{1}$ be
the resulting closure of $(M,\gamma)$, and  write $\bar{R}_{1}$ for
the surface it
contains.
Recall that we required $T$ to contain a
simple closed curve $c$ such that $\{1\}\times c$ and $\{-1\}\times c$
are non-separating in $\bar{R}_{\pm}$. We can form a surface
$\tilde T$ of genus $g_{1}+1$ by the following process. We
take a closed surface $S$ of genus $2$ containing a non-separating
closed curve $d$. We then cut $T$ along $c$ and cut $S$ along $d$, and
we reglue to form $\tilde T$ as shown in Figure~\ref{fig:Genus-add}.
\begin{figure}
    \begin{center}
        \includegraphics[scale=0.7]{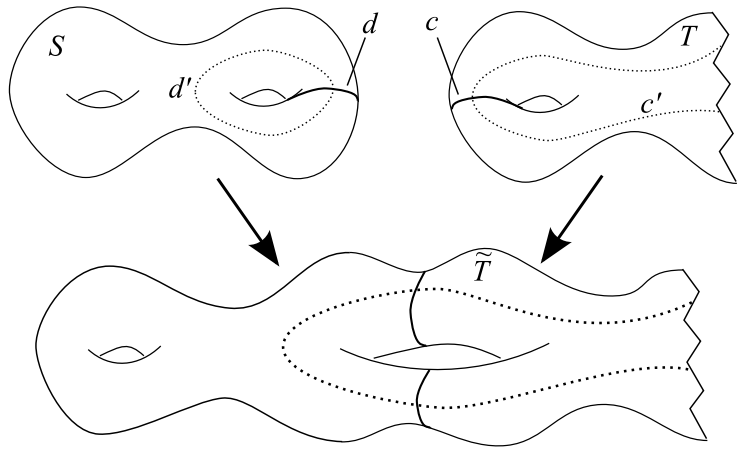}
    \end{center}
    \caption{\label{fig:Genus-add}
   Increasing the genus of $T$ by $1$.}
\end{figure}
The figure also shows curves $c'$ and $d'$ dual to $c$ and $d$. The
curve $c'$ is supposed to be extended (out of the picture) to become a
simple closed curve dual to $c$ in the larger surface
$\bar{R}_{+}=R_{+}(\gamma) \cup \{1\}\times T$.

In forming the closure $Y_{1}$ using $T$, we can arrange that
the diffeomorphism $h : \bar R_{+} \to \bar R_{-}$ carries $\{1\}
\times c$ to $\{-1\}\times c$ by the identity map on $c$.
This is because any
non-separating curve is equivalent to any other in an oriented
surface.  This implies that $Y_{1}$ can be identified with the product
$S^{1}\times T$ over some neighborhood of $c$ in $T$.
So $Y_{1}$ contains a  torus, $S^{1}\times c$. The dual curve $c'$ on
$\bar{R}_{+}$ becomes a curve (also called $c'$) in $Y_{1}$ which intersects the
torus $S^{1}\times c$ once.

We will
apply the second version of the excision theorem,
Theorem~\ref{thm:Floer-excision-genus-1}, as follows. We take
$Y=Y_{1}\cup Y_{2}$ with $Y_{1}$
as given, and  $Y_{2} = S^{1}\times S$.
We take $\Sigma_{1}$ to be the torus $S^{1}\times c$ inside
the product region of $Y_{1}$ and
$\Sigma_{2}$ to be $S^{1}\times d$. We take $\eta$ in $Y$ to be
$\eta_{1}+\eta_{2}$, where the cycle $\eta_{1}$ is
$c'$ and $\eta_{2}$ is
$\{\mathrm{point}\}\times d'$. These $1$-cycles intersect the
respective tori once each; and $\eta_{1}$ is of the sort required for the
definition of $\SHM^{g_{1}}(M,\gamma;\Gamma_{\eta_{1}})$ in
Definition~\ref{def:SHM-Gamma}. To play the role of the surface
$F=F_{1}\cup F_{2}$ in Theorem~\ref{thm:Floer-excision-genus-1} we
take $\bar{R}_{1}\cup \bar{R}_{2}$, where $\bar{R}_{2}$ is the genus-2
surface $\{\mathrm{point}\}\times S$.

The manifold $\tilde Y$ obtained from $Y= Y_{1} \cup Y_{2}$ in the excision
theorem  is
another closure of the original $(M,\gamma)$, using the auxiliary
surface $\tilde T$ of genus one larger than $T$ and a diffeomorphism
$\tilde h$ obtained by extending $h$ trivially over the extra handle.
It contains a closed surface $\tilde{R}$ whose genus is one larger
than the genus of $\bar{R}_{1}$. This is the surface obtained from
$\bar{R}_{1}$ and $\bar{R}_{2}$ by cutting and gluing.
So to prove the proposition, we
must prove
\begin{equation}\label{eq:for-propn}
                \HM(Y_{1}|\bar{R}_{1};\Gamma_{\eta_{1}})\cong
                \HM(\tilde Y|\tilde {R};\Gamma_{\tilde \eta}).
\end{equation}
Theorem~\ref{thm:Floer-excision-genus-1}, provides an isomorphism
\[
                \HM((Y_{1}\cup Y_{2})|(\bar{R}_{1}\cup\bar{R}_{2});\Gamma_{\eta})\to 
                \HM(\tilde Y | \tilde{R};\Gamma_{\tilde \eta}).
\]
But $\HM(Y_{2}|\bar{R}_{2};\Gamma_{\eta_{2}})$ is just $\cR$ by
Corollary~\ref{cor:product-rank-1-twist},
because this manifold is a product, so \eqref{eq:for-propn} follows
from the K\"unneth theorem. This completes the proof of the proposition.
\end{proof}

\begin{remark}
    Although Figure~\ref{fig:Genus-add} is drawn so as to make clear
    that the excision theorem is applicable, the topology can be
    described more simply.
    Let $G$ denotes the genus-one surface with
    one boundary component,
    obtained by cutting $S$ open along $d$ and then removing a
    neighborhood of $d'$. Then the operation of forming $\tilde T$ as
    shown is the same as removing a neighborhood of the point $x = c \cap
    c'$ and attaching $G$ to the boundary so created: a connected sum
    in other words. The $3$-manifold picture is obtained from this
    connected-sum picture by multiplying with by $S^{1}$. That is, we
    drill out a neighborhood of $S^{1}\times\{x\}$ and glue in
    $S^{1}\times G$.
\end{remark}

Now we can complete the proof of the theorem:

\begin{proof}[Proof of Theorem~\ref{thm:independence}]
    We have seen that there is no dependence on the choice of
    diffeomorphism $h$, and we have been considering the dependence on
    the genus $g$: we wish to show that $\SHM^{g}(M,\gamma)$ is
    independent of $g$. From Lemma~\ref{lem:straight-tensor} and
    Proposition~\ref{prop:independence-local}, we learn that the
    $\cR$-module
    \[
                \SHM^{g}(M,\gamma)\otimes \cR
    \]
    is independent of $g$ whenever $\cR$ has no $\Z$-torsion
    and $t-t^{-1}$ is
    invertible. But if $A$ and $B$ are finitely-generated abelian
    groups and $A\otimes \cR\cong B\otimes \cR$  as $\cR$-modules for
    all such $\cR$, then we must have $A\cong B$. For this one can
    take a universal example for $\cR$, namely the ring obtained by
    inverting $t-t^{-1}$ in the $\Z[\R]$, the group ring of $\R$.
\end{proof}

Finally, we turn to Proposition~\ref{prop:SHM-Gamma-invariant}. Up
until this point we have been assuming that $\bar{R}$ has genus $2$ or
more. But the proof of Proposition~\ref{prop:independence-local} works
just as well in the genus $1$ case. Thus if $Y_{1}$ is a closure
formed with $\bar{R}_{1}$ of genus $1$ and $(\tilde{Y}, \tilde{R})$ is
formed as in the proof of Proposition~\ref{prop:independence-local}
with $\bar{R}$ of genus $2$, then
\[
                \HM_{\bullet}(Y_{1}|\bar{R}_{1};\Gamma_{\eta_{1}})
                \cong \HM_{\bullet}(\tilde{Y} | \tilde{R}l
                \Gamma_{\tilde\eta} ).
\]
The group on the right is something we already know to be
independent of other choices: we have therefore
\begin{equation}\label{eq:genus-1-clarify}
          \HM_{\bullet}(Y_{1}|\bar{R}_{1};\Gamma_{\eta_{1}})
          = \SHM(M,\gamma)\otimes \cR.
\end{equation}
This verifies Proposition~\ref{prop:SHM-Gamma-invariant}. \qed

\section{Knot homology}
\label{sec:knot-homology}

Juh\'asz showed in \cite{Juhasz-1} that knot homology could be
obtained as a special case of his (Heegaard) Floer homology of a
sutured manifold. Specifically, given a knot $K$ in a closed $3$-manifold
$Z$, one can form a sutured manifold $(M,\gamma)$ by taking $M$ to be
the knot complement (with a torus boundary) and taking the sutures to
be two oppositely-oriented meridians. In the monopole case  we have
at present no a priori notion of knot homology; but we are free to
take Juh\'asz's prescription as  a \emph{definition} of knot homology
and pursue the consequences.  Thus:

\begin{definition}
    For a knot $K$ in a closed, oriented $3$-manifold $Z$, we define
    the monopole knot homology $\KHM(Z,K)$ to be the monopole homology
    of the sutured manifold $(M,\gamma)$ associated to $(Z,K)$ by
    Juh\'asz's construction. That is,
    \[
                \KHM(Z,K) := \SHM(M,\gamma)
    \]
    where $M=Z\setminus N^{\circ}(K)$ is the knot complement and
    $s(\gamma)$ consists of two oppositely-oriented meridians.
    \CloseDef
\end{definition}

To understand what this definition leads to, we must construct a
suitable closure of the sutured manifold.

\subsection{Closures of knot complements}

So let $K$ be a knot in a closed manifold $Z$, and let $(M,\gamma)$ be
the knot complement, with two sutures as just described. We can
describe a particularly simple closure of $(M,\gamma)$ as follows, if
we temporarily relax the rules and allow the auxiliary surface $T$ to
be an annulus. (The reason this is not a valid closure of $(M,\gamma)$
for our purposes is that the resulting surfaces $\bar{R}_{\pm}$ will
have genus $1$. We will correct this shortly, replacing the annulus by
a surface of genus $1$.) Let $N$ be a closed tubular neighborhood of
$K$, and let $N'\subset N$ be a smaller one.
Let $m$ be a meridian of $K$, lying outside $N'$ but inside $N$. 
We will consider $M$ to
be $\overline {Z\setminus N'}$, and we take two meridional sutures
$s(\gamma)$ on
the boundary of $N'$. If we take $T$ to be an annulus and attach
$[-1,1]\times T$ by gluing the two annuli
$[-1,1]\times \partial T$ to the sutures $A(\gamma)$, then what
results is a $3$-manifold $L$ with two tori as boundary components: we can
identify it with the complement of a tubular neighborhood of $m$ in
$M$.

Figure~\ref{fig:Knot-homology-2} shows the part of $L$ that lies
inside the tubular neighborhood $N$ of $K$. (The top and bottom are
identified.) The figure shows a vertical solid torus $N$ with a
smaller vertical solid torus $N'$ drilled out of it, as well as a
neighborhood $U$ of the meridian $m$, which has also been removed. The
boundary of $L$ consists of the inner vertical boundary (the boundary
of $N'$) and the boundary of the horizontal solid torus (the boundary
of $U$).  These boundary components are $\bar R_{+}$ and $\bar R_{-}$.

\begin{figure}
    \begin{center}
        \includegraphics[scale=0.7]{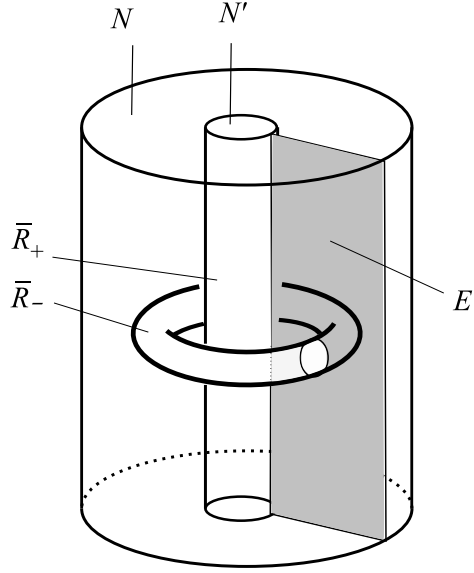}
    \end{center}
    \caption{\label{fig:Knot-homology-2}
   The part of $L$ lying inside the larger tubular neighborhood $N$.}
\end{figure}

If we choose a  framing of $K$, then we
obtain a fibration of $L \cap N$ by punctured annuli $E$ (one of which is
shown gray in the figure).
We now form the closure $Y_{1}=Y(M,\gamma)$ using $T$ as the auxiliary
surface by gluing $\bar R_{+}$ to $\bar R_{-}$: on each punctured
annulus $E$,  we
glue the circle $E \cap\bar R_{+}$ to $E \cap\bar R_{-}$. This turns
each annulus $E$ into a genus-1 surface $F$ with one boundary
component. (The remaining boundary component of $F$ lies on the outer
torus, $\partial N$.) Thus we have seen:

\begin{lemma}\label{lem:picture-F}
    Using an annulus $T$ as the auxiliary surface, a closure of the
    sutured manifold $(M,\gamma)$ associated to a knot $K$ in $Z$ can
    be described by taking a surface $F$ of genus one, with one
    boundary component, and gluing $F \times S^{1}$ to the knot
    complement $\overline{Z\setminus N}$. The gluing is done so that
    $\{p\}\times S^{1}$ is is attached to the meridian of $K$ on
    $\partial N$ and $\partial F \times \{q\}$ is glued to any chosen
    longitude of $K$ on $\partial N$. \qed
\end{lemma}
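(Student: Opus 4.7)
The lemma is essentially a repackaging of the closure construction described in the paragraph immediately preceding it, so the plan is to decompose the closure $Y_1$ along the torus $\partial N$ and to recognize the piece lying inside $N$ as the product $F\times S^{1}$ named in the statement. The work is really just book-keeping; the identification of the gluing data on $\partial N$ is the part that needs the most care.

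The first step is to isolate the part of $Y_1$ that sits inside $N$. The closure $Y_1$ is obtained from $L = \overline{Z\setminus(N'\cup U^{\circ})}$ by identifying $\bar R_+ = \partial N'$ with $\bar R_- = \partial U$ via some diffeomorphism $h$. Splitting along $\partial N$ separates $Y_1$ into the exterior piece $\overline{Z\setminus N}$ and an interior piece coming from $L\cap N$ after the identification. So it suffices to identify this interior piece with $F\times S^{1}$, glued to $\overline{Z\setminus N}$ along $\partial F\times S^{1} = \partial N$.

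Second, I would use the chosen framing to fix a product structure $N = D^{2}\times S^{1}$ (with $S^{1}$ the longitudinal direction and $D^{2}$ a meridional disk), $N' = D^{2}_{\epsilon}\times S^{1}$, with the meridian $m$ placed as a circle $\{|z|=r\}\times\{t_{0}\}$ and $U$ a small solid-torus neighborhood of $m$. Project $L\cap N$ onto the meridional $S^{1}$ factor: over each angle $\theta$, the fiber is the annular slice $\{(r,t):\epsilon\le r\le 1,\ t\in S^{1}\}$ with a disk near $(r,t_{0})$ removed, i.e.~a pair of pants (a ``punctured annulus'') $E$. Its three boundary circles are a longitudinal circle on $\partial N$, a longitudinal circle on $\bar R_+ = \partial N'$, and a meridional circle of $U$ on $\bar R_-$.

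Third, I would choose the identifying diffeomorphism $h$ to respect this meridional parameterization, so that over each $\theta$ it glues the two inner boundary circles of $E$ together. This is possible because both $\partial N'$ and $\partial U$ are naturally parameterized by meridional angle and longitudinal coordinate in a way compatible with the framing. A pair of pants with two of its three boundary circles identified is a genus-$1$ surface $F$ with a single remaining boundary component, and because $h$ is constant in $\theta$ the resulting bundle over the meridional $S^{1}$ is trivial, so the interior piece is $F\times S^{1}$. Finally, reading off the gluing to $\overline{Z\setminus N}$: under this product structure the $S^{1}$ factor is the meridional direction, so $\{p\}\times S^{1}$ is attached to the meridian on $\partial N$; and $\partial F\times\{q\}$ is the outer longitudinal circle of $E$ at angle $q$, so it is attached to the longitude determined by the framing. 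This matches the description in the lemma. The main obstacle, as indicated, is simply verifying that $h$ can be taken to respect the $\theta$-parameterization so that the monodromy is trivial and the boundary identifications come out as claimed.
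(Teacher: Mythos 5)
Your proof is correct and follows exactly the paper's route: the paper's own argument for this lemma is the discussion immediately preceding it (hence the \verb|\qed| with no separate proof), which identifies the piece of $L$ inside $N$ as fibered over the meridional circle by punctured annuli $E$, glues two of the three boundary circles of each fiber to produce the genus-one surface $F$, and reads off the gluing to $\partial N$ from the fibration. You have simply written out the same construction with explicit coordinates.
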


A shorter way to say what we have done is to that we have glued
together two knot complements: for the knot $K$ in $Z$ and the
standard circle ``knot'' in the $3$-torus, using any chosen framing of
the former and the standard framing of the latter, attaching
longitudes to meridians and meridians to longitudes. We give a name to
this closed manifold:

\begin{definition}\label{def:Closure-1}
    We write $Y_{1}(Z,K)$ for the closed $3$-manifold obtained from the
    framed knot $K$ in $Z$ by the construction just described. \CloseDef
\end{definition}

\begin{figure}
    \begin{center}
        \includegraphics[scale=0.7]{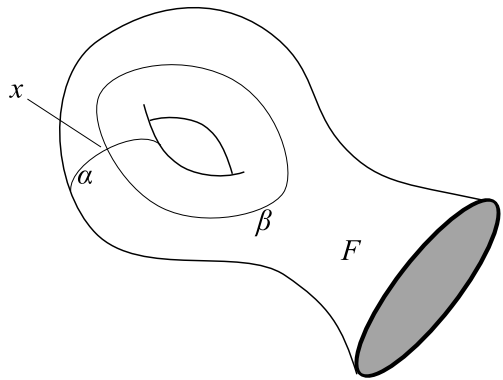}
    \end{center}
    \caption{\label{fig:F-alone}
   The closed surface $F$ obtained by gluing two boundary components
   of $E$: a genus-one surface with one boundary component. The curve
   $\alpha$ is the gluing locus.}
\end{figure}

As we pointed out at the beginning of this subsection, we have
described a closure of the sutured manifold $(M,\gamma)$ that is
illegitimate, because $T$ is an annulus and $\bar{R}$ has genus one. We now
described how $Y_{1}(Z,K)$ gets modified if we use a surface $\tilde{T}$
of genus one (still with two boundary components) instead of $T$.
Figure~\ref{fig:F-alone} shows the surface $F$. The curve $\alpha$ on
$F$ is the intersection of $F$ with the torus $\bar R \subset Y(Z,K)$
where $\bar{R}_{+}$ and $\bar R_{-}$ are glued. Thus $\bar R$
is the torus
\[
\begin{aligned}
\bar {R} &= \alpha\times S^{1} \\
                    &\subset F\times S^{1} \\
                    &\subset Y(Z,K)
                    \end{aligned}
\]
The image of $[-1,1]\times T$ in $Y(Z,K)$ is a copy of $S^{1}\times T$
and can be identified with the neighborhood of $\beta \times S^{1}$:
\[
            S^{1}\times T =\mathrm{nbd}( \beta) \times S^{1} \subset
            F\times S^{1},
\]
where $\mathrm{nbd}(\beta)\subset F$ is an annular neighborhood of
$\beta$.
The identification of the various factors is as indicated: the $S^{1}$
factor in $S^{1}\times T$ becomes the $\beta$ factor on the right, and
the core of the annulus $T$ becomes the $S^{1}$ factor on the right.
Recalling the remark made at the end of
section~\ref{sec:independence}, we see that to effectively increase
the genus of the auxiliary surface by $1$, we should:
\begin{enumerate}
    \item drill out a tubular neighborhood $\beta\times D^{2}$ of the circle
    $\beta\times\{q\}\subset F\times S^{1}$;
    \item attach $S^{1}\times G$, where $G$ is a genus-one surface
    with one boundary component, by a diffeomorphism
    \[
                    S^{1}\times \partial G \to \beta\times \partial
                    D^{2}
    \]
    which preserves the order of the factors.
\end{enumerate}
(In the second step, a framing of $\{q\}\times\beta$ is needed, but we have a
preferred one because $\beta$ lies on $\{q\}\times F$.)

\begin{definition}\label{def:Closure-2}
    We write $\tilde{Y}(Z,K)$ for the manifold obtained from $Y_{1}(Z,K)$
    by the two steps just described. It is a closure of the sutured
    manifold $(M,\gamma)$ associated to the knot $K$ in $Z$ obtained
    using a genus-one auxiliary surface $\tilde T$; and it
    depends only on a choice of framing for $K$. \CloseDef
\end{definition}

While the closure $Y_{1}(Z,K)$ has a genus-one surface $\bar{R}$, the
closure $\tilde Y(Z,K)$ has a genus-two surface $\tilde{R}$. The
latter is obtained from $\bar{R}=\alpha\times S^{1}$ by removing a
neighborhood of the point $(x,q)$ in $\alpha\times S^{1}$ and
adding the genus-one surface $\{x\}\times G$. To summarize this
discussion, we have the following, essentially by definition now:

\begin{corollary}\label{cor:KHM-from-Y}
    The monopole knot homology $\KHM(Z,K)$ can be computed as the
    ordinary monopole homology $\HM(\tilde Y| \tilde R)$, where
    $\tilde Y = \tilde Y(Z,K)$ is as above. Any framing of $K$ can be
    used in the construction of $\tilde Y$. \qed
\end{corollary}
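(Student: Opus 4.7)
The plan is to verify that $\tilde Y(Z,K)$ is a bona fide closure of the sutured manifold $(M,\gamma)$ associated to $(Z,K)$, in the sense of Definition~\ref{def:closure}, with $\tilde R$ playing the role of the distinguished surface $\bar R$. Once this identification is established, the equality $\KHM(Z,K)=\HM_{\bullet}(\tilde Y|\tilde R)$ will follow directly from Definition~\ref{def:SHM} combined with the definition of $\KHM$, and the framing-independence clause will follow from Theorem~\ref{thm:independence}.

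First I would recall from Lemma~\ref{lem:picture-F} that $Y_{1}(Z,K)$ is obtained by attaching a product sutured region $[-1,1]\times T$ with $T$ an annulus to the knot complement $(M,\gamma)$, and then identifying $\bar R_{+}$ with $\bar R_{-}$ via the diffeomorphism $h$ determined by the chosen framing (longitudes to meridians and vice versa). This would already be a closure were it not for the fact that $T$ is an annulus and so $\bar R$ is only a torus, violating condition \ref{item:genus-2}.

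Next I would identify the two steps of Definition~\ref{def:Closure-2} with the genus-raising recipe spelled out in the remark at the end of Section~\ref{sec:independence}. Namely, drilling out a tubular neighborhood of $\beta\times\{q\}\subset F\times S^{1}$ and gluing in $S^{1}\times G$ (where $G$ has genus one with one boundary component, respecting the order of factors) is exactly the operation of replacing $T$ by the surface $\tilde T$ of genus one obtained by attaching $G$ to $T$ at the point $x=\alpha\cap\beta$; equivalently, it replaces $\bar R=\alpha\times S^{1}$ by the connected-sum surface $\tilde R$ of genus two. Therefore $\tilde Y(Z,K)$ is precisely the closure $Y(M,\gamma)$ built with the auxiliary surface $\tilde T$ of genus one and with the diffeomorphism $\tilde h$ equal to $h$ off the handle and extended trivially across it. The relevant conditions are then in force: $\tilde R$ has genus two, and a non-separating curve $c\subset \tilde T$ as in \ref{item:closed-curve} is visible on the right-hand picture.

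With the closure identified, Definition~\ref{def:SHM} gives $\KHM(Z,K)=\SHM(M,\gamma)=\HM_{\bullet}(\tilde Y|\tilde R)$, and Theorem~\ref{thm:independence} supplies independence from the remaining choices, in particular from the framing of $K$ (which feeds only into the gluing diffeomorphism and not into the underlying sutured manifold $(M,\gamma)$). The only thing requiring actual care is the bookkeeping in the previous paragraph matching the $S^{1}$-factors in $F\times S^{1}$ with those of the product region $S^{1}\times T$; this amounts to unwinding the identifications already spelled out in Definition~\ref{def:Closure-2} and is not a serious obstacle.
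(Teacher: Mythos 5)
Your argument matches the paper's own: the corollary is presented there as following "essentially by definition" from the preceding discussion, which identifies $\tilde Y(Z,K)$ as the closure of $(M,\gamma)$ built from the genus-one auxiliary surface $\tilde T$ via the genus-raising remark, and then invokes Theorem~\ref{thm:independence} for framing-independence. Your proof reconstructs exactly this chain of reasoning.
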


\begin{remark}
    Both $Y_{1}(Z,K)$ and $\tilde Y(Z,K)$ can be described alternatively
    as follows. Let $S$ be a closed surface of genus $l$, let $c$ be a
    non-separating simple closed curve on $S$, and let $\hat{c}$ be
    the curve $\{p\} \times c$ in the $3$-manifold $S^{1}\times S$.
    Let $N(\hat{c})$ be a tubular neighborhood. Let $Y_{l}$ be the
    result of gluing the complement of $\hat{c}$ to the complement of
    $K$:
    \begin{equation}\label{eq:Y-l-picture}
                Y_{l} =  (S^{1}\times S) \setminus
                N^{\circ}(\hat{c})  \cup_{\phi} Z\setminus N^{\circ}(K)
    \end{equation}
    where $\phi$ identifies the meridian curves of $\hat{c}$ to the
    longitudes of $K$ and vice versa. (We give $\hat{c}$ the obvious
    framing, and we recall that  a framing of $K$ has been chosen.)
    Then the manifold $Y_{1}$ is $Y_{1}(Z,K)$, and $Y_{2}$ is $\tilde
    Y(Z,K)$.
\end{remark}

We can also use the simpler manifold $Y_{1}$ to compute monopole knot
homology, as long as we switch to local coefficients. This is the
content of the next lemma.

\begin{lemma}\label{lem:use-Y1}
    If $t-t^{-1}$ is invertible in the coefficient ring $\cR$ and
    $\cR$ has no $\Z$-torsion, then
    the knot homology $\KHM(Z,K)\otimes \cR$ can be computed as
    $\HM_{\bullet}(Y_{1};\Gamma_{\hat\alpha})$, where $Y_{1}$ is the
    manifold described in Definition~\ref{def:Closure-1} and
    $\hat\alpha$ is the curve $\alpha\times \{p\}$ in $F\times
    S^{1}\subset Y_{1}$, regarded as a $1$-cycle.
\end{lemma}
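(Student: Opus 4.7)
The plan is to recognize $Y_1 = Y_1(Z,K)$ as a closure of the sutured manifold $(M,\gamma)$ that just fails condition \ref{item:genus-2} (since $\bar R$ is a torus), and then to appeal to Proposition~\ref{prop:SHM-Gamma-invariant}, which authorizes such genus-one closures provided local coefficients are used. So first I would check that $Y_1$ really arises from Definition~\ref{def:closure} by taking the auxiliary surface $T$ to be an annulus: this is exactly what was observed in Lemma~\ref{lem:picture-F}. The resulting surface $\bar R = \alpha\times S^{1}\subset F\times S^{1}$ is a torus.

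Next I would verify that the $1$-cycle $\hat\alpha = \alpha\times\{p\}$ on $\bar R$ satisfies the dual-curve condition in Definition~\ref{def:SHM-Gamma}. Tracing the identifications preceding Definition~\ref{def:Closure-2}, the image of $[-1,1]\times T$ inside $Y_1$ is $\mathrm{nbd}(\beta)\times S^{1}\subset F\times S^{1}$, with the core $c$ of the annulus $T$ mapping to the $S^{1}$-factor. Consequently the curve $\bar c$ of Definition~\ref{def:SHM-Gamma}, which is the image of $\{\pm 1\}\times c$ on $\bar R$, is identified with $\{q\}\times S^{1}\subset \alpha\times S^{1}=\bar R$. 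The intersection number $\hat\alpha\cdot\bar c$ on $\bar R$ is $1$, so $\hat\alpha$ does lie in the class of dual curves permitted by Definition~\ref{def:SHM-Gamma}, provided $h$ is arranged compatibly as required. Any such choice of $h$ is admissible because Corollary~\ref{cor:KHM-from-Y} (extended to the present setting by Proposition~\ref{prop:SHM-Gamma-invariant}) tells us the answer is independent of $h$.

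With the setup in hand, the lemma now follows by the chain of equalities
\[
\HM_{\bullet}(Y_{1}\mid\bar R;\Gamma_{\hat\alpha})
\;=\; \SHM(M,\gamma;\Gamma_{\hat\alpha})
\;=\; \SHM(M,\gamma)\otimes\cR
\;=\; \KHM(Z,K)\otimes\cR,
\]
where the first equality is the genus-one case of Proposition~\ref{prop:SHM-Gamma-invariant}, the second is Lemma~\ref{lem:straight-tensor}, and the third is the definition of $\KHM$. It remains to drop the decoration ``$|\bar R$''. Because $\bar R$ is a torus, the adjunction inequality (applied to both $\bar R$ and $-\bar R$) forces $\HM_{\bullet}(Y_{1},\s;\Gamma_{\hat\alpha})=0$ for every \spinc{} structure $\s$ with $\langle c_{1}(\s),[\bar R]\rangle\neq 0$, so only the summands in $\Sp(Y_{1}\mid\bar R)$ survive and $\HM_{\bullet}(Y_{1};\Gamma_{\hat\alpha}) = \HM_{\bullet}(Y_{1}\mid\bar R;\Gamma_{\hat\alpha})$.

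The only real point of concern is this last step: one wants the adjunction vanishing in the form stated in Section~\ref{subsec:adjunction} to persist in the presence of the local system $\Gamma_{\hat\alpha}$. This is routine, since the vanishing ultimately rests on energy estimates for Seiberg-Witten solutions on $[-1,1]\times\bar R$ with product metric of large volume, and is insensitive to the local coefficient system, whose effect is only to weight the chain-level generators. Once that observation is in place, the proof is complete.
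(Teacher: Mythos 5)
Your proof is correct and follows the same route as the paper's brief argument: identify $Y_{1}$ as a genus-one closure permitted by Proposition~\ref{prop:SHM-Gamma-invariant} and then apply Lemma~\ref{lem:straight-tensor}. You go further than the paper in explicitly justifying the passage from $\HM_{\bullet}(Y_{1}|\bar{R};\Gamma_{\hat\alpha})$ to $\HM_{\bullet}(Y_{1};\Gamma_{\hat\alpha})$ via the torus adjunction inequality; this step is indeed left implicit in the paper's one-paragraph proof, and your observation that the vanishing persists with local coefficients is sound, since the adjunction inequality kills the relevant chain groups, not merely the homology.
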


\begin{proof}
    According to Proposition~\ref{prop:SHM-Gamma-invariant}, we can
    use the closure $Y_{1}$ to compute $\SHM(M,\gamma;\Gamma_{\eta})$.
    Together with Lemma~\ref{lem:straight-tensor}, this tells us that
    \[
                    \HM_{\bullet}(Y_{1};\Gamma_{\hat\alpha}) \cong
                    \SHM(M,\gamma)\otimes\cR,
    \]
    where $(M,\gamma)$ is the sutured manifold obtained from the knot
    complement by Juh\'asz's prescription. 
\end{proof}

\subsection{Properties of monopole knot homology}

Suppose that the knot $K\subset Z$ is null-homologous, and let
$\Sigma$ be a Seifert surface for $K$: an oriented embedded surface in
$Z\setminus N^{\circ}$ with boundary a simple closed curve on
$\partial N$. We can frame the knot $K$ so that $N$ is identified with
$K\times D^{2}$ and $\partial \Sigma$ is $K\times \{q'\}$ for some
$q' \in S^{1}$. We can also regard $\Sigma$ as a surface in the
manifold $Y_{1}(Z,K)$ (Definition~\ref{def:Closure-1}). The union of
$\Sigma$ and $F\times \{q'\}$ in $Y_{1}(Z,K)$ is a closed oriented surface
\[
                \bar\Sigma = \Sigma \cup (F\times\{q'\}) \subset
                Y(Z,K).
\]
Its genus is one more than the genus of $\Sigma$. The surface $F\times
\{q'\} \subset Y_{1}(Z,K)$ remains intact in the manifold $\tilde Y(Z,K)$
for $q'\ne q$
(Definition~\ref{def:Closure-2}), so we can regard $\bar\Sigma$ also
as a closed surface in $\tilde Y=\tilde Y(Z,K)$. Using the surface
$\bar\Sigma$, we can decompose $\KHM(Z,K)$ according to the first
Chern class of the \spinc{} structure. We write
\[
                \KHM(Z,K) = \bigoplus_{i\in \Z}\KHM(Z,K,i)
\]
where
\[
                \KHM(Z,K,i) = \bigoplus_{\substack{\s \in \Sp(\tilde Y |
                \tilde R) \\ \langle c_{1}(\s), [\bar\Sigma]
                \rangle=2i} }\mskip -20 mu \HM_{\bullet}(\tilde Y,\s).
\]
If $Z$ is not a homology sphere, then the decomposition by \spinc{}
structures may depend on the choice of the relative homology class for
the Seifert surface $\Sigma$, in which case one should write
\begin{equation}\label{eq:KHM-summands}
                \KHM(Z,K,[\Sigma],i)
\end{equation}
for the summands.

Some familiar properties of the (Heegaard) knot homology of
Ozsv\'ath-Szab\'o and Rasmussen carry over to this
monopole version.

\begin{lemma}
    The groups $\KHM(Z,K,i)$ and $\KHM(Z,K,-i)$ are isomorphic.
\end{lemma}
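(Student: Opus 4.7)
The plan is to derive the symmetry from the conjugation symmetry of monopole Floer homology: complex conjugation on the Seiberg--Witten configuration space induces a natural isomorphism $\HM_\bullet(Y,\s) \cong \HM_\bullet(Y,\bar\s)$ for every Spin$^c$ structure $\s$, with $c_1(\bar\s) = -c_1(\s)$. Applied to the closure $\tilde Y = \tilde Y(Z,K)$, conjugation reverses the sign of $\langle c_1(\s),[\bar\Sigma]\rangle$, providing the candidate mechanism for the grading flip $i \mapsto -i$.

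The complication is that conjugation also reverses $\langle c_1(\s),[\tilde R]\rangle$, so it sends the top-pairing subset $\Sp(\tilde Y|\tilde R)$ to the ``bottom-pairing'' subset $\Sp(\tilde Y|-\tilde R)$. To bridge these two subsets, I would exploit the fact that the sutured manifold $(M,\gamma)$ associated to a knot admits no canonical labeling of the two oppositely-oriented meridional sutures; reversing this labeling interchanges $R_+(\gamma)$ and $R_-(\gamma)$, and at the level of the closure gives the same underlying $3$-manifold $\tilde Y$ but with the distinguished surface $\tilde R$ equipped with the opposite orientation. Theorem~\ref{thm:independence} then furnishes a canonical identification $\HM_\bullet(\tilde Y|\tilde R) \cong \HM_\bullet(\tilde Y|-\tilde R)$, and composing this with the conjugation isomorphism yields the desired map $\KHM(Z,K,i) \to \KHM(Z,K,-i)$.

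The main obstacle is the orientation bookkeeping. One must carefully track the effect of the $R_+ / R_-$ swap on the Seifert cap $\bar\Sigma = \Sigma \cup (F\times\{q'\})$: the piece $\Sigma$ in the knot complement depends only on the orientation of $K$ and is unaffected by the relabeling, while the auxiliary cap $F\times\{q'\}$ inherits its orientation from the convention that selects $R_+(\gamma)$. Verifying that the combined effect of conjugation (negating all $c_1$-pairings) together with the swap (negating the pairing with $[\tilde R]$ and twisting the cap appropriately) produces exactly the sign reversal on the $[\bar\Sigma]$-grading required for the isomorphism $\KHM(Z,K,i) \cong \KHM(Z,K,-i)$---rather than cancelling to a tautological identity---is the crux of the argument, and is where I would spend the bulk of the proof.
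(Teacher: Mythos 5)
Your approach matches the paper's — both rest on the conjugation symmetry $\HM_\bullet(Y,\s)\cong\HM_\bullet(Y,\bar\s)$ — but you have correctly spotted that the paper's one-sentence proof elides exactly the point you worry about: conjugation negates \emph{all} of $c_1(\s)$, so it carries $\Sp(\tilde Y|\tilde R)$ to $\Sp(\tilde Y|-\tilde R)$ and therefore does not map $\KHM(Z,K,i)$ into $\KHM(Z,K,-i)$ on the nose. Your proposed bridge — that the two meridional sutures carry no canonical labeling, so that swapping $R_+(\gamma)\leftrightarrow R_-(\gamma)$ produces another closure of the same sutured manifold with $\tilde R$ replaced by $-\tilde R$, and Theorem~\ref{thm:independence} then identifies $\HM_\bullet(\tilde Y|\tilde R)$ with $\HM_\bullet(\tilde Y|-\tilde R)$ — is the right idea and fills the gap.

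Two refinements would tighten it. First, rather than invoking Theorem~\ref{thm:independence} (which gives an abstract isomorphism and requires you to chase what happens to the $\bar\Sigma$-grading under a change of closure), it is cleaner to exhibit a concrete orientation-preserving self-diffeomorphism of $\tilde Y$ realizing the swap: take the involution of the auxiliary piece $F\times S^1$ induced by the hyperelliptic-type involution of $F$ reversing $\alpha$ and $\beta$ (isotoped to be the identity near $\partial F$), extended by the identity over the knot complement. This carries $\bar R=\alpha\times S^1$ to $-\bar R$ while preserving $F\times\{q'\}$, hence $\bar\Sigma$, with orientation. Composing its pullback with conjugation then sends $\s$ with $\langle c_1(\s),[\tilde R]\rangle=2$, $\langle c_1(\s),[\bar\Sigma]\rangle=2i$ to a Spin$^c$ structure with pairings $2$ and $-2i$, which is precisely the claim. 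Second, in the bookkeeping step you flag, note that $\Sigma$ sits entirely in the knot complement and is untouched, so the only orientation to track is that of the cap, which the explicit diffeomorphism handles directly; this makes the ``crux'' you anticipate considerably shorter than you fear.
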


\begin{proof}
    The isomorphism arises from the isomorphism between
    $\HM_{\bullet}(Y,\s)$ and $\HM_{\bullet}(Y,\bar{s})$, where
    $\bar{s}$ is the conjugate \spinc{} structure.
\end{proof}

\begin{lemma}
    The group $\KHM(Z,K,i)$ is zero for $|i|$ larger than the genus of
    $\Sigma$.
\end{lemma}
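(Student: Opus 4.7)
The plan is to apply the adjunction inequality from Section~\ref{subsec:adjunction} directly to the closed surface $\bar\Sigma \subset \tilde Y$ and read off the bound on $i$.

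First I would verify the genus of $\bar\Sigma$. By construction, $\Sigma$ is an oriented surface with one boundary circle and genus equal to the Seifert genus $g(\Sigma)$, while $F\times\{q'\}$ is a torus with one boundary circle (genus one). These are glued along their common boundary circle on $\partial N$, so $\bar\Sigma$ is a closed, connected, oriented surface of genus
\[
            g(\bar\Sigma) = g(\Sigma) + 1.
\]
In particular $\bar\Sigma$ is not a sphere, so the adjunction inequality applies.

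Next, recall from Section~\ref{subsec:adjunction} (the vanishing theorem, Corollary~40.1.2 of \cite{KM-book}) that $\HM_{\bullet}(\tilde Y,\s) = 0$ for every \spinc{} structure $\s$ on $\tilde Y$ satisfying
\[
            \langle c_{1}(\s), [\bar\Sigma]\rangle > 2g(\bar\Sigma) - 2 = 2g(\Sigma).
\]
The same vanishing with $-\bar\Sigma$ in place of $\bar\Sigma$ takes care of the opposite sign. Thus if $|\langle c_{1}(\s),[\bar\Sigma]\rangle| > 2g(\Sigma)$, then $\HM_{\bullet}(\tilde Y,\s)=0$. Applied to the summands defining $\KHM(Z,K,i)$, this gives vanishing for $|2i| > 2g(\Sigma)$, that is, $|i| > g(\Sigma)$.

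There is no real obstacle here; the only thing to be careful about is that the closure $\tilde Y$ used to define $\KHM(Z,K)$ does contain $\bar\Sigma$ as a closed surface of the stated genus. This was set up explicitly before the statement: the piece $F\times\{q'\}$ survives into $\tilde Y(Z,K)$ for $q'\ne q$ (since the modification used to pass from $Y_{1}$ to $\tilde Y$ only drills near $\{q\}$), so $\bar\Sigma$ is a legitimate closed surface in $\tilde Y$ and the adjunction inequality applies verbatim.
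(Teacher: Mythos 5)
Your proof is correct and follows the same route as the paper: apply the adjunction inequality to the closed surface $\bar\Sigma$ in the closure $\tilde Y$, noting that $g(\bar\Sigma)=g(\Sigma)+1$ so that vanishing holds once $|\langle c_{1}(\s),[\bar\Sigma]\rangle|>2g(\Sigma)$. The only difference is that you spell out the genus count and the use of $-\bar\Sigma$ for the opposite sign, which the paper leaves implicit.
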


\begin{proof}
    The adjunction inequality tells us that $\HM_{\bullet}(Y,\s)$ is
    zero for \spinc{} structures $\s$ with $c_{1}(\s)[\bar\Sigma]$
    greater than $2g(\bar\Sigma)-2$. The genus of $\bar\Sigma$ is one
    larger than the genus of $\Sigma$.
\end{proof}

% \begin{lemma}\label{lem:use-Y}
%     For $i\ne 0$, the group $\KHM(Z,K,i)$ can also be calculated using
%     $Y_{1}=Y_{1}(Z,K)$ in place of $\tilde Y$:
%     \[
%                 \KHM(Z,K) = \bigoplus_{\substack{\s \in \Sp(Y_{1})
%                 \\ \langle c_{1}(\s), [\bar\Sigma]
%                 \rangle=2i} } \mskip -20 mu \HM_{\bullet}( Y_{1},\s).
% \]
% \end{lemma}
% 
% \begin{proof}
%     The proof that the monopole homology of a sutured manifold is
%     independent of the choice of genus for the auxiliary surface $T$
%     carries over in this case, as long as we are dealing with
%     non-torsion \spinc{} structures.
% \end{proof}
% 
\begin{lemma}
    For a classical knot $K$ in $S^{3}$ of genus $g$, the monopole
    knot homology group
    $\KHM(S^{3},K,g)$ is non-zero.
\end{lemma}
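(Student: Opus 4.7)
The plan is to apply the non-vanishing half of the adjunction story from Section~\ref{subsec:adjunction} (Corollary~41.4.2 of \cite{KM-book}) in a suitable closure of the knot complement. I work with the manifold $\tilde{Y}=\tilde{Y}(S^{3},K)$ from Corollary~\ref{cor:KHM-from-Y}, which is built from a genus-one auxiliary surface, so that the distinguished surface $\tilde{R}\subset\tilde{Y}$ has genus $2$. A choice of minimal genus Seifert surface $\Sigma$ and a framing of $K$ gives the closed surface $\bar\Sigma\subset\tilde{Y}$ of genus $g+1$ described before Lemma~\ref{lem:picture-F}. To show $\KHM(S^{3},K,g)\neq 0$ it suffices to produce a single \spinc{} structure $\s_{c}$ on $\tilde{Y}$ with $\HM_{\bullet}(\tilde{Y},\s_{c})\neq 0$ such that $\langle c_{1}(\s_{c}),[\tilde{R}]\rangle = 2$ (placing $\s_{c}$ in $\Sp(\tilde{Y}|\tilde{R})$) and $\langle c_{1}(\s_{c}),[\bar\Sigma]\rangle = 2g$ (placing it in the $i=g$ summand).

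The key step is to construct a taut foliation $\mathcal{F}$ on $\tilde{Y}$ having both $\tilde{R}$ and $\bar\Sigma$ as compact leaves. By Gabai's theorem, since $\Sigma$ is a Thurston-norm minimizing Seifert surface in the sutured manifold $(M,\gamma)$ associated to the knot complement, there is a taut foliation $\mathcal{F}_{0}$ of $M$ having $\Sigma$ as a compact leaf and $R_{\pm}(\gamma)$ as unions of leaves. On the product region $[-1,1]\times\tilde{T}$ used in forming the closure I install the product foliation by the slices $\{t\}\times\tilde{T}$; this matches $\mathcal{F}_{0}$ along the glued annuli $A(\gamma)=A(\delta)$. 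I then choose the closing diffeomorphism $h:\bar{R}_{+}\to\bar{R}_{-}$ so that the induced foliations on $\bar{R}_{\pm}$ are identified; this is possible because each $\bar{R}_{\pm}$ is already a leaf of the combined foliation, and we have freedom in the choice of $h$. The surface $\tilde{R}\subset\tilde{Y}$ then appears as a compact leaf, while the Seifert surface $\Sigma\subset M$ closes up through the cross-section $\tilde{T}\times\{q'\}$ of the product region to give $\bar\Sigma$ as a compact leaf of $\mathcal{F}$.

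Such a foliation determines a \spinc{} structure $\s_{c}$ on $\tilde{Y}$ whose first Chern class is (up to sign) the Euler class of the tangent plane bundle $T\mathcal{F}$; evaluated on a compact leaf $L$ this gives $2g(L)-2$ in the standard orientation conventions. In particular,
\[
\langle c_{1}(\s_{c}),[\tilde{R}]\rangle = 2,\qquad \langle c_{1}(\s_{c}),[\bar\Sigma]\rangle = 2g,
\]
so $\s_{c}\in\Sp(\tilde{Y}|\tilde{R})$ and $\s_{c}$ contributes to the $i=g$ Alexander summand. The non-vanishing theorem then yields $\HM_{\bullet}(\tilde{Y},\s_{c})\neq 0$, so $\KHM(S^{3},K,g)\neq 0$. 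The main technical hurdle is the foliation construction of the middle paragraph, and in particular arranging the closure diffeomorphism $h$ so that it simultaneously matches the horizontal slicing on $\bar{R}_{\pm}$ and correctly identifies the two copies of $\partial\Sigma$ in the sutures, so that both distinguished surfaces survive as compact leaves after gluing.
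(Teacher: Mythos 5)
Your argument breaks down at the foliation construction. The surfaces $\tilde{R}$ and $\bar\Sigma$ that you want to realize simultaneously as compact leaves cannot both be leaves of any foliation, because they have an essential transverse intersection in $\tilde{Y}$: the part of $\tilde{R}$ away from the attached handle is the torus $\alpha\times S^{1}\subset F\times S^{1}$, the part of $\bar\Sigma$ inside $F\times S^{1}$ is the slice $F\times\{q'\}$, and these meet in the circle $\alpha\times\{q'\}$, whose class in $H_{1}(\tilde{Y})$ is non-zero. Thus the intersection class $[\tilde{R}]\cdot[\bar\Sigma]\in H_{1}(\tilde{Y})$ is non-zero, so the two surfaces cannot even be made disjoint by isotopy, let alone both be leaves. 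Your auxiliary claim that there is a taut foliation $\mathcal{F}_{0}$ of the sutured knot complement $(M,\gamma)$ having $\Sigma$ as a compact leaf and $R_{\pm}(\gamma)$ as unions of leaves fails for a related reason: with meridian sutures, $\partial\Sigma$ is a longitude of $K$ and therefore crosses both $R_{+}(\gamma)$ and $R_{-}(\gamma)$, which is incompatible with $R_{\pm}(\gamma)$ being tangent leaves while $\Sigma$ is a properly embedded interior leaf.

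The paper's proof avoids this tension by never asking $\tilde{R}$ to be a leaf. Using the description $\tilde{Y}=Y_{2}=\bigl((S^{1}\times S)\setminus N^{\circ}(\hat{c})\bigr)\cup_{\phi}\bigl(S^{3}\setminus N^{\circ}(K)\bigr)$ with $S$ of genus $2$, it glues Gabai's foliation $\mathcal{F}_{K}$ of the knot complement (having $\Sigma$ as a compact leaf and the boundary torus foliated by longitudinal circles) to a foliation $\mathcal{F}_{S}$ of $S^{1}\times S$ which is a small perturbation of the product foliation and is \emph{transverse} to the gluing curve $\hat{c}$. Since $\mathcal{F}_{S}$ is homotopic as an oriented plane field to the tangent planes of the slices $\{p'\}\times S$, the first Chern class has degree $-\chi(S)=2$ on $\tilde{R}$ without $\tilde{R}$ being a leaf, while $\bar\Sigma$ picks up degree $2g$ from the compact leaf $\Sigma$ and the near-product piece. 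In short, only one of the two degree constraints should be realized by tangency; the other must be extracted from the homotopy class of the $2$-plane field.
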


\begin{proof}
    We use the description of $\tilde{Y}=\tilde{Y}(K)$ as the the
    manifold $Y_{2}$, where $Y_{l}$ is the manifold described by
    \eqref{eq:Y-l-picture}. Let $S$ be the genus-2 surface used there,
    let $c$ be the closed curve on $S$, and let $c'$ be a dual curve
    on $S$ meeting $c$ once.
     According to Gabai's results \cite{Gabai, Gabai-knots}, a Seifert
    surface $\Sigma$ of $K$ of genus $g$
    arises as a compact leaf of a taut foliation $\cF_{K}$ of
    $S^{3}\setminus N^{\circ}(K)$, and we can ask that the leaves of
    $\cF_{K}$ meet $\partial N(K)$ in
    parallel circles. On the other hand, $S^{1}\times S$ has a taut
    foliation $\cF_{S}$ which is transverse to the curve $\hat{c} = \{p\}\times
    c$. This foliation is obtained from the trivial product foliation
    by cutting alont the torus $S^{1}\times c'$ and regluing with a
    small rotation of the $S^{1}$ factor. Together, the foliations
    $\cF_{K}$ and $\cF_{S}$ define a foliation $\cF$ of
    $\tilde{Y}=Y_{2}$.
    The surface $\bar{\Sigma}$ sits inside $Y_{2}$ as the union of the
    Seifert surface $\Sigma$ and the punctured torus $(S^{1}\times
    c')\setminus D^{2}$. The \spinc{} structure $\s_{c}$ determined by
    $\cF$ has first Chern class of degree $2g$ on $\bar{\Sigma}$ and
    degree $2$ on the genus-2 surface, so
    $\HM_{\bullet}(\tilde{Y},\s_{c})$ is a summand of $\KHM(S^{3}, K,
    g)$ by definition. The non-vanishing theorem from
    section~\ref{subsec:adjunction} tells us that this group is
    non-zero.
\end{proof}

\begin{lemma}\label{lem:Alexander}
    Let $K$ be a  classical knot and let $\chi(K,i)$ denote the Euler
    characteristic of $\KHM(S^{3},K,i)$, computed using the canonical
    $\Z/2$ grading on monopole Floer homology \cite{KM-book}. Then the
    finite Laurent series
    \[
                \sum_{i} \chi(K,i) T^{i}
    \]
    is the symmetrized Alexander polynomial, $\Delta_{K}(T)$,
    for the knot $K$, up to an
    overall sign.
\end{lemma}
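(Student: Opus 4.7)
The plan is to pass to the Seiberg--Witten invariants of the closed 4-manifold $X = S^{1}\times \tilde{Y}$, where $\tilde{Y} = \tilde{Y}(S^{3},K)$ is the closure of Definition~\ref{def:Closure-2}, and to compute them by cutting along a $3$-torus and invoking a knot-surgery formula.

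\textbf{Step 1 (reduction to SW invariants).}
The manifold $\tilde{Y}$ has $b_{1} \ge 2$, so $\m(X,\s)$ is defined on each \spinc{} structure pulled back from $\tilde{Y}$. The unlabelled lemma in section~\ref{subsec:disconnected-1} (with its consequence that $\m(Z,\s)$ equals the $\Z/2$-graded Euler characteristic of $\HM_{\bullet}(Y,\s)$ when $Z = S^{1}\times Y$) gives $\m(X,\s) = \chi(\HM_{\bullet}(\tilde{Y},\s))$. Summing over $\s \in \Sp(\tilde{Y}|\tilde{R})$ with $c_{1}(\s)\cdot[\bar{\Sigma}] = 2i$ yields, up to an overall sign,
\[
\sum_{i}\chi(K,i)\,T^{i} \;=\; \sum_{\s\in\Sp(\tilde{Y}|\tilde{R})}\m(X,\s)\,T^{c_{1}(\s)\cdot[\bar{\Sigma}]/2},
\]
which is the top-degree sector along $\tilde{R}_{X} = \{\mathrm{pt}\}\times\tilde{R}$ of the SW polynomial $\m(X,[\bar{\Sigma}_{X}])$ from \eqref{eq:SW-generating}, with $\bar{\Sigma}_{X} = \{\mathrm{pt}\}\times\bar{\Sigma}$.

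\textbf{Step 2 (cutting and knot surgery).}
The remark after Definition~\ref{def:Closure-2} presents $\tilde{Y}$ as the splice along a torus $T$ of the knot complement $S^{3}\setminus N^{\circ}(K)$ and the curve-complement $(S^{1}\times S)\setminus N^{\circ}(\hat{c})$, where $T$ meets $\bar{\Sigma}$ transversely in a single point. The 3-torus $S^{1}\times T\subset X$ therefore meets $\bar{\Sigma}_{X}$ transversely in a standard circle, and Proposition~\ref{prop:cut-W-1} relates the SW polynomial of $X$ to that of the cut-open cobordism. Let $X_{0}$ be the manifold built by the same recipe with $K$ replaced by the unknot; then $X$ differs from $X_{0}$ by a Fintushel--Stern-type knot surgery performed along the torus $S^{1}\times m\subset X_{0}$, where $m$ is a meridian of the unknot. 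The resulting knot-surgery identity, whose proof in this framework is a direct iteration of Propositions~\ref{prop:cut-W-1} and~\ref{prop:cut-W-2} together with Corollary~\ref{cor:excision-prototype-twist}, gives
\[
\m(X,[\bar{\Sigma}_{X}]) \;=\; \Delta_{K}(T)\cdot \m(X_{0},[\bar{\Sigma}_{X_{0}}])
\]
in the relevant top-degree sector.

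\textbf{Step 3 (base case and principal obstacle).}
For the unknot, a direct examination of the closure construction (the sutured manifold for the unknot is itself a product) shows that $\KHM(S^{3},\mathrm{unknot})$ has rank one concentrated in degree $i = 0$, so $\sum_{i}\chi(\mathrm{unknot},i)\,T^{i} = \pm 1$. Combining with Step 2 gives $\sum_{i}\chi(K,i)\,T^{i} = \pm\Delta_{K}(T)$. The principal obstacle is Step 2: the knot-surgery torus $S^{1}\times m\subset X_{0}$ does not have simply-connected complement, so the Fintushel--Stern formula cannot be quoted off the shelf and must be rederived here from the cutting theorems of section~\ref{subsec:disconnected-2}; one must verify carefully that the $(t-t^{-1})^{\pm 1}$ factors introduced by Corollary~\ref{cor:excision-prototype-twist}, together with the homology-orientation signs, combine to produce exactly $\Delta_{K}(T)$ with no extraneous unit factor.
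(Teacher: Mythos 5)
Your Steps 1--3 trace out the same path as the paper's proof: reduce the Euler characteristic to the Seiberg--Witten series $\m'(X_{K},[\nu])$ of $X_{K}=S^{1}\times\tilde{Y}$ via the mod-2 grading lemma of section~\ref{subsec:disconnected-1}, identify $X_{K}$ as knot surgery on $X_{0}=T^{2}\times S$ along the torus $S^{1}\times\hat{c}$, and normalize against the unknot. The ``principal obstacle'' you flag in Step~2, however, is not actually an obstacle, and the paper does not leave that gap open. The worry about the surgery torus having non-simply-connected complement is irrelevant to the cutting formalism of section~\ref{subsec:disconnected-2}: Proposition~\ref{prop:cut-W-1} requires only that the $3$-torus meet the $2$-chain transversely in a primitive circle, and once one cuts along $S^{1}\times T$ the whole computation collapses to the \emph{ratio} of relative invariants
\[
\m'(X_{K},[\nu])\big/\m'(X_{0},[\nu])
\;=\;
\m(S^{1}\times M_{K},\nu_{K})\big/\m(S^{1}\times M_{0},\nu_{0}),
\]
so the normalization factors from $\widehat{E(n)}$, and the $(t-t^{-1})^{\pm 1}$ corrections from \eqref{eq:n-i-correction} and Corollary~\ref{cor:excision-prototype-twist}, cancel between numerator and denominator. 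This ratio is not something one needs to re-derive: it is exactly the quantity computed by Fintushel--Stern (and Meng--Taubes; see also \cite[\S 42.5]{KM-book}), and equals $\Delta_{K}(t^{2})$. Together with your Step~3 base case this closes the proof. One small imprecision: the surgery torus is $S^{1}\times\hat{c}$ with $\hat{c}\subset S^{1}\times S$ as in the remark after Corollary~\ref{cor:KHM-from-Y}, not ``$S^{1}\times m$ for $m$ a meridian of the unknot''; these happen to be isotopic in $\tilde{Y}_{0}$ after the gluing, but the paper's description makes the knot-surgery structure explicit without that detour.
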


\begin{proof}
    In different guise, this is essentially the same result as
    that of Fintushel-Stern \cite{Fintushel-Stern-knot} and
    Meng-Taubes \cite{Meng-Taubes}. Let $\tilde Y = \tilde Y(S^{3},K)$
    be the usual closure of the sutured manifold associated to $(S^{3},
    K)$ as in Definition~\ref{def:Closure-2}, let $\tilde{R}$ be the
    genus-2 surface in $\tilde{Y}$ and let $\bar{\Sigma}\subset
    \tilde{Y}$ be the surface of genus $g+1$ formed from a Seifert
    surface $\Sigma$ for $K$ and the genus-1 surface $F$.

    The Euler characteristic can be computed from the Seiberg-Witten
    invariants of the manifold $S^{1}\times \tilde Y$. Specifically, regard
    both $\bar{\Sigma}$ and $\tilde{R}$ as surfaces in \[X_{K} =
    S^{1}\times \tilde{Y}.\] Take $\nu$ to be the $2$-cycle in $X_{K}$
    defined by $\bar{\Sigma}$ and consider the generating function
    $\m(X_{K},[\nu])$ as in
    \eqref{eq:SW-generating}, but modified to use only \spinc{}
    structures that are of top degree on $\tilde {R}$. We introduce
    the notation
    \begin{equation*}
                        \m'(X_{K},[\nu]) =
                        \sum_{\s\in\Sp(X_{K}|\tilde{R})} \m(X_{K},\s)
                        t^{\langle c_{1}(\s),[\nu]\rangle}.
    \end{equation*}
    We then have
    \[
                    \sum_{i}\chi(K,i) t^{2i} =  \m'(X_{K},[\nu]) .
    \]

    Let $X_{0}$ be the same type of $4$-manifold as $X_{K}$, but formed using the
    unknot in place of $K$. The corresponding $3$-manifold
    $\tilde{Y}_{0}$ is $S^{1}\times S$, where $S$ has genus $2$; so
    $X_{0}$ is $T^{2}\times S$. The remark following
    Corollary~\ref{cor:KHM-from-Y} explains that $\tilde{Y}$ is formed
    from $\tilde{Y}_{0}$ by drilling out a neighborhood of a curve
    $\hat{c}$ and gluing in the knot complement $M_{K}=
    S^{2}\setminus N^{\circ}(K)$. If follows that $X_{K}$ is formed
    from $X_{0}$ by a ``knot surgery'' in the sense of
    \cite{Fintushel-Stern-knot}. This, one drills out a neighborhood
    of the torus $S^{1}\times \hat{c}$ and glues in $S^{1}\times
    M_{K}$. In the formalism of section~\ref{subsec:disconnected-2},
    we can therefore compute the ratio
    \[
                    \m'(X_{K},[\nu]) / \m'(X_{0},[\nu])
    \]
    as the ratio of the invariants associated to $(S^{1}\times M_{K},
    \nu_{K})$ and $(S^{1}\times M_{0}, \nu_{0})$. Here $M_{0}$ is the
    knot complement for the unknot, and $\nu_{K}$ and $\nu_{0}$ are
    the $2$-chains defined by Seifert surfaces for $K$ and the unknot
    respectively. This ratio is precisely what is calculated in
    \cite{Fintushel-Stern-knot} (see also \cite[section
    42.5]{KM-book}), and it is equal to $\Delta_{K}(t^{2})$. The lemma
    follows.
\end{proof}

Given a null-homologous knot $K$ in a $3$-manifold $Z$, there is a
rather more straightforward way to arrive at a sutured manifold than
the one that leads to knot homology. We can simply choose a Seifert
surface $\Sigma$ for $K$ and cut the knot complement $Z\setminus
N^{\circ}(K)$ open along $\Sigma$. The result is a sutured manifold
$(M_{\Sigma}, \delta)$ with a single suture and having
$R_{+}(\delta)=R_{-}(\delta)=\Sigma$. The monopole Floer homology of
this sutured manifold captures the top-degree part of the monopole
knot homology:

\begin{proposition}\label{prop:sutured-vs-knot}
    In the above situation, let $g$ be the genus of the Seifert
    surface $\Sigma$, and suppose $g\ne 0$. Then
    $\SHM(M_{\Sigma}, \delta)$ is isomorphic to
    $\KHM(Z,K,[\Sigma], g)$.
\end{proposition}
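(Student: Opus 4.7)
The strategy is to realize both $\KHM(Z,K,[\Sigma],g)$ and $\SHM(M_\Sigma,\delta)$, after tensoring with a suitable coefficient ring $\cR$, as the Floer homology of a common cut-open $3$-manifold, and then to promote the comparison to integer coefficients by the universal-$\cR$ argument from the end of the proof of Theorem~\ref{thm:independence}. The bridge is the simple closure $Y_{1}=Y_{1}(Z,K)=M\cup(F\times S^{1})$ of $(M,\gamma)$ from Definition~\ref{def:Closure-1}, which contains $\bar\Sigma=\Sigma\cup F\times\{q'\}$ as a closed oriented surface of genus $g+1\ge 2$ (using $g\ge 1$).

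My first step would be a purely topological observation: cutting $Y_{1}$ along $\bar\Sigma$ simultaneously cuts $M$ along $\Sigma$ (producing $M_\Sigma$) and cuts $F\times S^{1}$ along the fiber $F\times\{q'\}$ (producing $F\times I$), so the cut-open manifold is $N=M_\Sigma\cup(F\times I)$. But $N$ is precisely the pre-closure form $M_\Sigma\cup[-1,1]\times T'$ of a closure of $(M_\Sigma,\delta)$ using auxiliary surface $T'=F$, so re-gluing its two boundary copies of $\bar\Sigma$ by any orientation-preserving diffeomorphism produces a valid closure $Y_\Sigma$ of $(M_\Sigma,\delta)$ with $\bar R'$ of genus $g+1\ge 2$. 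Thus $Y_{1}$ and $Y_\Sigma$ are two different closures of the common $N$ along the single non-separating genus-$(g+1)$ surface $\bar\Sigma=\bar R'$.

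Next, choose $\hat\alpha=\alpha\times\{p\}$ with $p\neq q'$ so that $\hat\alpha$ is disjoint from $\bar\Sigma$. I would then invoke the local-coefficient version of the single-surface cut-and-reglue invariance used in the proof of Theorem~\ref{thm:independence}, obtained by running its mapping-torus derivation with Theorem~\ref{thm:Floer-excision-extra-eta} substituted for Theorem~\ref{thm:Floer-excision}. This yields
\[
  \HM_\bullet(Y_{1}|\bar\Sigma;\Gamma_{\hat\alpha})\;\cong\;\HM_\bullet(Y_\Sigma|\bar R';\Gamma_{\hat\alpha'}),
\]
where $\hat\alpha'$ is the image of $\hat\alpha$ in $Y_\Sigma$. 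By Proposition~\ref{prop:SHM-Gamma-invariant} together with Lemma~\ref{lem:straight-tensor}, the right-hand side is $\SHM(M_\Sigma,\delta)\otimes\cR$. For the left-hand side I would invoke Lemma~\ref{lem:use-Y1}: the isomorphism $\KHM(Z,K)\otimes\cR\cong\HM_\bullet(Y_{1};\Gamma_{\hat\alpha})$ supplied there is induced by excision operations carried out near $\beta\times\{q\}\subset F\times S^{1}$, which can be arranged disjoint from $\bar\Sigma$; consequently the isomorphism respects the splitting by $\langle c_{1}(\s),[\bar\Sigma]\rangle$, and restricting to the top summand $\langle c_{1}(\s),[\bar\Sigma]\rangle=2g=2g(\bar\Sigma)-2$ on each side yields
\[
  \KHM(Z,K,[\Sigma],g)\otimes\cR\;\cong\;\HM_\bullet(Y_{1}|\bar\Sigma;\Gamma_{\hat\alpha}).
\]
Combining the three displayed identifications gives $\KHM(Z,K,[\Sigma],g)\otimes\cR\cong\SHM(M_\Sigma,\delta)\otimes\cR$ for every coefficient ring $\cR$ with no $\Z$-torsion and $t-t^{-1}$ invertible, and the universal-$\cR$ argument from the proof of Theorem~\ref{thm:independence} promotes this to an isomorphism of finitely generated abelian groups.

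The main technical obstacle is the bookkeeping in the second displayed isomorphism: one must verify that the chain of excisions and genus-raising moves used to prove Lemma~\ref{lem:use-Y1} (and its supporting Proposition~\ref{prop:SHM-Gamma-invariant}) can be arranged so that every cut is disjoint from $\bar\Sigma$, so that the Chern-class pairing with $[\bar\Sigma]$ is transported correctly between $\tilde Y(Z,K)$ and $Y_{1}(Z,K)$. A secondary issue is that the local-coefficient enhancement of the single-surface cut-and-reglue invariance is not explicitly stated, although it follows from Theorem~\ref{thm:Floer-excision-extra-eta} together with the fibered-surface calculation of Lemma~\ref{lem:fibered}.
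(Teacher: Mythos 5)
Your proof is essentially correct and follows the same outline as the paper's: reduce to local coefficients with a universal $\cR$, realize $\KHM(Z,K,[\Sigma],g)\otimes\cR$ via Lemma~\ref{lem:use-Y1} as $\HM_\bullet(Y_1|\bar\Sigma;\Gamma_{\hat\alpha})$, identify this with $\SHM(M_\Sigma,\delta)\otimes\cR$, and promote back to $\Z$. The one place you take a detour is in introducing the intermediate closure $Y_\Sigma$ and invoking a cut-and-reglue comparison $\HM_\bullet(Y_1|\bar\Sigma;\Gamma_{\hat\alpha})\cong\HM_\bullet(Y_\Sigma|\bar R';\Gamma_{\hat\alpha'})$. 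This step is not needed: among the closures $Y_\Sigma$ that you observe can be produced from $N=M_\Sigma\cup(F\times[-1,1])$ by re-gluing, one particular choice of gluing diffeomorphism simply reproduces $Y_1$ itself. In other words, $Y_1$ is \emph{already} a closure of $(M_\Sigma,\delta)$ in the sense of Definition~\ref{def:closure}, with auxiliary surface $T'=F$; it is the same $3$-manifold serving as a closure for two different sutured manifolds simultaneously. Once one notices this, the identity $\SHM(M_\Sigma,\delta;\Gamma_\eta)=\HM_\bullet(Y_1|\bar\Sigma;\Gamma_{\hat\alpha})$ is immediate from Definition~\ref{def:SHM-Gamma} together with Proposition~\ref{prop:SHM-Gamma-invariant} (which already establishes independence of the remaining choices), and the cut-and-reglue invariance you worried about stating is not needed here at all. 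Your detour does work --- the required invariance is exactly what is proved in passing in the discussion surrounding Proposition~\ref{prop:SHM-Gamma-invariant}, via Theorem~\ref{thm:Floer-excision-extra-eta} and Lemma~\ref{lem:fibered} --- so there is no gap, only a slightly longer route. Your ``main technical obstacle'' regarding the grading in Lemma~\ref{lem:use-Y1} applies just as much to the paper's proof; it is handled by choosing $\hat\alpha$ disjoint from $\bar\Sigma$ and checking that the cuts producing the isomorphism in Lemma~\ref{lem:use-Y1} occur near $\beta\times\{q\}$, away from $\bar\Sigma$, exactly as you say.
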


\begin{proof}
    It is sufficient to prove that
    \[
                \SHM(M_{\Sigma}, \delta) \otimes \cR
                \cong \KHM(Z,K,[\Sigma], g) \otimes \cR
    \]
    when the coefficient ring $\cR$ has $t-t^{-1}$ invertible and no
    $\Z$-torsion.
    Lemma~\ref{lem:use-Y1} tells us that we can compute the right-hand
    side using the manifold $Y_{1}$ as
       \[
                \KHM(Z,K,[\Sigma],g) \otimes \cR  =
                            \HM_{\bullet}(Y_{1}|\bar{\Sigma};\Gamma_{\hat\alpha})
    \]
    where $\bar{\Sigma}$ is the surface of genus $g+1$ in $Y_{1}$. On
    the other hand, the same manifold $Y_{1}$ arises as a closure of
    $(M_{\Sigma},\delta)$ in the sense of
    section~\ref{subsec:closing}, so we
    also have
    \[
    \begin{aligned}
    \SHM(M_{\Sigma},\delta) \otimes \cR &=
                 \SHM(M_{\Sigma},\delta;\Gamma_{\eta})  \\
                 &=
                \HM_{\bullet}(Y_{1}|\bar{\Sigma};\Gamma_{\hat\alpha}).
                \end{aligned}
    \]
    This proves the proposition.
\end{proof}

\section{Fibered knots}

\subsection{Statement of the result}

In this section, we adapt the material from \cite{Ni-A} to show that
the monopole version of knot homology detects fibered knots. For the
most part, the arguments of \cite{Ni-A} carry over with little
modification.

A balanced sutured manifold $(M,\gamma)$ is a \emph{homology product}
if the inclusions $R_{+}(\gamma) \to M$ and $R_{-}(\gamma)\to M$ are
both isomorphisms on integer homology groups. The main target is the
following theorem.

\begin{theorem}\label{thm:fibered-M}
    Suppose that the balanced sutured manifold $(M,\gamma)$ is
    taut and a homology
    product.
    Then $(M,\gamma)$ is a product sutured manifold if and only if
    $\SHM(M,\gamma)=\Z$.
\end{theorem}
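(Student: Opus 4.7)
The plan is to prove both implications, with the converse being the substantive one. For the forward direction, suppose $(M,\gamma)$ is the product $([-1,1]\times T,\delta)$. I would choose any auxiliary surface $T'$ of genus large enough to meet conditions \ref{item:genus-2} and \ref{item:closed-curve}, and form the closure $Y$ by gluing $\bar R_{+}$ to $\bar R_{-}$ via the identity map. Then $Y$ fibers over $S^{1}$ with fiber $\bar R = T \cup T'$ of genus at least $2$, and Lemma~\ref{lem:fibered} gives $\SHM(M,\gamma) = \HM(Y|\bar R) \cong \Z$.

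For the converse, I would follow the strategy of Ghiggini and Ni \cite{Ghiggini, Ni-A} adapted to the monopole setting. The first main ingredient is a \emph{decomposition formula} for $\SHM$: for a suitable ``well-groomed'' decomposing surface $S$ in $(M,\gamma)$ in the sense of Gabai, producing a decomposed manifold $(M',\gamma')$ that is still taut and a homology product, one wants an isomorphism
\[
            \SHM(M,\gamma) \cong \SHM(M',\gamma').
\]
I would prove such a formula by choosing compatible closures $Y$ and $Y'$ so that $Y$ is obtained from $Y'$ by cutting along a closed surface (built by capping $S$ off inside the auxiliary region $[-1,1]\times T$) and regluing via a diffeomorphism arising from the original identifications; Floer's excision theorem (Theorem~\ref{thm:Floer-excision}) then identifies the relevant summands, provided the cutting surface has genus at least two and the spin$^{c}$ structures in play are constrained to top degree on every surface that appears.

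The second ingredient is a termination argument. Since $(M,\gamma)$ is taut, Gabai's theorem produces a sutured manifold hierarchy terminating in a disjoint union of products. Combining the decomposition formula with the forward direction, any nontrivial stage of the hierarchy applied to a taut homology product which is not already a product would have to strictly enlarge $\SHM$: each of the several non-isotopic taut foliations that a non-product homology product admits determines a distinct spin$^{c}$ structure on the closure $Y$, and by the non-vanishing theorem of section~\ref{subsec:adjunction}, each such spin$^{c}$ structure contributes a non-zero summand to $\HM(Y|\bar R)$. Thus the assumption $\SHM(M,\gamma) = \Z$ forces $(M,\gamma)$ to have been a product at the outset.

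The principal obstacle is establishing the decomposition formula in a form strong enough to drive the hierarchy, and identifying which decomposing surfaces $S$ yield a genuine isomorphism rather than a strict inclusion into $\SHM(M,\gamma)$. This requires arranging the ambient spin$^{c}$ structures to be of top degree both on $\bar R$ and on the new surface assembled from $S$, so that the summation in \eqref{eq:bar-F-notation} restricts correctly through the excision. A closely related difficulty, genuinely used in Ni's argument, is the homology product hypothesis: without it, decomposition along $S$ introduces extra contributions from $H_{1}$ that obstruct the identification, and one must track relative spin$^{c}$ structures carefully through each stage of the hierarchy, including the genus-$1$ case from Theorem~\ref{thm:Floer-excision-genus-1} where local coefficients must be used to avoid reducibles.
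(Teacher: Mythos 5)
Your forward direction is fine, and your instinct to adapt Ghiggini--Ni via decompositions plus excision is on the right track. But the converse has a genuine gap: you assert that any non-product taut homology product admits two non-isotopic taut foliations whose associated spin$^{c}$ structures on a closure $Y$ are distinct, and conclude rank $\ge 2$ via the non-vanishing theorem. This is exactly the hard point, and it is \emph{not} true in the generality you state. To separate the spin$^{c}$ structures coming from two taut foliations one needs their difference class in $H^{2}(M,\partial M)$ to be non-torsion (Lemma~\ref{lem:spinc}), and there is a genuine case --- the one Ni isolates by looking at a maximal product pair $[-1,1]\times E\hookrightarrow (M,\gamma)$ --- in which the inclusion is \emph{surjective} on $H_{1}$ and no such pair of foliations exists directly. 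The paper's proof splits into two cases accordingly: if $i_{*}$ is not surjective, Corollary~\ref{cor:rank-2} applies; if $i_{*}$ is surjective, one passes to the complement $(M',\gamma')$ of the maximal product pair (an isomorphism on $\SHM$ by the vertical decomposition, Proposition~\ref{prop:vertical}), attaches planar product handles to build $(\tilde M,\tilde\gamma)$ with non-surjective maximal product pair, and only then invokes the two-foliations argument. Your proposal omits this second case entirely.

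There is also a logical wobble in your termination argument. You posit a decomposition formula that gives an \emph{isomorphism} $\SHM(M,\gamma)\cong\SHM(M',\gamma')$, then assert that a nontrivial stage would ``strictly enlarge'' $\SHM$. These cannot both be the mechanism at a single step. In the paper, the hierarchy idea is used elsewhere (for the non-vanishing result via the direct-summand Proposition~\ref{prop:S-decomp}), but for Theorem~\ref{thm:fibered-M} the rank is preserved through a sequence of specific isomorphism-type decompositions (disjoint union, horizontal, product annulus --- Propositions~\ref{prop:disjoint-union}, \ref{prop:horizontal}, \ref{prop:vertical}) until the manifold is normalized, and then the rank jump to $\ge 2$ happens all at once from two distinguishable taut foliations. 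You would need to make that final step precise, which is exactly Ni's Case 1/Case 2 dichotomy that the proposal is missing.
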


The application to fibered knots is a corollary:

\begin{corollary}\label{cor:fibered-knots}
    If $K\subset S^{3}$ is a knot of genus $g$, then $K$ is fibered if
    and only if $\KHM(S^{3},K,g)=\Z$.
\end{corollary}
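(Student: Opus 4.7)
The plan is to reduce the corollary to Theorem~\ref{thm:fibered-M} applied to the sutured manifold $(M_\Sigma,\delta)$ obtained by cutting the knot complement along a genus-minimizing Seifert surface $\Sigma$ for $K$, and then to invoke Proposition~\ref{prop:sutured-vs-knot} to translate the hypothesis $\KHM(S^3,K,g)=\Z$ into the hypothesis $\SHM(M_\Sigma,\delta)=\Z$.

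First I would choose $\Sigma\subset S^3\setminus N^\circ(K)$ to be a Seifert surface of minimal genus $g$, and form the sutured manifold $(M_\Sigma,\delta)$ with $R_+(\delta)=R_-(\delta)=\Sigma$ and a single suture running around $\partial\Sigma$. The next step is to verify the two hypotheses of Theorem~\ref{thm:fibered-M}. Tautness is immediate: because $\Sigma$ is genus-minimizing and $K$ is a non-trivial knot (the case of genus zero is trivially fibered), the surface $R_\pm(\delta)=\Sigma$ is incompressible and Thurston-norm minimizing in its relative homology class in $M_\Sigma$. The homology-product property is a standard consequence of Alexander duality for knots in $S^3$: one applies Mayer--Vietoris to the decomposition of $S^3\setminus K$ into $M_\Sigma$ and a bicollar of $\Sigma$, and uses that $H_*(S^3\setminus K)$ is that of a circle together with the fact that a meridian of $K$ hits each side of $\Sigma$ once, to conclude that both inclusions $\Sigma\hookrightarrow M_\Sigma$ are integral homology isomorphisms.

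By Proposition~\ref{prop:sutured-vs-knot} applied with $Z=S^3$ and the chosen $\Sigma$ (of genus $g\ge 1$; the genus-zero case is the unknot, which is obviously fibered and for which the Alexander polynomial computation in Lemma~\ref{lem:Alexander} already gives $\KHM(S^3,K,0)=\Z$), we have the identification
\[
     \SHM(M_\Sigma,\delta) \;\cong\; \KHM(S^3,K,[\Sigma],g)=\KHM(S^3,K,g).
\]
Now I invoke Theorem~\ref{thm:fibered-M}: since $(M_\Sigma,\delta)$ is taut and a homology product, it is a product sutured manifold $\Sigma\times[-1,1]$ if and only if $\SHM(M_\Sigma,\delta)=\Z$, and by the above isomorphism this is equivalent to $\KHM(S^3,K,g)=\Z$.

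Finally, it is a classical fact that $K$ is a fibered knot with fiber $\Sigma$ precisely when the manifold obtained by cutting its complement along $\Sigma$ is homeomorphic, as a sutured manifold, to the product $\Sigma\times[-1,1]$: indeed, a fibration over $S^1$ with fiber $\Sigma$ restricts on $M_\Sigma$ to a trivial $I$-bundle structure, and conversely a product structure on $M_\Sigma$ glues up to a surface bundle structure on $S^3\setminus N^\circ(K)$ over $S^1$. Combining this with the preceding equivalence yields the corollary. The only step that is not entirely formal given the tools already assembled is the verification of the homology-product property, and this is short and classical; the real content is packaged in Theorem~\ref{thm:fibered-M}, which is the genuinely hard step and is not the subject of this proposal.
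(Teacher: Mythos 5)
Your overall reduction to Theorem~\ref{thm:fibered-M} via Proposition~\ref{prop:sutured-vs-knot} is the right skeleton, and it matches the paper's proof. However, there is a genuine gap in your verification of the homology-product hypothesis, and it is precisely the step where the hypothesis $\KHM(S^3,K,g)=\Z$ must be used.

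You assert that $(M_\Sigma,\delta)$ is a homology product for \emph{every} knot $K$ with minimal-genus Seifert surface $\Sigma$, by a Mayer--Vietoris / Alexander-duality argument. That claim is false. Take, for instance, the twist knot $5_2$, which has genus $1$ and symmetrized Alexander polynomial $2t - 3 + 2t^{-1}$. Because the leading coefficient is $2$ rather than $\pm1$, the inclusions $\Sigma_\pm \hookrightarrow M_\Sigma$ are not isomorphisms on $H_1$ for this knot: cutting the knot complement along $\Sigma$ does \emph{not} yield a homology product. Concretely, the Mayer--Vietoris sequence for $S^3\setminus N^\circ(K) = M_\Sigma \cup (\Sigma\times[-1,1])$ gives
\[
0 \to H_1(\Sigma)\oplus H_1(\Sigma) \to H_1(M_\Sigma)\oplus H_1(\Sigma) \to \Z \to \cdots,
\]
which constrains ranks but does not force $i_\pm\colon H_1(\Sigma)\to H_1(M_\Sigma)$ to be isomorphisms; the failure is precisely measured by the Seifert form (equivalently, by whether the Alexander polynomial is monic of degree $g$).

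The correct route — and the one the paper takes — is to first extract the arithmetic consequence from the hypothesis. If $\KHM(S^3,K,g)=\Z$, then by Lemma~\ref{lem:Alexander} the graded Euler characteristic of $\KHM(S^3,K,\cdot)$ is $\pm\Delta_K(T)$, so the top coefficient of $\Delta_K$ is $\pm 1$ and its degree is exactly $g$. It is then \emph{Ni's observation} (cited in the paper from \cite{Ni-A}, section~3) that a knot whose Alexander polynomial is monic of degree equal to the Seifert genus has $(M_\Sigma,\delta)$ a homology product. Without that input, Theorem~\ref{thm:fibered-M} is not applicable, and the argument breaks down for any knot with non-monic Alexander polynomial. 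You should replace the Mayer--Vietoris paragraph with the invocation of Lemma~\ref{lem:Alexander} followed by Ni's criterion.
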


\begin{proof}[Proof of the corollary]
    The ``only if'' direction is a straightforward matter: it
    follows from 
    Lemma~\ref{lem:fibered} and
    Proposition~\ref{prop:sutured-vs-knot}.  The interesting
    direction is the ``if'' direction, and this can be deduced from
    Theorem~\ref{thm:fibered-M} as follows.

    Suppose that $\KHM(S^{3},K,g)=\Z$. From Lemma~\ref{lem:Alexander}
    we learn
    that the Alexander polynomial of $K$ is monic and
    that its degree is $g$.
    Let $\Sigma$ be a Seifert surface for $K$ of genus $g$, and let
    $(M_{\Sigma},\delta)$ be the balanced sutured manifold obtained by
    cutting open the knot complement along $\Sigma$. As Ni observes in
    \cite[section 3]{Ni-A}, the fact that the Alexander polynomial is
    monic tells us that $(M_{\Sigma},\delta)$ is a homology product.
    The group $\SHM(M_{\Sigma},\delta)$ is isomorphic to
    $\KHM(S^{3},K,g)$ by Proposition~\ref{prop:sutured-vs-knot}, so
    $\SHM(M_{\Sigma},\delta)=\Z$. Theorem~\ref{thm:fibered-M} implies
    that $(M_{\Sigma},\delta)$ is a product
    sutured manifold, from which it follows
    that the knot complement is fibered.
\end{proof}

We will prove Theorem~\ref{thm:fibered-M} after some preliminary
material on further properties of $\SHM$.

\subsection{\Spinc{} structures}
\label{subsec:spinc-decomp}

The following definition of relative \spinc{} structures on sutured manifolds
coincides with that of Juh\'asz \cite{Juhasz-2}, in slightly different
notation.
If we regard $(M,\gamma)$ as  a manifold with corners, then it carries
a preferred $2$-plane field $\xi_{\partial}$ on its boundary: on
$R_{+}(\gamma)$ and $R_{-}(\gamma)$, we take $\xi_{\partial}$ to be the
tangent planes to the boundary, with the canonical orientation; and on
each component of
$A(\gamma)$ we take $\xi_{\partial}$ to have, as oriented basis, first
the outward normal to $M$ and second the direction parallel to the
oriented suture. On a $3$-manifold an oriented $2$-plane field defines
a \spinc{} structure; so $\xi_{\partial}$ gives a \spinc{} structure in
a neighborhood of the boundary. We define $\Sp(M,\gamma)$ to be the
set of extensions of $\s_{\partial}$ to a \spinc{} structure on all of
$M$, up to isomorphisms which are $1$ on $\partial M$.
We refer to elements of $\Sp(M,\gamma)$ as relative \spinc{}
structures.

Consider the process of forming the closure $Y=Y(M,\gamma)$. When we
attach $[-1,1]\times T$ to the annuli in $\partial M$, the $2$-plane
field $\xi_{\partial}$ extends in the obvious way, as the tangents to
$\{p\}\times T$. When we the attach $\bar{R}_{+}$ to $\bar{R}_{-}$
using $h$, we obtain a $2$-plane field on all of $Y(M,\gamma)$ except
the interior of the original $M$.  On the surface $\bar{R}\subset Y$,
this $2$-plane field is the tangent plane field. So we obtain a natural map
\begin{equation}\label{eq:spin-map}
   \epsilon:                \Sp(M,\gamma) \to \Sp(Y|\bar{R}).
\end{equation}

\begin{lemma}\label{lem:spinc}
    Let $\s_{1}, \s_{2} \in \Sp(M,\gamma)$ be relative \spinc{}
    structures whose difference element in $H^{2}(M,\partial M)$ is
    not torsion. Then we can choose
    $T$ and the diffeomorphism $h$ so that $\epsilon(\s_{1})$ and
    $\epsilon(\s_{2})$ are \spinc{} structures in $\Sp(Y|\bar{R})$
    whose difference is still non-torsion.
\end{lemma}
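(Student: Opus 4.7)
The plan is to recast the lemma as a question in integer homology via Poincar\'e--Lefschetz duality, and then to exhibit a closed surface in $Y$ that witnesses the non-torsion condition. The affine map $\epsilon$ has linear part a homomorphism $\epsilon_{*}\colon H^{2}(M,\partial M)\to H^{2}(Y)$ which, by excision, factors as $H^{2}(M,\partial M)\cong H^{2}(Y,N)\to H^{2}(Y)$, where $N=\overline{Y\setminus M}$, and which is the map appearing in the long exact sequence of the pair $(Y,N)$. Under Poincar\'e--Lefschetz duality this corresponds to the inclusion-induced map $i_{*}\colon H_{1}(M)\to H_{1}(Y)$. So the lemma reduces to the following: for a fixed non-torsion class $[\gamma]\in H_{1}(M)$ PL-dual to $\s_{1}-\s_{2}$, choose $T$ and $h$ so that $i_{*}[\gamma]$ is non-torsion in $H_{1}(Y)$.

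To verify this, I would exhibit a closed oriented surface $\bar F\subset Y$ with algebraic intersection $\bar F\cdot\gamma\neq 0$, which forces $i_{*}[\gamma]$ to be non-torsion. By PL duality in $M$ there is a properly embedded oriented surface $F\subset M$ with $\partial F\subset\partial M$ and $F\cdot\gamma\neq 0$. To extend $F$ to a closed surface $\bar F=F\cup F'$ in $Y$, it suffices to cap $\partial F$ off by a surface $F'\subset N$ with $\partial F'=\partial F$, and such a capping exists precisely when the class $[\partial F]\in H_{1}(\partial M)$ is null-homologous in $N$.

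The main technical step is then to choose $T$ and $h$ so that $[\partial F]$ bounds in $N$. Since $[\partial F]$ bounds $F$ in $M$, its class lies in the Lagrangian subspace $\ker(H_{1}(\partial M;\mathbb{Q})\to H_{1}(M;\mathbb{Q}))$; and by the half-lives-half-dies principle, the corresponding kernel $\ker(H_{1}(\partial M;\mathbb{Q})\to H_{1}(N;\mathbb{Q}))$ is also Lagrangian. I would arrange these two Lagrangians to coincide (or at least that the second contain $[\partial F]$) by taking the genus of $T$ large enough and choosing $h$ to be a suitable composition of Dehn twists along curves in $\bar R$, so that $N$ realizes the same Lagrangian kernel as $M$ does. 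The chief obstacle is verifying this combinatorial flexibility; since the mapping class group of $\bar R$ acts via $\mathrm{Sp}(H_{1}(\bar R;\mathbb{Q}))$ transitively on Lagrangians, and one can freely raise the genus of $T$ to accommodate any additional handles required, the construction has enough freedom to realize the prescribed Lagrangian. This yields the capping surface $F'\subset N$, and hence a closed surface $\bar F\subset Y$ with $\bar F\cdot\gamma\neq 0$, completing the proof.
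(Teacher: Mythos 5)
Your reformulation via Poincar\'e--Lefschetz duality is correct: the lemma is equivalent to showing that, given the non-torsion class $[\gamma]\in H_{1}(M)$ dual to $\s_{1}-\s_{2}$, one can choose $T$ and $h$ so that $i_{*}[\gamma]$ is non-torsion in $H_{1}(Y)$, which in turn means exhibiting a closed oriented surface $\bar F\subset Y$ with $\bar F\cdot\gamma\neq 0$. This is precisely the dual of the reduction the paper makes. The difficulty is in the execution, and there the proposal has a genuine gap.

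You try to build $\bar F$ by first finding $(F,\partial F)\subset(M,\partial M)$ with $F\cdot\gamma\neq 0$ and then capping $\partial F$ off inside $N=\overline{Y\setminus M}$. But $N$ is only determined after both $T$ and $h$ are chosen: it is a quotient of $[-1,1]\times T$ under whatever self-identifications $h$ induces between the $\{\pm1\}\times T$ pieces of $\bar R_{\pm}$, so both its diffeomorphism type and the Lagrangian $\ker\bigl(H_{1}(\partial M;\Q)\to H_{1}(N;\Q)\bigr)$ depend on $h$ in a way the proposal never analyzes. The Lagrangian-matching step is therefore circular as written. Worse, the mapping class group action you invoke is on $H_{1}(\bar R;\Q)$, whereas the Lagrangian you need to control lives in $H_{1}(\partial M;\Q)$; these are different surfaces ($\partial M = R_{+}(\gamma)\cup A(\gamma)\cup R_{-}(\gamma)$, while $\bar R_{\pm} = R_{\pm}(\gamma)\cup(\{\pm1\}\times T)$), and nothing in the proposal relates the two. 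Even if those issues were patched, there is no argument given that altering $h$ by a Dehn twist changes the Lagrangian of $N$ by the symplectic transformation of that twist.

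The paper's route avoids all of this by constructing the dual surface before $h$ is chosen, working in the intermediate manifold $Y' = M \cup ([-1,1]\times T)$. One checks that $H^{1}(Y')\to H^{1}(M)$ is onto, picks a lift $\beta$ of $\alpha$, represents its dual by a properly embedded surface $(B,\partial B)\subset(Y',\partial Y')$ with $\partial B\subset\bar R_{+}\cup\bar R_{-}$, and then, adding an annulus in the product region if necessary, arranges $\partial B$ to be a single non-separating circle on each of $\bar R_{\pm}$. It then suffices to choose $h$ to carry one of these circles to the other, which is always possible since any two non-separating simple closed curves on a closed oriented surface are equivalent under the mapping class group. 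No Lagrangian or transitivity argument is needed, and no knowledge of the topology of $N$ is required. I would rework your argument along those lines.
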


\begin{proof}
    The statement only concerns the difference elements. The dual of
    $H^{2}(M,\partial M;\Q)$ is $H^{1}(M;\Q)$, and what we must show
    is that given a non-zero element  $\alpha\in H^{1}(M)$, we can
    choose $T$ and $h$ so that $\alpha$ is in the image of the map
    \[
                    H^{1}(Y) \to H^{1}(M).
    \]
    To do this, consider as an intermediate step the manifold $Y'$
    with boundary $\bar{R}_{+}\cup\bar{R}_{-}$ formed from $M$ by
    attaching $[-1,1]\times T$. The map $H^{1}(Y') \to H^{1}(M)$ is
    surjective. Let $\beta$ be a class in $H^{1}(Y')$ which
    restricts to $\alpha$. Represent the dual of $\beta$ by a closed
    surface $(B,\partial B)$ in $(Y',\partial Y')$. By adding to $B$
    an annulus contained in the product region $[-1,1]\times T$ if
    necessary, we can be assured that $\partial B$ intersects both
    $\bar{R}_{+}$ and $\bar{R}_{-}$ in a collection of curves
    representing a  primitive, non-zero homology class. We can then
    modify $B$ without changing its class so that $\partial{B}$
    consists of two circles: a non-separating curve in each of
    $\bar{R}_{+}$ and $\bar{R}_{-}$. Finally, we choose the
    diffeomorphism $h:
    \bar{R}_{+}\to \bar{R}_{-}$ so as to match up these curves. In
    this way we obtain a closed surface $\bar{B}$ in $Y$ whose dual
    class in $H^{1}(Y)$ maps to $\alpha$ in $H^{1}(M)$.
\end{proof}

The following corollary is the tool used by Ghiggini
\cite{Ghiggini} in his proof of the original version of
Corollary~\ref{cor:fibered-knots} for genus-1 knots.

\begin{corollary}\label{cor:rank-2}
    Suppose that $(M,\gamma)$ admits two taut foliations $\cF_{1}$ and
    $\cF_{2}$ such that the corresponding \spinc{} structures $\s_{1}$
    and $\s_{2}$ have non-torsion difference element in
    $H^{2}(M,\partial M)$. Then $\SHM(M,\gamma)$ has rank at least
    $2$.
\end{corollary}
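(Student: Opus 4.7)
The plan is to pick a closure $Y = Y(M,\gamma)$ in which both taut foliations extend to taut foliations of $Y$ having $\bar{R}$ as a compact leaf, so that the non-vanishing theorem of section~\ref{subsec:adjunction} produces two distinct non-zero summands in $\HM_{\bullet}(Y|\bar{R}) = \SHM(M,\gamma)$.

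First I would apply Lemma~\ref{lem:spinc} to choose the auxiliary surface $T$ and the diffeomorphism $h$ so that $\epsilon(\s_{1})$ and $\epsilon(\s_{2})$ have non-torsion difference in $H^{2}(Y)$, and therefore represent distinct isomorphism classes of \spinc{} structures in $\Sp(Y|\bar{R})$. Next, extend each $\cF_{i}$ across the product region $[-1,1]\times T$ by the horizontal foliation $\{t\}\times T$ and then close up using $h$: since $\cF_{i}$ has $R_{\pm}(\gamma)$ as leaves and the product foliation has $\{\pm 1\}\times T$ as leaves, these match along the annuli $A(\gamma)=A(\delta)$ and close up under $h$ to give a foliation $\bar{\cF}_{i}$ of $Y$ with $\bar{R}$ as a compact leaf. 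The foliation $\bar{\cF}_{i}$ is taut: a closed $2$-form on $M$ certifying tautness of $\cF_{i}$ can be glued to an area form on $T$, pulled back over $[-1,1]\times T$, to produce a closed $2$-form on $Y$ that is positive on every leaf of $\bar{\cF}_{i}$.

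By the definition of $\epsilon$ in section~\ref{subsec:spinc-decomp}, the \spinc{} structure on $Y$ determined by the tangent $2$-plane field of $\bar{\cF}_{i}$ is precisely $\epsilon(\s_{i})$; in particular it lies in $\Sp(Y|\bar{R})$. The non-vanishing theorem recalled in section~\ref{subsec:adjunction} (Corollary~41.4.2 of \cite{KM-book}) then gives $\HM_{\bullet}(Y,\epsilon(\s_{i}))\neq 0$ for $i=1,2$. Since $\epsilon(\s_{1})$ and $\epsilon(\s_{2})$ are non-isomorphic, these are two distinct non-zero summands of $\HM_{\bullet}(Y|\bar{R})=\SHM(M,\gamma)$, yielding the desired rank bound (the non-vanishing produced by the monopole class argument is in fact non-torsion, so each summand contributes rank at least one). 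The main obstacle I expect is verifying cleanly that the extended foliation $\bar{\cF}_{i}$ is genuinely taut and that the \spinc{} structure it determines on $Y$ is the one produced by $\epsilon$; this is a matter of unpacking the definitions and tracking the coorientation conventions (in particular the sign in the formula $\langle c_{1},[\bar{R}]\rangle = 2g(\bar{R})-2$ versus the Euler number of the tangent plane field to $\bar{\cF}_{i}$), but it is essentially routine once set up carefully.
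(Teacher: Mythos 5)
Your proposal is correct and takes essentially the same approach as the paper: apply Lemma~\ref{lem:spinc} to arrange that $\epsilon(\s_{1})$ and $\epsilon(\s_{2})$ are distinct \spinc{} structures in $\Sp(Y|\bar{R})$, extend the taut foliations over the closure, and invoke the non-vanishing theorem to produce two non-zero summands. The paper dismisses the extension of $\cF_i$ to $Y$ as ``obvious''; your spelled-out construction (horizontal foliation on $[-1,1]\times T$, matching along $A(\gamma)$, certifying tautness by gluing a closed $2$-form to a pulled-back area form) is a useful unpacking of that step, and the parenthetical remark about the non-vanishing class being non-torsion correctly addresses why each summand contributes at least one to the rank.
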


\begin{proof}
    Choose the closure $Y=Y(M,\gamma)$ so that $\epsilon(\s_{1})$ and
    $\epsilon(\s_{2})$ are different \spinc{} structures on $Y$, as
    Lemma~\ref{lem:spinc} allows. The foliations $\cF_{1}$ and
    $\cF_{2}$ extend in an obvious way to foliations of $Y$ belonging
    to the \spinc structures $\epsilon(\s_{1})$ and
    $\epsilon(\s_{2})$. By the non-vanishing theorem described in
    section~\ref{subsec:adjunction}, the Floer groups
    $\HM_{\bullet}(Y,\epsilon(\s_{1}))$ and
    $\HM_{\bullet}(Y,\epsilon(\s_{2}))$ both have non-zero rank. Both
    of these Floer groups contribute to $\HM_{\bullet}(Y|\bar{R})
    =\SHM(M,\gamma)$, because the \spinc{} structures
    $\epsilon(\s_{i})$ belong to $\Sp(Y|\bar{R})$. So $\SHM(M,\gamma)$
    has rank at least $2$.
\end{proof}

\subsection{Decomposition theorems}
\label{subsec:decomp}

The excision theorems, in addition to their role in showing that
$\SHM(M,\gamma)$ is well-defined, can be used in a straightforward way
to establish some decomposition which related the Floer homology of a
sutured manifold $(M,\gamma)$ to that of $(M',\gamma')$, obtained from
$(M,\gamma)$ by cutting along a surface. We record a few types of such
decomposition theorem here. To avoid various circumlocutions involving
tensor products and the K\"unneth theorem, we shall work over $\Q$
instead of $\Z$ here; and when using local coefficients we shall take
$\cR$ to be a field of characteristic zero: either $\R$ with the usual
exponential map, or the field of fractions of the group ring $\Q[\R]$.

\begin{proposition}\label{prop:disjoint-union}
    Suppose $(M,\gamma)$ is a disjoint union $(M_{1},\gamma_{1}) \cup
    (M_{2},\gamma_{2})$ and that both pieces are balanced. Then
    \[
                \SHM(M,\gamma;\Q) \cong \SHM(M_{1},\gamma_{1};\Q)
                \otimes \SHM(M_{2},\gamma_{2};\Q).
    \]
\end{proposition}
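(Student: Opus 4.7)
The strategy is to realize both sides via Floer excision. First, choose auxiliary surfaces $T_1$ and $T_2$ for $(M_1,\gamma_1)$ and $(M_2,\gamma_2)$, tuned so that the resulting closures $(Y_i,\bar R_i)$ have $\bar R_i$ of the same genus $g\geq 2$. By Theorem~\ref{thm:independence}, $\SHM(M_i,\gamma_i;\Q)\cong \HM_{\bullet}(Y_i | \bar R_i;\Q)$. Applying the split short exact sequence \eqref{eq:split-short-exact} to the disconnected $3$-manifold $Y_1\cup Y_2$ with the surface $\bar R_1\cup \bar R_2$, and passing to $\Q$-coefficients where the Tor term vanishes, we obtain
\[
\HM_{\bullet}(Y_1\cup Y_2 \,|\, \bar R_1\cup \bar R_2;\Q) \cong \SHM(M_1,\gamma_1;\Q)\otimes_\Q \SHM(M_2,\gamma_2;\Q).
\]

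Next, apply Floer excision (Theorem~\ref{thm:Floer-excision}) to $Y=Y_1\cup Y_2$ with $\Sigma_1=\bar R_1$ and $\Sigma_2=\bar R_2$, regluing via an orientation-preserving diffeomorphism $h_{12}:\bar R_1\to\bar R_2$ chosen to respect the decomposition $\bar R_{i,\pm}=R_\pm(\gamma_i)\cup(\{\pm 1\}\times T_i)$ (mapping $R_+(\gamma_1)$ to $R_-(\gamma_2)$, $\{1\}\times T_1$ to $\{-1\}\times T_2$, and so on). The excision theorem yields a connected $\tilde Y$ with a two-component surface $\tilde\Sigma=\tilde\Sigma_1\cup\tilde\Sigma_2$ and the isomorphism
\[
\HM_{\bullet}(Y_1\cup Y_2 \,|\, \bar R_1\cup \bar R_2;\Q) \cong \HM_{\bullet}(\tilde Y \,|\, \tilde\Sigma;\Q).
\]

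The third step is to recognize $\tilde Y$ as a closure of the combined sutured manifold $(M_1\cup M_2,\gamma_1\cup\gamma_2)$. Unpacking the construction, $\tilde Y$ is built from $M_1\cup M_2$ by attaching $[-1,1]\times(T_1\cup T_2)$ along the sutures and then identifying $\bar R_{1,+}$ with $\bar R_{2,-}$ and $\bar R_{2,+}$ with $\bar R_{1,-}$ via $h_{12}$: this is precisely the closure construction with the (disconnected) auxiliary surface $T_1\cup T_2$ and a ``swap'' closing diffeomorphism. To bring this under Definition~\ref{def:closure}, which requires connected $T$, attach a $1$-handle joining $T_1$ to $T_2$ to form a connected $T_\bullet$ and extend the swap gluing by the identity across the handle, producing a genuine closure $(Y^*,\bar R^*)$ of $(M,\gamma)$. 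A direct adaptation of the excision argument from Proposition~\ref{prop:independence-local}---using Theorem~\ref{thm:Floer-excision-genus-1} with local coefficients and descending to $\Q$ via Lemma~\ref{lem:straight-tensor}---shows $\HM_{\bullet}(\tilde Y \,|\, \tilde\Sigma;\Q)\cong \HM_{\bullet}(Y^*\,|\, \bar R^*;\Q)$, and Theorem~\ref{thm:independence} identifies the right-hand side with $\SHM(M,\gamma;\Q)$.

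The main obstacle is step three: showing that the disconnected-$T$ closure $\tilde Y$ produced by excision computes the same rational Floer homology as an honest connected-$T$ closure $Y^*$. This mirrors Proposition~\ref{prop:independence-local} (a $1$-handle attached to $T$ does not change $\SHM$), but requires care in shuttling between local-coefficient statements, where $t-t^{-1}$ must be invertible, and the $\Q$-coefficient statement we want; one also needs to verify that the constraint sets $\Sp(\tilde Y|\tilde\Sigma_1\cup\tilde\Sigma_2)$ and $\Sp(Y^*|\bar R^*)$ pick out the same contributions, which follows from the adjunction inequality applied to the tube joining the two components of $\tilde\Sigma$.
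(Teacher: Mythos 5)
Your first two steps are sound in outline: the K\"unneth identification of $\HM_{\bullet}(Y_1\cup Y_2\mid\bar R_1\cup\bar R_2;\Q)$ with the tensor product, and the idea of applying excision, are both reasonable. But there are two linked problems with the rest.

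First, your parenthetical requirement that the regluing map $h_{12}:\bar R_1\to\bar R_2$ ``respect the decomposition $\bar R_{i,\pm}=R_\pm(\gamma_i)\cup(\{\pm 1\}\times T_i)$'' cannot be satisfied in general. It would force a diffeomorphism $R_+(\gamma_1)\to R_-(\gamma_2)$ and a diffeomorphism $T_1\to T_2$ compatible with the boundary gluings, but $(M_1,\gamma_1)$ and $(M_2,\gamma_2)$ may have different numbers of sutures, and $R_+(\gamma_1)$ and $R_-(\gamma_2)$ need not even have the same topological type. (The excision theorem itself places no such constraint on $h_{12}$, so the statement of Step 2 survives; but your Step 3 leans on this structure.)

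Second, and this is the real gap, Step 3 is asserted rather than proved. After cutting along $\Sigma_i=\bar R_i$ and regluing, the manifold $\tilde Y$ is a ``closure'' of $(M,\gamma)$ only in the extended sense that the auxiliary surface $T_1\cup T_2$ is disconnected and the gluing is the ``swap.'' You then want to pass to a genuine closure $Y^*$ built from a connected $T_\bullet$, appealing to an adaptation of Proposition~\ref{prop:independence-local}. But Proposition~\ref{prop:independence-local} excises along a torus $S^1\times c$ that sits inside an $S^1\times T$ region of the closure; in your $\tilde Y$ the swap gluing does \emph{not} identify $\{1\}\times c_i$ with $\{-1\}\times c_i$, so no such torus $S^1\times c_i$ exists in $\tilde Y$. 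Without that structure, the excision move you want to perform is not available, and ``extend the swap gluing by the identity across the handle'' is not a well-defined operation once you give up the decomposition-respecting hypothesis. One could try to salvage this by choosing $h_{12}$ so that $h_{12}(\{1\}\times c_1)=\{-1\}\times c_2$, creating a torus $([-1,1]\times c_1)\cup([-1,1]\times c_2)$ in $\tilde Y$ and excising along it --- but at that point you are essentially redoing the paper's argument in a roundabout way, and you would still need to check carefully that the excision hypotheses on $F$ and the $1$-cycles are met. As written, the step is not justified, and there is also a real danger of circularity: the fact that disconnected auxiliary surfaces compute $\SHM$ correctly is precisely what the paper later deduces \emph{from} Proposition~\ref{prop:disjoint-union} (see the remark in the proof of Lemma~\ref{lem:S-decomp-special}).

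For comparison, the paper sidesteps all of this by excising along the tori $S^1\times c_1\subset Y_1$ and $S^1\times c_2\subset Y_2$ (with $c_i\subset T_i$ the curve guaranteed by condition \ref{item:closed-curve} and the $h_i$ chosen so that $h_i(\{1\}\times c_i)=\{-1\}\times c_i$), applying Theorem~\ref{thm:Floer-excision-genus-1} with local coefficients. Cutting along those tori and regluing directly produces the connected closure $\tilde Y$ of $(M,\gamma)$ built from the connected auxiliary surface $\tilde T$ (the ``plumbing'' of $T_1$ and $T_2$ along $c_1$ and $c_2$, as in Figure~\ref{fig:Genus-add}), with no intermediate disconnected-auxiliary object and hence no Step 3 to worry about. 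If you want to keep a version of your strategy, you should abandon the decomposition-respecting $h_{12}$ and instead cut along tori in the auxiliary regions before closing up, which is precisely the route the paper takes.
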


\begin{proof}
     It will be sufficient to prove this for the local coefficient
     versions, $\SHM(M,\gamma;\Gamma_{\eta})$, because of
     Lemma~\ref{lem:straight-tensor}.
     Form the closures $(Y_{1},\bar{R}_{1})$ and
     $(Y_{2},\bar{R}_{2})$  of $(M_{1},\gamma_{1})$ and
     $(M_{2},\gamma_{2})$ by attaching product regions $[-1,1]\times
     T_{1}$ and $[-1,1]\times T_{2}$ respectively. Let $c_{1}$ and
     $c_{2}$ be non-separating curves on $T_{1}$ and $T_{2}$. When
     forming the closures $Y_{1}$ and $Y_{2}$, choose the
     diffeomorphisms $h_{1}$ and $h_{2}$ so that $h_{i}$ maps
     $\{1\}\times c_{i}$ to $\{-1\}\times c_{i}$, as in the proof of
     Proposition~\ref{prop:independence-local}. Let $\tilde T$ be the
     connected closed surface obtained from $T_{1}$ and $T_{2}$ by
     cutting open along $c_{1}$ and $c_{2}$ and reattaching, similarly
     to Figure~\ref{fig:Genus-add}. Let $\tilde h$ be the diffeomorphism of
     $\tilde T$ that arises from $h_{1}$ and $h_{2}$, and let $\tilde Y$ be the
     closure of $(M,\gamma)$ that is obtained by attaching $\tilde T$ to
     $(M,\gamma)$ and gluing up using $\tilde h$. We now have a
     connected closure $\tilde Y$ that is related to $Y=Y_{1}\cup
     Y_{2}$ by cutting and gluing along $2$-tori $S^{1}\times c_{i}$.
     So the excision theorem,
     Theorem~\ref{thm:Floer-excision-genus-1}, provides an isomorphism
     \[
                    \HM_{\bullet}(Y|\bar{R};\Gamma_{\eta}) \to
                    \HM_{\bullet}(\tilde{Y} | \tilde{R};
                    \Gamma_{\tilde \eta}),
     \]
     and hence and isomorphism
     \[
\HM_{\bullet}(Y|\bar{R};\cR) \to
                    \HM_{\bullet}(\tilde{Y} | \tilde{R};
                    \cR).     \]
     Since $\cR$ is a field and $Y$ is a disjoint union, the left-hand
     side is a tensor product, and the proposition follows.
\end{proof}

Next we prove a version of Ni's ``horizontal decomposition'' formula.
A \emph{horizontal surface} in $(M,\gamma)$ is a surface $S$ with
$\chi(S) = \chi(R_{+}(\gamma))$ such that $\partial S$ consists of one
circle in each of the annuli comprising $A(\gamma)$; it is required to
represent the same relative homology class as $R_{\pm}(\gamma)$ in
$H_{2}(M,A(\gamma))$ and should have
$[\partial S] = [s(\gamma)]$ in $H_{1}(A(\gamma))$. Cutting along a
horizontal surface creates a new sutured manifold
\[
            (M',\gamma') = (M_{1},\gamma_{1}) \cup (M_{2},
            \gamma_{2}).
\]

\begin{proposition}[{\cite[Proposition
4.1]{Ni-A}}]\label{prop:horizontal}
     If $(M',\gamma')$ is obtained from $(M,\gamma)$ by cutting along
     a horizontal surface, then
     \[
                  \SHM(M,\gamma;\Q) = \SHM(M',\gamma';\Q).
     \]
\end{proposition}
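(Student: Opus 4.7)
The plan is to interpret the horizontal decomposition as an application of Floer's excision theorem (Theorem~\ref{thm:Floer-excision}), in the spirit of Proposition~\ref{prop:disjoint-union}.

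First I would choose auxiliary surfaces $T_1, T_2$ for the closures of $(M_1,\gamma_1)$ and $(M_2,\gamma_2)$ compatibly, so that each $T_i$ contains a distinguished subsurface $T_i'$ whose boundary circles correspond to the new sutures of $\gamma_i$ coming from $\partial S$, and so that $T_1$ and $T_2$ glue across the $T_i'$ boundaries to produce a single auxiliary surface $T$ for $(M,\gamma)$. I would form the closures $Y_i = Y(M_i,\gamma_i)$ with closing diffeomorphisms $h_i$ arranged to be the identity on the $T_i'$ boundary circles. In each $Y_i$, the horizontal surface $S$ then closes up to a closed surface
\[
\bar S_i \;\subset\; Y_i
\]
built from $S$ together with $\{0\}\times T_i'$. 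A short homology argument using the definition of horizontal surface (which forces $[S]=[R_{\pm}(\gamma)]$ in the relevant relative class) will show that $\bar S_i$ is non-separating in $Y_i$ and homologous to the closing surface $\bar R$ of $Y_i$; moreover $\bar S_i$ and $\bar R$ are diffeomorphic (both are built from a copy of $R_{\pm}(\gamma_i)$, which is topologically $S$, together with $T_i$), so their genera agree and $\Sp(Y_i \,|\, \bar S_i) = \Sp(Y_i \,|\, \bar R)$. Thus $\HM_{\bullet}(Y_1 \sqcup Y_2 \,|\, \bar S_1 \cup \bar S_2)$ equals $\HM_{\bullet}(Y_1 \sqcup Y_2 \,|\, \bar R \cup \bar R)$, and after tensoring with $\Q$ the latter is $\SHM(M',\gamma';\Q)$ by Proposition~\ref{prop:disjoint-union}.

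Next I would apply Floer excision to $Y_1 \sqcup Y_2$ at the pair $(\bar S_1, \bar S_2)$, using an orientation-preserving diffeomorphism $\bar S_1 \to \bar S_2$ equal to the identity on $S$ and matching the $T_i'$ pieces tautologically. This produces a manifold $\tilde Y$ together with a surface $\tilde\Sigma$, and Theorem~\ref{thm:Floer-excision} provides an isomorphism
\[
\HM_{\bullet}(Y_1 \sqcup Y_2 \,|\, \bar S_1 \cup \bar S_2) \;\to\; \HM_{\bullet}(\tilde Y \,|\, \tilde\Sigma).
\]
The key remaining point will be to identify $\tilde Y$ as a closure of $(M,\gamma)$ in the sense of Definition~\ref{def:closure}, with $\tilde\Sigma$ homologous to the resulting $\bar R$ and of equal genus. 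Topologically, cutting each $Y_i$ along $\bar S_i$ unwraps the closing identification and produces the cobordism $M_i \cup_{A(\gamma_i)} ([-1,1]\times T_i)$; the cross-regluing prescribed by the excision construction then (i) glues $M_1$ to $M_2$ along their common $S$-boundary to reconstitute $M$, (ii) amalgamates the two product regions $[-1,1]\times T_i$ across the $T_i'$ pieces into a single $[-1,1]\times T$, and (iii) implements the remaining $\bar R_+ \leftrightarrow \bar R_-$ identification as the composite of $h_1$, $h_2$, and the excision gluing. Once this identification is in place, $\HM_{\bullet}(\tilde Y\,|\,\tilde\Sigma) = \SHM(M,\gamma)$, and the three equalities combine to yield the proposition.

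The main obstacle I anticipate is the topological verification at the last step: one must trace carefully how the two cross-regluings in the excision interact with the internal structure of each $Y_i$ (the piece $M_i$, the product region $[-1,1]\times T_i$, and the closing diffeomorphism $h_i$) and check that the auxiliary data $(T_1, T_2, h_1, h_2)$ can be chosen so that the three reassembly operations (i)--(iii) happen simultaneously and produce a legitimate closure of $(M,\gamma)$. A secondary technical point, handled by choosing $T_i$ of sufficiently large genus, is to ensure that $\bar S_i$ is non-separating and has genus at least two so that Theorem~\ref{thm:Floer-excision} applies.
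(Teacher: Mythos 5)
Your proposal is correct and is exactly the argument the paper has in mind; the paper's proof is the single sentence ``This follows directly from Theorem~\ref{thm:Floer-excision} and Proposition~\ref{prop:disjoint-union},'' and your write-up supplies the details of how those two results combine. A few small comments. The distinction between $T_i$ and a subsurface $T_i'$ is superfluous for a horizontal decomposition: since $\partial S$ meets every annulus of $A(\gamma)$, every suture of $\gamma_i$ is ``inherited,'' so $T_i' = T_i$ and the auxiliary surface $T$ for $(M,\gamma)$ is simply a copy of $T_1\cong T_2$, not a gluing of $T_1$ to $T_2$ along boundary circles. Relatedly, in step (ii) the two product regions $[-1,1]\times T_i$ are concatenated end-to-end by one of the two excision gluings (producing $[-1,1]\times T$ up to reparametrization), and the remaining ends are identified by the other excision gluing together with $R_+ \leftrightarrow R_-$, which is your step (iii); they do not ``glue across the $T_i'$ boundaries.'' Also, a cleaner choice is to take $\bar S_i$ to be the closing surface $\bar R^{(i)}$ itself (which already contains a copy of $S$, since $S = R_-(\gamma_1)=R_+(\gamma_2)$ becomes part of $\bar R_\pm^{(i)}$), rather than a parallel pushoff through $\{0\}\times T_i'$; then the two cut boundary components are literally $\bar R_+^{(i)}$ and $\bar R_-^{(i)}$ and the cross-gluing visibly reconstitutes $M$ along $S$ while producing the closure identification for $(M,\gamma)$. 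Finally, the surface $\tilde\Sigma$ produced by excision has two components, one of which is the closing surface $\bar R$ of $\tilde Y$ and the other the closed-up horizontal surface $\bar S$; what one needs is that both components are homologous to $\bar R$ and of the same genus, so that $\Sp(\tilde Y|\tilde\Sigma)=\Sp(\tilde Y|\bar R)$, which you have essentially observed.
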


\begin{proof}
    This follows directly from Theorem~\ref{thm:Floer-excision} and
    Proposition~\ref{prop:disjoint-union}.
\end{proof}

We shall also need to decompose sutured manifolds by cutting along
vertical surfaces. We prove a result along the lines of  \cite{Ni-A}
and \cite{Juhasz-2}. A \emph{product annulus} in $(M,\gamma)$ is an
embedded annulus $A=[-1,1]\times d$ in $(M,\gamma)$ such that the
circle $d_{+}=\{1\}\times d$ lies in the interior of $R_{+}(\gamma)$
and $d_{-}=\{-1\}\times d$ lies in the interior of $R_{-}(\gamma)$. 

\begin{proposition}\label{prop:vertical}
    Let $(M',\gamma')$ be obtained from $(M,\gamma)$ by cutting along
    a product annulus $A$. Then
    \[
                    \SHM(M,\gamma;\Q) = \SHM(M',\gamma';\Q)
    \]
    if we are in either of the following two situations:
    \begin{enumerate}
        \item \label{item:non-zero-d} the curves $d_{+}$ and $d_{-}$ represent non-zero
        classes in the first homology of $R_{+}(\gamma)$ and $R_{-}(\gamma)$
        respectively; or
        
        \item \label{item:zero-d} the curves $d_{+}$ and $d_{-}$ represent the zero
        class in $H_{1}(R_{+}(\gamma))$ and $H_{1}(R_{-}(\gamma))$
        respectively, at least one of them does not bound a disk,
        and the annulus $A$ separates $M$ into two parts, $M_{1}\cup
        M_{2}$, one of which is disjoint from the annuli $A(\gamma)$.
    \end{enumerate}
\end{proposition}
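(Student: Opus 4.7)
Both cases are proved by realizing the product-annulus decomposition inside a suitable closure of $(M,\gamma)$ and then applying Floer's excision theorem, in close parallel with the proof of Proposition~\ref{prop:horizontal}. The essential difference from horizontal cutting is that cutting along a product annulus corresponds to excision along a torus rather than along a surface of higher genus, so I must use the genus-one version of excision (Theorem~\ref{thm:Floer-excision-genus-1}) together with local coefficients, and then transport the resulting identity to $\Q$-coefficients by Lemma~\ref{lem:straight-tensor}.

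The common construction is as follows. I form a closure $Y = Y(M,\gamma)$ using an auxiliary surface $T$ of sufficiently large genus, and I choose the closing diffeomorphism $h\colon \bar R_+ \to \bar R_-$ so that $h(d_+) = d_-$. In case~\ref{item:non-zero-d} this is possible because $d_\pm$ are non-separating curves on $\bar R_\pm$ and any two non-separating simple closed curves on a closed orientable surface are equivalent under the mapping class group; in case~\ref{item:zero-d} it is possible because, with the standing hypotheses and a large enough auxiliary $T$, the pairs $(\bar R_+,d_+)$ and $(\bar R_-,d_-)$ are homeomorphic, the requirement that at least one of $d_\pm$ not bound a disk being what rules out degenerate spherical configurations. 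With this choice, the annulus $A\subset M$ closes up in $Y$ to an embedded torus $\hat A$, meeting $\bar R$ transversely in the single curve $\hat d = d_+ = d_-$. I then pass to the local-coefficient version $\SHM(M,\gamma;\Gamma_\eta)$, choosing $\eta$ so that it meets $\hat A$ transversely in one point.

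In case~\ref{item:non-zero-d}, $\hat A$ is non-separating in $Y$ (a consequence of $d_\pm$ being non-separating in $\bar R_\pm$), and Theorem~\ref{thm:Floer-excision-genus-1} applied along $\hat A$ yields a new closed manifold $\tilde Y$ together with an isomorphism $\HM_\bullet(Y|\bar R;\Gamma_\eta) \cong \HM_\bullet(\tilde Y|\tilde R;\Gamma_{\tilde\eta})$. The topological content is then that $\tilde Y$ is itself a closure of $(M',\gamma')$: cutting $Y$ along $\hat A$ simultaneously cuts $M$ along $A$ and cuts the part of the auxiliary collar that had been used to close $A$ up, and the excision regluing is precisely the operation needed to produce a new auxiliary surface $T'$ closing $(M',\gamma')$ back up. In case~\ref{item:zero-d}, $\hat A$ is separating, and one side of it is a closure of the piece $M_2$. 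The same excision argument, combined with Proposition~\ref{prop:disjoint-union}, gives a closure of the disjoint union $(M_1,\gamma_1) \sqcup (M_2,\gamma_2) = (M',\gamma')$. In both cases Lemma~\ref{lem:straight-tensor} then transports the local-coefficient isomorphism to the desired equality over $\Q$.

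The principal obstacle will be the topological identification of $\tilde Y$ with a closure of $(M',\gamma')$: tracking how the cut along $\hat A$ interacts with the auxiliary collar $[-1,1]\times T$ and verifying that the resulting auxiliary surface $T'$ still meets conditions \ref{item:genus-2} and \ref{item:closed-curve}. Case~\ref{item:zero-d} carries the additional bookkeeping of a separating-surface excision producing a two-component closure; here the hypothesis that $M_2$ is disjoint from $A(\gamma)$, together with the hypothesis that $d_\pm$ do not both bound disks, is exactly what ensures that both resulting sutured pieces $(M_1,\gamma_1)$ and $(M_2,\gamma_2)$ are balanced and that the closure construction on each is compatible with the cut-and-reglue.
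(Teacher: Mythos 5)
Your high-level plan for case~\ref{item:non-zero-d} is on track---arrange the gluing $h$ with $h(d_+)=d_-$, pass to local coefficients so that Theorem~\ref{thm:Floer-excision-genus-1} and Lemma~\ref{lem:straight-tensor} can be invoked---but the decisive ingredient is missing. Theorem~\ref{thm:Floer-excision-genus-1} always cuts along a \emph{pair} of surfaces $\Sigma_1\cup\Sigma_2$ and reglues with a swap; it is not a single-surface operation. You name only one torus, $\hat A = S^1\times d$, so the phrase ``excision applied along $\hat A$'' does not describe a well-defined operation and cannot by itself produce the claimed $\tilde Y$. The paper supplies the missing second torus: take the auxiliary surface $T$ to have positive genus, choose a non-separating simple closed curve $c\subset T$, and impose $h(c_+)=c_-$ as well as $h(d_+)=d_-$, so that the closure $Y$ contains the two tori $\Sigma_1 = S^1\times c$ and $\Sigma_2 = S^1\times d$, both pierced once by a common $1$-cycle $\eta$ on $\bar R$. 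The excision theorem then applies, and the content of the proof is the verification that cutting $Y$ along $\Sigma_1\cup\Sigma_2$ and regluing by the swap yields a closure of $(M',\gamma')$. This second torus and the check that the reglued manifold really is such a closure are the heart of the argument, and your proposal elides both. (As a side point, cutting $Y$ along $\hat A$ does not ``cut the part of the auxiliary collar that had been used to close $A$ up''; the torus $\hat A$ lies entirely in the image of $M\subset Y$, and the collar $[-1,1]\times T$ played no role in closing $A$---the gluing $h$ did.)

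Your treatment of case~\ref{item:zero-d} does not work. You propose to apply excision with $\hat A$ separating, but the hypotheses of Theorem~\ref{thm:Floer-excision-genus-1} require either one $\Sigma_i$ in each component of a two-component $Y$ or homologically independent $\Sigma_1,\Sigma_2$ in a connected $Y$; a separating surface cannot serve as an excision locus, and there is no coherent swap-and-reglue in that configuration. The paper takes an entirely different route in this case: it observes that the surface $R_{+,1}\cup A\cup R_{-,2}$ is isotopic to a horizontal surface in $(M,\gamma)$, applies Proposition~\ref{prop:horizontal} to decompose along it, and thereby reduces to the sub-case in which one of $M_1$ or $M_2$ is a product. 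It then disposes of the two product sub-cases directly: if $M_2$ is a product, the product region is absorbed into the auxiliary surface of the closure so that $\SHM(M,\gamma)=\SHM(M_1,\gamma_1)$, and Proposition~\ref{prop:disjoint-union} finishes the argument; if $M_1$ is a product, one cuts it along a non-separating product annulus (possible because $R_{+,1}$ has positive genus) and falls back to case~\ref{item:non-zero-d}. You should adopt this reduction rather than trying to force an excision along $\hat A$.
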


\begin{proof}
    We begin with case \ref{item:non-zero-d} of the proposition.
    We shall construct
    closures $(Y,\bar{R})$ and $(\tilde{Y},\tilde{R})$ for
    $(M,\gamma)$ and $(M',\gamma')$ which are related to each other as
    described in the excision theorem,
    Theorem~\ref{thm:Floer-excision-genus-1}, and the result will
    follow.

    When we attach the product $[-1,1]\times T$ to $(M,\gamma)$, the
    curves $d_{+}$ and $d_{-}$ remain non-separating in the closed
    surfaces $\bar{R}_{\pm}$, because $T$ is connected. By taking $T$
    to have non-zero genus, we can also ensure that there is a curve
    $c$ in the interior of $T$ which is non-separating in $T$. So
    after attaching the product region, we have two product annuli
    $[-1,1]\times d$ and $[-1,1]\times c$, with independent
    non-separating curves $d_{+},c_{+}$ in $\bar{R}_{+}$ in
    $d_{-},c_{-}$ in $\bar{R}_{-}$. We can close up the manifold using
    a diffeomorphism $h:\bar{R}_{+}\to\bar{R}_{-}$ such that
    $h(d_{+})=d_{-}$ and $h(c_{+})=c_{-}$. The closure $(Y,\bar{R})$
    of $(M,\gamma)$ that we arrive at in this way contains two tori,
\[
    \begin{aligned}
        \Sigma_{1}&= S^{1}\times c \\
        \Sigma_{2}&= S^{1}\times d 
    \end{aligned}
\]
    There is a $1$-cycle $\eta$ lying on $\bar{R}$ that is transverse
    to both of these tori, so Theorem~\ref{thm:Floer-excision-genus-1}
    is applicable. (This is an instance of that theorem where the
    manifold $Y'$ obtained by cutting along $\Sigma_{1}$ and
    $\Sigma_{2}$ is connected.) The manifold $(\tilde{Y},\tilde{R})$
    obtained from $(Y,\bar{R})$ by cutting along $\Sigma_{1}\cup
    \Sigma_{2}$ and regluing is a closure of the $(M',\gamma')$, so
    we are done with case \ref{item:non-zero-d}.

    We turn to case \ref{item:zero-d}. Without loss of generality, we
    suppose that $M_{1}$ does not meet $A(\gamma)$ and $d_{+}$ does
    not bound a disk. Let $R_{+,1}$
    denote $R_{+}(\gamma)\cap M_{1}$ and let $R_{-,2}$ denote
    $R_{-}(\gamma)\cap M_{2}$. The surface $R_{+,1}$ has genus at
    least $1$ and its only boundary component is $d_{+}$. In
    \cite{Ni-A}, Ni uses the following observation. The union
    \[
                    R_{+,1}\cup A \cup R_{-,2}
    \]
    is isotopic to a horizontal surface in $(M,\gamma)$ to which
    Proposition~\ref{prop:horizontal} applies. By cutting along this
    horizontal surface, the pieces we get from $(M,\gamma)$ are (up to
    diffeomorphism)
    \[
                      \bigl(  [-1,1]\times R_{+,1}\bigr)
                      \cup_{[-1,1]\times d} M_{2}
    \]
    and
    \[
                    M_{1} \cup_{[-1,1]\times d} \bigl([-1,1] \times
                    R_{-,2}\bigr).
    \]
    In this way, case \ref{item:zero-d} is reduced to the case that
    either $M_{1}$ or $M_{2}$ is a product.

    If $M_{2}$ is a product, $[-1,1]\times R_{-,2}$, then the result
    is entirely straightforward: the surface $R_{-,2}$ contains all
    the annuli $A(\gamma)$. A closure $Y$ of $(M,\gamma)$ using an
    auxiliary surface $T$ can also be regarded as a closure of
    $(M_{1},\gamma_{1})$ using the auxiliary surface $R_{-,2}\cup T$.
    So we have
    \[
                        \SHM(M,\gamma) = \SHM(M_{1},\gamma_{1}).
    \]
    On the other hand, because $M_{2}$ is a product, we have
    $\SHM(M_{1},\gamma_{1}) = \SHM(M',\gamma')$ by
    Proposition~\ref{prop:disjoint-union}. Finally, if $M_{1}$ is a
    product, then we can cut $M_{1}$ open along a non-separating
    annulus because $R_{+,1}$ has positive genus, and this does not
    change $\SHM$, by part \ref{item:non-zero-d} of the proposition.
    After cutting open $M_{1}$ in this way, we arrive at a situation
    in which proposition \ref{item:non-zero-d} applies again, and the
    proof is complete.
\end{proof}

\subsection{Proof of Theorem~\ref{thm:fibered-M}}
\label{subsec:proof-fibered}

Those ingredients of Ni's proof from \cite{Ni-A} which involve
Heegaard Floer homology have all been replicated here in the context
of monopole Floer homology, so the proof carries through with little
change. We outline the argument, adapted from \cite{Ni-A}.
Let $(M,\gamma)$ be a balanced sutured manifold satisfying the
hypotheses of the theorem, and suppose $(M,\gamma)$ is not a product
sutured manifold. We shall show that $\SHM(M,\gamma)$ has rank at
least $2$.

Because of Proposition~\ref{prop:disjoint-union}, it is sufficient to
treat the case that $M$ is connected. Similarly, because of
Proposition~\ref{prop:horizontal}, we may assume that $(M,\gamma)$ is
``vertically prime'': that is, every horizontal surface in $(M,\gamma)$
is a parallel copy of either $R_{+}(\gamma)$ or $R_{-}(\gamma)$.  By
attaching product regions to $(M,\gamma)$ and appealing to
Proposition~\ref{prop:vertical}, we are also free to suppose that
$(M,\gamma)$ has only one suture. We now consider a maximal product
pair $i : [-1,1]\times E\hookrightarrow (M,\gamma)$ as in
\cite{Ni-A,Ni-Corr} and the induced map
\[
            i_{*} : H_{1}([-1,1]\times E) \to H_{1}(M).
\]
There are two cases.

\subparagraph{Case 1: $i_{*}$ is not surjective.} In this case, Ni
establishes that $(M,\gamma)$ admits two taut foliations $\cF_{1}$ and
$\cF_{2}$ whose difference element is non-torsion in $H^{2}(M,\partial
M)$. It then follows from Corollary~\ref{cor:rank-2} that
$\SHM(M,\gamma)$ has rank $2$ or more, as required.

\subparagraph{Case 2: $i_{*}$ is surjective.} In this case, let
$(M',\gamma')$ be the complement of the maximal product pair.
This is non-empty, because $(M,\gamma)$ is not a product sutured
manifold.
Proposition~\ref{prop:vertical} tells us that $\SHM(M,\gamma)$ and
$\SHM(M',\gamma')$ have the same rank. Ni observes that the
vertically-prime condition on $(M,\gamma)$ implies that $M'$ is
connected. Furthermore, $(M',\gamma')$ is a homology product, and its
top and bottom surfaces $R_{\pm}(\gamma')$ are planar, because of the
surjectivity of $i_{*}$. The
(connected) surfaces $R_{\pm}(\gamma')$ are not disks, so
$(M',\gamma')$ has at least two sutures. Let $r \ge 2$ be the number of
sutures in $(M',\gamma')$. Let $S$ be a planar surface with $r+1$
boundary components, so that the product sutured manifold
$[-1,1]\times S$ has $r+1$ sutures. Form a new sutured manifold
$(\tilde M,\tilde\gamma)$ by gluing $r$ of the annuli from
$[-1,1]\times S$ to the annuli of $(M',\gamma')$. The resulting
sutured manifold $(\tilde M, \tilde \gamma)$ has
\[
            \rank\SHM(\tilde M,\tilde \gamma) = \rank\SHM(M',\gamma')
\]
by Proposition~\ref{prop:vertical}. Furthermore $(\tilde M,
\tilde\gamma)$ is a homology product, and its maximal product
pair is $[-1,1]\times S$ up to isotopy. The construction has been
made so that the inclusion of the maximal product pair in
$(\tilde M,\tilde\gamma)$ is not surjective on $H_{1}$, so we now
have a situation which falls into Case 1 above. It follows that
$\SHM(\tilde M,\tilde \gamma)$ has rank at least $2$; and so too
therefore does $\SHM(M,\gamma)$. This completes Ni's proof.

\subsection{More decomposition theorems}
\label{subsec:more-decomp}

In \cite{Juhasz-2}, rather general sutured manifold decompositions are
considered, and results of the following sort are obtained. Let
$(M,\gamma)$ be a balanced sutured manifold, and let $S\subset M$ be a
decomposing surface in the sense of \cite{Gabai}. There is a sutured
manifold decomposition,
\[
            (M,\gamma) \decomp{S} (M',\gamma'),
\]
and we shall suppose that $(M',\gamma')$ is also balanced (which
implies that $S$ has no closed components). Under some mild
restrictions on $S$, Juh\'asz proves in \cite{Juhasz-2} that $\SFH(M',
\gamma')$ is a direct summand of $\SFH(M,\gamma)$. An entirely similar
theorem can be proved in the context of monopole Floer homology, using
$\SHM(M,\gamma)$ in place of $\SFH(M,\gamma)$. The following is
a restatement of Theorem~1.3 of \cite{Juhasz-2}, though
with less specific information about the \spinc{} structures that are
involved behind the scenes. In the statement of the theorem, an
oriented simple
closed curve
$C$ in $R(\gamma)$ is called \emph{boundary coherent} if it either
represents a non-zero class in $H_{1}(R(\gamma))$ or it is the
oriented boundary $\partial R_{1}$ of a compact subsurface $R_{1}
\subset     R(\gamma)$ with its canonical orientation.

\begin{theorem}[{\cite[Theorem 1.3]{Juhasz-2}}]
\label{thm:Juhasz-decomp}
    Let $(M,\gamma)$ be a balanced sutured manifold and
    \[(M,\gamma)\decomp{S} (M', \gamma')\] a sutured manifold
    decomposition. Suppose that the decomposing surface $S$ has no
    closed components, and that for every component $V$ of
    $R(\gamma)$, the set of closed components of $S\cap V$ consists of
    parallel oriented boundary-coherent simple closed curves. Then the
    Heegaard Floer homology
    $\SFH(M',\gamma')$ is a direct summand of $\SFH(M,\gamma)$. \qed
\end{theorem}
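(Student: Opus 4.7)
The statement is a verbatim restatement of Juh\'asz's Theorem~1.3 from \cite{Juhasz-2}, which concerns his Heegaard-theoretic invariant $\SFH$ rather than the $\SHM$ invariant developed here. The plan is therefore simply to invoke Juh\'asz's argument; none of the closure-and-excision machinery of the preceding sections is used, because Juh\'asz works throughout with sutured Heegaard diagrams. What follows is a summary of the structure of his proof, which the present theorem quotes essentially unchanged.

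The first and central step is the construction of a \emph{surface diagram} for $(M,\gamma)$ adapted to $S$: a balanced sutured Heegaard diagram $(\Sigma,\alphas,\betas)$ together with an isotopic representative of $S$ which meets $\Sigma$ in a union of arcs and simple closed curves transverse to $\alphas\cup\betas$, and which meets $R(\gamma)$ in the prescribed collection of arcs and boundary-coherent circles. The hypothesis that every closed component of $S\cap V$ is boundary-coherent is precisely what Juh\'asz needs here: such a circle either carries non-trivial first homology in $R(\gamma)$ or bounds a distinguished subsurface with its canonical orientation, and in either case it can be put in standard position on $\Sigma$ without destroying the balanced condition.

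Once the surface diagram exists, $S$ singles out a set $\mathbf{O}$ of ``outer'' generators --- those tuples of $\alphas\cap\betas$ intersection points lying on the positive side of $S\cap\Sigma$. A positivity-of-intersection argument shows that any holomorphic Whitney disk connecting a point of $\mathbf{O}$ to a generator outside $\mathbf{O}$ would be forced to intersect $S$ negatively, which is ruled out. Hence $\mathbf{O}$ spans a direct summand at the chain level. Cutting the Heegaard diagram along $S\cap\Sigma$ and capping off yields a surface diagram for $(M',\gamma')$ whose generators are in canonical bijection with $\mathbf{O}$ and whose differential matches the restricted differential, identifying the summand with $\SFH(M',\gamma')$.

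The main obstacle, and essentially the whole content of \cite{Juhasz-2}, is the construction of the surface diagram: one must simultaneously control the position of $S$ relative to $\Sigma$, the compressing disks, and the suture-adapted part of $\partial\Sigma$. The boundary-coherence hypothesis is inserted so that every closed component of $S\cap R(\gamma)$ can be pushed into standard position, reducing the decomposition to the normal form Juh\'asz handles in his Sections~4--6. After that construction is in hand, the verification that $\mathbf{O}$ is closed under the differential and that cutting recovers a diagram for $(M',\gamma')$ is largely formal, and completes the proof.
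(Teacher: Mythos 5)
Your proposal matches the paper exactly: the paper does not prove this statement itself but quotes it as Theorem~1.3 of \cite{Juhasz-2} (hence the \qed with no argument), and its own contribution is only the monopole analogue, Proposition~\ref{prop:S-decomp}, proved later by a different, closure-and-eigenspace argument. Your summary of Juh\'asz's surface-diagram and outer-generator argument is a fair sketch of the cited proof, so deferring to \cite{Juhasz-2} here is precisely what the paper does.
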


We have the following result.

\begin{proposition}\label{prop:S-decomp}
    Theorem~\ref{thm:Juhasz-decomp} continues to hold with monopole
    Floer homology in place of Heegaard Floer homology. That is, with
    the same hypotheses, $\SHM(M',\gamma')$ is a direct summand of
    $\SHM(M,\gamma)$.
\end{proposition}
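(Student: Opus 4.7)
The plan is to adapt Juh\'asz's argument from \cite{Juhasz-2} by replacing his use of adapted Heegaard diagrams with an appropriate choice of closure $(Y,\bar{R})$ of $(M,\gamma)$ in which the decomposing surface $S$ extends to a closed oriented surface $\bar{S}\subset Y$. The monopole Floer groups should behave in parallel to the Heegaard version under cutting along $\bar{S}$, thanks to Floer's excision theorem (Theorem~\ref{thm:Floer-excision}) and the adjunction inequality. The goal is to identify $\SHM(M',\gamma')$ with the direct summand of $\SHM(M,\gamma) = \HM_{\bullet}(Y|\bar{R})$ corresponding to those \spinc{} structures which are simultaneously of top degree on $\bar{R}$ and on $\bar{S}$.

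First, I would construct a closure $(Y,\bar{R})$ using an auxiliary surface $T$ of sufficient genus, along with a diffeomorphism $h$ chosen so that the oriented boundary curves of $S$ on $R_{\pm}(\gamma)$ can be extended through the product region $[-1,1]\times T$ and matched under $h$ to cap off $\partial S$ into a closed oriented surface $\bar{S}\subset Y$ containing $S$. The boundary-coherence hypothesis together with the parallel-curve condition on $S\cap V$ is precisely what makes such a capping possible: each component of $\partial S$ either represents a non-zero class in $H_{1}(R(\gamma))$ (which can be extended by an arc through $T$ and matched by $h$) or is the boundary of a subsurface of $R(\gamma)$, which contributes a capping piece coming from the product region. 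I would also arrange that $\bar{S}$ has no sphere components and has genus at least $2$ on each piece, passing to local coefficients if necessary.

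Second, by construction $\Sp(Y|\bar{R}\cup\bar{S})\subset \Sp(Y|\bar{R})$, so
\[
\HM_{\bullet}(Y|\bar{R}\cup\bar{S}) = \bigoplus_{\s\in\Sp(Y|\bar{R}\cup\bar{S})} \HM_{\bullet}(Y,\s)
\]
is a direct summand of $\SHM(M,\gamma)$ tautologically. I would then apply Floer's excision along $\bar{S}$: cutting $Y$ open along $\bar{S}$ and regluing by swapping the two sides produces a new closed manifold $\tilde{Y}$ which, by the way $\bar{S}$ was constructed, can be identified as a closure of $(M',\gamma')$ together perhaps with auxiliary mapping-torus pieces whose Floer homology is $\Z$ by Lemma~\ref{lem:fibered}. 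Theorem~\ref{thm:Floer-excision} then yields
\[
\HM_{\bullet}(Y|\bar{R}\cup\bar{S}) \cong \HM_{\bullet}(\tilde{Y}|\tilde{R}) = \SHM(M',\gamma'),
\]
which proves the proposition.

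The main obstacle, as in \cite{Juhasz-2}, is the first step: engineering the closure so that $\bar{S}$ simultaneously has the geometric features needed for excision (genus, non-separating components, no sphere components) and so that the \spinc{} bookkeeping matches the oriented-decomposition data of $S$. In particular, one must verify that the constraint $\langle c_{1}(\s),[\bar{S}]\rangle = 2g(\bar{S})-2$ picks out exactly those \spinc{} structures which are compatible with the orientation of $S$ in the sense used in the sutured setting, and one must deal separately with components of $\partial S$ that are null-homologous, most likely by reducing to product-annulus decompositions handled by Proposition~\ref{prop:vertical}. Once this geometric setup is carried out carefully, the monopole-side argument is largely parallel to Juh\'asz's, with Floer's excision theorem playing the role of the Heegaard diagram moves in his proof.
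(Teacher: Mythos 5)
Your first step---choosing the closure so that $S$ caps off to a closed oriented surface $\bar S\subset Y$, and aiming to exhibit the summand $\bigoplus_{\s\in\Sp(Y|\bar R)\cap\Sp(Y|\bar S)}\HM_\bullet(Y,\s)$---is exactly the right target, and closely parallels the paper. But the mechanism you propose for identifying that summand with $\SHM(M',\gamma')$ has a genuine gap: Floer's excision theorem (Theorem~\ref{thm:Floer-excision}) is not the right tool here. That theorem cuts along two surfaces $\Sigma_1,\Sigma_2$ of equal genus and regluing crosswise; if $\bar S$ is a single connected non-separating surface, ``cutting along $\bar S$ and swapping the two sides'' either returns $Y$ itself or is not a well-defined operation, and in any case the theorem yields a comparison of $\HM_\bullet(\cdot\,|\,\Sigma)$, not of $\HM_\bullet(\cdot\,|\,\bar R\cup\bar S)$. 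The point you are missing is that \emph{no cutting is needed at all}: the manifold $Y$ you built is already a closure of $(M',\gamma')$. One only has to switch which surface inside $Y$ plays the role of $\bar R$. The correct distinguished surface is $F$, obtained from $\bar R\cup\bar S$ by resolving their circles of intersection, and one checks directly that $Y$ can be reconstructed by attaching a (possibly disconnected) product region to $(M',\gamma')$ and gluing up, with $F$ appearing where $\bar R'$ should. The identification of the resulting direct summand is then a consequence of the adjunction inequality: since $[F]=[\bar R]+[\bar S]$ and $\chi(F)=\chi(\bar R)+\chi(\bar S)$, the only $\s\in\Sp(Y|F)$ with nonzero Floer homology are those satisfying the top-degree condition on both $\bar R$ and $\bar S$ separately. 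You flag this as something to ``verify,'' but it is the central computation, not a loose end.

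A second, smaller discrepancy concerns the reduction to the clean boundary configuration. You suggest handling null-homologous boundary components of $S$ via product-annulus decompositions (Proposition~\ref{prop:vertical}), but the paper instead invokes Lemma~4.5 of Juh\'asz to reduce to a \emph{good} decomposing surface (one whose boundary components each meet both $R_+(\gamma)$ and $R_-(\gamma)$), and then attaches product $1$-handles so that the boundary circles of $S$ become independent curves $C^\pm_i$ in $R_\pm(\gamma)$; only then is the closure constructed with $h(C^+_i)=C^-_i$. Your route through Proposition~\ref{prop:vertical} is not obviously equivalent and would need a separate argument.
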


\begin{proof}
    By Lemma~4.5 of \cite{Juhasz-2}, Juh\'asz reduces this to the
    special case of a  ``good'' decomposing surface $S$, by which is
    meant a surface $S$ such that every component of $\partial S$
    intersects both $R_{+}(\gamma)$ and $R_{-}(\gamma)$.

    \begin{figure}
    \begin{center}
        \includegraphics[scale=0.7]{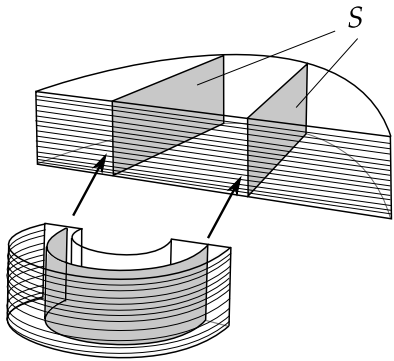}
    \end{center}
    \caption{\label{fig:Add-product-handles}
    Adding product $1$-handles to a sutured manifold containing a
    decomposing surface $S$.}
\end{figure}

    Starting from a good decomposing surface $S$, we can pass to
    another special case as follows. Let $C$ be a component of
    $\partial S$. By the definition of a good
    decomposing surface, $C$ intersects the annuli $A(\gamma)$ in
    vertical arcs. The number of these arcs counted with sign is zero.
    Pair up these arcs accordingly; and for each pair attach a product
    $1$-handle as shown in Figure~\ref{fig:Add-product-handles}.
    Repeat this with every other boundary component of $\partial S$.    
    The result of this process is a new balanced sutured manifold
    $(M_{1},\gamma_{1})$ containing a new decomposing surface $S_{1}$.
    We have $\SHM(M_{1},\gamma_{1})\cong \SHM(M,\gamma)$, because
    adding product handles has no effect.
    (The inverse operation to adding a product handle can also be
    described as removing a larger product region, by cutting along annuli
    parallel to the annuli where the handle is attached; so this
    operation is a  special case of one we have seen before.)
    Furthermore, if
    $(M'_{1},\gamma'_{1})$ is what we obtain from
    $(M_{1},\gamma_{1})$ by sutured manifold decomposition along
    $S_{1}$, then $(M'_{1},\gamma'_{1})$ is also related to
    $(M',\gamma')$ by adding adding product $1$-handles. It therefore
    suffices to prove that $\SHM(M'_{1},\gamma'_{1})$ is a direct
    summand of $\SHM(M_{1},\gamma_{1})$.

    Looking at $(M_{1},\gamma_{1})$, we now see that it is sufficient
    to prove the following lemma, which is  a priori a special case of
    the proposition.
\end{proof}

\begin{lemma}\label{lem:S-decomp-special}
    Let $(M,\gamma)$ be a balanced sutured manifold and let
    \[
            (M,\gamma) \decomp{S} (M',\gamma')
    \]
    be a sutured manifold decomposition. Suppose that $S$ has no
    closed components and that
    the oriented boundary of
    $\partial S$ consists of $n$ simple closed curves
    $C^{+}_{1},\dots, C^{+}_{n}$ in $R_{+}(\gamma)$ and $n$ simple
    closed curves $C^{-}_{1},\dots, C^{-}_{n}$ in $R_{-}(\gamma)$.
    Suppose further that the homology classes of
    $C^{+}_{1},\dots,C^{+}_{n}$ are  a collection of independent
    classes in
    $H_{1}(R_{+}(\gamma))$, and make a similar assumption for
    $R_{-}(\gamma)$. Then $\SHM(M',\gamma')$ is a direct summand of
    $\SHM(M,\gamma)$.
\end{lemma}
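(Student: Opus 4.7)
The plan is to construct a closure $(Y,\bar R)$ of $(M,\gamma)$ in which the decomposing surface $S$ closes up to an embedded closed oriented surface $\bar S\subset Y$, to extract the subgroup $\HM_\bullet(Y|\bar R\cup\bar S)$ as a direct summand of $\SHM(M,\gamma)=\HM_\bullet(Y|\bar R)$ using the adjunction inequality on $\bar S$, and then to identify this summand with $\SHM(M',\gamma')$ via an excision argument.

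First I would use the independence hypothesis to set up the closure carefully. Because $[C_1^\pm],\dots,[C_n^\pm]$ are independent in $H_1(R_\pm(\gamma))$, after attaching $[-1,1]\times T$ with $T$ of sufficiently large genus, these curves remain disjoint, independent, and non-separating in $\bar R_\pm = R_\pm(\gamma)\cup\{\pm1\}\times T$. The change-of-coordinates principle for closed oriented surfaces then provides an orientation-preserving diffeomorphism $h:\bar R_+\to\bar R_-$ with $h(C_i^+) = C_i^-$ for all $i$, which can also be arranged to satisfy conditions~\ref{item:genus-2}--\ref{item:closed-curve}. In the resulting closure $Y$, the $h$-identification pairs up the $2n$ boundary circles of $S\subset M$, so $S$ becomes a closed oriented surface $\bar S\subset Y$; by further increasing the genus of $T$ if necessary, I arrange that $\bar S$ is non-separating in $Y$ and has genus at least $2$.

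The adjunction inequality (Corollary~40.1.2 of~\cite{KM-book}) now annihilates $\HM_\bullet(Y,\s)$ whenever $\s\in\Sp(Y|\bar R)$ satisfies $\langle c_1(\s),[\bar S]\rangle > 2g(\bar S)-2$, so the subgroup
\[
\HM_\bullet(Y|\bar R\cup\bar S) = \bigoplus_{\s\in\Sp(Y|\bar R\cup\bar S)}\HM_\bullet(Y,\s)
\]
is a direct summand of $\HM_\bullet(Y|\bar R)=\SHM(M,\gamma)$. It remains to identify this summand with $\SHM(M',\gamma')$. For this I build a closure $(Y'',\bar R'')$ of $(M',\gamma')$ by performing a cut-and-reglue on $Y$ along $\bar S$: cutting $Y$ along $\bar S$ yields a $3$-manifold $Y^*$ in which $M\setminus S = M'$ appears with its two copies of $S$ exposed in the boundary, and by gluing in an appropriate product cobordism one reassembles a closure of $(M',\gamma')$ in the sense of Definition~\ref{def:closure}. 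An application of Floer's excision theorem (Theorem~\ref{thm:Floer-excision}), combined with Lemma~\ref{lemma:product-rank-1} to eliminate the auxiliary product factor $\bar S\times S^1$, then yields the desired isomorphism $\HM_\bullet(Y|\bar R\cup\bar S)\cong\HM_\bullet(Y''|\bar R'')=\SHM(M',\gamma')$.

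The main obstacle is the explicit cut-and-reglue realizing the excision in this last step. One must keep track of how the annular regions in $[-1,1]\times T$ that abut the curves $\bar C_i$ reassemble after cutting along $\bar S$, and verify that the reassembled topology is a valid closure of $(M',\gamma')$ with correctly oriented new sutures $C_1^\pm,\dots,C_n^\pm$. This bookkeeping is the closure-and-excision translation of the Heegaard Floer argument of~\cite{Juhasz-2}, and is where the bulk of the technical work lies.
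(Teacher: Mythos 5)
Your first half — constructing a closure $Y$ in which $h$ carries $C_i^+$ to $C_i^-$, so that $S$ closes up to $\bar S\subset Y$, and then invoking the adjunction inequality to extract $\HM_\bullet(Y|\bar R\cup\bar S)=\bigoplus_{\s\in\Sp(Y|\bar R)\cap\Sp(Y|\bar S)}\HM_\bullet(Y,\s)$ as a direct summand of $\SHM(M,\gamma)$ — is sound, and matches the paper. The problem is the second half, where you propose to identify this summand with $\SHM(M',\gamma')$ by cutting $Y$ along $\bar S$ and applying Floer excision.

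That step does not work as stated, for two related reasons. First, $\bar R$ and $\bar S$ intersect essentially in $Y$ (in $n$ circles $\bar C_1,\dots,\bar C_n$), so $\bar R$ is not carried along coherently under a cut-and-reglue along $\bar S$. All of the excision theorems in the paper (Theorems~\ref{thm:Floer-excision}, \ref{thm:Floer-excision-genus-1}, \ref{thm:Floer-excision-extra-eta}) either have the cutting locus itself as the constraint surface, or require any auxiliary constraint data (the surface $F$, the cycle $\eta_0$) to be disjoint from the cut; neither condition is met by the pair $(\bar R,\bar S)$ here. Second, an excision argument would replace $Y$ by a genuinely different closed manifold, whereas what you actually need is a statement relating two Floer groups of the \emph{same} $Y$. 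You have the right summand but no valid mechanism to move it to a closure of $(M',\gamma')$.

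The observation you are missing — and which collapses the ``main obstacle'' you flag at the end — is that no cut-and-reglue is needed at all: the manifold $Y$ you already built is \emph{itself} a closure of $(M',\gamma')$. One attaches to $(M',\gamma')$ the product over the disconnected auxiliary surface $T'=T\cup T_1$, where $T_1$ is a union of $n$ annuli filling the new sutures; the resulting closed-up manifold is diffeomorphic to $Y$, and the distinguished surface $\bar R'$ in this second closure is exactly the oriented surface $F$ obtained by smoothing the double circles of $\bar R\cup\bar S$. Since $[F]=[\bar R]+[\bar S]$ and $\chi(F)=\chi(\bar R)+\chi(\bar S)$, the adjunction inequality shows $\HM_\bullet(Y|F)$ equals exactly the summand you isolated, so $\SHM(M',\gamma')=\HM_\bullet(Y|F)$ sits inside $\SHM(M,\gamma)=\HM_\bullet(Y|\bar R)$ with no excision required. (That disconnected auxiliary surfaces are permissible here is justified earlier via Proposition~\ref{prop:disjoint-union}.)
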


\begin{proof}[Proof of the lemma]
    Form the closure $Y=Y(M,\gamma)$ by attaching a product region
    $[-1,1]\times T$ as usual and then choosing the diffeomorphism $h$
    in such a way that $h(C^{+}_{i}) = C^{-}_{i}$ (with the opposite
    orientation) for all $i$. The result of this is that $Y$ contains
    two closed surfaces: first the usual surface $\bar{R}$, and second
    a surface $\bar{S}$ obtained from $S$ by identifying $C^{+}_{i}$
    with $C^{-}_{i}$ for all $i$. The intersection
    $\bar{S}\cap\bar{R}$ consists of $n$ circles, $C_{1},\dots,
    C_{n}$. Let $F$ be the oriented surface obtained from $\bar{S}\cup\bar{R}$
    by smoothing out the circles of double points, respecting
    orientations.

    The same surface $F\subset Y$ can be arrived at from a different
    direction. Start with $(M',\gamma')$. We can write $A(\gamma')$ as
    a union of components
    \[
                A(\gamma') = A(\gamma) \cup A_{1},
    \]
    where $A(\gamma)$ are the annuli of the original sutured manifold
    $(M,\gamma)$ and $A_{1}$ are the new annuli. The new annuli can be
    written as $[-1,1]\times D^{\pm}_{i}$, where the collection of curves
    $D^{\pm}_{i}$ are in natural correspondence with the curves
    $C^{\pm}_{i}$. We now form a closure $Y'$ of $(M',\gamma')$ as
    follows. We attach a product region $[-1,1]\times T'$ to
    $(M',\gamma')$, where $T'$ is a (disconnected) surface
    \[
                T' = T \cup T_{1}.
    \]
    Here $T$ is the surface used to close $Y$ and $T_{1}$ is a
    collection of $n$ annuli
    \[
                T_{1} = T_{1,1} \cup \dots \cup T_{1,n}.
    \]
    Although $T'$ breaks the rules by being disconnected, we can still
    effectively use $T'$ in constructing $\SHM(M',\gamma')$ because of
    the arguments of section~\ref{subsec:decomp}. In attaching
    $[-1,1]\times T'$ to $(M',\gamma')$ we glue $[-1,1]\times\partial
    T$ to the annuli $A(\gamma)\subset A(\gamma')$ as we did when
    closing $(M,\gamma)$, and we glue the two components $[-1,1]\times
    T_{1,i}$ to the two annuli $[-1,1]\times D^{\pm}_{i}$ belonging to
    $A_{1}$.
\begin{figure}
    \begin{center}
        \includegraphics[scale=0.7]{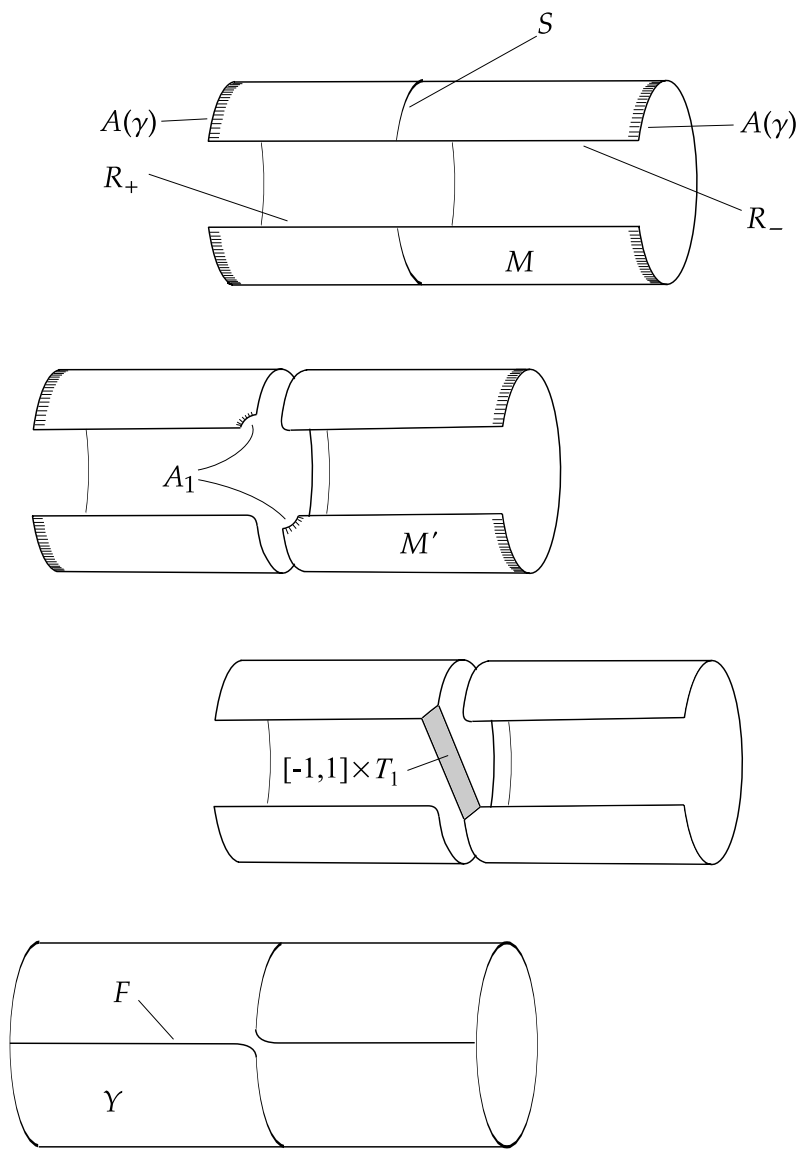}
    \end{center}
    \caption{\label{fig:Sewn-up}
    Decomposing $M$ along $S$ and then closing up to get $F$.
     The collars of $\partial M$ and $\partial
    M'$ are marked with hatching near $A(\gamma)$ and $A(\gamma')$. The
    product part $[-1,1]\times T$ is not shown in the figure, which is
    otherwise a faithful representation after multiplying by $S^{1}$.}
\end{figure}

    At this point, we have a manifold
    \[
                (M',\gamma') \cup [-1,1]\times (T\cup T_{1})
    \]
    with two boundary components $\bar{R}'_{\pm}$. The top surface
    $\bar{R'}_{+}$ can be described as a union
    \[
                \bar{R}'_{+} = \bar{R}^{\dag}_{+} \cup S_{+} \cup \{+1\}\times
                T_{1}.
    \]
    Here $\bar{R}^{\dag}_{+}$ is the surface with boundary obtained by
    cutting open $\bar{R}_{+}$ along the circles $C^{+}_{i}$, and the
    annuli $\{+1\}\times T_{1}$ are collars of half of the boundary components
    of $\bar{R}^{\dag}_{+}$. The surface $S_{+}$ is a copy of $S$.
    Up to diffeomorphism, we can forget these
    annular regions and write
    \[
\begin{aligned}
                \bar{R}'_{+} &= \bar{R}^{\dag}_{+} \cup S_{+} \\
                \bar{R}'_{-} &= \bar{R}^{\dag}_{-} \cup S_{-} .
\end{aligned}
    \]
    That is, $\bar{R}'_{+}$ is obtained from $\bar{R}_{+}$ by cutting
    open along the circles $C^{+}_{i}$ and inserting a copy of $S$.
    Finally, form the closure $Y'$ by using a diffeomorphism
    \[
                h' : \bar{R}'_{+}  \to \bar{R}'_{-} 
    \]
    which is equal to $h$ on $\bar{R}^{\dag}_{+}$ and equal to the
    identity on $S$.

    The resulting closure $Y'$ of $(M',\gamma')$ is diffeomorphic to
    $Y$; and under this diffeomorphism, the surface $\bar{R}'\subset
    Y'$ obtained from $\bar{R}'_{\pm}$ becomes the surface $F$. (See
    Figure~\ref{fig:Sewn-up}.)
    It
    follows that we can calculate $\SHM(M',\gamma')$ as
    \[
                \SHM(M',\gamma') = \HM_{\bullet}(Y|F).
    \]
    The homology class of $F$ is the sum of the classes of $\bar{R}$
    and $\bar{S}$. Furthermore, $\chi(F) = \chi(\bar{R}) +
    \chi(\bar{S})$. It follows from the adjunction inequality that the
    only \spinc{} structures in $\Sp(Y|F)$ which can have non-zero
    Floer homology are those in the intersection $\Sp(Y|\bar{R}) \cap
    \Sp(Y|\bar{S})$. So we have
    \[
                    \SHM(M',\gamma') = \bigoplus_{\s\in \Sp(\bar{R})
                    \cap \Sp(\bar{S})} \mskip -20 mu \HM_{\bullet}(Y,\s),
    \]
    while
    \[
                    \SHM(M,\gamma) = \bigoplus_{\s\in \Sp(\bar{R})}
                    \HM_{\bullet}(Y,\s) .
    \]
    This shows that $\SHM(M',\gamma')$ is a direct summand of
    $\SHM(M,\gamma)$, as the lemma asserts.
\end{proof}

As is pointed out in \cite{Juhasz-2}, one can use 
Proposition~\ref{prop:S-decomp} to give an alternative proof of the
non-vanishing of $\SHM(M,\gamma)$ when $(M,\gamma)$ is taut. One uses
a sutured manifold hierarchy, starting at $(M,\gamma)$ and ending at a
product sutured manifold, whose (monopole) Floer homology we know to
be $\Z$, so showing that $\Z$ is a summand of $\SHM(M,\gamma)$. 

\section{Instantons}
\label{sec:instantons}

Much of the contents of this paper can be adapted to the case of
(Yang-Mills) instanton homology, instead of (Seiberg-Witten) monopole
Floer homology. We present some of this material in this section. For
background on instanton homology, we refer to \cite{Donaldson-book}.

\subsection{Instanton Floer homology}

When looking at the monopole Floer homology groups $\HM_{\bullet}(Y,\s)$
of a $3$-manifold $Y$, we could avoid difficulties arising from
reducible solutions by considering situations where only non-torsion
\spinc{} structures $\s$ played a role. In instanton homology,
reducibles can be avoided by using $\SO(3)$ bundles with non-zero
$w_{2}$. We proceed as follows.

Fix a hermitian line bundle $w\to Y$
such that $c_{1}(w)$ has odd pairing with some integer homology
class. Let $E \to Y$ be a $\U(2)$ bundle with an isomorphism
$\theta : \Lambda^{2}E \to w$. Let $\cA$ be the space of $\SO(3)$
connections in $\ad(E)$ and let $\G$ be the group of determinant-1
gauge transformations of $E$ (the automorphisms of $E$ that 
respect $\theta$). The Chern-Simons functional on the space $\cB =
\cA/\G$ leads to a well-defined instanton homology  group which we
write as $I_{*}(Y)_{w}$ \cite{Donaldson-book}. It is also possible to
use a slightly larger gauge group than $\G$. Fix a surface $R \subset Y$
that has odd pairing with $c_{1}(w)$. Let $\xi = \xi_{R}$ be a real line
bundle with $w_{1}(\xi)$ dual to $R$. The map $E\mapsto E\otimes \xi$
gives rise to a map on the space of connections, \[
\iota_{R}:\cB\to\cB, \] without
fixed points, and there is a quotient $\cB/\iota_{R}$. This is the same
as the quotient of $\cA$ by a gauge group which has $\G$ as an index-2
subgroup. Let us temporarily write $I_{*}(Y)_{w,R}$ for the resulting
instanton homology group: it is the fixed space of an induced
involution on $I_{*}(Y)$. As an example, in the case $Y=T^{3}$, we
have $I_{*}(T^{3})_{w} = \Z\oplus \Z$. The involution interchanges the two
copies of $\Z$, and $I_{*}(T^{3})_{w,R}=\Z$ whenever $w\cdot [R]$ is
non-zero. In general, $I_{*}(Y)_{w}$ is $(\Z/8_{}$-graded. The involution
acts with degree $4$, and the group $I_{*}(Y)_{w,R}$ is $(Z/4)$-graded.

Although these groups are defined with $\Z$ coefficients, it will be
convenient to work with a field of characteristic zero; and in what
follows we will take that field to be $\C$. Thus we will take it that
\[
                I_{*}(T^{3})_{w,R}=\C.
\]

\subsection{The eigenspace decomposition}

The monopole Floer homology detects the Thurston norm of a
$3$-manifold (see section~\ref{subsec:adjunction}); but the formulation
of this statement requires the decomposition of the monopole Floer
homology according to the different \spinc{} structures. In order to
relate \emph{instanton} homology to the Thurston norm, one needs a
decomposition of the instanton homology. As suggested in
\cite{KM-asymptotics}, such a decomposition arises from the
eigenspaces of natural operators on the Floer groups.

Let $Y$ be again a closed $3$-manifold and $w$ a line bundle as above.
Given an oriented closed surface $R$ in $Y$, there is a
$2$-dimensional cohomology class $\mu(R)$ in $\cB$ (for which our
conventions follow \cite{Donaldson-Kronheimer}) and hence an operation
of degree $-2$ on both $I_{*}(Y)_{w}$ and $I_{*}(Y)_{w,R}$. There is
also the class $\mu(y)$, for $y$ a point in $y$, which acts with
degree $4$. The operators $\mu(R)$ and $\mu(y)$ commute, so one can
look for simultaneous eigenvalues. In the
special case that $Y=S^{1}\times \Sigma$, with $\Sigma$ a surface of
positive genus, the eigenvalues of $\mu(\Sigma)$ and $\mu(y)$ were computed by
Mu\~noz in 
\cite{Munoz}:

\begin{proposition}[{\cite[Proposition 20]{Munoz}}]
\label{prop:eigen-munoz}
    Let $w\to S^{1}\times \Sigma$ be the line bundle whose first Chern
    class is dual to the $S^{1}$ factor. Then
    the simultaneous eigenvalues of the action of  $\mu(\Sigma)$ and
    $\mu(y)$ on
    $I_{*}(S^{1}\times\Sigma)_{w}$ are the pairs of complex numbers
    \[
               ( i^{r}(2k), (-1)^{r}2 )
    \]
    for all the integers $k$ in the range $0 \le k \le g-1$ and all
    $r=0,1,2,3$. Here $i$ denotes
    $\sqrt{-1}$. 
    \qed
\end{proposition}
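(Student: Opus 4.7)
My plan is to reduce the eigenvalue computation to a question about the quantum cohomology ring of the moduli space $M_\Sigma$ of semistable rank-$2$ holomorphic bundles over $\Sigma$ of fixed determinant of odd degree, via the Dostoglou--Salamon correspondence.

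The first step is to invoke the Dostoglou--Salamon theorem identifying the instanton Floer homology of a mapping torus of a surface diffeomorphism (with the $w$ whose $c_{1}$ is dual to the $S^{1}$ factor) with the symplectic Floer homology of the corresponding symplectomorphism of $M_\Sigma$. Specialized to the trivial mapping torus, together with the Piunikhin--Salamon--Schwarz isomorphism (applicable because $M_\Sigma$ is a closed monotone symplectic manifold), this gives an isomorphism
\[
I_{*}(S^{1}\times\Sigma)_{w}\;\cong\; HF^{*}(\mathrm{id}_{M_\Sigma})\;\cong\; QH^{*}(M_\Sigma;\mathbb{C}).
\]
Under this isomorphism I have to translate the operators $\mu(\Sigma)$ and $\mu(y)$ into quantum multiplication operators. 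Using the universal bundle $\mathbb{E}\to M_\Sigma\times \Sigma$ and the classical Atiyah--Bott/Newstead dictionary (extended to the quantum setting), $\mu(\Sigma)$ becomes quantum multiplication by the degree-$2$ class $\alpha$ obtained by slant product of $c_{2}(\mathrm{ad}\,\mathbb{E})$ with $[\Sigma]$, while $\mu(y)$ becomes quantum multiplication by the degree-$4$ class $\beta$ obtained by slanting with a point; I would be careful to track the normalizing constants so that the claimed eigenvalues come out with the right scale.

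The second step is to compute the simultaneous spectrum of quantum multiplication by $\alpha$ and $\beta$. By $\mathrm{Sp}(2g,\mathbb{Z})$-equivariance of the quantum product under the mapping class group action, it suffices to work in the invariant subring generated by $\alpha$, $\beta$, and the invariant combination $\gamma=-2\sum\psi_{i}\psi_{i+g}$ of the odd generators. I would then appeal to the Siebert--Tian presentation of $QH^{*}(M_\Sigma)$, which gives explicit quantum deformations of Mumford's conjectured relations. From these relations the minimal polynomial of the $\beta$-action is $(\beta^{2}-4)^{?}$, so the $\beta$-eigenvalues are exactly $\pm 2$. On each $\beta=\varepsilon\cdot 2$ eigenspace ($\varepsilon=\pm1$), the relations reduce the $\alpha$-action to a polynomial whose roots are of the form $\pm 2ik$ with $k$ ranging over $0,1,\dots,g-1$. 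Combining the four sign choices with the two possible square roots produces precisely the pairs $(i^{r}\cdot 2k,\,(-1)^{r}\cdot 2)$ for $r=0,1,2,3$ and $0\le k\le g-1$.

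The hard part is the quantum cohomology computation in the third step: verifying that the quantum corrections to the Mumford relations land exactly where they must in order to produce eigenvalues $\pm 2$ for $\beta$ and $\pm 2ik$ for $\alpha$ on each $\beta$-eigenspace, rather than some other spectrum. Getting this right requires a careful unpacking of the three- and four-point Gromov--Witten invariants of $M_\Sigma$ in low degrees, which is where the bulk of the technical work of \cite{Munoz} lies. Once the ring structure is pinned down, the eigenvalue statement reduces to elementary linear algebra on a finite-dimensional commutative $\mathbb{C}$-algebra.
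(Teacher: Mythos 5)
This proposition is not proved in the paper: it is quoted from Mu\~noz with a citation and a \qed, and the only original content the paper contributes is the remark immediately following. That remark records that Mu\~noz's generator $\alpha$ equals $2\mu(\Sigma)$, his $\beta$ equals $-4\mu(y)$, that the group Mu\~noz writes as $HF^{*}(S^{1}\times\Sigma)$ is the $(\Z/4)$-graded $I_{*}(S^{1}\times\Sigma)_{w,\Sigma}$ of this paper rather than the $(\Z/8)$-graded $I_{*}(S^{1}\times\Sigma)_{w}$, and that the passage between the two follows because a degree-$2$ operator on a $(\Z/8)$-graded space has $\lambda$- and $i\lambda$-eigenspaces of equal rank. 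So what you are proposing is something different in kind: a reconstruction of the argument that lives inside Mu\~noz's paper rather than a comparison with anything proved here.

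Within that reconstruction there is a concrete gap at the very first step. The Dostoglou--Salamon theorem combined with Piunikhin--Salamon--Schwarz identifies $QH^{*}(M_\Sigma;\C)$ with the $(\Z/4)$-graded group $I_{*}(S^{1}\times\Sigma)_{w,\Sigma}$ built from the enlarged gauge group, \emph{not} with the $(\Z/8)$-graded $I_{*}(S^{1}\times\Sigma)_{w}$ that the proposition is about. The genus-one case already exposes the discrepancy: $M_{T^{2}}$ is a point, so $QH^{*}$ is one-dimensional, while the paper records $I_{*}(T^{3})_{w}=\Z\oplus\Z$. Your quantum-cohomology computation therefore only accesses the $r$-even half of the spectrum, namely the pairs $(\pm 2k,\,2)$. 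To produce the remaining pairs $(\pm 2ik,\,-2)$ you need precisely the observation the paper's remark supplies: since $\mu(\Sigma)$ has degree $2$ and $\mu(y)$ has degree $4$ on the $(\Z/8)$-graded $I_{*}(S^{1}\times\Sigma)_{w}$, conjugating by the grading automorphism acting by $e^{\pi i j/4}$ on the degree-$j$ summand carries $\mu(\Sigma)$ to $i\mu(\Sigma)$ and $\mu(y)$ to $-\mu(y)$, so the joint spectrum is closed under $(\lambda_{1},\lambda_{2})\mapsto(i\lambda_{1},-\lambda_{2})$. Finally, the normalization question you flag but do not resolve actually matters: your step three conflates $\mu(\Sigma),\mu(y)$ with Mu\~noz's $\alpha,\beta$, which differ by the factors $2$ and $-4$ spelled out in the paper's remark, so assertions such as ``the $\beta$-eigenvalues are exactly $\pm 2$'' are ambiguous between the two conventions and cannot be checked as stated.
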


\begin{remark}
    In \cite{Munoz}, the $2$-dimensional class called $\alpha$
    corresponds to $2\mu(\Sigma)$ here, and the class $\beta$
    corresponds to $-4\mu(y)$. Also, the group
    $\mathit{HF}^{*}(S^{1}\times\Sigma)$ that appears in \cite{Munoz} is our
    $I_{*}(S^{1}\times\Sigma)_{w,\Sigma}$. Munoz computes the spectrum
    in the case of
    $I_{*}(S^{1}\times \Sigma)_{w,\Sigma}$, but the case of
    $I_{*}(S^{1}\times \Sigma)_{w}$ follows in a straightforward
    manner. Observe, in particular, that because $\mu(\Sigma)$ is an
    operator of degree $2$ on a $(\Z/8)$-graded vector space, the
    eigenspaces of eigenvalues $\lambda$ and $i\lambda$ will always be
    isomorphic.
\end{remark}

As a corollary of this proposition, a similar result holds for a
general $3$-manifold $Y$.

\begin{corollary}\label{cor:eigen-munoz-Y}
    Let $R\subset Y$ be closed connected surface of positive genus,
    and let $w$ have odd pairing with $R$. Then the eigenvalues of the
    action of the pair of operators $\mu(R)$ and $\mu(y)$
    on $I_{*}(Y)_{w}$ are a subset of the
    eigenvalues that occur in the case of the product manifold
    $S^{1}\times R$. That is, they are pairs complex numbers
    \[
                ( i^{r}(2k), (-1)^{r }2)
    \]
    for integers $k$ in the range $0\le k \le g-1$.
\end{corollary}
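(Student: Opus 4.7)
The plan is to reduce the eigenvalue computation on $I_*(Y)_w$ to the product case handled by Proposition~\ref{prop:eigen-munoz} by applying Floer's excision theorem in its instanton form — this is the original setting in which Floer established excision.

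First I would set up the excision picture. Take the disjoint union $Y \sqcup Y_0$ where $Y_0 = S^1 \times R'$ and $R'$ is a copy of $R$, and distinguish the two surfaces $R \subset Y$ and $\Sigma_0 := \{p\} \times R' \subset Y_0$; both are non-separating of genus $g$. Choose the line bundle $w_0$ on $Y_0$ with $c_1(w_0)$ dual to the $S^1$ factor, so that $w_0 \cdot \Sigma_0$ is odd and matches the oddness condition on $w \cdot R$. Apply the instanton version of Floer's excision theorem, working with $\C$ coefficients, to obtain an isomorphism
\[
\Phi \colon I_*(Y)_w \otimes_{\C} I_*(Y_0)_{w_0} \xrightarrow{\sim} I_*(\tilde Y)_{\tilde w},
\]
where $\tilde Y$ is the manifold built from the cut-and-paste construction. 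In our configuration $\tilde Y$ can be identified with $Y$ itself, since cutting $Y$ along $R$ and then inserting the cylinder $[0,1]\times R$ coming from $Y_0$ recovers $Y$ topologically.

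The critical step is to track the commuting pair $(\mu(R), \mu(y))$ across $\Phi$. The excision is realized by a cobordism $W$ from $\tilde Y$ to $Y \sqcup Y_0$, and inside $W$ one can construct a $2$-chain whose boundary pieces match $R \subset Y$ on one end and the re-glued surface $\bar R \subset \tilde Y$ on the other, and similarly a $2$-chain matching $\Sigma_0 \subset Y_0$ to a parallel copy $\bar \Sigma_0 \subset \tilde Y$. The two surfaces $\bar R$ and $\bar \Sigma_0$ cobound the inserted cylinder, so they are homologous in $\tilde Y$ and therefore $\mu(\bar R) = \mu(\bar \Sigma_0)$ as operators on $I_*(\tilde Y)_{\tilde w}$. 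Combined with the cobordism-functoriality of $\mu$-insertions (Stokes-type relations between boundary and interior $\mu$-classes) this forces a nontrivial relation between $\mu(R) \otimes 1$, $1 \otimes \mu(\Sigma_0)$, and a common operator on $I_*(\tilde Y)_{\tilde w}$, from which one reads off that every joint eigenvalue of $(\mu(R), \mu(y))$ on $I_*(Y)_w$ must also occur as a joint eigenvalue of $(\mu(\Sigma_0), \mu(y'))$ on $I_*(Y_0)_{w_0}$. An analogous argument with $\mu(y)$ in place of $\mu(R)$ applied simultaneously handles the joint spectrum.

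Finally, Proposition~\ref{prop:eigen-munoz} identifies the joint spectrum on $I_*(Y_0)_{w_0}$ as the list $(i^r \cdot 2k, (-1)^r \cdot 2)$ for $0 \le k \le g-1$ and $r \in \{0,1,2,3\}$, which gives the conclusion. The main obstacle is the second step: verifying the precise sense in which $\Phi$ intertwines the $\mu$-actions so as to yield subset containment of the joint spectra, as opposed to some weaker relation such as containment only of the spectra of sums or differences. This requires a careful analysis of the excision cobordism with $\mu$-insertions, in the spirit of Floer's original argument as presented in \cite{Braam-Donaldson}, together with the elementary observation that for a commuting pair of operators the spectrum of $A \otimes 1$ (resp.\ $1 \otimes B$) on a tensor product coincides with the spectrum of $A$ (resp.\ $B$) on the relevant factor.
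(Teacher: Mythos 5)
Your plan has a genuine gap at the point where you invoke excision. The excision theorem, both in Floer's original form and in the version used in this paper, does \emph{not} give an isomorphism between $I_*(Y)_w \otimes I_*(Y_0)_{w_0}$ and $I_*(\tilde Y)_{\tilde w}$. What it gives is an isomorphism of the restricted groups $I_*(Y|\Sigma)_w$ (the simultaneous eigenspaces for the top eigenvalues of $\mu(\Sigma)$ and $\mu(y)$), and even the genus-$1$ version in \cite{Braam-Donaldson} for the unrestricted groups carries an extra factor-of-two discrepancy, as the paper itself remarks. So the map $\Phi$ you posit does not exist, and in particular would have inconsistent ranks: for $g\ge 2$, $I_*(Y_0)_{w_0}$ is much larger than $\C$, so $I_*(Y)_w\otimes I_*(Y_0)_{w_0}$ cannot be isomorphic to $I_*(\tilde Y)_{\tilde w}=I_*(Y)_w$. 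There is also a circularity worry: the higher-genus excision theorem in the paper requires knowing that $I_*(S^1\times R|R)_w$ has rank one (Proposition~\ref{prop:rank-one-w}), which is itself extracted from the eigenvalue picture, so using excision to deduce the eigenvalue containment gets the logical order backwards.

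The paper proves the corollary with a more elementary and more robust cobordism argument that sidesteps excision entirely. Place a parallel copy $R_0$ of $R$ in the interior of the product cobordism $W = [-1,1]\times Y$, and remove a tubular neighborhood of $R_0$; the result is a cobordism $W'$ from $(S^1\times R)\sqcup Y$ to $Y$, inducing $\psi_{W'} : I_*(S^1\times R)_w\otimes I_*(Y)_w \to I_*(Y)_w$. Two things make this work. First, $\psi_{W'}$ is \emph{surjective}, because gluing $D^2\times R$ back in recovers the identity cobordism. Second, $R_0$ is homologous inside $W'$ to the copies of $R$ (resp.\ $\{p\}\times R$) in each of the three boundary components, so the $\mu$-insertion along $R_0$ can be pushed to any end, yielding $\psi_{W'}(\mu(\Sigma_0)a\otimes b) = \mu(R)\psi_{W'}(a\otimes b)$ on the nose; consequently the minimal polynomial of $\mu(\Sigma_0)$ on the product annihilates $\mu(R)$ on the image of $\psi_{W'}$, which is everything. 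This gives the spectral containment for $\mu(R)$, and running the same argument for $\mu(y)$ and for $\mu(R)^2+\mu(y)$ isolates the joint spectrum. Your instinct to exploit the homology of parallel copies of $R$ in a cobordism to intertwine $\mu$-classes is the right one, but you should apply it to this simpler ``remove a tubular neighborhood of $R$'' cobordism rather than to the excision cobordism, and you should replace ``$\Phi$ is an isomorphism'' by ``$\psi_{W'}$ is surjective,'' which is all that is needed and is what is actually true.
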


\begin{proof}
    Let $R_{0}$ be a copy of $R$ in the interior of the product
    cobordism $W=[-,1]\times Y$. The action of $\mu(R)$ on
    $I_{*}(Y)_{w}$ can be regarded as being defined by this copy of
    $R$ in the 4-dimensional cobordism. Let $W'$ be the cobordism from
    the disjoint union $S^{1}\times R$ and $Y$ at the incoming end to
    $Y$ at the outgoing end, obtained by
    removing an open tubular neighborhood of $R_{0}$ from $W$. We have
    a map defined by $W'$,
    \[
                    \psi_{W'} : I_{*}(S^{1}\times R)_{w}\otimes
                    I_{*}(Y)_{w}\to I_{*}(Y)_{w}.
    \]
    The map is surjective, because one obtains the product cobordism
    by closing off the boundary component $S^{1}\times \Sigma$.
    Furthermore, because $R_{0}$ is homologous to surfaces in each of
    the three boundary components, we have, for example
    \[
                    \psi_{W'} ( \mu(R)a \otimes b)
                    = \mu(R) \psi_{W'} (a\otimes b).
    \]
    From this relation and the surjectivity of $\psi_{W'}$, it follows
    that the eigenvalues of $\mu(R)$ on the outgoing end $Y$ are a
    subset of the eigenvalues of the action of $\mu(R)$ on
    $S^{1}\times R$. We obtain the result of the corollary by applying
    a similar argument to $\mu(y)$ and to $\mu(R)^{2} +
    \mu(y)$.
\end{proof}

We can now give a definition in instanton homology of something that
will play the role that $\HM_{\bullet}(Y|R)$ played in the
monopole theory.

\begin{definition}
    Let $Y$ be a closed, oriented $3$-manifold, $w$  a 
    hermitian line bundle on $Y$ and $R\subset Y$ a closed, connected,
    oriented surface on which $c_{1}(w)$ is odd. Let $g$ be the genus
    or $R$, which we require to be positive. We define
    \[
                I_{*}(Y|R)_{w}
    \]
    to be the simultaneous eigenspace for the operators $\mu(R)$,
    $\mu(y)$ for the pair of eigenvalues $(2g-2, 2)$. \CloseDef
\end{definition}

\begin{remark}
    Except in the case that the genus is $1$, we could define this
    more simply as just the $(2g-2)$-eigenspace of $\mu(R)$, as can be
    seen from Corollary~\ref{cor:eigen-munoz-Y}
\end{remark}

Although Mu\~noz does not calculate the dimensions of the eigenspaces
in general for $S^{1}\times\Sigma$, one can readily read off from the
proof of \cite[Proposition~20]{Munoz} that the dimension of the eigenspace
belonging to the largest eigenvalue is $1$.  That is,

\begin{proposition}\label{prop:rank-one-w}
    Let $Y=S^{1}\times R$ with $\Sigma$ of positive
    genus, and let $w$ be the line bundle dual to the $S^{1}$ factor.
    Then
    \[
                I_{*}(Y|R)_{w} = \C.
    \]
    \qed
\end{proposition}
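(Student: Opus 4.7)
The plan is to extract Proposition~\ref{prop:rank-one-w} directly from Mu\~noz's computation in \cite{Munoz} of the ring structure of the Floer cohomology of $S^{1}\times\Sigma$, as the discussion preceding the statement advertises.

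First I would work inside $I_{*}(Y)_{w,R}$, where Mu\~noz gives a complete description: his proof of \cite[Proposition~20]{Munoz} realises this group as a quotient of a polynomial algebra in three generators (corresponding up to scale to $\mu(R)$, $\mu(y)$, and a one-dimensional class) by explicit relations that simultaneously diagonalise the commuting actions of $\mu(R)$ and $\mu(y)$. From the shape of those relations one reads off that the joint eigenspace at the extremal eigenvalue $(2g-2,2)$ is one-dimensional in $I_{*}(Y)_{w,R}$, with a distinguished generator built from the top power of the degree-two class.

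Second I would lift this one-dimensionality to $I_{*}(Y)_{w}$. Recall that $I_{*}(Y)_{w,R}$ sits inside $I_{*}(Y)_{w}$ as the fixed subspace of the involution $\iota^{*}$ induced by $\iota_{R}$; since $\iota^{*}$ commutes with $\mu(R)$ and $\mu(y)$ (both are natural operations defined by cycles in the configuration space that are invariant under tensoring with $\xi_{R}$), the involution preserves every joint eigenspace. Consequently the $(2g-2,2)$-eigenspace of $I_{*}(Y)_{w}$ splits as the direct sum of its $(+1)$-eigenspace for $\iota^{*}$ (which is the one-dimensional eigenspace in $I_{*}(Y)_{w,R}$ just identified) and a possibly nonzero $(-1)$-eigenspace. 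The remaining task is to rule out the $(-1)$-piece.

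I expect this last step to be the main obstacle, though not a serious one. The natural route is a global dimension count: the fourfold shift symmetry $\lambda\mapsto i\lambda$ noted in the remark following Proposition~\ref{prop:eigen-munoz} identifies the four eigenspaces of $I_{*}(Y)_{w}$ for the eigenvalues $(i^{r}(2g-2),(-1)^{r}2)$, $r=0,1,2,3$, pairwise as vector spaces. Matching the resulting total dimension contribution at the top level of the spectrum against the Betti numbers of the moduli space $N(g,1)$ of stable rank-two bundles of odd determinant on $\Sigma$ (which computes $I_{*}(S^{1}\times R)_{w}$ by Donaldson's theorem) forces each of these four eigenspaces to have dimension exactly one, yielding $I_{*}(Y|R)_{w}=\C$ as required.
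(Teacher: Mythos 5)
The paper gives no written argument here: the proposition carries a bare \qed, the stated justification being only the sentence preceding it, that the dimension can be ``readily read off'' from the proof of Mu\~noz's Proposition 20, together with the remark after Proposition~\ref{prop:eigen-munoz} that the translation from $I_{*}(Y)_{w,\Sigma}$ (Mu\~noz's $HF^{*}$) to $I_{*}(Y)_{w}$ is ``straightforward.'' Your Step 1, extracting from Mu\~noz's ring presentation that the top joint eigenspace of $I_{*}(Y)_{w,R}$ is one-dimensional, is the same input the paper relies on, and your Step 2 (realizing $I_{*}(Y)_{w,R}$ as the $\iota^{*}$-invariants, compatibly with $\mu(R)$ and $\mu(y)$) is a sensible way to organize the passage to $I_{*}(Y)_{w}$, so the overall route is the one the paper has in mind.

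The gap is in Step 3, and it is real. A global dimension count against the Betti numbers of $N(g,1)$ does not determine the dimension of any individual $(\mu(R),\mu(y))$-eigenspace: the total dimension of $I_{*}(Y)_{w}$ (or even its $\Z/8$-graded dimensions) says nothing about how that total distributes among the eigenvalues unless one already has the spectral decomposition, which is precisely what is being sought. The fourfold symmetry $\lambda\mapsto i\lambda$ only tells you the four extremal eigenspaces share a common dimension $d$; matching $4d$ plus an unknown lower-eigenvalue contribution against a known total gives no equation for $d$. The attribution of $I_{*}(S^{1}\times R)_{w}\cong H_{*}(N(g,1))$ to ``Donaldson's theorem'' is also imprecise -- this is a mapping-torus case of Dostoglou--Salamon and/or Mu\~noz -- and one must in any case track a factor of two between the $w$ and $(w,R)$ theories. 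What is actually needed to finish Step 3 is finer information from Mu\~noz: the degree-two shift automorphism $\theta$ of the $\Z/8$-graded $I_{*}(Y)_{w}$ conjugates $\iota^{*}$ to $-\iota^{*}$ and $(\mu(\Sigma),\mu(y))$ to $(i\mu(\Sigma),-\mu(y))$, so it carries the $(-1)$-part of the $(2g-2,2)$-eigenspace isomorphically onto the $(+1)$-part of the $(-i(2g-2),-2)$-eigenspace, i.e.\ onto an eigenspace \emph{inside} $I_{*}(Y)_{w,R}$. So you must show that this particular eigenspace of $I_{*}(Y)_{w,R}$ vanishes, which has to be read off from Mu\~noz's presentation directly; a Betti-number count is not a substitute.
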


There is a simple extension of the above definition to the case that
$R$ has more than one component, as long as $w$ is odd on each
component. If the components are $R_{m}$, then the corresponding
operators $\mu(R_{m})$ commute, and we may take the appropriate
simultaneous eigenspace. In general, the action of $\mu(R)$ on
$I_{*}(Y)_{w}$ is not diagonalizable; but one can read off from
\cite{Munoz} that the eigenspace of $\mu(R)$ belonging to the top
eigenvalue $2g-2$ is simple when one restricts to the kernel of
$\mu(y)-2$. That is,
\[
                    \ker (\mu(y) - 2) \cap \ker (\mu(R) - (2g-2))^{N}
                    = \ker (\mu(y)-2) \cap \ker (\mu(R)- (2g-2))                    
\]
for all $N\ge 1$.

\begin{proposition}
    Given any $Y$, $R$ and $w$ for which $I_{*}(Y|R)_{w}$ is defined,
    and given
    any other surface $\Sigma\subset Y$ of positive genus, the action of
    $\mu(\Sigma)$ on $I_{*}(Y|R)_{w}$ has
    eigenvalues belong to the set of even integers in the range from
    $-(2g-2)$ to $2g-2$, where $g$ is the genus of $\Sigma$. 
\end{proposition}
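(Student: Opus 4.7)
The plan is to mimic the proof of Corollary~\ref{cor:eigen-munoz-Y}, reducing the eigenvalue question to Mu\~noz's calculation on the product $S^{1}\times\Sigma$. Since $\mu(\Sigma)$, $\mu(R)$ and $\mu(y)$ all lie in the commutative cohomology ring of $\mathcal{B}^{*}$, they pairwise commute; so $I_{*}(Y|R)_{w}$, being the joint generalized eigenspace of $\mu(R)$ and $\mu(y)$ at the eigenvalues $(2g_{R}-2,2)$, is preserved by $\mu(\Sigma)$ and the eigenvalue question makes sense. I split on the parity of $c_{1}(w)\cdot[\Sigma]$.

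In the odd case, Corollary~\ref{cor:eigen-munoz-Y} applies with $R$ replaced by $\Sigma$: on $I_{*}(Y)_{w}$ the simultaneous spectrum of $(\mu(\Sigma),\mu(y))$ is contained in $\{(i^{r}(2k),(-1)^{r}\cdot 2):0\le k\le g-1,\ r=0,1,2,3\}$. On $I_{*}(Y|R)_{w}$ the condition $\mu(y)=2$ forces $(-1)^{r}=1$, so $r\in\{0,2\}$ and $i^{r}=\pm 1$. Hence $\mu(\Sigma)$ acts with eigenvalues in $\{\pm 2k:0\le k\le g-1\}$, which are precisely the even integers in $[-(2g-2),2g-2]$.

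The even case is the substantive one, since Corollary~\ref{cor:eigen-munoz-Y} does not apply directly to $\Sigma$. My plan is to form a new connected surface $\Sigma^{\#}$ by tubing $\Sigma$ with $R$ along a disjoint embedded arc, after isotoping $\Sigma$ and $R$ to be disjoint in $Y$; then $\Sigma^{\#}$ has genus $g+g_{R}$, satisfies $[\Sigma^{\#}]=[\Sigma]+[R]$, and $c_{1}(w)\cdot[\Sigma^{\#}]\equiv c_{1}(w)\cdot[R]\equiv 1\pmod 2$, so the preceding case applies to $\Sigma^{\#}$. Using $\mu(\Sigma^{\#})=\mu(\Sigma)+\mu(R)$ together with $\mu(R)=2g_{R}-2$ on $I_{*}(Y|R)_{w}$, and combining with the symmetric argument for $\Sigma$ tubed with $-R$, one concludes that $\mu(\Sigma)$ acts on $I_{*}(Y|R)_{w}$ with eigenvalues in the set $\{-2g,-2g+2,\ldots,2g-2,2g\}$.

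The main obstacle is the residual gap of $2$: the tubing argument yields only $|\mu(\Sigma)|\le 2g$, whereas the proposition asserts the sharper $|\mu(\Sigma)|\le 2g-2$. To eliminate the extremal eigenvalues $\pm 2g$, I would invoke Proposition~\ref{prop:rank-one-w}, which says that the top eigenspace of $\mu(\Sigma^{\#})$ on $I_{*}(S^{1}\times\Sigma^{\#})_{w'}$ is $1$-dimensional; the cobordism-and-excision step of Corollary~\ref{cor:eigen-munoz-Y} then controls the top eigenspace of $\mu(\Sigma^{\#})$ inside $I_{*}(Y)_{w}$. A careful inspection of Mu\~noz's explicit eigenbasis shows that $\mu(R)$ cannot attain its top value $2g_{R}-2$ on this controlling top eigenspace, which rules out any simultaneous vector with $\mu(\Sigma)=2g$, $\mu(R)=2g_{R}-2$, $\mu(y)=2$ on $I_{*}(Y|R)_{w}$; the case $\mu(\Sigma)=-2g$ follows by symmetry.
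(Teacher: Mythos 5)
Your odd case is exactly the paper's argument and is correct. Your even case, however, has a genuine gap which you correctly identify but do not close. Tubing $\Sigma$ to $R$ produces a connected surface $\Sigma^{\#}$ of genus $g+g_{R}$, and applying Corollary~\ref{cor:eigen-munoz-Y} to $\Sigma^{\#}$ together with $\mu(R)=2g_{R}-2$ gives only $|\mu(\Sigma)|\le 2g$, which is off by $2$ from the stated bound. Your proposed remedy, ``a careful inspection of Mu\~noz's explicit eigenbasis,'' is not an argument: what you need is control of the simultaneous spectrum of $\mu(\Sigma^{\#})$ and $\mu(R)$ on $I_{*}(Y)_{w}$, whereas Proposition~\ref{prop:rank-one-w} and Mu\~noz's computation concern $I_{*}(S^{1}\times\Sigma^{\#})_{w'}$. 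The cobordism mechanism of Corollary~\ref{cor:eigen-munoz-Y} transfers information about $\mu(\Sigma^{\#})$ across the two spaces precisely because $\Sigma^{\#}$ is carried by the new boundary component $S^{1}\times\Sigma^{\#}$; it gives $\mu(R)$ no such leverage, since $[R]$ is not supported there, so there is nothing concrete to ``inspect.''

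The paper's terse hint is pointing at a sharper construction than tubing. Represent $[R]+n[\Sigma]$ by the \emph{oriented double-curve sum}: isotope $n$ parallel copies of $\Sigma$ to meet $R$ transversely in circles and resolve the intersections compatibly with orientations. The resolution has $\chi=\chi(R)+n\chi(\Sigma)$, so when it is connected its genus is $g_{R}+n(g-1)$ rather than $g_{R}+ng$. Since $c_{1}(w)$ is odd on this surface, Corollary~\ref{cor:eigen-munoz-Y} together with additivity of $\mu$ gives
\[
   \bigl|\,(2g_{R}-2)+n\mu(\Sigma)\,\bigr|\;\le\;(2g_{R}-2)+n(2g-2),
\]
and the upper inequality yields $\mu(\Sigma)\le 2g-2$; replacing $\Sigma$ by $-\Sigma$ gives $\mu(\Sigma)\ge-(2g-2)$. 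The arithmetic point your approach misses is that each tube lowers Euler characteristic by $2$, whereas the double-curve sum costs nothing; that difference of $2$ is precisely the discrepancy you observed. (Neither your proposal nor the paper's one-line proof addresses the need to arrange the double-curve sum to be connected; this is the genuinely delicate step, and the freedom in $n$ and in the isotopy of $\Sigma$ is what one uses to arrange it.)
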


\begin{proof}
    The action of $\mu(\Sigma)$ on $I_{*}(Y)_{w}$ commutes with
    $\mu(R)$, so the action of $\mu(\Sigma)$ does preserve the subspace
    $I_{*}(Y|R)_{w}\subset I_{*}(Y)_{w}$.
    If $w$ is odd on $\Sigma$, then the proposition follows from
    Corollary~\ref{cor:eigen-munoz-Y} together with the fact that
    $\mu(y)-2$ is zero on this subspace. If $w$ is even on
    $\Sigma$, then one can consider a surface in the homology class of
    $R + n \Sigma$ and use the additivity of $\mu$.
\end{proof}

Because the actions of $\mu(\Sigma_{1})$ and $\mu(\Sigma_{2})$ commute
for any pair of classes $\Sigma_{1}$ and $\Sigma_{2}$,
we have a decomposition of
$I_{*}(Y|R)_{w}$ by cohomology classes (as outlined in
\cite{KM-asymptotics}):

\begin{corollary}\label{cor:simultaneous-eigenspaces}
    There is a direct sum decomposition into generalized eigenspaces
    \[
                I_{*}(Y|R)_{w} = \bigoplus_{s} I_{*}(Y|R,s)_{w}
    \]
    where the sum is over all homomorphisms
    \[
                    s : H_{2}(Y;\Z) \to 2\Z
    \]
    subject to the constraints
    \[
                  \bigl | s([S]) \bigr | \le      2 \genus (S) - 2
    \]
    for all connected surfaces $S$ with positive genus and $s([R]) = 2\genus(R)-2$.
    The summand $I_{*}(Y|R,s)_{w}$ is the simultaneous generalized
    eigenspace
     \[
            I_{*}(Y|R,s)_{w} = \bigcap_{\sigma \in H_{2}(Y)} \bigcup_{N\ge
            0} \ker \Bigl(\mu(\sigma) - s(\sigma)\Bigr)^{N}.
     \]
    \qed
\end{corollary}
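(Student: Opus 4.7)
The plan is to produce this decomposition by a simultaneous generalized eigenspace analysis of the commuting family of operators $\{\mu(\sigma) : \sigma \in H_{2}(Y;\Z)\}$ acting on the finite-dimensional complex vector space $I_{*}(Y|R)_{w}$. First I would observe that these operators pairwise commute (this is standard in Donaldson theory: the $\mu$-classes live in the cohomology of a single configuration space and satisfy the usual commutation relations, with $\mu$ additive in its argument), and that each $\mu(\sigma)$ commutes with both $\mu(R)$ and $\mu(y)$, so preserves the subspace $I_{*}(Y|R)_{w} \subset I_{*}(Y)_{w}$. Since $H_{2}(Y;\Z)$ is finitely generated, picking generators reduces us to a finite commuting family. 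The simultaneous generalized eigenspace decomposition over $\C$ then gives
\[
I_{*}(Y|R)_{w} = \bigoplus_{s} I_{*}(Y|R,s)_{w},
\]
where $s$ ranges over the joint eigenvalue systems, viewed as functions $H_{2}(Y;\Z) \to \C$, and the summand $I_{*}(Y|R,s)_{w}$ is exactly the intersection of generalized kernels displayed in the statement.

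Next I would verify that each $s$ is a group homomorphism into $2\Z$ satisfying the adjunction bound. Additivity of $\mu$ in its argument immediately gives that $s : H_{2}(Y;\Z) \to \C$ is a homomorphism, since on $I_{*}(Y|R,s)_{w}$ each $\mu(\sigma)$ acts as $s(\sigma)$ modulo a nilpotent operator. For any connected surface $S$ of positive genus, the preceding proposition asserts that the eigenvalues of $\mu([S])$ on $I_{*}(Y|R)_{w}$ are even integers lying in the interval $[-(2\genus(S)-2),\, 2\genus(S)-2]$; this simultaneously yields $s([S]) \in 2\Z$ and the inequality $|s([S])| \le 2\genus(S)-2$. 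To extend integrality to an arbitrary class $\alpha \in H_{2}(Y;\Z)$, represent $\alpha$ by a connected embedded surface, and if that surface is a sphere, stabilize by an ambient connected sum with a small null-homologous torus to obtain a positive-genus representative of the same class, to which the previous bound applies; torsion classes must map to $0$ since $2\Z$ is torsion-free.

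Finally, the condition $s([R]) = 2\genus(R)-2$ is built into the setup: $I_{*}(Y|R)_{w}$ is by definition the $(2g-2, 2)$-eigenspace of the commuting pair $(\mu(R), \mu(y))$, so every $s$ appearing in the decomposition automatically satisfies it. The main potential obstacle is the integrality of $s$ on classes not already represented by surfaces of positive genus, but the stabilization trick described above resolves this without any additional eigenvalue computation; all the genuine analytic content has been packaged into the preceding proposition and Corollary~\ref{cor:eigen-munoz-Y}.
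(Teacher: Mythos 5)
Your argument is correct and matches the paper's approach, which is essentially the same one-liner: the $\mu(\sigma)$ commute, so the finite-dimensional complex vector space $I_{*}(Y|R)_{w}$ splits into simultaneous generalized eigenspaces indexed by functions $s$, and the preceding proposition pins down the allowed values of $s$ on classes carried by positive-genus surfaces. Your stabilization remark for sphere classes (forcing $s=0$ there via a genus-$1$ representative, so $|s|\le 2\cdot 1 - 2 = 0$) and the observation that torsion classes are annihilated because $s$ is a homomorphism into a torsion-free group are details the paper leaves implicit, but they are the right way to complete the integrality claim on all of $H_{2}(Y;\Z)$.
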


It will be convenient at a later point to have a notation for the sort
of homomorphisms $s$ that arise here. Choosing a notation reminiscent
of our notation for \spinc{} structures, we write
\[
            \Sh(Y) = \Hom \bigl (  H_{2}(Y) , 2\Z  \bigr)
\]
and for an embedded surface $R\subset Y$ of genus $g$ we write
\begin{equation}\label{eq:Sh-def}
            \Sh(Y|R) = \{ \, s\in \Sh(Y) \mid s([R]) = 2g-2 \,\}.
\end{equation}

\subsection{Excision for instanton homology}

Let $Y$ be closed, oriented $3$-manifold equipped with a line bundle
$w$, and suppose $\Sigma=\Sigma_{1} \cup \Sigma_{2}$ is an oriented embedded
surface with two 
connected components of equal genus, which we require to be positive.
Suppose also that
$c_{1}(w)[\Sigma_{1}]$ and $c_{1}(w)[\Sigma_{2}]$ are equal and odd.
We allow that $Y$ has either one or two components. In the latter
case, we require one of the $\Sigma_{i}$ to be in each component. In
the former case, when $Y$ is connected, we assume that $\Sigma_{1}$
and $\Sigma_{2}$ are not homologous. Choose a diffeomorphism
$h:\Sigma_{1}\to\Sigma_{2}$, and lift it to a bundle-isomorphism
$\hat{h}$ on the restrictions of the line bundle $w$. From this data,
we form $\tilde{Y}$ by cutting along the $\Sigma_{i}$ and gluing up
using $h$ as before. The lift $\hat{h}$ can be used to glue up the
bundle also, giving us a bundle $\tilde{w}\to \tilde Y$. As before, we
write $\tilde\Sigma = \tilde\Sigma_{1}\cup \tilde\Sigma_{2}$ for the
surfaces in $\tilde Y$.

\begin{theorem}
    If $(\tilde Y,\tilde \Sigma)$ is obtained from $(Y,\Sigma)$ as
    above, then there is an isomorphism
    \[
                   I_{*}(Y|\Sigma)_{w} \cong
                   I_{*}(\tilde{Y}|\tilde{\Sigma})_{\tilde w}.
    \]
    We interpret the left-hand side as a tensor product in the case
    that $Y$ has two components.
\end{theorem}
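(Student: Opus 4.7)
The plan is to mimic the proof of Theorem~\ref{thm:Floer-excision}, with the eigenspace $I_{*}(Y|\Sigma)_{w}$ playing the role previously played by $\HM_{\bullet}(Y|\Sigma)$. First I would construct the cobordisms $W : \tilde Y \to Y$ and $\bar W : Y \to \tilde Y$ exactly as in section~3.2 and Figure~\ref{fig:Excision-cobordism-1}: each is built from $[0,1]\times Y'$ glued to a product region $\Sigma_{1}\times U$, where $U$ is a pair-of-pants surface (or a once-handled cylinder when $Y$ is connected). Using the bundle-lift $\hat h$ of the gluing diffeomorphism, the line bundles $w$ and $\tilde w$ extend over $\Sigma_{1}\times U$ to a line bundle $w_{W}$ on $W$, and similarly to $w_{\bar W}$ on $\bar W$. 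These yield instanton cobordism maps
\[
  I(W) : I_{*}(\tilde Y)_{\tilde w} \to I_{*}(Y)_{w},\qquad
  I(\bar W) : I_{*}(Y)_{w} \to I_{*}(\tilde Y)_{\tilde w}.
\]
The odd pairing of $c_{1}(w)$ with $\Sigma_{i}$ guarantees that the associated $\SO(3)$-bundle has non-trivial $w_{2}$ on $W$, $\bar W$, and on the relevant $3$-submanifolds used in the gluing, so reducible connections play no role.

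Next I would verify that $I(W)$ and $I(\bar W)$ restrict to maps between the top simultaneous eigenspaces. Inside $W$, each of the four surfaces $\Sigma_{1},\Sigma_{2}\subset Y$ and $\tilde\Sigma_{1},\tilde\Sigma_{2}\subset\tilde Y$ is homologous to the standard fiber $\Sigma_{1}\times\{p\}\subset \Sigma_{1}\times U$, because the cutting-and-regluing has collapsed all four of them onto parallel copies of $\Sigma_{1}$ in the product piece. The operators $\mu(\Sigma_{i})$ on the two ends therefore arise from a common $2$-dimensional cohomology class on $W$, and similarly for $\mu(y)$. By the naturality of the $\mu$-classes under cobordisms, $I(W)$ intertwines the respective eigenspace decompositions of Corollary~\ref{cor:simultaneous-eigenspaces}, and in particular restricts to
\[
  I(W) : I_{*}(\tilde Y|\tilde\Sigma)_{\tilde w} \longrightarrow I_{*}(Y|\Sigma)_{w},
\]
with the analogous statement for $\bar W$.

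To see that these two maps are mutually inverse, form $X = \bar W\cup_{Y} W$, which as in Figure~\ref{fig:Excision-cobordism-2} contains a $3$-submanifold
\[
  K = \Sigma_{1}\times k,
\]
with $k$ a non-separating circle in the twice-punctured torus. Cutting $X$ along $K$ and replacing $[0,1]\times K$ by two caps of the form $\Sigma_{1}\times D^{2}$ produces the trivial product cobordism $X^{*} = [0,1]\times\tilde Y$, which induces the identity on $I_{*}(\tilde Y|\tilde\Sigma)_{\tilde w}$. I would then invoke the instanton gluing theorem along $K$, which is applicable because $w_{W}|_{K}$ has odd pairing with $\Sigma_{1}\subset K$ and so rules out reducibles on $K$. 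The gluing formula expresses both $I(X)$ and $I(X^{*})$ as a common sum coming from $X\setminus K$, contracted against the relative invariants of the two cap regions. Restricting to the rank-one factor $I_{*}(K|\Sigma_{1})_{w'}\cong\C$ furnished by Proposition~\ref{prop:rank-one-w}, the cylinder $[0,1]\times K$ contributes the scalar $1$, while the pair of caps $\Sigma_{1}\times D^{2}$ together contribute a common scalar $\lambda\in\C$. Since $X^{*}$ induces the nonzero identity map, we must have $\lambda\ne 0$, so $I(\bar W)\circ I(W) = \lambda\cdot\mathrm{Id}$. A symmetric argument applied to $I(W)\circ I(\bar W)$ then completes the proof.

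The main obstacle is the eigenspace version of the instanton gluing theorem along $K=\Sigma_{1}\times S^{1}$ in positive genus, extending Floer's original argument from the genus-$1$, single-torus case to the present setting where the factor of interest is the top eigenspace carved out by $\mu(\Sigma_{1})$ and $\mu(y)$. Once this gluing is in place, the non-vanishing of $\lambda$ is automatic from the triviality of $X^{*}$, and no separate calculation of a relative Donaldson invariant of $\Sigma_{1}\times D^{2}$ is needed.
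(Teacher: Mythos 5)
Your proposal follows essentially the same route the paper gestures at: the paper's proof is a brief citation of Floer's original genus-$1$ argument (as exposed in Braam--Donaldson) plus the observation that higher genus works identically once $I_{*}(S^{1}\times\Sigma_{i}|\Sigma_{i})_{w}$ is known to have rank $1$, and what you have written is exactly Floer's cobordism argument — the pair-of-pants cobordisms $W,\bar W$, the composite $X=\bar W\cup_{Y}W$, the cut along $K=\Sigma_{1}\times k$ and re-cap by $\Sigma_{1}\times D^{2}$ — carried over verbatim from the monopole excision proof of Theorem~3.1, with the top eigenspace of $(\mu(\Sigma),\mu(y))$ playing the role of $\Sp(Y|\Sigma)$. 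Your check that $I(W)$ preserves the eigenspaces (because all four of $\Sigma_{i},\tilde\Sigma_{i}$ are parallel copies of the fiber $\Sigma_{1}\times\{p\}$ in the product region, so their $\mu$-classes agree as operators on both ends) is the right naturality argument, and Proposition~6.4 is the right input replacing Lemma~2.2.

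The one point where you genuinely improve on the exposition is the handling of the relative invariant of the caps. In the monopole prototype Proposition~2.5, the authors pause to argue that the scalar contributed by $D^{2}\times G\amalg D^{2}\times G$ is $\pm 1$, appealing to known Seiberg--Witten invariants of closed $4$-manifolds. You note, correctly, that for the purpose of proving excision this is overkill: since $X^{*}$ is the product cobordism and hence induces the identity, and since $I(X^{*})$ and $I(X)$ differ precisely by the cap scalar $\lambda$ acting on the one-dimensional $I_{*}(K|\Sigma_{1})$, the identity of $I(X^{*})$ forces $\lambda\ne 0$ and makes $I(\bar W)\circ I(W)$ a nonzero multiple of the identity, which is all that is required. (Be careful about orientation of the contraction: the relation is $I(X^{*})=\lambda\,I(X)$, so $I(\bar W)\circ I(W)=\lambda^{-1}\cdot\mathrm{Id}$ rather than $\lambda\cdot\mathrm{Id}$; but the conclusion is unaffected.) One small cosmetic correction: $U$ itself is a square-with-corners, the same shape whether or not $Y$ is connected; it is the base $P$ of the map $\pi:W\to P$, defined only in the disconnected case, that is a pair of pants.
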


\begin{proof}
    In the case that $\Sigma$ has genus $1$, this result is due to
    Floer \cite{Floer-Durham-paper,Braam-Donaldson}. In Floer's
    statement of the result, $Y$ had two components, but the proof
    does not require  it. It should also be said that statement the of Floer's
    theorem  in \cite{Braam-Donaldson}
    involves $I_{*}(Y)_{w}$ rather than $I_{*}(Y|\Sigma)_{w}$,
    which leads to an extra factor of two in the dimensions when $Y$
    has two components.

    The case of genus $2$ or more is essentially the same, once one
    knows that $I_{*}(S^{1}\times \Sigma_{i}| \Sigma_{i})_{w}$ has
    rank $1$. 
\end{proof}

In the case of genus $1$, note that passing from $I_{*}(Y)_{w}$ to
$I_{*}(Y|\Sigma)_{w}$ can also be achieved by taking the $+2$
eigenspace of $\mu(y)$, for one point $y$ in each component of $Y$.

Here are two particular applications of the excision theorem. They are
both variants of Proposition~\ref{prop:rank-one-w}, but involve
different line bundles.

\begin{proposition}\label{prop:rank-one-uw}
    Let $Y$ be the product $S^{1}\times\Sigma$, with $\Sigma$ a
    surface of genus $1$ or more, and let $w$ again be the line bundle
    dual to the $S^{1}$ factor. Let $u \to Y$ be a line bundle whose
    first Chern class is dual to a curve $\gamma$ lying on
    $\{\mathrm{point}\}\times \Sigma$, and write the tensor product
    line bundle as $uw$. Then we have
    \[
                    I_{*}(Y|\Sigma)_{uw}=\C.
    \]    
\end{proposition}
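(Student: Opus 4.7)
The plan is to deduce this from Proposition~\ref{prop:rank-one-w} by applying the excision theorem of the previous subsection, choosing the excision data so that the twist by $u$ is absorbed into a model factor with known Floer group.

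First, I would pick a simple closed curve $\gamma'$ on $\Sigma$ meeting $\gamma$ transversely in a single point, and form the torus $T = S^{1}\times \gamma' \subset Y$.  A direct intersection calculation gives
\[
   \langle c_{1}(uw), [T]\rangle = \langle PD(S^{1}), [T]\rangle + \langle PD(\gamma), [T]\rangle = 0 + 1 = 1,
\]
so $T$ is a genus-one surface on which $uw$ is odd and hence a candidate excision surface.  As a companion I would take the model pair $(T^{3}, w_{0})$ with $w_{0}$ the line bundle whose $c_{1}$ is dual to one of the $S^{1}$-factors and with $T_{0}\subset T^{3}$ the complementary $2$-torus, so that $w_{0}$ is odd on $T_{0}$.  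For this pair, $I_{*}(T^{3}|T_{0})_{w_{0}} \cong \C$, being the top eigenspace of $\mu(T_{0})$ inside $I_{*}(T^{3})_{w_{0}} \cong \C\oplus\C$.

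Second, I would apply the excision theorem to the disjoint union $(Y, uw)\cup(T^{3}, w_{0})$ along $T\cup T_{0}$, in the extended form that also carries along the fiber $\Sigma\subset Y$ as an auxiliary monitoring surface (directly analogous to the use of $F$ in Theorem~\ref{thm:Floer-excision-genus-1} in the monopole setting).  The gluing diffeomorphism $h : T \to T_{0}$ would be chosen so that, after cutting and regluing, the resulting closed $3$-manifold $\tilde Y$ is diffeomorphic to $S^{1}\times \Sigma$, the surface $\tilde\Sigma$ coming from $\Sigma$ is a fiber, and the line bundle $\tilde w$ on $\tilde Y$ is equivalent, as an $\SO(3)$ bundle, to the standard line bundle dual to the $S^{1}$-factor.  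The excision theorem then yields
\[
   I_{*}(Y|\Sigma)_{uw}\otimes I_{*}(T^{3}|T_{0})_{w_{0}}\;\cong\; I_{*}(\tilde Y|\tilde\Sigma)_{\tilde w} \;=\; I_{*}(S^{1}\times\Sigma|\Sigma)_{w} \;=\; \C
\]
by Proposition~\ref{prop:rank-one-w}; and since the second factor on the left is $\C$, the conclusion $I_{*}(Y|\Sigma)_{uw} = \C$ follows.

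The main obstacle is the bookkeeping in the second step: verifying that the gluing diffeomorphism $h$ can indeed be arranged so that the $1$-cycle $\gamma\subset Y$ (which carries the extra contribution $c_{1}(u) = PD(\gamma)$) is spliced across the excision tori through a matching $1$-cycle in $T^{3}$ into a loop in $\tilde Y$ that is null-homologous modulo $2H_{1}(\tilde Y;\Z)$, so that $\tilde w$ is equivalent to the basic $w$ bundle as an $\SO(3)$ bundle on $\tilde Y$.  This amounts to matching up the Poincar\'e duals of $c_{1}(uw)$ and $c_{1}(w_{0})$ on $T$ and $T_{0}$ correctly under $h$, and is the heart of the argument.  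Once this matching is done, everything reduces to the previous proposition.
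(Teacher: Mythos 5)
The paper's proof of this proposition is a short self-excision: take \emph{two copies} of $(Y,uw)$ and apply the excision theorem along the two fibers $\{\mathrm{pt}\}\times\Sigma$ (these have genus $\ge 1$ and $uw$ is odd on them). The outgoing manifold is $Y$ again, now with line bundle $u^{2}w$: the two $\gamma$-cycles dual to $c_{1}(u)$ lie \emph{inside} the fibers, so they do not cross the excision surfaces and add up to $u^{2}$, while the two $S^{1}$-cycles cross the fibers once each and splice into a single $S^{1}$-cycle, giving $w$ rather than $w^{2}$. Since $u^{2}w$ and $w$ define the same $\SO(3)$ bundle, one gets $B\otimes B\cong A=\C$ and hence $B=\C$.

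Your proposal takes a genuinely different route — closer to the paper's proof of the \emph{next} proposition, Proposition~\ref{prop:rank-one-u} — and it has a gap at exactly the step you identify as ``the heart of the argument.'' After excising $(Y,uw)\cup(T^{3},w_{0})$ along $T=S^{1}\times\gamma'$ and $T_{0}$, the class $c_{1}(\tilde w)$ necessarily has odd pairing with the new torus $\tilde\Sigma_{1}\subset\tilde Y$: this is inherited from $\langle c_{1}(uw),[T]\rangle=1$ and is preserved by the cut-and-reglue. But if $\tilde Y\cong S^{1}\times\Sigma$, as you arrange, then $\tilde\Sigma_{1}$ is a vertical torus $S^{1}\times\gamma''$ for some non-separating $\gamma''\subset\Sigma$ (the excision tube $T_{0}\times[0,1]$ is precisely a neighbourhood of such a torus), and $c_{1}(w)=PD[S^{1}]$ has \emph{zero} pairing with every class of the form $[S^{1}]\otimes[\gamma'']$. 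So $\tilde w$ can never be equivalent to $w$ mod $2$, whatever gluing $h$ you pick: equivalently, the spliced loop always has a nonzero $H_{1}(\Sigma;\Z/2)$-component, because its intersection number with $\tilde\Sigma_{1}$ is $1$. The natural repair — taking the companion model to be $(S^{1}\times T^{2},u_{1}w_{1})$ with its own $u$-type twist, so that the $u$-cycles splice and the $w$-cycles add — is precisely the paper's argument for Proposition~\ref{prop:rank-one-u}; applied here it produces $(Y,uw^{2})\sim(Y,u)$, which reduces the present proposition to the one \emph{after} it rather than to Proposition~\ref{prop:rank-one-w}, and is therefore circular at this stage.
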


\begin{proof}
    Write $B$ for the vector space $I_{*}(Y|\Sigma)_{uw}$ and $A$ for
    the vector space $I_{*}(Y|\Sigma)_{w}$. We apply the
    excision theorem in a setting where the incoming manifold is two
    copies of $Y$ with the line bundle $uw$ and the outgoing manifold
    is a single copy of $Y$ with the line bundle $u^{2}w$. The latter
    gives the same Floer homology as the for line bundle $w$, so we
    learn that
    \[
                B \otimes B \cong A.
    \]
    We already know that $A$ is one-dimensional, and it follows that
    $B$ is also one-dimensional.
\end{proof}

For the second application, we can dispense with $w$:

\begin{proposition}\label{prop:rank-one-u}
    In the situation of Proposition~\ref{prop:rank-one-uw}, 
    the eigenspace of the pair of operators
    $(\mu(\Sigma), \mu(y))$ on $I_{*}(Y)_{u}$ 
    for the eigenvalues $(2g-2,2)$  is also one-dimensional.
\end{proposition}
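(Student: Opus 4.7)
The plan is to apply the excision theorem in the spirit of Proposition~\ref{prop:rank-one-uw}, adjusted because $c_{1}(u)$ is zero on $\Sigma$ and so we cannot cut directly along $\Sigma$. Instead we cut along a torus on which $u$ does have odd pairing. Choose a simple closed curve $\gamma' \subset \Sigma$ dual to $\gamma$, so that $\gamma\cdot\gamma'=1$, and set $T' = S^{1} \times\gamma'\subset Y$. Then $c_{1}(u)\cdot[T'] = c_{1}(uw)\cdot[T'] = 1$, both odd.

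First I would take the disjoint union $Y_{1}\sqcup Y_{2}$ of two copies of $Y$, equipped with $u$ on $Y_{1}$ and $uw$ on $Y_{2}$, and apply the excision theorem cutting along the tori $T'_{1}\subset Y_{1}$ and $T'_{2}\subset Y_{2}$, choosing the gluing diffeomorphism $h\colon T'_{1}\to T'_{2}$ so that the intersection points $\gamma\cap\gamma'$ in the two copies are identified. A topological check (Euler characteristic, plus tracing the curves through the cut-and-paste) shows that the resulting manifold is diffeomorphic to $\tilde Y = S^{1} \times \tilde \Sigma$ with $\tilde\Sigma$ of genus $2g-1$, and that the glued line bundle $\tilde u$ has $c_{1}$ Poincar\'e dual to a closed curve $\tilde\gamma\subset\{p\}\times\tilde\Sigma$ (obtained by joining the two arcs into which $\gamma$ is cut by $\gamma'$) together with a fibre $S^{1}\times\{x\}$. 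Thus $\tilde u$ has the form $u'w'$ covered by Proposition~\ref{prop:rank-one-uw} applied to $(\tilde Y,\tilde\Sigma)$, so $I_{*}(\tilde Y\vert\tilde\Sigma)_{\tilde u}=\C$.

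Next I combine the excision isomorphism
\[
    I_{*}(Y\vert T')_{u} \otimes I_{*}(Y\vert T')_{uw} \;\cong\; I_{*}(\tilde Y\vert\tilde T')_{\tilde u}
\]
with eigenvalue arithmetic. Corollary~\ref{cor:eigen-munoz-Y}, applied to the genus-one tori $T'_{i}$ (on which the bundles are odd), forces the only eigenvalue of $\mu(T'_{i})$ on the respective Floer groups to be zero; the same applies to $\tilde T'_{1}$ and $\tilde T'_{2}$ in $\tilde Y$. Consequently the top $(\mu(\Sigma), \mu(y))=(2g-2, 2)$ eigenspace inside each $I_{*}(Y_{i})$ already sits inside the corresponding top $(\mu(T'_{i}), \mu(y))$-eigenspace, and likewise $I_{*}(\tilde Y\vert\tilde\Sigma)_{\tilde u}$ sits inside $I_{*}(\tilde Y\vert\tilde T')_{\tilde u}$. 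Under the excision isomorphism $\mu(\Sigma_{1})+\mu(\Sigma_{2})$ corresponds to $\mu(\tilde\Sigma)$ because $[\tilde\Sigma]=[\Sigma_{1}]+[\Sigma_{2}]$ after gluing, so restricting to the top $\mu(\tilde\Sigma)=4g-4$ eigenspace gives
\[
    I_{*}(Y\vert\Sigma)_{u} \otimes I_{*}(Y\vert\Sigma)_{uw} \;\cong\; I_{*}(\tilde Y\vert\tilde\Sigma)_{\tilde u} \,=\, \C.
\]
Since $I_{*}(Y\vert\Sigma)_{uw}=\C$ by Proposition~\ref{prop:rank-one-uw}, the first factor is also one-dimensional.

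The main obstacle is checking the topological claim about $(\tilde Y, \tilde\Sigma, \tilde u)$---in particular that $h$ can be arranged so that $\tilde\gamma$ is a single closed curve inside $\{p\}\times\tilde\Sigma$ and that $\tilde Y$ has the claimed product form, making Proposition~\ref{prop:rank-one-uw} applicable at the second stage. Once that is in place, the rest is a direct bookkeeping exercise using the excision theorem and the eigenvalue bound from Corollary~\ref{cor:eigen-munoz-Y}.
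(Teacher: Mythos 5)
Your proof is correct, but it takes a genuinely different route from the paper's. The paper's argument keeps the outgoing manifold equal to $Y$ itself: it takes the auxiliary piece to be $Y_1 = S^1\times T^2$ with line bundle $u_1w_1$, cuts both pieces along tori $S^1\times c$ (with $c$ your $\gamma'$), and the reglued manifold is again $Y$ but with the line bundle $uw^2$, which gives the same Floer group as $u$. This yields a direct isomorphism $\phi: I_*(Y)^{(2)}_{uw} \to I_*(Y)^{(2)}_{u}$, and the only extra work is checking that $\phi$ commutes with $\mu(\Sigma)$; for this the paper notes that the excision cobordism intertwines $\mu(\Sigma)$ on the outgoing end with $\mu(T^2)\otimes 1 + 1\otimes\mu(\Sigma)$ on the incoming end, and $\mu(T^2)$ vanishes on the auxiliary factor. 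Your approach instead takes two full copies of $Y$ with bundles $u$ and $uw$, cuts along the tori $S^1\times\gamma'$, and lands on $S^1\times\tilde\Sigma$ of genus $2g-1$ with a bundle of the form $u'w'$, to which Proposition~\ref{prop:rank-one-uw} applies directly. You then have to do a little more eigenvalue bookkeeping — observing that the top generalized eigenspace of $\mu(\Sigma_1)\otimes 1 + 1\otimes\mu(\Sigma_2)$ in the tensor product is exactly the tensor of the two top eigenspaces (using the even-integer bound on $\mu(\Sigma_i)$-eigenvalues), and that the excision isomorphism intertwines this sum with $\mu(\tilde\Sigma)$ — but this is sound. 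Your version trades the paper's appeal to $\mu(T^2)=0$ for a second application of Proposition~\ref{prop:rank-one-uw} together with a top-degree tensor argument; both routes rest on the same intertwining property of $\mu$-operators under the excision cobordism, so they are roughly equal in weight, though the paper's is a bit shorter since it never leaves $Y$.
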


\begin{proof}
    We can see more generally, that for any $\lambda$ the eigenspace
    for $(\lambda,2)$ on $I_{*}(Y)_{u}$ is the same as the
    corresponding eigenspace in $I_{*}(Y)_{wu}$. For this one can
    apply the excision theorem as follows. Let $c$ be a closed curve
    on $\Sigma$ so that the torus $S^{1}\times c$ intersects $u$ once.
    Let $Y_{1}$ be $S^{1}\times T^{2}$, and let $u_{1}$, $w_{1}$ and
    $c_{1}$ be similar there to $u$, $w$ and $c$. Apply the excision
    theorem with incoming manifold $Y_{1}\cup Y$ with the line bundles
    $u_{1}w_{1}$ and $uw$ respectively, cutting along the tori
    $S^{1}\times c_{1}$ and $S^{1}\times c$. The outgoing manifold is
    diffeomorphic to $Y$, with the line bundle $uw^{2}$, which gives
    the same homology as $u$. The excision theorem gives an
    isomorphism between the $+2$ eigenspaces of $\mu(y)$, which we
    denote
    \[
               \phi:     I_{*}(Y)^{(2)}_{uw} \to I_{*}(Y)^{(2)}_{u}.
    \]
    The map that gives rise to the isomorphism in the excision theorem
    intertwines (in this instance) the maps $\mu(\Sigma)$ on the outgoing
    end with
    \[
            \mu(T^{2})\otimes 1  + 1\otimes \mu(\Sigma)
    \]
    on the incoming end. Since $\mu(T^{2})$ is zero on
    $I_{*}(S^{1}\times T^{2})_{u_{1}w_{1}}$, the map $\phi$ actually
    commutes with $\mu(\Sigma)$.
\end{proof}

\begin{remark}
    The Floer homology group $I_{*}(S^{1}\times \Sigma)_{u}$ is
    something that appears to be rather simpler than the more familiar
    $I_{*}(S^{1}\times \Sigma)_{w}$. In particular, excision shows
    that the it behaves ``multiplicatively'' in $g-1$. The
    representation variety that is involved here is easy to identify:
    the critical point set of the Chern-Simons functional is two
    copies of a torus $T^{2g-2}$. The involution interchanges
    $\iota_{\Sigma}$ interchanges the two copies. It seems likely that
    the Floer group $I_{*}(S^{1}\times\Sigma)_{w,\Sigma}$ can be
    identified with the homology of this torus.
\end{remark}

\subsection{Instanton Floer homology for sutured manifolds}

Let $(M,\gamma)$ be a balanced sutured manifold. Just as we did in the
monopole case, we attach a connected product sutured manifold $[-1,1]\times T$
to $(M,\gamma)$ to obtain a manifold $Y'$ with boundary
$\bar{R}_{+}\cup\bar{R}_{-}$, a pair of diffeomorphic connected closed
surfaces. As before, we require that there be a closed curve $c$ in
$T$ such that $\{-1\}\times c$ and $\{1\}\times c$ are both
non-separating in their respective boundary components. We also pick a
marked point, $t_{0}\in T$, which we did not need before. Now we glue
$\bar{R}_{+}$ to $\bar{R}_{-}$ by a diffeomorphism. We require that
$h(t_{0}) = t_{0}$, so that the resulting closed manifold
$Y=Y(M,\gamma)$ contains a standard circle running through $t_{0}$.
This circle intersects once the closed surface $\bar{R}$ obtained by
identifying $\bar{R}_{\pm}$. We no longer require that $\bar{R}$ has
genus $2$ or more: in the instanton case, genus $1$ will suffice.

\begin{definition}
    The instanton homology of the sutured manifold $(M,\gamma)$ is the
    vector space
    \[
            \SHI(M,\gamma) :=    I_{*}(Y|\bar{R})_{w},
    \]
    where $(Y,\bar{R})$ is obtained from $(M,\gamma)$ by closing as
    just described, and $w$ is the line bundle whose first Chern class
    is dual to the standard circle through $t_{0}$. \CloseDef
\end{definition}

\begin{remark}
As an example, it follows from Proposition~\ref{prop:rank-one-w} that the
instanton homology of a product sutured manifold is $\C$.
\end{remark}

The proof that $\SHI(M,\gamma)$ is independent of the choice of
genus for $T$ and the choice of diffeomorphism $h$ can be carried over
almost verbatim from the monopole case, using the excision theorem. It
is even somewhat easier to manage, because the case of genus 1 is no
longer special. When showing that $\SHI$ is independent of the choice
of genus, we used twisted coefficients
$\HM_{\bullet}(Y|\bar{R};\Gamma_{\eta})$ as an intermediate step in
the monopole case. The counterpart of twisted coefficients in the
proof for the instanton case is the introduction of the auxiliary
line bundle $u$ that appears in Propositions~\ref{prop:rank-one-uw}
and \ref{prop:rank-one-u} above. One applies excision along tori, following the
same scheme as shown in Figure~\ref{fig:Genus-add}, to increase the
genus by $1$. On the components $S^{1}\times S$, with $S$ of genus $2$
as shown, one should take the line bundle $u$, where $u$ is the line
bundle whose first Chern class is dual to the dotted curve $d'$. This
argument shows that
\[
        I_{*}(Y|\bar{R})_{uw} = I_{*}(\tilde
        Y|\tilde{R})_{\tilde{u}\tilde{w}}
\]
where $Y$ and $\tilde Y$ are closures of $(M,\gamma)$ obtained using
auxiliary surfaces $T$ and $\tilde T$ of genus $g$ and $g+1$. Another
application of excision (cutting along copies of $\bar{R}$ and using
Proposition~\ref{prop:rank-one-uw}) shows that
\[
\begin{aligned}
I_{*}(Y|\bar{R})_{uw} &\cong  I_{*}(Y|\bar{R})_{w} \\
                       &= \SHI^{g}(M,\gamma).
\end{aligned}
\]

\subsection{Decompositions of sutured manifolds and non-vanishing}

The proofs of the decomposition results of sections~\ref{subsec:decomp} and
\ref{subsec:more-decomp} carry over without change to the instanton
setting also. In particular, Proposition~\ref{prop:S-decomp} holds in
the instanton case:

\begin{proposition}
    \label{prop:S-decomp-instanton}
        Let $(M,\gamma)$ be a balanced sutured manifold and
    \[(M,\gamma)\decomp{S} (M', \gamma')\] a sutured manifold
    decomposition satisfying the hypotheses of
    Theorem~\ref{thm:Juhasz-decomp}. Then $\SHI(M',\gamma')$ is a
    direct summand of $\SHI(M,\gamma)$. 
\end{proposition}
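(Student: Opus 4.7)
The plan is to follow the monopole proof of Proposition~\ref{prop:S-decomp} almost verbatim, replacing the decomposition of $\SHM$ by \spinc{} structures with the simultaneous generalized-eigenspace decomposition of Corollary~\ref{cor:simultaneous-eigenspaces}, and replacing the adjunction inequality by the eigenvalue bounds $|s([\Sigma])| \le 2\genus(\Sigma) - 2$ that come with it. The formal reductions go through unchanged. Juh\'asz's Lemma~4.5 of \cite{Juhasz-2} lets us assume $S$ is a good decomposing surface. The product-$1$-handle trick of Figure~\ref{fig:Add-product-handles} then reduces to the special hypothesis of Lemma~\ref{lem:S-decomp-special}, in which the curves of $\partial S$ are homologically independent in $R_{\pm}(\gamma)$. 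Invariance of $\SHI$ under adding product $1$-handles is immediate, since adding such a handle simply enlarges the auxiliary surface $T$ by a trivial product region and we have already shown that $\SHI$ is independent of such choices.

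With these reductions in place, form the closure $Y = Y(M,\gamma)$ exactly as in the proof of Lemma~\ref{lem:S-decomp-special}, choosing the gluing $h$ so that $h(C_i^+)=C_i^-$. This produces a closed oriented surface $\bar S \subset Y$ meeting $\bar R$ transversely in circles; let $F$ be the oriented smoothing of $\bar R \cup \bar S$ along those circles, so that $[F] = [\bar R] + [\bar S]$ in $H_2(Y)$ and $\chi(F) = \chi(\bar R) + \chi(\bar S)$. The same $Y$ is a closure of $(M',\gamma')$ using the disconnected auxiliary surface $T \cup T_1$, with $F$ in the role of the outer surface, so
\[
\SHI(M',\gamma') = I_*(Y|F)_w,
\]
provided $c_1(w)$ has odd pairing with every connected component of $F$. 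Now apply Corollary~\ref{cor:simultaneous-eigenspaces} to decompose
\[
\SHI(M,\gamma) = I_*(Y|\bar R)_w = \bigoplus_{s\in \Sh(Y|\bar R)} I_*(Y,s)_w ,
\]
on each summand of which $|s([\bar S])| \le 2\genus(\bar S)-2$ and $|s([F_j])| \le 2\genus(F_j)-2$ for every component $F_j$ of $F$. A short Euler-characteristic calculation, using $\chi(F) = \chi(\bar R)+\chi(\bar S)$ and $[F] = [\bar R]+[\bar S]$, shows that $s([F_j]) = 2\genus(F_j)-2$ for all $j$ is equivalent to $s([\bar R]) = 2\genus(\bar R)-2$ together with $s([\bar S]) = 2\genus(\bar S)-2$; the sum of the two extremal values exhausts the total bound, so equality in the sum forces equality in each term. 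Hence $I_*(Y|F)_w$ is the sum of those $I_*(Y,s)_w$ for which $s([\bar S])$ is maximal, which is manifestly a direct summand of $\SHI(M,\gamma)$.

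The main obstacle is the bookkeeping around the line bundle $w$. In the monopole setting there is no analogue of the oddness hypothesis on $c_1(w)$, and in the instanton setting $I_*(Y|F)_w$ is only defined when $c_1(w)$ is odd on every component of $F$. Arranging this will require choosing the marked point on the auxiliary surface, and possibly attaching one or two extra product $1$-handles carrying arcs transverse to the components of $S$, so that each standard circle defining $w$ meets $\bar R$ and each component of $\bar S$ in a single point. Once this arrangement is in place the argument is a direct translation of the monopole proof, with the eigenvalues of the $\mu$-operators playing the role of the first Chern classes of \spinc{} structures.
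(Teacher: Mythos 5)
Your strategy is exactly the paper's: the proof of Proposition~\ref{prop:S-decomp-instanton} is obtained from the proof of Proposition~\ref{prop:S-decomp} by substituting the generalized-eigenspace decomposition of Corollary~\ref{cor:simultaneous-eigenspaces} (and its bounds $|s([\Sigma])| \le 2\genus(\Sigma)-2$) for the \spinc{} decomposition and the adjunction inequality, and your Euler-characteristic step showing $\Sh(Y|F) = \Sh(Y|\bar{R}) \cap \Sh(Y|\bar{S})$ in the relevant range is correct.

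You are right to flag the oddness of $c_1(w)$ on the components of $F$ as the one point where the translation is not a tautology; the paper's proof is silent on it. But your proposed fix cannot work as described: the standard circle defining $w$ is $\{t_0\}\times S^1$, living entirely in the auxiliary product region $T\times S^1$, and $\bar{S}$ lies in the interior of $M$, so no choice of marked point and no product $1$-handle attached along $A(\gamma)$ will make that circle cross a component of $\bar{S}$. (Indeed, one computes $c_1(w)\cdot[\bar{S}]=0$ always in this setup.) The correct fix is the one already used in the paper for the invariance-of-genus argument: twist $w$ by an auxiliary line bundle $u$ and invoke Propositions~\ref{prop:rank-one-uw}--\ref{prop:rank-one-u} together with the fact, noted in the proof of Proposition~\ref{prop:rank-one-u}, that the excision isomorphism intertwines the $\mu$ operators and hence preserves the eigenspace decomposition. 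Choosing $u$ so that $uw$ is odd on each component of $F$, one gets $I_*(Y|\bar{R})_{uw}\cong I_*(Y|\bar{R})_w$ respecting the $\Sh$-decomposition, and then $\SHI(M',\gamma')=I_*(Y|F)_{uw}=\bigoplus_{s\in\Sh(Y|F)} I_*(Y|\bar{R},s)_w$ sits inside $\SHI(M,\gamma)$ as a direct summand. Alternatively, one can note that the sub-sum $\bigoplus_{s\in\Sh(Y|\bar{R})\cap\Sh(Y|\bar{S})}I_*(Y|\bar{R},s)_w$ is defined without any oddness hypothesis on $F$ and identify it with $\SHI(M',\gamma')$ by excision from a separate connected-auxiliary-surface closure. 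Either route completes the argument; the mechanism in your last paragraph does not.
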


\begin{proof}
    The proof is the same as the proof of
    Proposition~\ref{prop:S-decomp}; but at the last step in
    Lemma~\ref{lem:S-decomp-special}, instead of using the
    decomposition into \spinc{} structures, one uses the
    generalized-eigenspace
    decomposition of Corollary~\ref{cor:simultaneous-eigenspaces}.
\end{proof}

As shown in \cite{Juhasz-2} and mentioned above at the end of
section~\ref{subsec:more-decomp}, a result such as
Proposition~\ref{prop:S-decomp-instanton} gives a non-vanishing
theorem for the case of taut sutured manifolds. We therefore have:

\begin{theorem}
    \label{thm:SHI-non-vanishing}
    If the balanced sutured manifold $(M,\gamma)$ is taut, then
    $\SHI(M,\gamma)$ is non-zero. \qed
\end{theorem}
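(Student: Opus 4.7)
The plan is to mirror the argument sketched at the end of Section~\ref{subsec:more-decomp} for $\SHM$, substituting $\SHI$ throughout. The essential inputs are Gabai's sutured manifold hierarchy, Juh\'asz's refinement of this hierarchy so that each decomposing surface satisfies the hypotheses of Theorem~\ref{thm:Juhasz-decomp}, Proposition~\ref{prop:S-decomp-instanton}, and the base case computation that the instanton homology of a product sutured manifold is $\C$ (noted in the remark following the definition of $\SHI$, and ultimately a consequence of Proposition~\ref{prop:rank-one-w}).

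First I would invoke Gabai's theorem to produce a finite sequence of sutured manifold decompositions
\[
(M,\gamma) = (M_{0},\gamma_{0}) \decomp{S_{1}} (M_{1},\gamma_{1}) \decomp{S_{2}} \cdots \decomp{S_{n}} (M_{n},\gamma_{n})
\]
in which $(M_{n},\gamma_{n})$ is a product sutured manifold and in which each intermediate $(M_{i},\gamma_{i})$ is taut (hence in particular balanced). As discussed in \cite{Juhasz-2}, one can further arrange that each decomposing surface $S_{i}$ has no closed components and that for every component $V$ of $R(\gamma_{i-1})$ the closed components of $S_{i}\cap V$ form a family of parallel, oriented, boundary-coherent simple closed curves; that is, each step meets the hypotheses of Theorem~\ref{thm:Juhasz-decomp}.

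Next I would apply Proposition~\ref{prop:S-decomp-instanton} inductively at each step of the hierarchy to conclude that $\SHI(M_{i},\gamma_{i})$ is a direct summand of $\SHI(M_{i-1},\gamma_{i-1})$. Composing these direct-summand inclusions gives that $\SHI(M_{n},\gamma_{n})$ is a direct summand of $\SHI(M,\gamma)$. Since $(M_{n},\gamma_{n})$ is a product sutured manifold, the closure $Y(M_{n},\gamma_{n})$ can be taken to be of the form $S^{1}\times\bar R$, and Proposition~\ref{prop:rank-one-w} identifies $\SHI(M_{n},\gamma_{n})$ with $\C$. Therefore $\SHI(M,\gamma)$ contains a nonzero summand, proving the theorem.

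The step requiring the most care is ensuring that every decomposing surface in the hierarchy can be chosen to satisfy the hypotheses of Theorem~\ref{thm:Juhasz-decomp}; however, this is precisely what is established in \cite{Juhasz-2}, and it applies in our context without modification since the statement concerns only the topology of $(M,\gamma)$ and not the choice of Floer theory. Everything else is a straightforward induction, and no additional gauge-theoretic input beyond Proposition~\ref{prop:S-decomp-instanton} and the product computation is needed.
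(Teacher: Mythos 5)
Your proof matches the paper's own argument, which is exactly the hierarchy-plus-direct-summand strategy already sketched at the end of Section~\ref{subsec:more-decomp} and invoked for $\SHI$ in the sentence immediately preceding the theorem. One small caution: ``taut'' does not by itself imply ``balanced'' (balancedness includes conditions like $R(\gamma)$ having no closed components and every boundary component meeting $A(\gamma)$); the correct citation is that Juh\'asz shows the hierarchy can be \emph{chosen} so that every intermediate sutured manifold is balanced and every decomposing surface satisfies the hypotheses of Theorem~\ref{thm:Juhasz-decomp}, which is indeed what you lean on.
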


The only alternative route known to the authors for proving
a non-vanishing theorem for instanton homology is the strategy in
\cite{KM-Witten-P}, which draws on results from symplectic and contact
topology, as well as on the partial proof of Witten's conjecture
relating Donaldson invariants and Seiberg-Witten invariants of closed
$4$-manifolds  \cite{Feehan-Leness}. We shall return to
non-vanishing theorems for instanton homology in
section~\ref{subsec:I-non-vanish}.

\subsection{Floer's instanton homology for knots}

Just as we did for the monopole case in
section~\ref{sec:knot-homology}, we can take Juh\'asz's prescription
as a definition of knot homology. Let $K\subset Z$ be again  a knot in
a closed, oriented $3$-manifold. Let $(M,\gamma)$ be the sutured
manifold obtained by taking $M$ to be the knot complement $Z\setminus
N^{\circ}(K)$ and $s(\gamma)$ a pair of oppositely oriented meridians
on $\partial K$. In the instanton case, there is no need for $\bar{R}$
to have genus $2$ or  more, so we may use the closure
$Y_{1}(M,\gamma)$ described in Definition~\ref{def:Closure-1}. This is
the closure of $(M,\gamma)$ obtained using $[-1,1]\times T$, where $T$
is an annulus. It is also described in Lemma~\ref{lem:picture-F} as
obtained from $Z\setminus N^{\circ}(K)$ by attaching $F\times S^{1}$,
where $F$ has genus one: the gluing is done so that $\{p\}\times S^{1}$
is attached to a meridian of $K$.  We summarize the construction of this
instanton knot homology in the following definition. The definition is
not new: it is the same ``instanton homology for knots'' that Floer
defined in \cite{Floer-Durham-paper}. For the purposes of this paper,
we call it $\KHI(Z,K)$:

\begin{definition}\label{def:KHI}
    The instanton knot homology $\KHI(Z,K)$ of a knot $K$ in
    $Z$ is defined to be the instanton homology of the sutured manifold
    $(M,\gamma)$ above; or equivalently, the instanton homology
    group $I_{*}(Y_{1}|\bar{R})_{w}$. Here $Y_{1}$ is obtained from
    the knot complement by attaching $F\times S^{1}$ as described, the
    surface $\bar{R}$ is the torus $\alpha\times S^{1}$ as shown in
    Figure~\ref{fig:F-alone}, and $w$ is the line bundle with
    $c_{1}(w)$ dual to $\beta\times\{p\}\subset F\times S^{1}$.
    \CloseDef
\end{definition}

The only difference between this and Floer's original definition is
that we have used $I_{*}(Y_{1}|\bar{R})_{w}$ in place of
$I_{*}(Y_{1})_{w}$. Since $\bar{R}$ has genus one, the former group
can be characterized as the $+2$ eigenspace of $\mu(y)$ acting on the
latter group. The latter group is the sum of two subspaces of equal
dimension, the eigenspaces for the eigenvalues $2$ and $-2$.

For a classical knot $K$ in $S^{3}$, we shall simply write $\KHI(K)$
for the instanton knot homology.  To get a feel for what this
invariant is, let us examine the set of critical points of the
Chern-Simons functional on $\cB$, or in other words the space of flat
connections in the appropriate $\SO(3)$ bundle, modulo the
determinant-1 gauge transformations. To do this, we start by looking
at $F\times S^{1}$, where $F$ is the genus-1 surface with one boundary
component, and the line-bundle $w$ with $c_{1}(w)$ dual to
$\beta\times \{p\}$. The appropriate representation variety can also
be viewed as the space of flat $\SU(2)$ connections on the complement
of the curve $\beta\times \{p\}$ with the property that the holonomy
around a small circle linking $\beta\times\{p\}$ is the central
element $-1$. Consider such a flat connection $A$ and let $J_{1}$ and
$J_{2}$ be the holonomies of $A$ around respectively
the curves $\alpha\times\{q\}$
and $a \times S^{1}$ in $F\times S^{1}$, where $a$ is a point on
$\alpha\setminus\beta$. The torus $\alpha\times S^{1}$ intersects the
circle $\beta\times\{p\}$ once, so we have
\[
                [J_{1}, J_{2}] = -1
\]
in $\SU(2)$. Up to a gauge transformation, we must have
\[           
            J_{1}=
            \begin{pmatrix}
                0 & -1 \\
                1 & 0
            \end{pmatrix} ,           \qquad
             J_{2}=
            \begin{pmatrix}
                i & 0 \\
                0 & -i
            \end{pmatrix} .
\]
Let $J_{3}$ be the holonomy around $\beta'\times\{q\}$, where $\beta'$
is a parallel copy of $\beta$. The elements $J_{1}$ and $J_{3}$ must
commute, so
\[
                J_{3} = \begin{pmatrix}
                e^{i\theta} & 0 \\
                0 & e^{-i\theta}
            \end{pmatrix} 
\]
for some $\theta$ in $[0,2\pi)$. The angle $\theta$ is now determined
without ambiguity from the gauge-equivalence class of the connection
$A$; and the  matrices $J_{1}$, $J_{2}$ and $J_{3}$ determine $A$
entirely. We have proved:

\begin{lemma}
    The representation variety of flat $\SO(3)$ connections on $F\times
    S^{1}$ for the given $w$, modulo the determinant-1 gauge group,
    is diffeomorphic to a circle $S^{1}$, via $J_{3}$ as above. \qed
\end{lemma}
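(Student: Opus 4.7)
The plan is straightforward given that the authors have already reduced the problem in the paragraph preceding the lemma: the representation variety is parameterized by holonomies $J_1, J_2, J_3$ subject to the relations $[J_1, J_2] = -1$ and a commutation condition that places $J_3$ in the centralizer of one of the stabilized generators, modulo simultaneous conjugation. After gauge-fixing $J_1, J_2$ in their standard forms, $J_3$ ranges over the maximal torus $\{\operatorname{diag}(e^{i\theta}, e^{-i\theta})\}$ of $\SU(2)$, giving a circle's worth of representations.

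To complete the proof I would verify three things. First, that no relation in $\pi_{1}(F \times S^{1} \setminus (\beta \times \{p\}))$ beyond those already identified places a further constraint on $\theta$. I would do this by writing out a presentation of this group via van Kampen applied to a tubular neighborhood of the removed circle, which yields the meridian $m$ as an extra generator with $[J_1, J_2] = m$ as the only essential relation carried over from $\pi_{1}(F \times S^{1})$; the condition $m = -1$ coming from $w$ then absorbs this, leaving $J_3$ constrained only by its commutation relation. Second, that the parameterization is injective: the residual gauge stabilizer after fixing $J_1, J_2$ in their standard forms is the center $\{\pm I\}$ of $\SU(2)$, which acts trivially on $J_3$, so distinct $\theta \in [0, 2\pi)$ correspond to distinct gauge equivalence classes of flat connections.

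Third, that the map $\theta \mapsto [A_\theta]$ is smooth with smooth inverse, the inverse reading off an eigenvalue of $J_3$ from the holonomy. Smoothness in both directions is automatic from the standard local slice theory for the action of the gauge group, giving a genuine diffeomorphism rather than just a bijection. The main obstacle is the first step: carrying out the van Kampen presentation carefully and verifying that the relation structure produces no hidden constraints on $\theta$ beyond those that become trivial once $m = -1$ is imposed. Once this combinatorial group-theoretic check is settled, the rest of the argument is formal.
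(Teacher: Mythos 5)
Your proposal matches the paper's argument, which is contained entirely in the paragraph preceding the lemma: the holonomies $J_1,J_2,J_3$ are placed in standard form and $\theta$ is read off, with the paper simply asserting that "$\theta$ is determined without ambiguity" and "the matrices $J_1,J_2,J_3$ determine $A$ entirely." Your three verification steps (a van Kampen check that no further relation constrains $\theta$, that the residual conjugation stabilizer is the central $\{\pm I\}$, and smoothness from slice theory) make explicit precisely what those two assertions are suppressing, so you have the same proof with the details supplied.
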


Let us examine the restriction of these representations to the
boundary of $F\times S^{1}$. On this torus $\partial F\times S^{1}$,
the flat connections can be regarded as $\SU(2)$ connections. The
holonomy around the $S^{1}$ factor is $J_{2}$, which we have described
above. The holonomy around the $\partial F$ factor is given by the
commutator
\[
                    [J_{3}, J_{1}] = \begin{pmatrix}
                e^{2i\theta} & 0 \\
                0 & e^{-2i\theta}
\end{pmatrix}
.
\]
So for the representation variety described in the lemma, the
restriction to the boundary is a two-to-one map whose image is the
space of connections having holonomy around the $S^{1}$ factor given
by
\[
            \bi = \begin{pmatrix}
                i & 0 \\
                0 & -i
            \end{pmatrix}.
\]
Finally, we can attach $F\times S^{1}$ to the knot complement
$S^{3}\setminus N^{\circ}(K)$, and we obtain the following description
of the representation variety.

\begin{lemma}
    Let $K\subset S^{3}$ be a knot and let $Y_{1}$ and $w$ be as
    described in Definition~\ref{def:KHI}. Then 
    the representation variety given by the critical points of the
    Chern-Simons functional in the corresponding space of connections
    $\cB$ can be identified with a double cover of the
    space
    \[
            \Rep(K,\bi) = \{ \, \rho: \pi_{1}(S^{3}
            \setminus K) \to \SU(2) \mid \rho(m) =
            \bi \,\},
    \]
    where  $m$ is a chosen meridian. \qed    
\end{lemma}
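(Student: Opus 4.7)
The plan is to decompose $Y_1 = N_K \cup_{T^2} (F \times S^1)$, where $N_K = S^3 \setminus N^\circ(K)$ is the knot complement glued to $F\times S^1$ along the boundary torus $T^2 = \partial N(K) = \partial F \times S^1$, and to analyze flat connections on each piece together with the gluing constraints on the common boundary. First I would observe that $c_1(w)$ is Poincar\'e dual to $\beta \times \{p\}$, a curve contained entirely inside $F\times S^1$; hence $w$ restricts trivially to $N_K$, and any flat $\SO(3)$ connection in $\ad(E)|_{N_K}$ lifts canonically to an $\SU(2)$ connection whose holonomy defines a representation $\rho : \pi_1(N_K) = \pi_1(S^3 \setminus K) \to \SU(2)$, well-defined up to $\SU(2)$-conjugation.

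Next, I would invoke the preceding lemma, which parametrizes flat $\SO(3)$ connections on $F\times S^1$ (modulo the determinant-$1$ gauge group) by $\theta \in [0, 2\pi)$ via the normalized holonomies $J_1$, $J_2$, $J_3$. In this parametrization the boundary holonomies on $T^2 = \partial F \times S^1$ are
\[
J_2 = \bi \ \text{around the } S^1\text{-factor}, \qquad [J_3, J_1] = \begin{pmatrix} e^{2i\theta} & 0 \\ 0 & e^{-2i\theta}\end{pmatrix} \ \text{around } \partial F.
\]

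I would then assemble a flat connection on $Y_1$ from a compatible pair $(\rho, \theta)$ by a van Kampen style argument: under the identifications meridian $m \leftrightarrow S^1\text{-factor}$ and longitude $l \leftrightarrow \partial F \times \{q\}$, matching the boundary holonomies (using the residual conjugation freedom on the $N_K$ side to put $\rho(m)$ into the standard form $\bi$) forces
\[
\rho(m) = \bi, \qquad \rho(l) = \operatorname{diag}(e^{2i\theta}, e^{-2i\theta}).
\]
The first equation is precisely the condition $\rho \in \Rep(K, \bi)$. For the second, note that $\rho(l)$ automatically commutes with $\bi$ and so can be written uniquely as $\operatorname{diag}(e^{i\phi(\rho)}, e^{-i\phi(\rho)})$ for a well-defined $\phi(\rho) \in \R/2\pi\Z$; the equation $2\theta \equiv \phi(\rho) \pmod{2\pi}$ then has exactly two solutions $\theta$ and $\theta + \pi$ in $[0, 2\pi)$. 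The projection $(\rho, \theta) \mapsto \rho$ therefore exhibits the full representation variety of $Y_1$ as a 2-to-1 covering of $\Rep(K, \bi)$.

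The main obstacle is the careful bookkeeping of the determinant-$1$ gauge equivalence on the glued manifold, showing that no spurious identifications arise beyond those already visible in the fiber-product picture. The normalization $\rho(m) = \bi$ on the knot-complement side breaks the full $\SU(2)$ conjugation down to its maximal torus, which is then absorbed into the one-parameter family of $\theta$'s; the essential source of the double cover is genuinely the squaring $\theta \mapsto 2\theta$ appearing in the commutator $[J_3, J_1]$, reflecting that $J_3$ is an $\SU(2)$ lift of an $\SO(3)$ holonomy on a bundle with nontrivial $w_2$. Once this gauge matching is verified, the conclusion is immediate from the two preceding steps.
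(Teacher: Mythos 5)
Your proposal is correct and follows essentially the same route as the paper: both rely on the preceding computation of the representation variety of $F\times S^1$ parametrized by $\theta$, note that the restriction to the boundary torus is two-to-one because of the squaring $\theta\mapsto 2\theta$ in the commutator $[J_3,J_1]$, and conclude by matching boundary holonomies with a representation $\rho$ on the knot complement normalized so that $\rho(m)=\bi$. The only cosmetic quibble is the phrase about the maximal torus being ``absorbed into the one-parameter family of $\theta$'s'': conjugation by the centralizer of $\bi$ leaves $\rho(l)$ (and hence $\theta$) unchanged, so it is an independent circle direction in the representation variety rather than something absorbed by $\theta$; but this does not affect the double-cover conclusion.
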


Note that $\Rep(K,\bi)$ is a space of homomorphisms, not a space of
conjugacy classes of homomorphisms. The centralizer  of $\bi$ (a circle
subgroup) still acts on $\Rep(K,\bi)$ by conjugation. There is always
exactly one point of $\Rep(K,\bi)$ which is fixed by the action of this circle, namely
the homomorphism $\rho$ which factors through the abelianization
$H_{1}(S^{3}\setminus K) = \Z$. All other orbits are irreducible: they have stabilizer
$\pm 1$, so they are circles. In a generic case, $\Rep(K,\bi)$
consists of one isolated point corresponding to the abelian
(reducible) representation, and finitely many circles, one for each
conjugacy class of irreducible representations. In such a case, the
representation variety described in the lemma above is a trivial
double-cover of $\Rep(K,\bi)$. It therefore has two isolated points
corresponding to the reducible,
and two circles for each irreducible conjugacy class.

Because it comprises only the $+2$ eigenspace of $\mu(y)$, the knot
Floer homology $\KHI(K)$ has just half the dimension of
$I_{*}(Y_{1})_{w}$ in Definition~\ref{def:KHI}. Heuristically, we can
think of each irreducible conjugacy class in $\Rep(K,\bi)$ as
contributing the homology of the circle, $H_{*}(S^{1};\C)$, to the
complex that computes $\KHI(K)$, while the reducible contributes a
single $\C$. In any event, if there are only $n$ conjugacy
classes of irreducibles and the corresponding circles of critical points are
non-degenerate in the Morse-Bott sense, then it will follow that the
dimension of $\KHI(K)$ is bounded above by $2n+1$.

For a knot $K\subset Z$ supplied with a Seifert surface $\Sigma$,
there is a decomposition of the instanton knot homology $\KHI(Z,K)$ as
\[
            \KHI(Z,K) = \bigoplus_{i=-\genus(\Sigma)}^{\genus(\Sigma)} \KHI(Z,K,[\Sigma],i).
\]
The definition is the same as in the monopole case
\eqref{eq:KHM-summands}, but uses the
generalized-eigenspace decomposition of
Corollary~\ref{cor:simultaneous-eigenspaces} in place of the
decomposition by \spinc{} structures. In particular, for a classical
knot $K\subset S^{3}$, we can write
\[
            \KHI(K) = \bigoplus_{i=-g}^{g}\KHI(K,i),
\]
where $g$ is the genus of the knot. Just as in the monopole case, the
top summand $\KHI(K,g)$ can be identified with the instanton Floer
homology $\SHM(M,\gamma)$, where $(M,\gamma)$ is the sutured manifold
obtained by cutting open the knot complement along a Seifert surface
of genus $g$. (See Proposition~\ref{prop:sutured-vs-knot}.) From the
non-vanishing theorem, Theorem~\ref{thm:SHI-non-vanishing}, we
therefore deduce a non-vanishing theorem for $\KHI$.

\begin{proposition}
    Let $K$ be a classical knot of genus $g$. Then the instanton knot
    homology group $\KHI(K,g)$ is non-zero. In particular, instanton
    knot homology detects the genus of a knot. \qed
\end{proposition}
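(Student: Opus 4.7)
The plan is to reduce the non-vanishing of $\KHI(K,g)$ to Theorem~\ref{thm:SHI-non-vanishing}, the non-vanishing result for taut balanced sutured manifolds, via the instanton analog of Proposition~\ref{prop:sutured-vs-knot}.

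First I would choose a Seifert surface $\Sigma$ of minimal genus $g$ for $K$ and form the balanced sutured manifold $(M_\Sigma,\delta)$ by cutting the knot complement $S^{3}\setminus N^{\circ}(K)$ open along $\Sigma$, so that $R_{\pm}(\delta)$ are both copies of $\Sigma$ and $\delta$ consists of a single suture along $\partial\Sigma$. Since $\Sigma$ is genus-minimizing and the knot complement is irreducible, $(M_\Sigma,\delta)$ is taut.

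The heart of the argument is the identification
\[
\SHI(M_\Sigma,\delta) \;\cong\; \KHI(K,g).
\]
I would follow the strategy of Proposition~\ref{prop:sutured-vs-knot}, but with the \spinc{}-structure decomposition replaced by the generalized eigenspace decomposition of Corollary~\ref{cor:simultaneous-eigenspaces}. The closure $Y_{1}=Y_{1}(S^{3},K)$ of Definition~\ref{def:Closure-1}, built with an annulus as auxiliary surface, serves simultaneously as the closure used to compute $\KHI(K)=I_{*}(Y_{1}|\bar R)_{w}$ (where $\bar R=\alpha\times S^{1}$ is the torus) and, with its genus-$(g+1)$ surface $\bar\Sigma=\Sigma\cup F$ in place of $\bar R$, as a closure of $(M_\Sigma,\delta)$ in the sense of section~\ref{subsec:closing}. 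By definition, $\SHI(M_\Sigma,\delta)$ is the $(2g,2)$-eigenspace of $(\mu(\bar\Sigma),\mu(y))$ on $I_{*}(Y_{1})_{w}$, while $\KHI(K,g)$ is the $2g$-generalized eigenspace of $\mu(\bar\Sigma)$ sitting inside $I_{*}(Y_{1}|\bar R)_{w}$. These coincide: on the $(2g,2)$-eigenspace of $(\mu(\bar\Sigma),\mu(y))$, the constraint $|s([\bar R])|\le 2\genus(\bar R)-2=0$ from Corollary~\ref{cor:simultaneous-eigenspaces} forces $\mu(\bar R)$ to act by zero automatically.

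With the identification in hand, Theorem~\ref{thm:SHI-non-vanishing} immediately gives $\SHI(M_\Sigma,\delta)\ne 0$, hence $\KHI(K,g)\ne 0$. Detection of genus is then a corollary: the bound $|s([\bar\Sigma])|\le 2g$ from the same corollary forces $\KHI(K,i)=0$ for $|i|>g$, so $g$ is precisely the largest index with non-vanishing summand. The only step that requires real care is the eigenvalue bookkeeping in the identification $\SHI(M_\Sigma,\delta)\cong \KHI(K,g)$ — in particular the automatic vanishing of $\mu(\bar R)$ on the pertinent eigenspace; everything else is an immediate consequence of earlier results in the paper.
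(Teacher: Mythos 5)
Your overall strategy is exactly the one the paper has in mind: the proposition is an immediate consequence of the instanton analogue of Proposition~\ref{prop:sutured-vs-knot} (identifying the top summand $\KHI(K,g)$ with $\SHI(M_\Sigma,\delta)$) together with the taut non-vanishing theorem, Theorem~\ref{thm:SHI-non-vanishing}. The eigenvalue bookkeeping you supply — in particular the observation that on the $(2g,2)$-generalized eigenspace of $(\mu(\bar\Sigma),\mu(y))$ the eigenvalue of $\mu(\bar R)$ is automatically $0$ because $\bar R$ is a torus and Corollary~\ref{cor:simultaneous-eigenspaces} forces $|s([\bar R])|\le 0$ — is correct and is a useful thing to spell out.

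There is, however, a genuine gap in the step you dismiss with ``By definition.'' The group $\KHI(K)=I_{*}(Y_{1}|\bar R)_{w}$ uses the bundle $w$ with $c_{1}(w)$ dual to $\beta\times\{p\}\subset F\times S^{1}$, which pairs to $1$ with $[\bar R]=[\alpha\times S^{1}]$ but to $0$ with $[\bar\Sigma]=[\Sigma\cup(F\times\{q'\})]$ (the circle $\beta\times\{p\}$ is disjoint from $\Sigma$ and from $F\times\{q'\}$ when $p\ne q'$). By contrast, when $Y_{1}$ is viewed as a closure of $(M_\Sigma,\delta)$ with auxiliary surface $T=F$, the definition of $\SHI$ requires the line bundle $w'$ to be dual to the standard circle $\{t_{0}\}\times S^{1}$, a curve in a different class of $H_{1}(Y_{1})$, which pairs to $1$ with $[\bar\Sigma]$ but to $0$ with $[\bar R]$. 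So $\SHI(M_\Sigma,\delta)$ is a priori an eigenspace inside $I_{*}(Y_{1})_{w'}$, not $I_{*}(Y_{1})_{w}$, and the assertion that the two eigenspaces ``coincide'' does not even typecheck without further work. To repair this you need the instanton counterpart of the local-coefficient bookkeeping that the paper performs carefully in Proposition~\ref{prop:sutured-vs-knot} and in the $\SHI$-independence argument: pass to a bundle such as $ww'$, odd on both $\bar R$ and $\bar\Sigma$, and invoke excision together with Propositions~\ref{prop:rank-one-uw} and \ref{prop:rank-one-u} to compare $I_{*}(Y_{1}|\bar R)_{w}$, $I_{*}(Y_{1}|\bar R)_{ww'}$ and $I_{*}(Y_{1}|\bar\Sigma)_{w'}$, $I_{*}(Y_{1}|\bar\Sigma)_{ww'}$. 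Once that comparison is in place, the rest of your argument — tautness of $(M_\Sigma,\delta)$, the eigenvalue bound giving vanishing for $|i|>g$, and the application of Theorem~\ref{thm:SHI-non-vanishing} — is fine.
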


This proposition provides an alternative proof for results from
\cite{KM-Witten-P} and \cite{KM-Dehn-SU2}. In particular, we have the
following corollary:

\begin{corollary}
    If $K\subset S^{3}$ is non-trivial knot, then there exists an
    irreducible homomorphism $\rho : \pi_{1}(S^{3}\setminus K)$ which
    maps a chosen meridian $m$ to the element $\bi\in \SU(2)$.
\end{corollary}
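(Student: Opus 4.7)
The plan is to argue by contradiction. Suppose that every homomorphism $\rho\in\Rep(K,\bi)$ is reducible. Because $\rho(m)=\bi$ is non-central, a reducible $\rho$ must take values in the circle centralizer of $\bi$; hence $\rho$ factors through $H_1(S^3\setminus K)=\Z$, and such a $\rho$ is determined by the image of $m$. It follows that $\Rep(K,\bi)$ consists of the single abelian representation $\rho_0$. By the lemma describing the critical set of the Chern-Simons functional as a double cover of $\Rep(K,\bi)$, the representation variety in $\cB$ is then the two-point trivial cover of $\{\rho_0\}$, consisting of two gauge equivalence classes of flat reducible connections.

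The next step is to perturb if necessary (in the standard Morse-Bott sense) and observe that $\KHI(K)$ is generated, as a vector space, by contributions arising from these two critical points; in particular its total dimension is bounded by a small constant independent of the knot topology. The key point is now to compute the action of $\mu(\Sigma)$ on these contributions, where $\Sigma$ is a Seifert surface for $K$ (together with the auxiliary genus-1 piece in $\tilde Y(S^3,K)$ to form a closed surface $\bar\Sigma$). For the abelian representation, the holonomy around $\partial\Sigma=K$ is $\bi$, and indeed the restriction of the flat connection to the complement of $K$ lives in an abelian subalgebra; consequently the curvature-form piece that computes $\mu(\bar\Sigma)$ pairs with $[\bar\Sigma]$ to give $0$. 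So both reducible contributions lie in the generalized $0$-eigenspace of $\mu(\bar\Sigma)$. Under the decomposition
\[
\KHI(K)=\bigoplus_{i=-g}^{g}\KHI(K,i)
\]
from Corollary~\ref{cor:simultaneous-eigenspaces}, this means $\KHI(K,i)=0$ for every $i\ne 0$.

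The contradiction is then immediate from the non-vanishing result proved just above: for a non-trivial knot $K\subset S^3$, the genus $g$ is at least $1$, and the proposition asserts $\KHI(K,g)\ne 0$. This forces the existence of an irreducible $\rho\in\Rep(K,\bi)$, proving the corollary.

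The main obstacle in carrying this out is the identification of the $\mu(\bar\Sigma)$-eigenvalue of the reducible contributions as $0$: one needs to verify that the reducible connection really sits in the generalized $0$-eigenspace of $\mu(\bar\Sigma)$ rather than at some other weight, and one should check that the contribution of $\bar\Sigma$ coming from the auxiliary genus-one piece $F\times S^1$ in $Y_1(S^3,K)$ does not shift this weight. This is a computation entirely within the abelian theory on $S^3\setminus K$ together with the product $F\times S^1$, so it is tractable, but it is the step where the identification $\Rep(K,\bi)=\{\rho_0\}\Rightarrow \KHI(K,i)=0$ for $i\ne 0$ must be made rigorous.
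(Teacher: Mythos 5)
Your overall strategy (contradiction from assuming $\Rep(K,\bi)$ contains only the abelian representation, combined with non-vanishing of $\KHI(K,g)$) is the same one the paper uses, but the middle step you propose is both more complicated than what is needed and contains a genuine gap.

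The paper's argument is a pure rank count: with only the non-degenerate reducible in $\Rep(K,\bi)$, the critical set in $\cB$ has two non-degenerate points, so $\dim I_*(Y_1)_w \le 2$ and hence $\dim\KHI(K)\le 1$. But the symmetry $\KHI(K,i)\cong\KHI(K,-i)$ together with $\KHI(K,g)\ne 0$ and $g\ge 1$ forces $\dim\KHI(K)\ge 2$, a contradiction. There is no need to locate \emph{which} eigenvalue of $\mu(\bar\Sigma)$ the two generators sit in.

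Your proposal instead tries to show that the two generators lie in the generalized $0$-eigenspace of $\mu(\bar\Sigma)$, and you honestly flag this as the step you have not justified. The gap is real: the eigenvalues of $\mu(\bar\Sigma)$ on $I_*(Y_1|\bar R)_w$ are not attached to critical points of the Chern-Simons functional. They arise from the spectral decomposition of the chain-level operator defined by cutting down moduli spaces of trajectories by a geometric representative of the cohomology class $\mu(\bar\Sigma)\in H^2(\cB)$, so the weight assigned to a generator depends on the moduli spaces connecting it to all other generators, not on the holonomy of the underlying flat connection around $\bar\Sigma$. In particular, "the curvature-form piece pairs with $[\bar\Sigma]$ to give $0$" is a \spinc{}-style statement about first Chern classes that has no direct analogue for $\mu$-classes in instanton homology; the abelian nature of the flat connection by itself does not tell you which eigenspace its contribution lands in. You can avoid this issue entirely by counting generators, as the paper does, and invoking the $i\leftrightarrow -i$ symmetry.
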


\begin{proof}
    If there is no such homomorphism, then $\Rep(K,\bi)$ consists only
    of the reducible, which is always non-degenerate. The critical
    point set in $\cB$ then consists of two irreducible critical
    points, so the rank of $I_{*}(Y_{1})_{w}$ is at most $2$, and the
    rank of $\KHI(K)$ is therefore at most $1$. This is inconsistent
    with non-vanishing of $\KHM(K,g)$, since $\KHM(K,g)$ is isomorphic
    to $\KHM(K,-g)$.
\end{proof}

\subsection{Instanton homology and fibered knots}

Instanton knot homology detects fibered knots, just as the other
versions do. We state and prove this here. We need, however, an extra
hypothesis on the Alexander polynomial. For Heegaard knot homology,
and also in the monopole case, we know the Alexander polynomial is
determined by the knot homology, and the extra hypothesis is not
needed. It seems likely that the same holds in the instanton case, but
we have not proved it.

We begin with a version of Theorem~\ref{thm:fibered-M} for the
instanton case. 

\begin{theorem}\label{thm:fibered-M-instanton}
        Suppose that the balanced sutured manifold $(M,\gamma)$ is
    taut and a homology
    product.
    Then $(M,\gamma)$ is a product sutured manifold if and only if
    $\SHI(M,\gamma)=\C$.
\end{theorem}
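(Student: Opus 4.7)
The plan is to adapt the proof of Theorem~\ref{thm:fibered-M} given in Section~\ref{subsec:proof-fibered} to the instanton setting. The ``only if'' direction is immediate: for a product sutured manifold $([-1,1]\times\T,\delta)$, any closure $(Y,\bar R)$ fibers over $S^{1}$ with fiber $\bar R$, so by Proposition~\ref{prop:rank-one-w} combined with an excision argument parallel to Lemma~\ref{lem:fibered}, one has $\SHI(M,\gamma)=\C$. For the converse I shall show that a taut, homology-product $(M,\gamma)$ which is not itself a product sutured manifold satisfies $\dim_{\C}\SHI(M,\gamma)\ge 2$.

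All the structural ingredients of Ni's argument have instanton counterparts already in place. Propositions~\ref{prop:disjoint-union}, \ref{prop:horizontal} and \ref{prop:vertical} were proved in the monopole case using only Floer excision, and the analogous instanton excision theorem is now available, so the same proofs give their instanton versions. Using these I would first reduce to the case that $M$ is connected, vertically prime, and has a single suture, exactly as in Section~\ref{subsec:proof-fibered}. I then pick a maximal product pair $i:[-1,1]\times E\hookrightarrow(M,\gamma)$ as in \cite{Ni-A,Ni-Corr} and split into the same two cases according to whether $i_{*}:H_{1}([-1,1]\times E)\to H_{1}(M)$ is surjective.

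The real content is Case~1, where Ni produces two taut foliations $\cF_{1}$ and $\cF_{2}$ whose Euler classes differ by a non-torsion element of $H^{2}(M,\partial M)$. Here I need the instanton analog of Corollary~\ref{cor:rank-2}, which requires two ingredients. First, a non-vanishing theorem for taut foliations in instanton homology: if $\cF$ is a taut foliation of a closure $(Y,\bar R)$ and $s_{\cF}\in\Sh(Y|\bar R)$ is the homomorphism $\sigma\mapsto\langle e(\cF),\sigma\rangle$, then the generalized eigenspace summand $I_{*}(Y|\bar R,s_{\cF})_{w}$ of Corollary~\ref{cor:simultaneous-eigenspaces} is non-zero. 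This is the instanton counterpart of the non-vanishing result used in Section~\ref{subsec:adjunction} and is the substance of what is promised in Section~\ref{subsec:I-non-vanish}, so it is the main obstacle. Second, I need the instanton version of Lemma~\ref{lem:spinc}: one can choose $T$ and the gluing $h$ so that $s_{\cF_{1}}\ne s_{\cF_{2}}$ as elements of $\Sh(Y|\bar R)$. This is strictly easier than its \spinc{} counterpart, since it suffices to build a single closed surface $\bar B\subset Y$ whose cohomology class restricts to a class in $H^{1}(M)$ pairing non-trivially with $e(\cF_{1})-e(\cF_{2})$, and the surface construction in the proof of Lemma~\ref{lem:spinc} applies verbatim.

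With these two ingredients, $\cF_{1}$ and $\cF_{2}$ contribute to distinct generalized eigenspace summands $I_{*}(Y|\bar R,s_{\cF_{i}})_{w}$ of $\SHI(M,\gamma)$, each non-zero, so $\dim_{\C}\SHI(M,\gamma)\ge 2$. Case~2 is then reduced to Case~1 by attaching a planar product piece with the appropriate number of sutures to the complement of the maximal product pair, exactly as in Section~\ref{subsec:proof-fibered}, using the instanton version of Proposition~\ref{prop:vertical} to keep track of the rank under this modification. The foliation non-vanishing theorem together with the eigenvalue bookkeeping of $\Sh(Y|\bar R)$ is therefore the only genuinely new ingredient beyond what the monopole proof supplied.
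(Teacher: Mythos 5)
Your overall strategy is on the right track in most respects, but there is a genuine gap at the one place you flag as uncertain, and it is decisive: the paper does \emph{not} establish (nor does it need) a foliation-based non-vanishing theorem for instanton homology of the form you require, namely that the generalized eigenspace $I_{*}(Y|\bar R,s_{\cF})_{w}$ attached to a taut foliation $\cF$ is non-zero. In the monopole case, the corresponding result (Corollary~41.4.2 of \cite{KM-book}, used in Corollary~\ref{cor:rank-2}) is a deep theorem whose proof goes through Eliashberg--Thurston deformations to contact structures and weak symplectic fillings. One of the declared aims of this paper is precisely to \emph{avoid} such contact/symplectic input in the instanton setting, and Section~\ref{subsec:I-non-vanish} does not contain a foliation-indexed non-vanishing statement: Theorem~\ref{thm:I-non-vanishing} gives non-vanishing of the total group $I_{*}(Y|\bar R)_{w}$ via a sutured hierarchy, not of an individual eigenspace labelled by a foliation. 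So your ``instanton analog of Corollary~\ref{cor:rank-2}'' is not available, and the reduction to it does not close the argument.

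The paper's proof handles Case~1 differently, and this is the ``one slight change, in the spirit of \cite{Juhasz-2}'' that it advertises. Rather than producing two taut foliations and invoking a foliation non-vanishing theorem, one uses the homological statement behind Ni's construction: when $i_{*}$ is not surjective, Ni produces two decomposing surfaces $S_{1},S_{2}$ in the manifold $N$ (obtained from $(M,\gamma)$ by attaching a product piece), with common boundary circles $\omega_{\pm}\subset\bar R_{\pm}$, such that the decompositions along $S_{1}$ and $S_{2}$ are both taut, and such that after closing with a gluing $h$ satisfying $h(\omega_{+})=\omega_{-}$ the resulting closed surfaces satisfy $[\bar S_{1}]=m[\bar R]+[\bar S_{0}]$, $[\bar S_{2}]=m[\bar R]-[\bar S_{0}]$ and $\chi(\bar S_{1})=\chi(\bar S_{2})=m\chi(\bar R)+\chi(\bar S_{0})$ for some $m>0$ and some $\bar S_{0}$ with $\chi(\bar S_{0})\neq 0$. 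These relations force $\Sh(Y|\bar R)\cap\Sh(Y|\bar S_{1})$ and $\Sh(Y|\bar R)\cap\Sh(Y|\bar S_{2})$ to be \emph{disjoint}, and the mechanism of Lemma~\ref{lem:S-decomp-special} (applied eigenspace-by-eigenspace as in Proposition~\ref{prop:S-decomp-instanton}) identifies $\SHI(M'_{i},\gamma'_{i})$ with the direct sum of the $I_{*}(Y|\bar R,s)_{w}$ over $s$ in the $i$-th intersection. Both of these sutured manifolds are taut, so both pieces are non-zero by Theorem~\ref{thm:SHI-non-vanishing}, and their disjointness gives $\dim_{\C}\SHI(M,\gamma)\geq 2$. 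Everything needed here --- the decomposition theorem, the eigenspace bookkeeping, and the sutured non-vanishing theorem --- is already in place, with no foliation non-vanishing and no contact geometry. Your argument should be re-routed through this decomposing-surface mechanism.
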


\begin{proof}
    Ni's argument, as presented for monopole knot homology in the
    proof of Theorem~\ref{thm:fibered-M}, works just as well for $\SHI$
    as it does for $\SHM$, with one slight change (a change which is
    in the spirit of \cite{Juhasz-2}). The key point occurs in
    Case 1 in the proof of Theorem~\ref{thm:fibered-M}
    (section~\ref{subsec:proof-fibered}), where it is already assumed
    that $(M,\gamma)$ has just one suture. We described this step using
    \spinc{} structures, but we can argue using homology instead.

    Let $N$ be obtained from $(M,\gamma)$ by adding a product region
    $[-1,1]\times T$ to the single suture. The boundary of $N$ is
    $\bar{R}_{+}\cup \bar{R}_{-}$. In
    \cite{Ni-A}, Ni shows that if $E\times I$ does not carry all the
    homology of $(M,\gamma)$, then one can find two decomposing
    surfaces $S_{1}$
    and $S_{2}$ in $N$ with the following properties. First,
    the boundaries of $S_{1}$ and $S_{2}$ are the same and consist of
    a pair of circles $\omega_{+}$ and $\omega_{-}$ which represent
    non-zero homology classes in $\bar{R}_{+}$ and $\bar{R}_{-}$.
    Second, the sutured manifolds $(M_{1}', \gamma_{1}')$ and
    $(M_{2}', \gamma_{2}')$ obtained by decomposition of $N$
    along $S_{1}$ and $S_{2}$ respectively are both taut. Third, if
    $Y$ is obtained from $N$ by gluing $\bar{R}_{+}$ to $\bar{R}_{-}$
    by a diffeomorphism $h$ with $h(\omega_{+}) = \omega_{-}$, then
    the resulting closed surface $\bar{S}_{1}$, $\bar{S}_{2}$ and
    $\bar{R}$ in $Y$ satisfy the following conditions,
    for some $m>0$ and some closed surface
    $\bar{S}_{0}$ with $\chi(\bar{S}_{0})$ non-zero,
    \begin{align*}
                [\bar{S}_{1}] &= m
                            [\bar{R}] + [\bar{S}_{0}] \\
                            [\bar{S}_{2}] &= m
                            [\bar{R}] - [\bar{S}_{0}] \\
    \intertext{ and }
                \chi (\bar{S}_{1}) &= \chi (\bar{S}_{2}) \\
                            &= m
                            \chi(\bar{R}) + \chi(\bar{S}_{0}) .
   \end{align*}
   These last conditions imply that $\Sh(Y|\bar{R}) \cap
   \Sh(Y|\bar{S}_{1})$ is disjoint from $\Sh(Y|\bar{R}) \cap
   \Sh(Y|\bar{S}_{2})$. (The notation $\Sh$ is introduced at
   \eqref{eq:Sh-def}.)

   For $i=1,2$, let $F_{i}$ be the surface in $Y$ obtained by
   smoothing out the intersection of $\bar{R}$ and $\bar{S}_{i}$ (a
   single circle in both cases). The proof of
   Lemma~\ref{lem:S-decomp-special} shows that
   \[
   \begin{aligned}
   \SHI(M'_{i},\gamma'_{i}) &=
                    \bigoplus_{\;\;\quad s\in \Sh(Y|F_{i})\qquad\;\;} I_{*}(Y|\bar{R},s)_{w} \\
                     &=
                    \bigoplus_{s\in \Sh(Y|\bar{R}) \cap
                    \Sh(Y|\bar{S}_{i})} I_{*}(Y|\bar{R},s)_{w}\\
                     &\subset \qquad I_{*}(Y|\bar{R})_{w} \\
                     &= \qquad \SHI(M,\gamma).
                    \end{aligned}
   \]
   The disjointness of the two indexing sets for $s$ means that we
   have
   \[
   \SHI(M'_{1},\gamma'_{1}) \oplus
              \SHI(M'_{2},\gamma'_{2})
              \subset \SHI(M,\gamma).
   \]
   Finally, both summands on the right are non-zero because these
   sutured manifolds are taut.
\end{proof}

\begin{corollary}\label{cor:fibered-knots-instanton}
    Let $K$ be a non-trivial knot in $S^{3}$. Suppose that the symmetrized
    Alexander polynomial $\Delta_{K}(T)$ is monic and that its degree
    (by which we mean the highest power of $T$ that appears) is $g$.
    Then $K$ is fibered if and only if $\KHI(K,g)$ is one-dimensional.
\end{corollary}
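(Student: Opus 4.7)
The plan is to adapt the proof of Corollary~\ref{cor:fibered-knots} almost verbatim, substituting $\SHI$ for $\SHM$ and invoking Theorem~\ref{thm:fibered-M-instanton} in place of Theorem~\ref{thm:fibered-M}. The extra hypothesis on the Alexander polynomial compensates for the absence of an instanton analog of Lemma~\ref{lem:Alexander}; in the monopole case $\Delta_K$ was automatically recoverable from $\KHM$, but this remains conjectural on the instanton side, so it must be assumed here.

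For the \emph{only if} direction, suppose $K$ is fibered with fiber a Seifert surface $\Sigma$ of genus $g$. Cutting the knot complement along $\Sigma$ yields the product sutured manifold $([-1,1]\times\Sigma,\delta)$, whose instanton homology is $\C$ by the remark immediately following the definition of $\SHI$. The instanton analog of Proposition~\ref{prop:sutured-vs-knot}, already noted in the text, then identifies $\KHI(K,g)$ with $\SHI(M_\Sigma,\delta)$, so $\dim_\C\KHI(K,g)=1$.

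For the \emph{if} direction, let $\Sigma$ be a minimal-genus Seifert surface for $K$ and let $(M_\Sigma,\delta)$ be the sutured manifold obtained by cutting the knot complement open along $\Sigma$. Because $K$ is non-trivial we have $g\ge 1$, so $(M_\Sigma,\delta)$ is taut. Ni's observation, as used in the proof of Corollary~\ref{cor:fibered-knots}, shows that the hypothesis that $\Delta_K(T)$ is monic of degree $g$ forces $(M_\Sigma,\delta)$ to be a homology product. The instanton analog of Proposition~\ref{prop:sutured-vs-knot} then gives $\SHI(M_\Sigma,\delta)\cong\KHI(K,g)=\C$, so Theorem~\ref{thm:fibered-M-instanton} applies and yields that $(M_\Sigma,\delta)$ is a product sutured manifold, which is precisely the assertion that $K$ is fibered. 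The one step that one might still wish to verify carefully is the instanton version of Proposition~\ref{prop:sutured-vs-knot}; its proof mirrors the monopole one, with the auxiliary line bundle $u$ of Propositions~\ref{prop:rank-one-uw}--\ref{prop:rank-one-u} taking the place that local coefficients played in Lemma~\ref{lem:use-Y1}.
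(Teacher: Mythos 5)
Your proof is correct and follows exactly the route the paper intends: the paper's own proof of this corollary simply refers back to the proof of Corollary~\ref{cor:fibered-knots}, saying it ``needs no alteration'' apart from taking the Alexander polynomial facts as hypotheses rather than deducing them from Lemma~\ref{lem:Alexander}, and your write-up is a faithful spelling-out of that adaptation (invoking Theorem~\ref{thm:fibered-M-instanton} in place of Theorem~\ref{thm:fibered-M}, the instanton analogue of Proposition~\ref{prop:sutured-vs-knot}, and the product computation via Proposition~\ref{prop:rank-one-w}).
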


\begin{proof}
    The proof given for Corollary~\ref{cor:fibered-knots} (the
    monopole case) needs no alteration, except that the hypothesis on
    the Alexander polynomial has been explicitly included, rather than
    being deduced from Lemma~\ref{lem:Alexander}.
\end{proof}

\begin{corollary}
    Let $K\subset S^{3}$ be a  knot whose Alexander polynomial is
    monic of degree equal to the genus of the knot. Consider the
    irreducible homomorphisms $\rho : \pi_{1}(S^{3}\setminus
    K)\to\SU(2)$ which
    map a chosen meridian $m$ to the element $\bi\in \SU(2)$. If there
    is only one conjugacy class of such homomorphisms, and if these
    homomorphisms are non-degenerate, then $K$ is fibered. \qed
\end{corollary}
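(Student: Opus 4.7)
The plan is to deduce the corollary from Corollary~\ref{cor:fibered-knots-instanton}: given the monic Alexander polynomial hypothesis, that earlier result reduces the task to showing $\dim \KHI(K, g) = 1$.

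I would first apply the Morse-Bott count from the paragraph preceding the corollary. Under the hypothesis of a single non-degenerate conjugacy class of irreducible representations in $\Rep(K, \bi)$, the relevant representation variety contributes at most $H_{*}(S^{1}; \C) = \C^{2}$ from the circle of irreducibles and at most $\C$ from the (non-degenerate) reducible point, so $\dim \KHI(K) \le 2 + 1 = 3$.

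Next, Theorem~\ref{thm:SHI-non-vanishing} together with the instanton analogue of Proposition~\ref{prop:sutured-vs-knot} yields $\KHI(K, g) \ne 0$. Invoking the symmetry $\KHI(K, i) \cong \KHI(K, -i)$ (the same one used implicitly in the proof of the preceding corollary), we obtain $\dim \KHI(K, -g) = \dim \KHI(K, g) \ge 1$. If $\dim \KHI(K, g) \ge 2$, then the two summands already force $\dim \KHI(K) \ge 4$, contradicting the upper bound just established. Thus $\dim \KHI(K, g) = 1$, and Corollary~\ref{cor:fibered-knots-instanton} concludes that $K$ is fibered.

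The hardest part will be making the Morse-Bott count rigorous: one must verify that the isolated reducible critical point is non-degenerate (a standard check for the abelian representation $\pi_{1}(S^{3} \setminus K) \to \Z \to \SU(2)$) and that the Morse-Bott contribution is indeed $\C$ per reducible and $H_{*}(S^{1}; \C)$ per circle, without additional terms from the spectral sequence. The symmetry $\KHI(K, i) \cong \KHI(K, -i)$ itself should arise from an involution of $\cB$ induced by conjugation by an $\SU(2)$-element sending $\bi$ to $-\bi$, realizing the instanton analogue of the monopole symmetry $\HM_{\bullet}(Y, \s) \cong \HM_{\bullet}(Y, \bar{\s})$.
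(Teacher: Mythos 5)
Your argument is correct and reconstructs exactly what the paper intends (the corollary is left with a bare $\qed$, but the bound $\dim\KHI(K)\le 2n+1$ and the symmetry $\KHI(K,i)\cong\KHI(K,-i)$ are both supplied in the surrounding text, and the reduction to Corollary~\ref{cor:fibered-knots-instanton} is the only possible route). One small remark: the symmetry $\KHI(K,i)\cong\KHI(K,-i)$ is cleanest to see not via a conjugation involution but from the grading argument the paper cites after Proposition~\ref{prop:eigen-munoz}---$\mu(\bar\Sigma)$ has degree $2$ on a $\Z/8$-graded space, so the generalized eigenspaces for eigenvalues $\lambda$ and $i\lambda$ (and hence $\pm\lambda$) are isomorphic; and you should note that the hypothesis ``exactly one irreducible conjugacy class'' forces $K$ nontrivial, hence $g\ge 1$, so $\KHI(K,g)$ and $\KHI(K,-g)$ really are distinct summands in the count.
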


\subsection{Non-vanishing theorems in the closed case}
\label{subsec:I-non-vanish}

Theorem~\ref{thm:SHI-non-vanishing} asserts the non-vanishing of
instanton Floer homology for balanced sutured manifolds; but the
theorem does not say anything directly about closed $3$-manifolds $Y$.
Nevertheless, with a little extra input, we obtain the following
result as a corollary.

\begin{theorem}
    \label{thm:I-non-vanishing}
    Let $Y$ be a closed irreducible $3$-manifold containing a closed,
    connected, oriented surface $\bar{R}$ representing a non-zero class in
    second homology. Let $w$ be a hermitian line bundle whose first
    Chern class has odd evaluation on $[R]$. Then $I_{*}(Y|\bar{R})_{w}$ is
    non-zero.
\end{theorem}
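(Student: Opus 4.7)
The plan is to realize $(Y,\bar R)$ as a closure of a taut balanced sutured manifold $(M,\gamma)$, so that $I_*(Y|\bar R)_w\cong \SHI(M,\gamma)$, and then invoke Theorem~\ref{thm:SHI-non-vanishing}. A preliminary observation: if $\bar R$ is not genus-minimizing in its homology class, then applying Corollary~\ref{cor:eigen-munoz-Y} to a lower-genus representative of $[\bar R]$ forces the eigenvalue $2\genus(\bar R)-2$ out of the spectrum of $\mu([\bar R])$, so $I_*(Y|\bar R)_w$ vanishes automatically. We therefore restrict to the essential case where $\bar R$ is genus-minimizing.

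Since $[\bar R]\ne 0$ and $\bar R$ is connected, $\bar R$ is non-separating, and $M_R:=Y\setminus N^{\circ}(\bar R)$ is connected with $\partial M_R=\bar R_+\cup \bar R_-$. Pick a non-separating simple closed curve $c\subset \bar R$ with lifts $c_\pm\subset \bar R_\pm$, and let $(M,\gamma)$ be the balanced sutured manifold whose underlying $3$-manifold is $M_R$, with annuli $A(\gamma)$ equal to tubular neighborhoods $N(c_\pm)\subset \bar R_\pm$ (sutures $c_+\cup c_-$), and with $R_\pm(\gamma)=\bar R_\pm\setminus N^{\circ}(c_\pm)$. Taking the auxiliary surface $T$ in the closure construction to be an annulus and the gluing diffeomorphism $h$ to be the identity, the attached product $[-1,1]\times T$ precisely refills the removed neighborhood of $c$, so the closure $Y(M,\gamma)$ is canonically $Y$, with the resulting distinguished closed surface equal to $\bar R$. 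The core of the annulus $T$ is homologous to the non-separating curve $c$, so condition~\ref{item:closed-curve} holds, and condition~\ref{item:genus-2} (relaxed to genus $\geq 1$ in the instanton setting) holds by hypothesis.

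I next verify that $(M,\gamma)$ is taut. Irreducibility of $M$ follows from irreducibility of $Y$ together with incompressibility of $\bar R$, both being immediate from $\bar R$ being a non-sphere genus-minimizing surface in an irreducible manifold. For Thurston-norm minimality, any properly embedded $\Sigma\subset M$ in the same relative homology class as $R_\pm(\gamma)$ and with the same boundary can be capped off by the annulus $N(c)\subset Y$ to produce a closed surface in the class $[\bar R]$; since $\chi(N(c))=0$, capping does not alter the Euler characteristic, so genus-minimality of $\bar R$ upgrades to Thurston-norm minimality of $R_\pm(\gamma)$.

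The standard circle $S^1\times\{t_0\}$ in the closure meets $\bar R$ transversely once, so the line bundle $w_0$ arising in the definition of $\SHI(M,\gamma)$ has $c_1(w_0)\cdot[\bar R]$ odd, matching the parity of $c_1(w)\cdot[\bar R]$. If $w$ and $w_0$ agree in $H^2(Y;\mathbb{Z}/2)$, then $I_*(Y|\bar R)_w\cong \SHI(M,\gamma)$ directly; in general the mod-$2$ difference $c_1(w)-c_1(w_0)$ pairs trivially with $[\bar R]$ and can be absorbed by altering the gluing diffeomorphism $h$ (which changes the class of the standard circle) or by an auxiliary twist argument in the spirit of Propositions~\ref{prop:rank-one-uw} and~\ref{prop:rank-one-u}. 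Theorem~\ref{thm:SHI-non-vanishing} then delivers the required non-vanishing. The principal obstacle in this plan is the verification of tautness of $(M,\gamma)$, specifically the Thurston-norm minimality of $R_\pm(\gamma)$, which is the point where the genus-minimality hypothesis on $\bar R$ is essential; the line-bundle matching is a subsidiary technical point.
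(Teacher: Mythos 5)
There is a genuine gap: the object you call a ``balanced sutured manifold'' $(M,\gamma)$ is not, in fact, balanced, so neither the closure construction nor Theorem~\ref{thm:SHI-non-vanishing} applies to it. In Juh\'asz's definition one must be able to orient $R(\gamma)$ so that its oriented boundary matches the orientations of $\partial A(\gamma)$ induced by the sutures. In your construction $R_{+}(\gamma)=\bar R_{+}\setminus N^{\circ}(c_+)$ is connected (precisely because $c_+$ is non-separating), and \emph{both} boundary circles of the annulus $N(c_+)$ lie on this single component $R_{+}(\gamma)$. A connected oriented surface-with-boundary induces opposite orientations on two parallel boundary circles, whereas the suture orientation of $\partial N(c_+)$ makes them parallel; no orientation of $R_{+}(\gamma)$ can reconcile these. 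Equivalently, the gluing map $A(\delta)\to A(\gamma)$ required in Definition~\ref{def:closure} — which must send $\partial R_{+}(\delta)$ to $\partial R_{+}(\gamma)$ — does not exist: each annulus of $A(\delta)$ has one boundary circle in $\partial R_{+}(\delta)$ and one in $\partial R_{-}(\delta)$, but $N(c_+)$ has both of its boundary circles in $\partial R_{+}(\gamma)$ and none in $\partial R_{-}(\gamma)$. This is exactly why the paper flags $Y$ cut along $\bar R$ as a sutured manifold with \emph{empty} sutures that ``fails to be balanced''; placing one non-separating suture on each boundary component does not repair the defect. (Your identification of $Y(M,\gamma)$ with $Y$ is also doubtful even in principle — compare Lemma~\ref{lem:picture-F}, where the annular closure of a knot complement is a new manifold, not the ambient one — and the tautness and line-bundle issues you flag as the ``principal obstacles'' are not where the real difficulty lies.)

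The paper's route is necessarily different, precisely because of this balance problem. It cuts $Y$ along $\bar R$ to obtain the unbalanced sutured manifold $M$ with empty sutures, passes to the double $N$ of $M$, and reduces by excision to showing $I_{*}(N|R_{+})_{w}\neq 0$. It then invokes the sutured-manifold hierarchy from the proof of Theorem~3.13 of \cite{Gabai} to find a decomposing surface $T\subset N$ and a non-empty collection of annuli $J$ so that the resulting decomposition of $N$ is a taut \emph{balanced} sutured manifold $(N'',\delta'')$ — it is the doubling plus the Gabai decomposition that restores balance, which is exactly what is missing from your plan. Theorem~\ref{thm:SHI-non-vanishing} then gives $\SHI(N'',\delta'')\neq 0$, and recognizing $N$ as a closure of $(N'',\delta'')$ with (disconnected, annular) auxiliary surface $J$ yields, via the direct-summand argument of Lemma~\ref{lem:S-decomp-special}, that $I_{*}(N|R_{+})_{w}\neq 0$, and hence $I_{*}(Y|\bar R)_{w}\neq 0$.
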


\begin{proof}
    Let $M$ be the manifold obtained by cutting $Y$ open along $R$,
    and write the boundary of $M$ as $R_{+} \cup R_{-}$. We regard $M$
    as a sutured manifold, with an empty set of sutures. (The absence
    of sutures means that $M$ fails to be balanced.) Let $N$ be the
    double of $M$. We can regard $R=R_{+}\cup R_{-}$ as a surface in
    the closed manifold $N$. We can ``double'' the line bundle also;
    so we have a line bundle, also denoted by $w$, on $N$. By the
    excision theorem, it will be sufficient to show that
    $I_{*}(N|R)_{w}$ is non-zero. Since $R_{-}$ and $R_{+}$ are
    homologous in $N$ and of equal genus, we have
    \[
            I_{*}(N|R)_{w} = I_{*}(N|R_{+})_{w},
    \]
    so we could equally well deal with $I_{*}(N|R_{+})_{w}$ instead.

    From the proof of Theorem~3.13 of \cite{Gabai}, we have a closed,
    oriented
    surface $T\subset N$ with the following properties. The
    surface $T$ meets $R$ in a non-empty set of circles, and we let
    $T'$ be the surface obtained from $T$ and $R$ by smoothing these
    circles of double points. This $T'$ has the property that by
    cutting $N$ open along $T$ and then decomposing further along a
    non-empty collection of annuli $J$, we arrive at a taut,
    sutured manifold $(N'',\delta'')$. 

    If $T$ intersects both $R_{+}$ and $R_{-}$, then $(N'',\delta'')$
    is balanced. If $T$ intersects only $R_{+}$, say, then
    $(N'',\delta'')$ fails to be balanced, because its boundary
    contains two copies of $R_{-}$: these are components of $\partial
    N''$ which fail to meet $A(\delta'')$, contrary to the definition
    of balanced. If this is what happens, we
    re-attach these two copies of $R_{-}$. We rename the resulting
    manifold as our new $N''$ and proceed. At this point,
    $(N'',\delta'')$ is a balanced sutured manifold.
    
    By Theorem~\ref{thm:SHI-non-vanishing}, we know that
    $\SHM(N'',\delta'')$ is non-zero. We can regard the manifold $N$
    as a closure of $(N'',\delta'')$, but with an auxiliary surface
    that fails to be connected: the auxiliary surface is the
    collection of annuli $J$. But as we argued in the proof of
    Lemma~\ref{lem:S-decomp-special}, a disconnected auxiliary surface
    is as good as a connected one here. We can therefore compute
    $\SHM(N'',\delta'')$ as $I_{*}(N|F)_{w}$, where $F$ is the surface
    in $N$ formed from $R_{\pm}(\delta'')$ when making the closure.
    Thus
    \[
    I_{*}(N|F)_{w} \ne 0.                    
    \]
    This surface $F$ can be identified with $T'$ in the case that $T$
    meets both $R_{+}$ and $R_{-}$. In the case that $T$ meets only
    $R_{+}$, then $F$ is $T'\setminus R_{-}$. In other words, $F$ is
    obtained by smoothing the circles of double points of either
    $T\cup R$ or $T\cup R_{+}$. As in the proof
    Lemma~\ref{lem:S-decomp-special}, the Floer homology
    $I_{*}(N|F)_{w}$ is a direct summand of $I_{*}(N|R_{+})_{w}$. So
    the latter is non-zero, and we are done.
\end{proof}

\begin{corollary}
    If $Y$ is obtained from zero-surgery on a non-trivial knot
    $K\subset S^{3}$, then $I_{*}(Y)_{w}$ is non-zero for an odd line
    bundle $w$. \qed
\end{corollary}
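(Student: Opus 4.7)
The plan is to apply the preceding non-vanishing theorem, Theorem~\ref{thm:I-non-vanishing}, directly, once we have exhibited the required surface and verified the irreducibility and pairing hypotheses.

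First I would observe that if $K$ is non-trivial, then $Y = S^{3}_{0}(K)$ is irreducible. This is a classical consequence of Gabai's work: a non-trivial knot admits a taut foliation on the exterior whose boundary slope is the longitude, and after $0$-surgery the foliation extends over the solid torus, forcing $Y$ to be irreducible (since a reducing sphere would be transverse to a taut foliation). Alternatively, one can quote directly Corollary~8.3 of \cite{Gabai}.

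Next I would construct the surface. Let $\Sigma$ be a minimal-genus Seifert surface for $K$, of some genus $g\ge 1$. The boundary $\partial\Sigma$ is a longitude of $K$, which in the $0$-surgery bounds the core disk of the glued-in solid torus. Capping off $\Sigma$ with this disk produces a closed, connected, oriented surface $\hat{\Sigma}\subset Y$ of genus $g$, and $[\hat{\Sigma}]$ generates $H_{2}(Y;\Z)\cong\Z$, so in particular it is non-zero. Since $H_{1}(Y;\Z)\cong \Z$ is generated by the meridian $m$ of $K$, I may choose a hermitian line bundle $w\to Y$ whose first Chern class is Poincar\'e dual to $m$. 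The intersection pairing gives
\[
    \langle c_{1}(w), [\hat{\Sigma}] \rangle = m\cdot \hat{\Sigma} = 1,
\]
because the meridian meets the Seifert surface transversely in exactly one point. In particular $c_{1}(w)$ evaluates oddly on $[\hat{\Sigma}]$.

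With these hypotheses verified, Theorem~\ref{thm:I-non-vanishing} yields $I_{*}(Y|\hat{\Sigma})_{w}\neq 0$, and since $I_{*}(Y|\hat{\Sigma})_{w}$ is by definition a subspace of $I_{*}(Y)_{w}$ (the simultaneous eigenspace of $\mu(\hat{\Sigma})$ and $\mu(y)$ at $(2g-2,2)$), it follows that $I_{*}(Y)_{w}$ is non-zero, completing the proof. The only non-formal ingredient here is the irreducibility of zero-surgery on a non-trivial knot; everything else is the bookkeeping of capping off a Seifert surface and reading off the required intersection number, so I do not anticipate any serious obstacle.
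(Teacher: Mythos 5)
Your proof is correct and takes exactly the route the paper intends: the corollary is stated with a bare \qed, indicating that it is meant to follow immediately from Theorem~\ref{thm:I-non-vanishing}. You have supplied precisely the bookkeeping that makes this immediate — irreducibility of $0$-surgery on a non-trivial knot via Gabai, the capped-off minimal-genus Seifert surface as the surface $\bar{R}$ (non-zero in homology and of positive genus since $K$ is non-trivial), and the meridian-dual line bundle giving the required odd pairing.
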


Essentially the same theorem and corollary are proved in
\cite{KM-Witten-P}. But the present proof requires considerably less
geometry and analysis. From Floer's surgery exact triangle, one
obtains, as in \cite{KM-Witten-P},

\begin{corollary}
    If $Y_{1}$ is obtained as $+1$ surgery on $K\subset S^{3}$, then
    $\pi_{1}(Y_{1})$ admits a non-trivial homomorphism to $\SU(2)$.
    In particular, $Y_{1}$ is not a homotopy sphere. \qed
\end{corollary}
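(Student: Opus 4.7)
The plan is to invoke Floer's surgery exact triangle for instanton Floer homology, which relates the three manifolds obtained from $K\subset S^{3}$ by surgery with slopes $\infty$, $0$, and $+1$; this exact triangle appears already in \cite{Floer-Durham-paper} and its hypotheses are set up in such a way that $Y_{0}$ appears equipped with the odd line bundle $w$ of the previous corollary. Because $S^{3}$ admits no non-trivial flat $\SU(2)$ connections, the corresponding instanton Floer group on the $\infty$-surgery is zero, and so the surgery triangle collapses to an isomorphism (up to grading shift) between $I_{*}(Y_{0})_{w}$ and $I_{*}(Y_{1})$. The previous corollary tells us the left-hand side is non-zero, so $I_{*}(Y_{1})$ is non-zero as well.

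Next, I would translate non-vanishing of $I_{*}(Y_{1})$ into the existence of an irreducible flat $\SU(2)$ connection on $Y_{1}$. Since $Y_{1}$ is an integer homology sphere, any line bundle on it is trivial, so $I_{*}(Y_{1})$ is generated by the gauge-equivalence classes of irreducible flat $\SU(2)$ connections in the underlying chain complex; if the complex has a non-zero generator then such a connection must exist. The holonomy representation of such an irreducible flat connection gives an irreducible (in particular non-trivial) homomorphism $\pi_{1}(Y_{1})\to \SU(2)$, which is the first conclusion of the corollary.

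For the second assertion, observe that a homotopy $3$-sphere has trivial fundamental group by definition, and therefore admits no non-trivial homomorphism to $\SU(2)$. The existence of the representation produced above therefore rules out the possibility that $Y_{1}$ is a homotopy sphere.

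The main obstacle is simply citing the surgery exact triangle in the appropriate form (with the odd line bundle $w$ on the $0$-surgery) and being careful that the vanishing of the Floer group of $S^{3}$ forces an isomorphism between the other two terms. All other steps are essentially bookkeeping, using only standard properties of the Chern-Simons functional and the definition of a homotopy sphere.
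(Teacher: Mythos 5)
Your argument is correct and is essentially the same one the paper intends, which it states only by citing Floer's surgery exact triangle together with the reference to \cite{KM-Witten-P}. You have filled in exactly the expected details: $I_{*}(S^{3})=0$ forces the triangle to collapse to an isomorphism $I_{*}(Y_{0})_{w}\cong I_{*}(Y_{1})$, the previous corollary gives non-vanishing of the left-hand side, and non-vanishing of $I_{*}(Y_{1})$ for a homology sphere produces an irreducible (hence non-trivial) $\SU(2)$ representation since otherwise the trivial connection is non-degenerate and the chain complex has no generators.
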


This provides a proof of the Property P conjecture that is independent
of the work of Feehan and Leness in \cite{Feehan-Leness} and
independent also of Perelman's proof of the Poincar\'e conjecture.

\subsection{Questions and conjectures}

There are various questions and conjectures which naturally arise. The most obvious of
these is:

\begin{conjecture}
       For balanced sutured manifolds $(M,\gamma)$, the monopole
       and Heegaard groups
       $\SHM(M,\gamma)$ and $\SFH(M,\gamma)$ are isomorphic. When
       tensored with $\C$,  they  are
      both isomorphic to the instanton version, $\SHI(M,\gamma)$.
\end{conjecture}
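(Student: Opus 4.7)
Since this is a conjecture rather than a theorem, I outline a plan that splits naturally into two very different problems: the comparison $\SHM \cong \SFH$, and the comparison with $\SHI$ after tensoring with $\C$.

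The plan for the Heegaard-to-monopole comparison is to introduce a third group $\SHF(M,\gamma)$ defined by exactly the closure construction of Definition~\ref{def:SHM}, but using Heegaard Floer homology $\widehat{HF}$ in place of $\HM_{\bullet}$. All the ingredients needed to make this definition work have Heegaard Floer analogues: the adjunction inequality and non-vanishing theorems on $|F|$-summands are due to Ozsv\'ath-Szab\'o, and there is a Heegaard Floer excision theorem (provable from the surgery exact triangle, as formulated for instance by Juh\'asz in \cite{Juhasz-1}). Once these are in place, the independence proof in section~\ref{sec:independence} transcribes verbatim to show that $\SHF(M,\gamma)$ is a well-defined invariant. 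The first main step would then be to show $\SHF(M,\gamma) \cong \SFH(M,\gamma)$ by an induction on the hierarchy depth of $(M,\gamma)$: both sides satisfy the surface decomposition formula of Theorem~\ref{thm:Juhasz-decomp} (for $\SHF$ this is Proposition~\ref{prop:S-decomp} with Heegaard coefficients), and both agree on product sutured manifolds, where each equals $\Z$. The second step is to invoke the Kutluhan-Lee-Taubes / Colin-Ghiggini-Honda isomorphism between $\widehat{HF}$ and $\HM_{\bullet}$ for closed $3$-manifolds; since that isomorphism is natural under cobordisms and respects the $\spinc$ decomposition, it restricts to an isomorphism on the $|F|$-summands defined by an embedded surface, yielding $\SHF \cong \SHM$.

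The instanton comparison is more speculative, and I expect it to be the genuine obstacle. Lacking any closed-manifold comparison theorem between $\HM_{\bullet}$ and $I_{*}$, one cannot simply import a black box as above. The most promising line of attack is axiomatic: establish that $\SHI \otimes \C$ satisfies enough formal properties to pin it down from its value on product sutured manifolds. Concretely, one would show that both $\SHM \otimes \C$ and $\SHI$ satisfy:
\begin{enumerate}
\item the disjoint union formula (Proposition~\ref{prop:disjoint-union}),
\item the horizontal and vertical decomposition formulas (Propositions~\ref{prop:horizontal} and \ref{prop:vertical}),
\item the surface decomposition direct-summand property (Propositions~\ref{prop:S-decomp} and \ref{prop:S-decomp-instanton}),
\item a surgery exact triangle relating $\SHI$ of sutured manifolds differing by $\pm 1$ Dehn twists on a vertical annulus,
\item normalization $= \C$ on product sutured manifolds.
\end{enumerate}
Item (iv), a sutured surgery triangle in the instanton world, is not yet available and is the main analytic content one would need to supply; it should descend, via the closure construction and excision, from Floer's original triangle. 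Granted such a triangle in both theories, and using that a taut sutured manifold hierarchy reduces any $(M,\gamma)$ to products via the moves covered by (i)--(iii), one would obtain a chain of isomorphisms between $\SHM(M,\gamma) \otimes \C$ and $\SHI(M,\gamma)$ by induction on hierarchy depth.

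The hard part, as indicated, is step (iv): constructing and proving exactness of an instanton surgery triangle compatible with the eigenspace decomposition of Corollary~\ref{cor:simultaneous-eigenspaces}, so that it restricts correctly to the $|F|$-summands that define $\SHI$. Even granting a Floer-style triangle in $I_{*}(Y)_{w}$, one must check that the connecting maps preserve the top eigenvalue condition $\mu(\bar R)=2g-2$, $\mu(y)=2$; this requires a careful analysis of the cobordism maps and their interaction with the $\mu$-operators, probably via dimension counts on moduli spaces over the cobordisms used in the triangle. Without such compatibility, the inductive argument collapses. Because of this, I would approach the conjecture first in the special case of knot complements $(M,\gamma)$ coming from a classical knot $K \subset S^{3}$, where $\KHM$ and $\KHI$ can be compared summand-by-summand using the Alexander grading and the Fintushel-Stern / Meng-Taubes formula (Lemma~\ref{lem:Alexander}) as a sanity check.
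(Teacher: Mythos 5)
This statement is an open conjecture in the paper, not a theorem; the authors offer no proof and only remark in passing that the closure construction seems to recapture Heegaard knot homology and that it is worth asking whether $\SFH$ itself can be recovered from the Heegaard Floer homology of a closure $Y(M,\gamma)$. Your proposal is therefore a research outline rather than a proof, and it is a sensible one in overall direction, but it contains two concrete gaps.

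For the $\SHM\cong\SFH$ half, the induction on hierarchy depth does not close up from the surface decomposition formula alone. Proposition~\ref{prop:S-decomp} (and Juh\'asz's Theorem~1.3) only asserts that $\SFH(M',\gamma')$ is a \emph{direct summand} of $\SFH(M,\gamma)$; descending a hierarchy to a product sutured manifold then yields non-vanishing, not the isomorphism type. To turn the induction into an isomorphism you need the refined version that identifies the summand with the piece supported on the ``outer'' relative \spinc{} structures determined by $S$, and you then have to verify that your closure-defined $\SHF$ partitions $\Sp(Y|\bar R)$ in the same way. You also need the $\HF$--$\HM$ comparison isomorphism to commute with the cobordism maps used in the excision argument, not merely to respect the \spinc{} grading, before you can identify $\SHF$ with $\SHM$. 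For the instanton half, even granting a sutured surgery triangle, the list (i)--(v) is not known to characterize a sutured Floer theory: exact triangles constrain ranks but do not pin down the theory, and two theories can satisfy (i)--(v) yet differ. What the hierarchy argument actually delivers in the paper is Theorem~\ref{thm:SHI-non-vanishing} (non-vanishing), which is as far as (i), (iii) and (v) reach; promoting it to an isomorphism requires a new ingredient, and the paper's own remarks (for instance the unproved analogue of Lemma~\ref{lem:Alexander} for $\KHI$) make clear that even the Euler-characteristic check on the instanton side is presently open.
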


\noindent
As a special case, we have:

\begin{conjecture}
    With complex coefficients, the knot homologies defined by  Ozsv\'ath-Szab\'o
    and Rasmussen are isomorphic to Floer's instanton homology for
    knots, $\KHI(K)$, as defined here and in
    \cite{Floer-Durham-paper}.
\end{conjecture}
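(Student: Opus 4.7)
The plan is to attack the conjecture through a structural characterization shared by both theories, combined with a chain-level comparison in the spirit of the Atiyah-Floer conjecture. After the present paper together with Juh\'asz's work, both $\widehat{HFK}(K)\otimes\C$ and $\KHI(K)$ arise as functors from the category of balanced sutured manifolds to finite-dimensional vector spaces, satisfying the same list of structural properties: the disjoint-union formula of Proposition~\ref{prop:disjoint-union}, invariance under product regions, the horizontal and product-annulus decompositions of Propositions~\ref{prop:horizontal} and \ref{prop:vertical}, the surface-decomposition summand statement of Proposition~\ref{prop:S-decomp-instanton}, non-vanishing on taut sutured manifolds, and the normalization $\C$ on a product sutured manifold. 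The first step would be to extract a list of axioms sufficient to pin the theory down uniquely on a ``simple enough'' class of sutured manifolds, for instance those admitting a sutured hierarchy of bounded depth, in the spirit of Juh\'asz's polytope-decorated framework.

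Given this formal agreement, the natural next step is to manufacture an actual natural transformation. Writing the closure $Y_{1}(S^{3},K) = (F\times S^{1})\cup_{\partial N(K)} (S^{3}\setminus N^{\circ}(K))$ as a union across the torus $\partial N(K)$, the proposal is to interpret both $I_{*}(Y_{1}|\bar{R})_{w}$ and $\widehat{HFK}(K)\otimes \C$ as Lagrangian intersection Floer homologies inside the character variety of $\partial N(K)$: on the instanton side by restricting flat connections from each piece, on the Heegaard side by matching to the symmetric-product description of $\widehat{HFK}$ via a suitable Heegaard splitting of the knot complement. A comparison isomorphism would then be produced by an adiabatic degeneration interpolating between the gauge-theoretic and symplectic descriptions of these representation-variety Lagrangians.

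The hard part will be the analysis underlying this Atiyah-Floer comparison: the bubbling, compactness and transversality theory needed to identify instanton moduli spaces on $Y_{1}\times\R$ with holomorphic strips in the character variety has been carried out only in quite restricted settings, and the singularities of the character variety of the boundary torus near its reducible stratum are a substantial complication. As a more tractable fallback, one could first verify the conjecture on a large class of knots (two-bridge, alternating, or quasi-alternating) where both sides admit explicit combinatorial descriptions, and then propagate via the surgery exact triangles available in each theory. Such a propagation argument, however, requires an independent coherence step: one must show that the Alexander-type gradings correspond canonically (the generalized $\mu(\Sigma)$-eigenspace decomposition of Corollary~\ref{cor:simultaneous-eigenspaces} on the instanton side versus the \spinc{} decomposition on the Heegaard side), and that the connecting homomorphisms in the two surgery triangles agree under the conjectural isomorphism. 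Matching these structures across two analytically unrelated theories is the principal obstacle the plan must ultimately overcome.
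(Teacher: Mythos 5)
The statement you were asked about is stated in the paper as a \emph{conjecture}; the paper offers no proof, and the authors explicitly leave open the relation between $\KHI(K)$ and Heegaard knot Floer homology. What you have written is therefore not a proof to be compared against a paper argument, but a speculative research program --- and you yourself repeatedly flag that the essential steps remain to be carried out (``the hard part will be the analysis,'' ``the principal obstacle the plan must ultimately overcome'').

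Each of the three routes you sketch has a genuine gap. The axiomatic-characterization step is not known to pin the theory down: Propositions~\ref{prop:disjoint-union}, \ref{prop:horizontal}, \ref{prop:vertical}, and \ref{prop:S-decomp-instanton} give summand relations and isomorphisms under specific decompositions, and a sutured hierarchy forces non-vanishing and identifies the contribution of the terminal product piece, but there is no uniqueness theorem saying a functor with these properties is determined up to isomorphism --- so your first paragraph asserts a rigidity that has not been established. The Atiyah--Floer comparison you invoke in the second paragraph is itself a famous open problem in the $\SU(2)$ instanton setting, with precisely the bubbling, reducible-stratum, and adiabatic-limit difficulties you name; saying a natural transformation ``would then be produced'' is restating the problem, not solving it. And the fallback propagation scheme via surgery triangles requires an unproved coherence: the Alexander-type decomposition on the instanton side is a generalized eigenspace decomposition of the $\mu(\Sigma)$ action (Corollary~\ref{cor:simultaneous-eigenspaces}), while the Heegaard side decomposes by relative $\spinc$ structures, and there is no known canonical correspondence between these that also commutes with the connecting maps in the two exact triangles. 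None of these routes is closed, so the conjecture stands exactly where the paper leaves it.
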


There are various more
modest questions one should ask. We have not shown that the Alexander
polynomial can be recovered from the instanton knot homology groups
$\KHI(K,i)$; but it is natural to conjecture that this is so, just as
in the monopole and Heegaard theories. This may be only a matter of
repeating \cite{Fintushel-Stern-knot} in the instanton
context:

\begin{conjecture}
    The Euler characteristics of the instanton knot homology groups
    $\KHI(K,i)$, for $i=-g,\dots,g$, are the coefficients of the
    symmetrized Alexander polynomial of $K$.
\end{conjecture}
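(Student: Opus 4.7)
The plan is to follow the same general strategy used in the monopole case (Lemma~\ref{lem:Alexander}), replacing Seiberg--Witten invariants of $S^1\times \tilde{Y}(K)$ by Donaldson polynomial invariants, and replacing the Fintushel--Stern formula for Seiberg--Witten invariants under knot surgery by its Donaldson-invariant analogue (which was in fact the original source of the Fintushel--Stern formula). Thus let $\tilde{Y}=\tilde{Y}(S^{3},K)$ and $\bar{\Sigma}\subset \tilde{Y}$ be as in the proof of Lemma~\ref{lem:Alexander}, and set $X_{K}=S^{1}\times \tilde{Y}$. The surface $\tilde{R}$ and the surface $\bar{\Sigma}$ both sit in $X_{K}$, and carry a natural line bundle $w$ pulled back from $\tilde Y$ (the one used in the definition of $\KHI$). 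I expect to be able to identify
\[
   \sum_{i}\chi(K,i)\,T^{2i}
\]
with a \emph{truncated} Donaldson series for $(X_{K},w)$: that is, the generating function for Donaldson invariants $D_{X_{K},w}\bigl(e^{\,T\bar{\Sigma}}\bigr)$, where we evaluate only on the simultaneous generalized eigenspace of $\mu(\tilde R)$ and $\mu(y)$ corresponding to $(2g(\tilde R)-2,2)$, in the spirit of Corollary~\ref{cor:simultaneous-eigenspaces}. This identification is essentially formal once one checks that, at the chain level, the Euler characteristic of $\KHI(K,i)$ computes the signed count of fixed points of the return map on the top eigenspace, and that these fixed points are exactly the translation-invariant solutions on $X_{K}$ in the relevant grading.

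The next step is to apply the Fintushel--Stern knot surgery formula in the Donaldson setting. As observed after Lemma~\ref{lem:Alexander}, the four-manifold $X_{K}$ is obtained from $X_{0}:=T^{2}\times S$ (the closure of the unknot) by knot surgery along the torus $S^{1}\times\hat{c}$, where $\hat{c}\subset \tilde{Y}_{0}=S^{1}\times S$ is the standard curve used in~\eqref{eq:Y-l-picture}. The original Fintushel--Stern theorem for Donaldson series says precisely that, after knot surgery along an embedded torus $\mathcal{T}$ of square zero representing a primitive class and having a cusp neighborhood,
\[
    D_{X_{K}}\bigl(e^{\,T\bar{\Sigma}}\bigr) \;=\; D_{X_{0}}\bigl(e^{\,T\bar{\Sigma}_{0}}\bigr)\cdot\Delta_{K}(e^{2T[\mathcal{T}]}),
\]
and after restricting to the appropriate eigenspace over $\tilde{R}$ this specializes to
\[
    \sum_{i}\chi(K,i)\,T^{2i} \;=\; \Bigl(\text{unknot contribution}\Bigr)\cdot \Delta_{K}(T^{2}).
\]
The unknot contribution is the corresponding truncated Donaldson series for the product $T^{2}\times S$ with its natural line bundle, which by the multiplicative structure of $I_{*}(S^{1}\times S | S)_{w}\cong\C$ (Proposition~\ref{prop:rank-one-w}) and a direct computation on the representation variety is a unit, in fact the constant $\pm 1$.

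To carry this out I would proceed in the order: (i) set up the signed count of critical points on $\tilde Y$ in each generalized eigenspace and its interpretation as a Lefschetz number; (ii) identify this Lefschetz number with a coefficient of the Donaldson series of $X_{K}$; (iii) cite or reprove Fintushel--Stern knot surgery for $\SO(3)$ Donaldson invariants in the presence of an auxiliary $w_{2}$, taking care with the truncation to the $\tilde R$-eigenspace; (iv) compute the unknot generating series on $T^{2}\times S$. The main obstacle I anticipate is in step (iii): the standard Fintushel--Stern gluing argument uses closed four-manifolds of simple type and must be adapted to the setting where one works only on a fixed eigenspace of the $\mu$-operators on $\tilde R$. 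This adaptation should nonetheless be routine in the formalism of Section~\ref{subsec:disconnected-1}, since the eigenspace constraint is imposed by operators supported away from the surgery torus $S^{1}\times\hat{c}$ and so commutes with the relative invariants used in the gluing. The other technical point requiring care is fixing the overall sign and normalization in step (iv), which amounts to comparing two canonical orientations on the reducible locus on $T^{2}\times S$.
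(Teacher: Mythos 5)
Be aware that the paper does not prove this statement: it appears in the ``Questions and conjectures'' section, stated as an open conjecture, and the authors only remark that proving it ``may be only a matter of repeating [Fintushel--Stern] in the instanton context.''  That remark is exactly your plan, so there is no in-paper proof to compare against, only the strategy hint that you have faithfully expanded.

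The gap is at your step (iii), and you have underestimated it.  The Fintushel--Stern knot surgery formula that you invoke,
\[
    D_{X_{K}}\bigl(e^{\,T\bar{\Sigma}}\bigr) \;=\; D_{X_{0}}\bigl(e^{\,T\bar{\Sigma}_{0}}\bigr)\cdot\Delta_{K}\bigl(e^{2T[\mathcal{T}]}\bigr),
\]
was proved by Fintushel and Stern for \emph{Seiberg--Witten} invariants; the Donaldson-series analogue was available at the time only as a consequence of Witten's conjecture (via Feehan--Leness), a dependence that this paper is explicitly at pains to avoid throughout its instanton developments.  If you instead try to prove the Donaldson knot surgery formula directly by gluing across the $T^{3}$ around $S^{1}\times\hat{c}$, in the style of section~\ref{subsec:disconnected-2}, you find that you must first compute the relative Donaldson invariant of $S^{1}\times M_{K}$ (knot complement times a circle) as an element of $I_{*}(T^{3})_{w}$-with-coefficients, and show that its generating function is $\Delta_{K}$.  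But that identification \emph{is} the substance of the conjecture: the monopole proof of Lemma~\ref{lem:Alexander} works precisely because Fintushel--Stern (resting on Meng--Taubes) computed the relative SW invariant of $S^{1}\times M_{K}$, and there is no Meng--Taubes-type theorem for Donaldson invariants.  So your argument is, as written, circular at its central step, and the ``routine'' label on (iii) is where the proof fails.

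A smaller point to note: in the monopole case, the identification of $\m(S^{1}\times Y,\s)$ with the $\Z/2$-graded Euler characteristic of $\HM_{\bullet}(Y,\s)$ required a separate sign lemma via spectral-flow parity.  Your steps (i)--(ii) need the instanton analogue, and here the decomposition by eigenvalues of $\mu(\bar{\Sigma})$ is a \emph{generalized}-eigenspace decomposition (Corollary~\ref{cor:simultaneous-eigenspaces}) which can fail to be semisimple, so one must verify that the alternating trace you want genuinely localizes to a Lefschetz number that respects this decomposition.  These issues are tractable with care; the circularity at (iii) is the genuine obstruction, and is presumably why the authors leave the statement as a conjecture rather than a theorem.
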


If this conjecture is proved, then the hypothesis on the Alexander
polynomial could be dropped from
Corollary~\ref{cor:fibered-knots-instanton}.

A loose end in our development of $\SHM(M,\gamma)$
is the lack of a complete accounting of \spinc{} structures. The
material of section~\ref{subsec:spinc-decomp} is a step in the right
direction. In \cite{Juhasz-2}, Juh\'asz proves that his Heegaard Floer
homology of sutured manifolds can be decomposed as a direct sum
indexed by the set of
relative \spinc{} structures $\Sp(M,\gamma)$,
and it would be desirable to have a
similar statement for the monopole and instanton cases.

Juh\'asz \cite{Juhasz-3} has considered an extension of the fibering
theorem,  which prompts naturally a conjecture in the instanton
context. Motivated by this, we have:

\begin{conjecture}[\emph{cf}. {\cite{Juhasz-3}}]
    Let $K\subset S^{3}$ be a  knot, and consider the
    irreducible homomorphisms $\rho : \pi_{1}(S^{3}\setminus
    K)\to\SU(2)$ which
    map a chosen meridian $m$ to the element $\bi\in \SU(2)$. Suppose
    that these homomorphisms are non-degenerate and that  the number of
     conjugacy classes of such homomorphisms is less then $2^{k+1}$.
     Then the knot complement $S^{3}\setminus N^{\circ}(K)$ admits a
     foliation of depth at most $2k$, transverse to the torus
     boundary.
\end{conjecture}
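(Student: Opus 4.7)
The plan is to transfer Juh\'asz's hierarchy argument from \cite{Juhasz-3} into the instanton setting, using the representation-variety bound for $\KHI(K)$ in place of the Heegaard dimension bound, and the decomposition results of section~\ref{subsec:more-decomp} in place of their Heegaard counterparts.

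First I would convert the hypothesis on representations into a dimension bound. Following the heuristic recorded after Definition~\ref{def:KHI}, if the irreducible representations in $\Rep(K,\bi)$ are non-degenerate and fall into fewer than $2^{k+1}$ conjugacy classes, then each irreducible orbit is a circle contributing at most a copy of $H_{*}(S^{1};\C)$ to the complex computing $I_{*}(Y_{1})_{w}$, while the reducible contributes a single $\C$; halving by the $+2$ eigenspace of $\mu(y)$ that cuts out $\KHI(K)$ from $I_{*}(Y_{1})_{w}$ yields
\[
           \dim_{\C}\KHI(K) < 2^{k+1}.
\]
Next I would pass to the sutured world. Let $\Sigma$ be a minimal-genus Seifert surface and $(M_{\Sigma},\delta)$ the result of cutting the knot complement along $\Sigma$; by (the instanton version of) Proposition~\ref{prop:sutured-vs-knot} and the splitting of $\KHI(K)$ by the generalized eigenspaces of Corollary~\ref{cor:simultaneous-eigenspaces}, each summand $\KHI(K,i)$ is a direct summand of $\SHI(M_{\Sigma},\delta)$, and more generally each successive sutured-manifold decomposition produces, via Proposition~\ref{prop:S-decomp-instanton}, a summand of the previous $\SHI$.

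I would then run a Gabai-style sutured manifold hierarchy that is adapted to controlling depth, exactly as in \cite{Juhasz-3}. The idea is to exhibit, for each sutured manifold $(N,\nu)$ arising along the hierarchy that does \emph{not} already carry a depth-zero foliation tangent to $R(\nu)$, two decomposing surfaces $S_{1},S_{2}\subset N$ whose resulting sutured manifolds $(N'_{1},\nu'_{1})$, $(N'_{2},\nu'_{2})$ are both taut and belong to disjoint subsets of $\Sh(Y|\bar{R})$, in the style of the double-decomposition at the heart of the proof of Theorem~\ref{thm:fibered-M-instanton}. By Theorem~\ref{thm:SHI-non-vanishing} and Proposition~\ref{prop:S-decomp-instanton} this doubles a lower bound on $\dim\SHI$ at every stage where a strictly deeper leaf is needed, so that a foliation requiring depth greater than $2k$ forces
\[
        \dim_{\C}\SHI(M_{\Sigma},\delta) \ge 2^{k+1},
\]
contradicting the bound on $\KHI(K,g)$ obtained in the first paragraph. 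The conclusion is then a foliation of depth at most $2k$, which can be arranged to be transverse to $\partial N(K)$ by Gabai's extension argument from \cite{Gabai}.

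The hard part is the middle step: carrying out the doubling hierarchy for instantons. In Heegaard Floer homology, Juh\'asz exploits a refined decomposition of $\SFH$ by relative \spinc{} structures $\Sp(M,\gamma)$, which the present instanton framework supplies only through the coarser homomorphisms $\Sh(Y|\bar{R})$ of Corollary~\ref{cor:simultaneous-eigenspaces}. Verifying that the pairs $(S_{1},S_{2})$ produced by the hierarchy land in genuinely disjoint generalized-eigenspace indexing sets, rather than merely disjoint \spinc{} sets, requires an additivity property of $\mu$ under the smoothing construction of Lemma~\ref{lem:S-decomp-special} together with a surface-counting argument analogous to the one used in the proof of Theorem~\ref{thm:fibered-M-instanton}; this is the main technical obstacle, and it is the piece that, if resolved, would make the conjecture into a theorem in the same way that Theorem~\ref{thm:fibered-M-instanton} follows from Ni's work.
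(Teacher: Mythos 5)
The statement you are attempting to prove is explicitly a \emph{conjecture} in the paper, not a theorem, and the paper offers no proof of it.  There is therefore no ``paper's own proof'' to compare against; the authors state it at the end (citing Juh\'asz's \cite{Juhasz-3} for motivation) precisely because the ingredients for a proof are not yet available in the instanton framework.  Your sketch is, to your credit, honest about this: you identify the ``main technical obstacle'' and note that resolving it would turn the conjecture into a theorem.  So what you have written is an account of why the statement \emph{should} be true, not a proof, and it would be misleading to present it as one.

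Your diagnosis of the gap is correct and is the same one the paper tacitly concedes.  Juh\'asz's depth-bounding hierarchy in \cite{Juhasz-3} exploits the decomposition of $\SFH(M,\gamma)$ indexed by relative \spinc{} structures $\Sp(M,\gamma)$; the instanton theory developed here only provides the coarser generalized-eigenspace decomposition by $\Sh(Y|\bar{R})$ of Corollary~\ref{cor:simultaneous-eigenspaces}, and the paper's own questions-and-conjectures section flags exactly this as a ``loose end.''  Without a finer decomposition and a statement that the doubling hierarchy lands in genuinely disjoint pieces of it, the inductive doubling step cannot be carried out, and the conclusion remains conjectural.  Two smaller points to tighten if you develop this further.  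First, the relevant quantity is the top summand $\KHI(K,g)$, not the total $\KHI(K)$, and the arithmetic of your opening step does not close up: from $n < 2^{k+1}$ conjugacy classes and the Morse--Bott bound recorded after Definition~\ref{def:KHI} one obtains $\dim_{\C}\KHI(K) \le 2n+1$, which is weaker than the $\dim_{\C}\KHI(K) < 2^{k+1}$ you assert, and extracting a bound on the top-degree summand alone from a bound on the representation count requires an additional argument.  Second, the single doubling step in the proof of Theorem~\ref{thm:fibered-M-instanton} rests on a specific geometric construction due to Ni (the two decomposing surfaces $S_{1},S_{2}$ with disjoint $\Sh$-indexing sets); iterating this over a full Gabai hierarchy while controlling depth is the content of \cite{Juhasz-3}, and it is not automatic that the analogous geometric input is available at every stage when one can only track $\Sh$ rather than $\Sp$.
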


The fact that $I_{*}(Y|\Sigma)_{w}$ is of rank $1$ in the case that
$Y$ is a surface bundle of $S^{1}$ with fiber $\Sigma$ is something
that has other applications. For example, combined with Donaldson's
theorem on the existence of Lefschetz pencils
\cite{Donaldson-pencils},  it yields a fairly
direct proof that symplectic $4$-manifolds have non-zero Donaldson
invariants. Essentially the same strategy was used by Ozsv\'ath and
Szab\'o in the Heegaard context. What the argument shows specifically
is that if $X\to S^{2}$ is a symplectic Lefschetz fibration whose
fiber $F$ has genus $2$ or more, and if $w$ is the line bundle
dual to a section, then the Donaldson invariant $D^{w}(F^{n})$ is
non-zero for all large enough $n$ in the appropriate residue class mod
$4$.

Another matter is whether one can relate either the monopole or instanton
knot homologies to the corresponding Floer homologies of the $3$-manifolds obtained
by surgery on the knot, particularly for large integer surgeries. This
is how Heegaard knot homology arose 
in \cite{Rasmussen-thesis}.

In a previous paper \cite{KM-knot-singular}, the authors described
another knot-homology constructed using instantons. The definition
there is distinctly different from the definition of $\KHI(K)$ given
in this paper, because instantons with singularities in codimension-2
were involved. Nevertheless, both theories involve the same
representation variety $\Rep(K,\bi)$. Various versions are defined in
\cite{KM-knot-singular}, but the one most closely related to $\KHI(K)$
is the ``reduced'' variant, called $\RI_{*}(K)$ in
\cite{KM-knot-singular}. Like $\KHI(K)$, the group $\RI_{*}(K)$ is a
Floer homology group, constructed from a Chern-Simons functional whose
set of critical points can be identified with $\Rep(K,\bi)$.
The paper \cite{KM-knot-singular} develops its theory for the gauge
group $\SU(N)$, not just $\SU(2)$, and it would be interesting to
pursue a similar direction with $\SHI(M,\gamma)$ and $\KHI(K)$.

The ``hat'' version of Heegaard Floer homology, for a closed
$3$-manifold $Y$, can also be recovered as a special case of
Juh\'asz's $\SFH$, as shown in \cite{Juhasz-1}. The appropriate
manifold $M$ is the complement of a ball in $Y$, and one takes a
single annular suture on the result $2$-sphere boundary. One can take
this as a definition of a ``hat'' version of monopole Floer homology.
In the instanton case, this leads to essentially the same construction that was
used in \cite{KM-knot-singular} to avoid reducibles: one replaces $Y$
by $Y\# T^{3}$ and takes $w$ to be a line bundle that is trivial on
$Y$ and of degree $1$ on a $T^{2}$ in the $T^{3}$.

Finally, as we mentioned in the introduction,
it is worth asking whether, in the Heegaard theory, the
Floer homology of a balanced sutured manifold $(M,\gamma)$, as defined in
\cite{Juhasz-1}, can
also be recovered as the Heegaard Floer homology of a closed manifold
$Y = Y(M,\gamma)$, of the sort that we have used here. If so, it would
be interesting to know whether the existing proofs of the
decomposition theorems in \cite{Ni-A} and \cite{Juhasz-2}, for
example, can be adapted to prove Floer's excision theorem in the
context of Heegaard Floer theory.

\bibliographystyle{abbrv}
\bibliography{categorify}

\end{document}